\newtheorem{theorem}{Theorem}
\newtheorem{claim}{Claim}
\newtheorem{proposition}{Proposition}
\newtheorem{remark}{Remark}
\newcommand{\tabincell}[2]{\begin{tabular}{@{}#1@{}}#2\end{tabular}}
\definecolor{darkblue}{rgb}{0.0,0.0,1.0}
\def\B{{\mathbb{B}}}
\def\R{{\mathbb{R}}}
\def\Z{{\mathbb{Z}}}
\def\Gb{{\mathbb{G}}}
\def\V{{\mathbb{V}}}
\def\Cupper{{\overline C}}
\def\Clower{{\underline C}}
\def\Vupper{{\overline V}}
\definecolor{myblue}{RGB}{3,70,148}
\def\fblue{\textcolor{black}}
\def\fgreen{\textcolor{black}}
\def\sred{\textcolor{black}}
\def\sblue{\textcolor{blue}}
\newcommand{\algmargin}{\the\ALG@thistlm}
\newlength{\whilewidth}
\algnewcommand{\parState}[1]{\State%
  \parbox[t]{\dimexpr\linewidth-\algmargin}{\strut #1\strut}}
\begin{document}

\title{ {\bf A Polyhedral Study of the Integrated Minimum-Up/-Down Time and Ramping Polytope\footnote{Earlier versions are available online at \url{http://www.optimization-online.org/DB_HTML/2015/06/4942.html} and \url{http://www.optimization-online.org/DB_HTML/2015/08/5070.html}. }}\\[3mm]
\author{\normalsize {\bf Kai Pan} and {\bf Yongpei Guan}
\\ \\
{\small Department of Industrial and Systems Engineering}\\
{\small University of Florida, Gainesville, FL 32611}\\
{\small Emails: kpan@ufl.edu; guan@ise.ufl.edu}\\
} }
\date{\textcolor{red}{\small Submitted: July 10, 2015; Revised: April 1, 2016}}

\maketitle

\vspace{-0.3in}

\begin{abstract}
\setlength{\baselineskip}{18pt}
In this paper, we study the polyhedral structure of an integrated minimum-up/-down time and ramping polytope, which has broad applications in variant industries. The polytope we studied includes minimum-up/-down time, generation ramp-up/-down rate, logical, and generation upper/lower bound constraints. By exploring its specialized structures, we derive strong valid inequalities and explore a new proof technique to prove these inequalities are sufficient to provide convex hull descriptions for variant two-period and three-period polytopes, under different parameter settings. For multi-period cases, we derive generalized strong valid inequalities (including one, two, and three continuous variables, respectively) and further prove that these inequalities are facet-defining under mild conditions. Moreover, we discover efficient polynomial time separation algorithms for these inequalities to improve the computational efficiency. Finally, extensive computational experiments are conducted to verify the effectiveness of our proposed strong valid inequalities by testing the applications of these inequalities to solve both {self-scheduling and network-constrained} unit commitment problems, for which our derived approach outperforms the default CPLEX significantly. 

\vspace{0.10in}

\noindent{\it Key words:} convex hull; polyhedral study; unit commitment 
\end{abstract}

\setlength{\baselineskip}{22pt}

\section{Introduction} \label{sec:introduction}
In this paper, we study the polyhedral structure of the integrated minimum-up/-down time and ramping polytope, which is the fundamental polytope describing the physical characteristics of operating large machines and production systems. This polytope describe the operations in which when the system/machine is turned on, it should stay {online} in certain time periods, which is referred as the minimum-up time. Similarly, when the system/machine is turned off, it should stay {offline} in certain time periods, which is referred as the minimum-down time. Meanwhile, when the system is {online}, besides the generation upper/lower bound restrictions, the difference between the generation amounts in two consecutive time periods is bounded from above, which is referred as the ramp-up/-down rate restrictions.

To mathematically describe the integrated polytope, we let $T$ be the number of time periods for the whole operational horizon, $L$ ($\ell$) be the minimum-up (-down) time limit, $\Cupper$ ($\Clower$) be the generation upper (lower) bound when the machine is {online}, $\Vupper$ be the start-up/shut-down ramp rate {(which is usually between $\Clower$ and $\Cupper$, i.e., $\Clower \leq \Vupper \leq \Cupper$)}, and $V$ be the ramp-up/-down rate in the stable generation region. In addition, we let binary decision variable $y$ represent the machine's {online} (i.e., $y_t = 1$) or {offline} (i.e., $y_t = 0$) status, and continuous decision variable $x$ represent the generation amount. We add an additional binary decision variable $u$ to represent whether the machine starts up (i.e., $u_t = 1$) or not (i.e., $u_t = 0$). The corresponding integrated minimum-up/-down time and ramping polytope can be described as follows:
\begin{subeqnarray}
& P := \Bigl\{ (x, y, u) \in \R_+^{T} \times \B^{T} \times \B^{T-1} :  & \sum_{i = t- L +1}^{t} u_i \leq y_t, \ \forall t \in [L + 1, T]_{\Z}, \slabel{eqn:p-minup} \\
&& \sum_{i = t- \ell +1}^{t} u_i \leq 1 - y_{t- \ell}, \ \forall t \in [\ell + 1, T]_{\Z}, \slabel{eqn:p-mindn} \\
&& y_t - y_{t-1} - u_t \leq 0, \ \forall t \in [2, T]_{\Z}, \slabel{eqn:p-udef} \\
&&  - x_t + \Clower y_t \leq 0, \ \forall t \in [1, T]_{\Z}, \slabel{eqn:p-lower-bound} \\
&& x_t - \overline{C} y_t \leq 0, \ \forall t \in [1, T]_{\Z}, \slabel{eqn:p-upper-bound} \\
&& x_t - x_{t-1} \leq V y_{t-1} + \Vupper (1 - y_{t-1}), \ \forall t \in [2, T]_{\Z}, \slabel{eqn:p-ramp-up} \\
&& x_{t-1} - x_t \leq V y_t + \Vupper (1 - y_t), \ \forall t \in [2, T]_{\Z} \slabel{eqn:p-ramp-down} \Bigr\},
\end{subeqnarray}
where constraints \eqref{eqn:p-minup} and \eqref{eqn:p-mindn} describe the minimum-up and minimum-down time limits \cite{lee2004min, rajan2005minimum}, respectively (i.e., if the machine starts up at {time} $t-L+1$, it should {stay} online in the following $L$ consecutive time periods until {time} $t$; if the machine shuts down at time $t-\ell+1$, it should {stay} offline in the following $\ell$ consecutive time periods until {time} $t$), constraints \eqref{eqn:p-udef} describe the logical relationship between $y$ and $u$, constraints \eqref{eqn:p-lower-bound} and \eqref{eqn:p-upper-bound} describe the generation lower and upper bounds, and constraints \eqref{eqn:p-ramp-up} and \eqref{eqn:p-ramp-down} describe the generation ramp-up and ramp-down rate limits. For notation convenience, we define $[a,b]_{\Z}$ with $a < b$ as the set of integer numbers between integers $a$ and $b$, i.e., $\{ a, a+1, \cdots, b \}$, and let conv($P$) represent the convex hull {description} of $P$. 

One well-known application of this fundamental polytope is the unit commitment (UC) problem (see, e.g., \cite{frangioni2006solving, dubost2005primal, muckstadt1977application}, and \cite{sagastizabal2012divide}, among others) in power generation scheduling. The UC problem decides the unit (referred to a thermal power generation unit) commitment status (online/offline) and power generation amount at each time period for each unit over a finite discrete time horizon so as to satisfy the load with a minimum total cost, with the associated physical restrictions, including generation upper/lower limits, ramp-rate limits, and minimum-up/-down time limits, to be satisfied. Due to its importance for power system operations, significant research progress has been made in terms of developing dynamic programming algorithms, Lagrangian relaxation/decomposition, and heuristic approaches. Readers are referred to \cite{padhy2004unit, saravanan2013solution} for the detailed reviews of these approaches. Among these approaches, the Lagrangian relaxation approach~\cite{dubost2005primal, muckstadt1977application} has been commonly adopted in industry, due to its advantages of targeting large-scale instances by decomposing the unit commitment problem into a group of subproblems with each subproblem solved by an efficient dynamic programming algorithm. However, the Lagrangian relaxation approach cannot guarantee to provide an optimal or even a feasible solution at the termination. On the other hand, mixed-integer linear programming (MILP) approaches can guarantee to obtain an optimal solution~\cite{nemhauser2013ip}. Considering this, MILP approaches have recently been adopted by all wholesale electricity markets in US \cite{carlson2012miso} and creates more than $500$ million annual savings \cite{bixby2010mixed}. 

{Besides power generation scheduling problems, this polytope has broader applications in other fields. Since the polytope captures the fundamental characteristics of minimum-up/-down time, ramp-up/-down rate, and capacity constraints, the polyhedral study results in this paper can be applied to any problems with one or two or all of these three characteristics. In fact, there are many engineering problems with these characteristics and the corresponding polytope $P$ embedded. For instance, we can observe the production ramping and capacity constraints in the production smoothing problems \cite{silver1967tutorial, pekelman1975production}, ramp-up production planning problems \cite{haller2003cycle}, and the refrigerating problems through an expansion valve \cite{outtagarts1997transient}. Moreover, all of these three characteristics appear in the operations of chemical pumps \cite{lin2002optimal, xia2007endoreversible} and the boilers \cite{lakshmanan2009study, havel2013optimal}. Meanwhile, besides the traditional thermal generators, polytope $P$ is also embedded in the models describing the operations of hydro/pump-storage hydro generators \cite{guan1999scheduling,lohndorf2013optimizing} and combined-cycle units~\cite{liu2009component}.}

Considering the fact that cutting planes are efficient approaches to strength MILP formulations and speed up the corresponding branch-and-cut algorithm~\cite{nw}, strong formulations for the polytope {$P$} have been crucial to help improve the computational efficiency to solve the application problem with {$P$} embedded. Along this direction, there has been research progress on developing cutting planes for the related polytopes with a part of constraints described in {$P$}. For instance, for the polytopes with only minimum-up/-down time constraints (i.e., only constraints~\eqref{eqn:p-minup} and~\eqref{eqn:p-mindn}), in \cite{lee2004min}, alternating up/down inequalities are proposed to strengthen the polytope without considering the start-up binary decision variables. In \cite{rajan2005minimum}, the convex hull of the minimum-up/-down time polytope with the {auxiliary} start-up binary decision variables considered. Recently, new families of strong valid inequalities are proposed in \cite{ostrowski2012tight, damci1777polyhedral} to tighten the ramping polytope (i.e., constraints~\eqref{eqn:p-ramp-up} and~\eqref{eqn:p-ramp-down}) of the unit commitment problem. 

{In the above studies, the minimum-up/-down time and ramping polytopes are studied separately. In this paper, we study the integrated polytope including both minimum-up/-down time and ramping constraints. Our main contributions can be summarized as follows:
\begin{itemize}
\setlength\itemsep{0em}
\item[(1)] Our studied integrated polytope extends and generalizes the minimum-up/-down time polytope only and ramping polytope only studies to consider both aspects in one polytope. 
\item[(2)] For the integrated polytope, we derive the convex hull descriptions for two- and three-period cases under different parameter settings. The derived strong valid inequalities can be applied to help solve multi-period cases. More importantly, we provide a new technique to prove the convex hull descriptions. 
\item[(3)] For the multi-period cases, we derive strong valid inequalities that are facet-defining and can be separated in polynomial time. These inequalities can help speed up the branch-and-cut algorithm significantly to solve general multiple period cases.
\item[(4)] Extensive computational studies for the network-constrained and self-scheduling unit commitment problems verify the effectiveness of the proposed strong valid inequalities for the polytope as cutting planes to help improve the computational efficiency to solve the related application problems. 
\end{itemize}}

In the remaining part of this paper, we provide the convex hull descriptions for two- and three-period cases in Section~\ref{sec:convexhullresults}. In Section \ref{sec:multi-period}, we {extend our study to derive} strong valid inequalities covering multiple time periods {so as} to further strengthen the general multi-period polytopes. In Section \ref{sec:comp-exper}, we perform computational {studies} on its applications in network-constrained and self-scheduling unit commitment problems to verify the effectiveness of our proposed strong valid inequalities. Finally, we conclude {our study} in Section \ref{sec:conclusion}.

\section{Convex Hulls} \label{sec:convexhullresults}
\subsection{Two-period Convex Hulls} \label{sec:two-period}
Before we provide the convex hull description of the two-period polytope, to the reader's attention, the convex hull descriptions for the separated studies on the {two-period} ramp-up only polytope (i.e., $\sred{P_2^{\mbox{\tiny up}}} := \{ (x, y, u) \in \sred{\R_+^{2} \times \B^{2} \times \B:}~\eqref{eqn:p-minup},~\eqref{eqn:p-mindn},~\eqref{eqn:p-udef},~\eqref{eqn:p-lower-bound}, \eqref{eqn:p-upper-bound}, \eqref{eqn:p-ramp-up}\}$) and {two-period} ramp-down only polytope (i.e., $\sred{P_2^{\mbox{\tiny down}}} := \{ (x, y, u) \in \sred{\R_+^{2} \times \B^{2} \times \B:}~\eqref{eqn:p-minup},~\eqref{eqn:p-mindn},~\eqref{eqn:p-udef},~\eqref{eqn:p-lower-bound}, \eqref{eqn:p-upper-bound}, \eqref{eqn:p-ramp-down} \}$) are provided in \cite{damci1777polyhedral}. Here we provide the convex hull description for two-period $P$ with {both ramp-up and ramp-down constraints} as follows:
\begin{theorem}\label{thm1}
For $T=2$ and $L=\ell=1$, {when $\Cupper - \Clower - V \geq 0$}, conv($P$) can be described as 
\begin{subeqnarray}
& {Q_2}:= \Bigl\{ (x, y, u) \in \R^5:  & u_2 \geq 0, \ u_2 \geq y_2 - y_1, \slabel{eqn-q2:u2-1} \\
&& u_2 \leq y_2, \ y_1 + u_2 \leq 1, \slabel{eqn-q2:u2-2} \\
&& x_1 \geq \Clower y_1, \ x_2 \geq \Clower y_2, \slabel{eqn-q2:lb} \\
&& x_1 \leq \Vupper y_1 + (\Cupper - \Vupper) (y_2 - u_2), \slabel{eqn-q2:x1-ub} \\
&& x_2 \leq \Cupper y_2 - (\Cupper - \Vupper) u_2, \slabel{eqn-q2:x2-ub} \\
&& x_2 - x_1 \leq (\Clower + V) y_2 - \Clower y_1 - (\Clower + V - \Vupper) u_2, \slabel{eqn-q2:x2-x1-ub} \\
&& x_1 - x_2 \leq \Vupper y_1 - (\Vupper - V) y_2 - (\Clower + V - \Vupper) u_2 \slabel{eqn-q2:x1-x2-ub} \Bigr\};
\end{subeqnarray}
{when $\Cupper - \Clower - V < 0$, conv($P$) can be described as ${\bar{Q}_2}=\{ (x, y, u) \in \R^5:~\eqref{eqn-q2:u2-1} -\eqref{eqn-q2:x2-ub}\}$}.
\end{theorem}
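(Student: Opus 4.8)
The plan is to show $\mathrm{conv}(P)=Q_2$ (and $\bar Q_2$ in the low-capacity regime) by the classical two-inclusion argument, but — following the paper's stated emphasis on a \emph{new proof technique} — I would lean on an extreme-point / projection argument rather than a brute-force vertex enumeration. First I would verify $\mathrm{conv}(P)\subseteq Q_2$: each inequality \eqref{eqn-q2:u2-1}--\eqref{eqn-q2:x1-x2-ub} must be shown valid for every $(x,y,u)\in P$. The logical ones \eqref{eqn-q2:u2-1}--\eqref{eqn-q2:u2-2} and the lower bounds \eqref{eqn-q2:lb} are immediate from \eqref{eqn:p-udef}, \eqref{eqn:p-mindn}, \eqref{eqn:p-minup}, \eqref{eqn:p-lower-bound}; for the nontrivial upper-bound inequalities \eqref{eqn-q2:x1-ub}--\eqref{eqn-q2:x1-x2-ub} I would do a short case split on $(y_1,y_2)\in\{0,1\}^2$, in each case using $u_2=\max\{0,y_2-y_1\}$ forced by \eqref{eqn:p-udef} and \eqref{eqn:p-mindn} with $\ell=1$, and then checking that the right-hand sides reduce exactly to the combination of \eqref{eqn:p-upper-bound}, \eqref{eqn:p-ramp-up}, \eqref{eqn:p-ramp-down} active in that case (e.g.\ $(y_1,y_2)=(0,1)$ forces $u_2=1$, $x_1=0$, and \eqref{eqn-q2:x1-ub}, \eqref{eqn-q2:x2-ub}, \eqref{eqn-q2:x2-x1-ub} all collapse to $x_2\le\Vupper$, which is exactly the start-up ramp bound). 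The condition $\Cupper-\Clower-V\ge 0$ is what makes \eqref{eqn-q2:x2-x1-ub} dominate the raw bound $x_2\le\Cupper$ when $y_1=y_2=1$; when it fails, that inequality is no longer valid/needed and one drops to $\bar Q_2$.

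For the reverse inclusion $Q_2\subseteq\mathrm{conv}(P)$, the cleanest route is to show that every extreme point of $Q_2$ lies in $P$ (then, since $Q_2$ is evidently bounded — the $y,u$ are clamped in $[0,1]$ and then \eqref{eqn-q2:lb}, \eqref{eqn-q2:x2-ub} bound $x$ — we are done). I would argue that at an extreme point the binary-type variables $y_1,y_2,u_2$ must be integral: the face lattice of $Q_2$ restricted to those three coordinates is governed by \eqref{eqn-q2:u2-1}--\eqref{eqn-q2:u2-2}, whose integrality is standard (it is essentially the minimum-up/-down polytope of \cite{rajan2005minimum} for $T=2$, $L=\ell=1$), so any vertex has $(y_1,y_2,u_2)\in\{0,1\}^3$ feasible for \eqref{eqn:p-udef}, \eqref{eqn:p-mindn}; then for fixed integral $(y,u)$ the remaining system in $(x_1,x_2)$ is a two-variable polytope whose defining inequalities, by the case analysis above, are exactly the projections of the $x$-constraints of $P$, so its vertices lie in $P$. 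An equivalent and perhaps slicker device is to exhibit an explicit affine map (disaggregation) writing any $(x,y,u)\in Q_2$ as a convex combination of the at-most-four ``pure-status'' points; I would likely present whichever the authors found to be the promised new technique.

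The main obstacle I anticipate is the $(y_1,y_2)=(1,1)$ case of the reverse direction: here $u_2$ is \emph{not} pinned by the status, so one must check that the two-dimensional region cut out by \eqref{eqn-q2:lb}, \eqref{eqn-q2:x1-ub}, \eqref{eqn-q2:x2-ub}, \eqref{eqn-q2:x2-x1-ub}, \eqref{eqn-q2:x1-x2-ub} in the $(x_1,x_2)$-plane, for each of $u_2\in\{0\}$ (and, degenerately, the boundary slice toward $u_2\to$ the value forced when $y_1=0$), has all of its vertices realizable in $P$ — in particular that the interplay of the ramp-up bound $x_2-x_1\le V$, the ramp-down bound $x_1-x_2\le V$, and the box $[\Clower,\Cupper]^2$ produces no spurious fractional vertex. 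This is exactly where the hypothesis $\Cupper-\Clower-V\ge 0$ versus $<0$ bifurcates the description, so the bookkeeping must track that inequality carefully; everything else is routine linear algebra on $2\times2$ and $3\times3$ systems.
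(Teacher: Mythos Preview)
Your validity direction is fine, and your second proposed device---writing any $(x,y,u)\in Q_2$ as a convex combination of the four ``pure-status'' points---is exactly the paper's technique. But your primary route has a genuine gap.

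The inference ``the projection of $Q_2$ onto $(y_1,y_2,u_2)$ is integral (Rajan--Takriti), therefore every extreme point of $Q_2$ has integral $(y_1,y_2,u_2)$'' is not valid. Integrality of a projection says nothing about whether the coupling constraints between $x$ and $(y,u)$---here \eqref{eqn-q2:x1-ub}--\eqref{eqn-q2:x1-x2-ub}---create fractional vertices upstairs. For instance, $\{0\le y\le 1,\;0\le x\le 1,\;x\le 2y,\;x\le 2(1-y)\}$ has integral $y$-projection but a vertex at $(x,y)=(1,\tfrac12)$. So you cannot cite integrality of the $(y,u)$ block, freeze $(y,u)$ to integers, and then check the two-dimensional $x$-slices; the freezing step \emph{is} what needs proof. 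Your ``main obstacle'' paragraph is already written inside that unproved assumption.

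The paper makes your second device precise and this is what it calls the new technique. For $z=(\bar x,\bar y,\bar u)\in Q_2$ it fixes the four status patterns $z^1=(\hat x_1,0,1,0,0)$, $z^2=(0,\hat x_2,0,1,1)$, $z^3=(\hat x_3,\hat x_4,1,1,0)$, $z^4=0$, and takes the multipliers as explicit linear functions $\lambda_1=\bar y_1-\bar y_2+\bar u_2$, $\lambda_2=\bar u_2$, $\lambda_3=\bar y_2-\bar u_2$, $\lambda_4=1-\bar y_1-\bar u_2$; nonnegativity and $\sum\lambda_s=1$ come straight from \eqref{eqn-q2:u2-1}--\eqref{eqn-q2:u2-2}, and the $(y,u)$ coordinates match automatically. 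The real work is then showing that the free parameters $\hat x_1,\hat x_2,\hat x_3,\hat x_4$ (each constrained to the box/ramp region of its own pattern, call that region $A$) can always be chosen so that $(\bar x_1,\bar x_2)$ is hit. This is posed as surjectivity of a linear map $F:A\to C$, where $C$ is the $(\bar x_1,\bar x_2)$-slice of $Q_2$ at the given $(\bar y,\bar u)$, and surjectivity is verified not by enumerating vertices of $C$ but by taking each defining inequality of $C$ at equality in turn and exhibiting a preimage. That face-by-face argument is what replaces---and in effect proves---the vertex-integrality claim you tried to invoke directly.
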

\begin{proof}
We omit the proof here since the same {proof} technique is applied to prove {the later on more complicated} Theorem \ref{thm:conv-t3l2} in Section \ref{sec:three-period}.
\end{proof}

\begin{remark}
Since the start-up decision is not considered in the {first time} period in $Q_2$, we do not need to consider the cases in which $L = 2$ or $\ell=2$ and it also follows that the strong valid inequalities in $Q_2$ (e.g., \eqref{eqn-q2:x1-ub} - \eqref{eqn-q2:x1-x2-ub}) can be applied to any two consecutive time periods {for the multi-period cases}.
\end{remark}

\subsection{Three-period Convex Hulls} \label{sec:three-period}
In this subsection, we {further} perform the polyhedral study for the three-period formulation, i.e., $T = 3$ in $P$, and {propose convex hull descriptions for variant cases under the following valid parameter settings (\textbf{five} in total).
\begin{itemize}
\item \textbf{Case 1}: $\Clower {\leq} \Vupper < \Clower + V$, $\Cupper - \Vupper - V \geq 0$ (which implies $\Cupper - \Clower - V \geq 0$);
\item \textbf{Case 2}: $\Clower {\leq} \Vupper < \Clower + V$, $\Cupper - \Clower - V \geq 0$,  and $\Cupper - \Vupper - V < 0$;
\item \textbf{Case 3}: $\Clower {\leq} \Vupper < \Clower + V$, $\Cupper - \Clower - V < 0$ (which implies $\Cupper - \Vupper - V < 0$);
\item \textbf{Case 4}: $\Vupper \geq \Clower + V$, $\Cupper - \Vupper - V \geq 0$ (which implies $\Cupper - \Clower - V \geq 0$);
\item \textbf{Case 5}: $\Vupper \geq \Clower + V$, $\Cupper - \Clower - V \geq 0$,  and $\Cupper - \Vupper - V < 0$.
\end{itemize}}

{Note here that the case in which $\Vupper \geq \Clower + V$ and $\Cupper - \Clower - V < 0$ (which implies $\Cupper - \Vupper - V < 0$) does not need to be considered since $\Cupper \geq \Vupper$, which ensures that $\Vupper \geq \Clower + V$ guarantees $\Cupper \geq \Vupper \geq \Clower + V$.}

{We first consider \textbf{Case 1}, which is the most common case for thermal generators.} {Furthermore, since} the derived strong valid inequalities are different for $\Cupper - \Clower - 2 V \geq 0$ and $\Cupper - \Clower - 2 V < 0$ cases, we {separate the study for each case and} start with the case in which $\Cupper - \Clower - 2 V \geq 0$. {In addition,} {we consider different minimum-up/-down time limits, including $L = \ell = 2$, $L=\ell=1$, $L=2$ and $\ell=1$, and $L=1$ and $\ell=2$.} {We first study the case in which $L=\ell=2$ before extending our study to other minimum-up/-down time settings.} {Under the setting $L = \ell = 2$ (with $\Cupper - \Clower - 2 V \geq 0$)},
the corresponding {original polytope} can be described as follows:
\begin{subeqnarray}
& P_3^2 := \Bigl\{& (x, y, u) \in \R_+^3 \times \B^3 \times \B^2 : \nonumber \\
&& u_2 + u_3 \leq y_3, \slabel{eqn:p-minup-t3l2} \\
&& y_1 + u_2 + u_3 \leq 1, \slabel{eqn:p-mindn-t3l2} \\
&& u_2 \geq y_2 - y_1, \ u_3 \geq y_3 - y_2, \slabel{eqn:p-udef-t3l2} \\
&& x_1 \geq \Clower y_1, \ x_2 \geq \Clower y_2, \ x_3 \geq \Clower y_3, \slabel{eqn:p-lower-bound-t3l2} \\
&& x_1 \leq \Cupper y_1, \ x_2 \leq \Cupper y_2, \ x_3 \leq \Cupper y_3, \slabel{eqn:p-upper-bound-t3l2} \\
&& x_2 - x_1 \leq V y_1 + \Vupper (1 - y_1), \ x_3 - x_2 \leq V y_2 + \Vupper (1 - y_2), \slabel{eqn:p-ramp-up-t3l2} \\
&& x_1 - x_2 \leq V y_2 + \Vupper (1 - y_2), \ x_2 - x_3 \leq V y_3 + \Vupper (1 - y_3) \slabel{eqn:p-ramp-down-t3l2} \Bigr\}.
\end{subeqnarray}

{For $P_3^2$, we first provide the strong valid inequalities in the following proposition. Then we provide a linear programming description $Q_3^2$ and further prove that $Q_3^2$ provides the convex hull description for $P_3^2$.}

\begin{proposition} \label{prop:valid-t3l2}
{For $P_3^2$}, the following inequalities
\begin{align}
 x_1 & \leq \Vupper y_1 + V (y_2 - u_2) + (\Cupper - \Vupper - V) (y_3 - u_3 - u_2), \label{eqn:p-x1-ub-t3l2} \\
 x_2 & \leq \Vupper y_2 + (\Cupper - \Vupper) (y_3 - u_3 - u_2), \label{eqn:p-x2-ub-t3l2} \\
 x_3 & \leq \Cupper y_3 - (\Cupper - \Vupper) u_3 - (\Cupper - \Vupper - V) u_2, \label{eqn:p-x3-ub-t3l2} \\
 x_2 - x_1 & \leq \Vupper y_2 - \Clower y_1 + (\Clower + V - \Vupper) (y_3 - u_3 - u_2), \label{eqn:p-x2-x1-ub-t3l2} \\
 x_3 - x_2 & \leq (\Clower + V) y_3 - \Clower y_2 - (\Clower + V - \Vupper) u_3, \label{eqn:p-x3-x2-ub-t3l2} \\
 x_1 - x_2 & \leq \Vupper y_1 - (\Vupper - V) y_2 - (\Clower + V - \Vupper) u_2, \label{eqn:p-x1-x2-ub-t3l2} \\
 x_2 - x_3 & \leq \Vupper y_2 - \Clower y_3 + (\Clower + V - \Vupper) (y_3 - u_3 - u_2), \label{eqn:p-x2-x3-ub-t3l2} \\
 x_3 - x_1 & \leq (\Clower + 2 V) y_3 - \Clower y_1 - (\Clower + 2 V - \Vupper) u_3 - (\Clower + V - \Vupper) u_2, \label{eqn:p-x3-x1-ub-t3l2} \\
 x_1 - x_3 & \leq \Vupper y_1 - \Clower y_3 + V (y_2 - u_2) + (\Clower + V - \Vupper) (y_3 - u_3 - u_2), \label{eqn:p-x1-x3-ub-t3l2} \\
 x_1 - x_2 + x_3 & \leq \Vupper y_1 - (\Vupper - V) y_2 + \Vupper y_3 + (\Cupper - \Vupper) (y_3 - u_3 - u_2), \label{eqn:p-x1-x2+x3-ub-t3l2} 
\end{align}
are valid for conv($P_3^2$).
\end{proposition}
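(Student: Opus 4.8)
The plan is to verify validity of each of the ten inequalities \eqref{eqn:p-x1-ub-t3l2}--\eqref{eqn:p-x1-x2+x3-ub-t3l2} by a direct case analysis on the binary status vector $(y_1,y_2,y_3,u_2,u_3)$ restricted to the integer points of $P_3^2$, and then, within each surviving status pattern, bounding the continuous variables $(x_1,x_2,x_3)$ using the original constraints \eqref{eqn:p-lower-bound-t3l2}--\eqref{eqn:p-ramp-down-t3l2}. Since $P_3^2$ is the integer hull, validity for $\mathrm{conv}(P_3^2)$ is equivalent to validity at every integer point, so it suffices to check each inequality for all feasible $(y,u)$ together with the induced feasible range of $x$.

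First I would enumerate the feasible binary patterns. From \eqref{eqn:p-udef-t3l2} together with \eqref{eqn:p-minup-t3l2} and \eqref{eqn:p-mindn-t3l2}, one gets a short list: essentially the ``off'' pattern $y=(0,0,0)$; patterns where the unit is on for a suffix of periods with a single start-up (e.g. $(0,0,1)$ with $u_3=1$, $(0,1,1)$ with $u_2=1$, $(1,1,1)$ with $u_2=u_3=0$); and a few prefix/on-throughout patterns (e.g. $(1,0,0)$, $(1,1,0)$, $(1,1,1)$). Here I would use the facts $L=\ell=2$ and $\Clower\le\Vupper<\Clower+V$, $\Cupper-\Vupper-V\ge 0$, and $\Cupper-\Clower-2V\ge 0$ to keep the coefficient signs under control. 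For each pattern, I would note that every term of the form $(y_3-u_3-u_2)$, $(y_2-u_2)$, etc. collapses to an explicit $0/1$ value, so each right-hand side becomes a concrete linear expression in the bounds $\Clower,\Cupper,\Vupper,V$, and the inequality reduces to a one- or two-variable bound on $x$. For instance, when $y=(1,1,1)$, $u_2=u_3=0$, \eqref{eqn:p-x3-x1-ub-t3l2} becomes $x_3-x_1\le 2V$, which follows by telescoping $x_3-x_1=(x_3-x_2)+(x_2-x_1)\le V+V$ from \eqref{eqn:p-ramp-up-t3l2}; when $y=(0,1,1)$, $u_2=1$, the same inequality becomes $x_3-x_1\le (\Clower+2V)-(\Clower+2V-\Vupper)=\Vupper$ and $x_1=0$ forces $x_3\le\Vupper$, which follows from $x_3-x_2\le V$ and $x_2\le\Vupper$ (itself \eqref{eqn:p-x2-ub-t3l2} specialized, or directly from the start-up ramp \eqref{eqn:p-ramp-up-t3l2} with $y_1=0$). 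The aggregated inequality \eqref{eqn:p-x1-x2+x3-ub-t3l2} is handled the same way, combining one upper bound on $x_1$, one lower bound on $x_2$, and one upper bound on $x_3$.

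The main obstacle I anticipate is the bookkeeping in the ``on-throughout'' pattern $y=(1,1,1),u_2=u_3=0$, where the right-hand sides are the weakest and the chained ramp bounds must be shown to still dominate; and in the mixed patterns where a start-up occurs at period $2$ or $3$, where one must carefully combine a start-up ramp bound ($\Vupper$) with an in-region ramp bound ($V$) and the capacity bound $\Cupper$, checking that the telescoped sum never exceeds the stated coefficient combination (this is exactly where $\Cupper-\Vupper-V\ge 0$ and $\Cupper-\Clower-2V\ge 0$ enter). There is also a small subtlety for the difference inequalities such as \eqref{eqn:p-x1-x3-ub-t3l2}, which mix a ramp-up term $V(y_2-u_2)$ with ramp-down information; for these I would bound $x_1$ from above via \eqref{eqn:p-x1-ub-t3l2} (already proved) and $x_3$ from below via \eqref{eqn:p-lower-bound-t3l2}, i.e. prove the ``harder'' single-variable bounds first and then reuse them, so that later inequalities follow by adding two already-established ones rather than redoing the enumeration. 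I would organize the write-up so that \eqref{eqn:p-x1-ub-t3l2}--\eqref{eqn:p-x3-ub-t3l2} (pure upper bounds) are proved first, \eqref{eqn:p-x2-x1-ub-t3l2}--\eqref{eqn:p-x2-x3-ub-t3l2} (adjacent differences) next, and the three-period reach inequalities \eqref{eqn:p-x3-x1-ub-t3l2}--\eqref{eqn:p-x1-x2+x3-ub-t3l2} last, each time pointing out that only a handful of binary patterns are active and the rest are vacuous because $x_t=0$ whenever $y_t=0$.
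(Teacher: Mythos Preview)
Your approach is correct and essentially the same as the paper's: both proofs reduce to a case analysis on the feasible binary patterns $(y_1,y_2,y_3,u_2,u_3)$, plugging each into the inequality and bounding the continuous variables via the ramp and capacity constraints; the paper just frames this as ``successively tightening'' a weaker valid inequality rather than a flat enumeration. One small arithmetic slip to fix: in your $(0,1,1)$ example for \eqref{eqn:p-x3-x1-ub-t3l2} the active coefficient is that of $u_2$, not $u_3$, so the right-hand side is $(\Clower+2V)-(\Clower+V-\Vupper)=\Vupper+V$ (and indeed $x_3\le\Vupper+V$ follows from $x_2\le\Vupper$ and $x_3-x_2\le V$).
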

\begin{proof}
{To prove the validity of \eqref{eqn:p-x1-ub-t3l2}, which essentially tightens $x_1 \leq \Cupper y_1$ in \eqref{eqn:p-upper-bound-t3l2}, we show how \eqref{eqn:p-x1-ub-t3l2} is obtained and {accordingly illustrate} the corresponding insights. Since $\Cupper y_1 = \Vupper y_1 + (\Cupper - \Vupper) y_1$, $x_1 \leq \Cupper y_1$ can be tightened to be
\begin{equation}
x_1 \leq \Vupper y_1 + (\Cupper - \Vupper) (y_2 - u_2) \label{inner-valid-t2-1}
\end{equation}
because $0 \leq y_2 - u_2 \leq y_1$ due to \eqref{eqn:p-udef-t3l2}. It is easy to observe that \eqref{inner-valid-t2-1} is valid when $y_2 - u_2 = y_1$, which reduces back to $x_1 \leq \Cupper y_1$. We only need to consider the case in which $y_2 - u_2 = 0$ and $y_1= 1$, i.e., $y_2 = 0$, from which \eqref{inner-valid-t2-1} reduces to $x_1 \leq \Vupper$, which is valid due to ramp-down constraints \eqref{eqn:p-ramp-down-t3l2} as the machine shuts down from the first time period {(online)} to the second time period {(offline)}.}

{Furthermore, {we can rewrite} $\Vupper y_1 + (\Cupper - \Vupper) (y_2 - u_2) = \Vupper y_1 + V (y_2 - u_2) + (\Cupper - \Vupper - V) (y_2 - u_2)$. {Thus}, \eqref{inner-valid-t2-1} can be further tightened to be 
\begin{equation}
x_1 \leq \Vupper y_1 + V (y_2 - u_2) + (\Cupper - \Vupper - V) (y_3 - u_3 - u_2) \label{inner-valid-t2-2}
\end{equation}
because $0 \leq y_3 - u_3 \leq y_2$ due to \eqref{eqn:p-udef-t3l2}. Similarly, it is easy to observe that \eqref{inner-valid-t2-2} is valid when $y_3 - u_3 = y_2$, which reduces back to \eqref{inner-valid-t2-1}. We only need to consider the case in which $y_3 - u_3 = 0$ and $y_2= 1$, which further leads to {$u_3=y_3=0$ and thus} $y_1=1$ due to minimum-up constraint \eqref{eqn:p-minup-t3l2} and {accordingly} $x_1 \leq \Vupper + V$, which is also valid due to ramp-down constraints \eqref{eqn:p-ramp-down-t3l2} as the machine shuts down from the second time period {(online)} to the third time period {(offline)}. Since we obtain inequality \eqref{inner-valid-t2-2} as exactly inequality \eqref{eqn:p-x1-ub-t3l2}, the validity proof of \eqref{eqn:p-x1-ub-t3l2} is done.}

{Similar argument as above for \eqref{eqn:p-x1-ub-t3l2} can be applied to prove that inequalities \eqref{eqn:p-x2-ub-t3l2} and \eqref{eqn:p-x1-x3-ub-t3l2} are valid and thus we omit the corresponding proofs here.}

{For \eqref{eqn:p-x3-ub-t3l2}, it is clearly valid when $y_3=0$ since it leads to $u_2=u_3=0$ due to \eqref{eqn:p-minup-t3l2}. When $y_3=1$,  we prove the validity of \eqref{eqn:p-x3-ub-t3l2} by considering when the machine starts up. Note here that there is at most one start-up due to \eqref{eqn:p-minup-t3l2}. If there is no start-up, then we have $y_1=y_2=y_3=1$ due to \eqref{eqn:p-udef-t3l2} and it follows that $x_3 \leq \Cupper$ from \eqref{eqn:p-x3-ub-t3l2}, which is clearly valid; otherwise, because of ramp-up constraints \eqref{eqn:p-ramp-up-t3l2}, then $x_3 \leq \Vupper + V$ if the machine starts up {in} the second time period (i.e., $u_2=1$) and $x_3 \leq \Vupper$ if the machine starts up {in} the third time period (i.e., $u_3=1$). It follows that we have
\begin{align*}
x_3 & \leq \Vupper u_3 + (\Vupper+V)u_2 \\
& = \Cupper (u_3+u_2) - (\Cupper - \Vupper) u_3 - (\Cupper - \Vupper - V) u_2 \\
& \leq \Cupper y_3 - (\Cupper - \Vupper) u_3 - (\Cupper - \Vupper - V) u_2,
\end{align*}
which indicates that \eqref{eqn:p-x3-ub-t3l2} is valid.}

{To prove the validity of \eqref{eqn:p-x2-x1-ub-t3l2}, which essentially tightens $x_2 - x_1 \leq V y_1 + \Vupper (1 - y_1)$ in \eqref{eqn:p-ramp-up-t3l2}, we show how \eqref{eqn:p-x2-x1-ub-t3l2} is obtained and {accordingly illustrate} the corresponding insights. Due to ramp-up process characteristics and $\Vupper < \Clower+V$, we have 
\begin{equation}
x_2 - x_1 \leq (\Clower+V)y_2 - \Clower y_1. \label{inner-valid-t2-3}
\end{equation}
Since $(\Clower+V)y_2 = \Vupper y_2 + (\Clower+V-\Vupper)y_2$ and $y_2 \geq y_3 - u_3 -u_2 \geq 0$ due to \eqref{eqn:p-udef-t3l2} and $u_2 \geq 0$, \eqref{inner-valid-t2-3} can be tightened to be 
\begin{equation}
x_2 - x_1 \leq \Vupper y_2 - \Clower y_1 + (\Clower+V-\Vupper) (y_3 - u_3 -u_2), \label{inner-valid-t2-4}
\end{equation}
which is clearly valid when $y_3 - u_3 -u_2 = y_2$. When $y_3 - u_3 -u_2 < y_2$, i.e., $y_2=1$ and $y_3=u_3+u_2$, it follows that $y_1=1, \ y_3=0$ or $y_1=0, \ y_3=1$. If $y_1=1$ and $y_3=0$, then the machine shuts down from the second time period to the third time period; else {if} $y_1=0$ and $y_3=1$, then the machine starts up at the second time period. In whichever case, we have $x_2 \leq \Vupper$ due to ramp-up/-down constraints and therefore \eqref{inner-valid-t2-4} is valid since it reduces to $x_2 - x_1 \leq \Vupper - \Clower y_1$. Since we obtain inequality \eqref{inner-valid-t2-4} as exactly inequality \eqref{eqn:p-x2-x1-ub-t3l2}, the validity proof of \eqref{eqn:p-x2-x1-ub-t3l2} is done.}

{Similar argument as above for \eqref{eqn:p-x2-x1-ub-t3l2} can be applied to prove that inequalities \eqref{eqn:p-x3-x2-ub-t3l2} - \eqref{eqn:p-x3-x1-ub-t3l2} are valid and thus we omit the corresponding proofs here.}

{To prove the validity of \eqref{eqn:p-x1-x2+x3-ub-t3l2}, we show how \eqref{eqn:p-x1-x2+x3-ub-t3l2} is obtained and {accordingly illustrate} the corresponding insights. Due to ramp-down process characteristics and $\Vupper < \Clower+V$, we have
\begin{equation}
x_1 - x_2 \leq \Vupper y_1 - (\Vupper - V) y_2. \nonumber
\end{equation}
It follows that 
\begin{equation}
x_1 - x_2 + x_3 \leq \Vupper y_1 - (\Vupper - V) y_2 + \Cupper y_3 \label{inner-valid-t2-5}
\end{equation}
because $x_3 \leq \Cupper y_3$ due to \eqref{eqn:p-upper-bound-t3l2}.
Since $\Cupper y_3 = \Vupper y_3 + (\Cupper - \Vupper) y_3$ and $u_2, \ u_3 \geq 0$, \eqref{inner-valid-t2-5} can be further tightened to be 
\begin{equation}
x_1 - x_2 + x_3 \leq \Vupper y_1 - (\Vupper - V) y_2 + \Vupper y_3 + (\Cupper - \Vupper) (y_3 - u_3 -u_2), \label{inner-valid-t2-6}
\end{equation}
which is clearly valid when $u_2 = u_3 = 0$. When $u_2 = 1$ or $u_3 =1$ (note {here} that there is at most one start-up due to minimum-up time constraint \eqref{eqn:p-minup-t3l2}), we have $y_3 - u_3 -u_2 = 0$ and \eqref{inner-valid-t2-6} reduces to be $x_3 - x_2 \leq \Vupper y_3 - (\Vupper - V) y_2$, which is clearly valid due to ramp-up process characteristics. Since we obtain inequality \eqref{inner-valid-t2-6} as exactly inequality \eqref{eqn:p-x1-x2+x3-ub-t3l2}, the validity proof of \eqref{eqn:p-x1-x2+x3-ub-t3l2} is done.}
\end{proof}

Now, through utilizing inequalities \eqref{eqn:p-x1-ub-t3l2} - \eqref{eqn:p-x1-x2+x3-ub-t3l2}, we introduce the linear {programming} description of conv($P_3^2$) by adding trivial inequalities as follows:
\begin{eqnarray}
& Q_3^2 := \Bigl\{ & (x, y, u) \in \R^8 :  \eqref{eqn:p-minup-t3l2} - \eqref{eqn:p-lower-bound-t3l2}, \eqref{eqn:p-x1-ub-t3l2} - \eqref{eqn:p-x1-x2+x3-ub-t3l2}, \nonumber \\
&& u_2 \geq 0, \ u_3 \geq 0 \label{eqn:u2-3period-t3l2} \Bigr\}.
\end{eqnarray}
{Note here that} the nonnegativity of $x$ {in $Q_3^2$} is guaranteed by \eqref{eqn:p-minup-t3l2}, \eqref{eqn:p-udef-t3l2} - \eqref{eqn:p-lower-bound-t3l2}, and \eqref{eqn:u2-3period-t3l2}. {In the following}, we show that $Q_3^2$ describes the convex hull of $P_3^2$, i.e., $Q_3^2 =$ conv($P_3^2$).

\begin{proposition} \label{prop:t3l2-full}
$Q_3^2$ is full-dimensional.
\end{proposition}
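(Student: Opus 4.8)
The plan is to exhibit $8$ affinely independent points in $P_3^2$, or equivalently $9$ affinely independent points lying in $\text{conv}(P_3^2) \subseteq Q_3^2 \subseteq \R^8$. Since $\dim(Q_3^2) \leq 8$ trivially, producing $9$ affinely independent feasible points forces $\dim(Q_3^2) = 8$; and because $Q_3^2 \supseteq \text{conv}(P_3^2)$, it suffices to find these points already in $P_3^2$ (they will a fortiori be in $Q_3^2$). Concretely, I would take as a base point the ``always-on, stable generation'' point with $y_1=y_2=y_3=1$, $u_2=u_3=0$, and $x_1=x_2=x_3=\Clower$, which clearly satisfies all constraints \eqref{eqn:p-minup-t3l2}--\eqref{eqn:p-ramp-down-t3l2} (here the ramp inequalities read $0 \le V$, and the assumption $\Clower \le \Vupper$ together with $\Cupper - \Clower - 2V \ge 0$ gives plenty of slack).

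From this base point I would then perturb in each of the $8$ coordinate directions to get $8$ more points, each differing from the base point in essentially one ``free'' direction, so that the difference vectors are manifestly linearly independent. For the three $x$-coordinates: keeping $y=(1,1,1)$, $u=(0,0)$, I can independently raise $x_1$, $x_2$, $x_3$ from $\Clower$ by a small common amount $\delta>0$ (staying below $\Cupper$ and within ramp limits since $\Cupper - \Clower - 2V \ge 0$ gives room, and a small simultaneous-then-single perturbation keeps $|x_i - x_{i+1}| \le V$); this yields three points whose difference from the base lies in $\text{span}\{e_{x_1}, e_{x_2}, e_{x_3}\}$ after an obvious triangularization. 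For the remaining five directions $y_1, y_2, y_3, u_2, u_3$ I would use points corresponding to genuinely different on/off patterns: e.g.\ $(y,u) = ((0,1,1),(1,0))$ with $x=(0,\Vupper,\Vupper)$ (machine starts up at period $2$), $(y,u)=((0,0,1),(0,1))$ with $x=(0,0,\Vupper)$ (starts up at period $3$), $(y,u)=((1,1,0),(0,0))$ with $x=(\Vupper,\Vupper,0)$ (shuts down before period $3$), $(y,u)=((1,0,0),(0,0))$ with $x=(\Vupper,0,0)$, and $(y,u)=((0,0,0),(0,0))$ with $x=(0,0,0)$. One checks each against \eqref{eqn:p-minup-t3l2}--\eqref{eqn:p-ramp-down-t3l2}, the key facts being $\Clower \le \Vupper \le \Cupper$ and $\Vupper < \Clower+V$, so the ramp constraints at a start-up/shut-down boundary are satisfied.

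The one real task is to verify that the $9$ chosen points are affinely independent, i.e.\ that the $8$ difference vectors (from the base point) are linearly independent in $\R^8$. I would organize the points so that the associated $8\times 8$ matrix of differences is block-triangular: the three $x$-perturbation rows occupy the $x$-block with an upper-triangular (hence invertible) structure, while the five pattern-change rows, restricted to the $(y_1,y_2,y_3,u_2,u_3)$ coordinates, should be checked to form an invertible $5\times 5$ matrix. With the five patterns listed above, the $(y_1,y_2,y_3,u_2,u_3)$ vectors relative to the base $(1,1,1,0,0)$ are $(-1,0,0,1,0)$, $(-1,-1,0,0,1)$, $(0,0,-1,0,0)$, $(0,-1,-1,0,0)$, $(-1,-1,-1,0,0)$, and a direct determinant or row-reduction check confirms independence.

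\textbf{Main obstacle.} The bookkeeping: there is no conceptual difficulty, but one must be careful that every candidate point genuinely satisfies all of \eqref{eqn:p-minup-t3l2}--\eqref{eqn:p-ramp-down-t3l2} under the standing parameter assumptions of this subsection ($\Clower \le \Vupper < \Clower+V$, $\Cupper-\Vupper-V\ge 0$, $\Cupper - \Clower - 2V \ge 0$) — in particular the $x$-perturbation points need the slack $\Cupper - \Clower - 2V \ge 0$ to keep the ramp and upper-bound constraints feasible, and the start-up/shut-down points need $\Clower \le \Vupper \le \Cupper$. Once the feasibility checks are in place, the affine-independence verification is a single small linear-algebra computation, so I expect the argument to be short.
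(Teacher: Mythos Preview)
Your proposal is correct and takes essentially the same approach as the paper: exhibit nine affinely independent points of $P_3^2$ (hence of $Q_3^2$), combining three small $x$-perturbations of a fixed feasible on/off pattern with five genuinely different $(y,u)$ patterns, and then verify linear independence via a block-triangular argument. The paper's version differs only cosmetically (it uses the origin as the base point and a slightly different list of patterns, with $\epsilon$-perturbed companions playing the role of your $x$-perturbations), so there is nothing to add.
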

\begin{proof}
We prove that dim($Q_3^2$) = $8$, because there are eight decision variables in $Q_3^2$. The details are provided in Appendix \ref{apx:sec:t3l2-full}.
\end{proof}

\begin{proposition} \label{prop:t3l2-int-extpoint}
$Q_3^2 \subseteq conv(P_3^2)$.
\end{proposition}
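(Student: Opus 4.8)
The plan is to prove the containment by an extreme-point argument, using that both polyhedra in question are bounded. First I would observe that $Q_3^2$ is a polytope: chaining \eqref{eqn:p-minup-t3l2}, \eqref{eqn:p-udef-t3l2} and $u_2,u_3\ge 0$ gives $\hat y_3\ge u_2+u_3\ge 0$, then $\hat y_2\ge \hat y_3-u_3\ge 0$ and $\hat y_1\ge \hat y_2-u_2\ge 0$, while \eqref{eqn:p-mindn-t3l2} together with $u\ge 0$ caps $y_1,u_2,u_3$ (and hence, via \eqref{eqn:p-udef-t3l2}, $y_2,y_3$) by $1$; with $\Clower\ge 0$ this and \eqref{eqn:p-lower-bound-t3l2} force $x\ge 0$, and the tightened upper-bound inequalities \eqref{eqn:p-x1-ub-t3l2}--\eqref{eqn:p-x3-ub-t3l2} (using $\Cupper-\Vupper-V\ge 0$) cap each $x_t$ by $\Cupper$. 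Since $\mathrm{conv}(P_3^2)$ is also bounded ($y,u$ binary and $\Clower y_t\le x_t\le\Cupper y_t$ in $P_3^2$), it suffices to show that every extreme point of $Q_3^2$ belongs to $P_3^2\subseteq\mathrm{conv}(P_3^2)$.

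The core step is to show that at every extreme point $(\hat x,\hat y,\hat u)$ of $Q_3^2$ the combinatorial variables are integral, i.e.\ $\hat y\in\{0,1\}^3$ and $\hat u\in\{0,1\}^2$. I would argue by contradiction: assuming some coordinate of $(\hat y,\hat u)$ is strictly fractional, I would exhibit a nonzero direction $d=(d_x,d_y,d_u)$ with $(\hat x,\hat y,\hat u)\pm\eta\,d\in Q_3^2$ for all small $\eta>0$, contradicting extremality. The direction is obtained by perturbing the fractional $y$- or $u$-coordinate and simultaneously shifting the $x_t$'s so that every inequality of $Q_3^2$ tight at $(\hat x,\hat y,\hat u)$ remains tight; each of the ten inequalities \eqref{eqn:p-x1-ub-t3l2}--\eqref{eqn:p-x1-x2+x3-ub-t3l2} (and the simpler \eqref{eqn:p-minup-t3l2}--\eqref{eqn:p-lower-bound-t3l2}) has a transparent sign pattern in $(x,y,u)$, so a compatible choice of $d_x$ can be read off from the tight set, and the non-tight constraints survive automatically for $\eta$ small. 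This is presumably the ``new proof technique'' the excerpt refers to in connection with Theorem~\ref{thm:conv-t3l2}, and it is where all the work resides.

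Once $(\hat y,\hat u)$ is known to be integral, verifying $(\hat x,\hat y,\hat u)\in P_3^2$ is a finite check. The constraints of $P_3^2$ that are not literally among those defining $Q_3^2$ are the upper bounds \eqref{eqn:p-upper-bound-t3l2} and the ramp constraints \eqref{eqn:p-ramp-up-t3l2}--\eqref{eqn:p-ramp-down-t3l2}; for each I would point to the dominating tightened inequality already present in $Q_3^2$ (for instance \eqref{eqn:p-x3-ub-t3l2} dominates $x_3\le\Cupper y_3$, \eqref{eqn:p-x2-x1-ub-t3l2} dominates $x_2-x_1\le V y_1+\Vupper(1-y_1)$, \eqref{eqn:p-x1-x2-ub-t3l2} dominates $x_1-x_2\le V y_2+\Vupper(1-y_2)$, and so on), and check, over the handful of binary patterns $(\hat y,\hat u)$ admitted by \eqref{eqn:p-minup-t3l2}--\eqref{eqn:p-udef-t3l2} and $u\ge0$, that evaluating the $Q_3^2$ inequality at integral $(\hat y,\hat u)$ reproduces the corresponding $P_3^2$ inequality (invoking the case hypotheses $\Clower\le\Vupper<\Clower+V$, $\Cupper-\Vupper-V\ge0$, $\Cupper-\Clower-2V\ge0$ where needed). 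Non-negativity of $\hat x$ is already guaranteed, as noted after the definition of $Q_3^2$. Hence every extreme point of $Q_3^2$ lies in $P_3^2$, giving $Q_3^2\subseteq\mathrm{conv}(P_3^2)$.

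The main obstacle is the vertex-integrality step: one must treat several configurations according to which coordinate of $(\hat y,\hat u)$ is fractional and which subset of the roughly twenty inequalities is active, and in each configuration confirm that the compensating $x$-perturbation $d_x$ can be chosen so feasibility is preserved in both directions. If this casework becomes unwieldy I would instead use the equivalent optimization statement --- for every linear objective, $\min\{c^\top z:\ z\in Q_3^2\}$ is attained at a point of $P_3^2$ --- which lets one first pin down the optimal on/off pattern and then reduce to a one-dimensional argument in $x$. Either way, the reverse containment $\mathrm{conv}(P_3^2)\subseteq Q_3^2$ is exactly Proposition~\ref{prop:valid-t3l2}, so this proposition completes the identification $Q_3^2=\mathrm{conv}(P_3^2)$.
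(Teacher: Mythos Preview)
Your route—prove every extreme point of $Q_3^2$ has integral $(y,u)$ via a perturbation argument, then verify such points satisfy the original $P_3^2$ constraints—is sound in principle but is \emph{not} what the paper does; in fact the Remark after Theorem~\ref{thm:conv-t3l2} explicitly lists ``integral extreme points'' among the conventional techniques the paper is departing from, so your guess that the perturbation step is the ``new proof technique'' is off. The paper instead gives an explicit convex decomposition: for an arbitrary $z=(\bar x,\bar y,\bar u)\in Q_3^2$ it fixes six templates $z^1,\dots,z^6\in P_3^2$, one for each feasible on/off trajectory (e.g.\ $z^3=(\hat x_4,\hat x_5,\hat x_6,1,1,1,0,0)$), and sets $\lambda_1=\bar y_1-\bar y_2+\bar u_2$, $\lambda_2=\bar y_2-\bar y_3+\bar u_3$, $\lambda_3=\bar y_3-\bar u_2-\bar u_3$, $\lambda_4=\bar u_2$, $\lambda_5=\bar u_3$, $\lambda_6=1-\bar y_1-\bar u_2-\bar u_3$; these automatically reproduce the $(y,u)$-coordinates and are nonnegative precisely by \eqref{eqn:p-minup-t3l2}--\eqref{eqn:p-udef-t3l2} and \eqref{eqn:u2-3period-t3l2}. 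What remains is to choose the continuous parts $\hat x_1,\dots,\hat x_9$ of the templates so that the three $x$-coordinates also match, and this is recast as surjectivity of a linear map $F:A\to C$, where $A$ is the product-type feasible region for $(\hat x_1,\dots,\hat x_9)$ and $C$ is the $(\bar x_1,\bar x_2,\bar x_3)$-slice of $Q_3^2$ for the given $(\bar y,\bar u)$; surjectivity is verified face by face on $C$. That face-by-face surjectivity check is the actual new technique. The trade-off is that your approach concentrates all the casework into the vertex-integrality step you rightly flag as the bottleneck, whereas the paper's puts it into the surjectivity check; the latter has the advantage of extending cleanly to settings like Theorem~\ref{thm:general1}, where the multipliers $\lambda_s$ are no longer uniquely determined and can be tuned as additional degrees of freedom during the argument.
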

\begin{proof}
It is sufficient to prove that every point $z \in Q_3^2$ can be written as $z = \sum_{s \in S} \lambda_s z^s$ for some $\lambda_s \geq 0$ and $\sum_{s \in S} \lambda_s = 1$, where $z^s \in {P_3^2}, \ s \in S$ and $S$ is the index set for the candidate points.

For a given point $z = (\bar{x}_1, \bar{x}_2, \bar{x}_3, \bar{y}_1, \bar{y}_2, \bar{y}_3, \bar{u}_2, \bar{u}_3) \in Q_3^2$, we let the candidate points $z^1, z^2, \cdots, z^6 \in {P_3^2}$ in the forms such that  $z^1 = (\hat{x}_1, 0, 0, 1, 0, 0, 0, 0)$, $z^2 = (\hat{x}_2, \hat{x}_3, 0, 1, 1, 0, 0, 0)$, $z^3 = (\hat{x}_4, \hat{x}_5, \allowbreak \hat{x}_6, \allowbreak 1, \allowbreak 1, \allowbreak 1, 0, 0)$, $z^4 = (0, \hat{x}_7, \hat{x}_8, 0, 1, 1, 1, 0)$, $z^5 = (0, 0, \hat{x}_9, 0, 0, 1, 0, 1)$, and $z^6 = (0, 0, 0, 0, 0, 0, 0, 0)$, {where $\hat{x}_i, i = 1, \cdots, 9$ are to be decided later}. {Meanwhile}, we let 
\begin{subeqnarray} \label{eqn:lamdba}
&& \lambda_1 = \bar{y}_1 - \bar{y}_2 + \bar{u}_2, \ \lambda_2 = \bar{y}_2 - \bar{y}_3 + \bar{u}_3, \ \lambda_3 = \bar{y}_3 - \bar{u}_2 - \bar{u}_3, \\
&& \lambda_4 = \bar{u}_2, \ \lambda_5 = \bar{u}_3, \mbox{ and } \lambda_6 = 1 - \bar{y}_1 - \bar{u}_2 - \bar{u}_3. 
\end{subeqnarray} 
First of all, {based on this construction, we can check} that $\sum_{s = 1}^{6} \lambda_s = 1$ and $\lambda_s \geq 0$ for $\forall s = 1, \cdots, 6$ due to \eqref{eqn:p-minup-t3l2} - \eqref{eqn:p-udef-t3l2} and \eqref{eqn:u2-3period-t3l2}.
{Meanwhile}, it {can be checked} that $\bar{y}_i = y_i(z) = \sum_{s=1}^{6} \lambda_s y_i(z^s)$ for $i = 1, 2, 3$ and $\bar{u}_i = u_i(z) = \sum_{s=1}^{6} \lambda_s u_i(z^s)$ for $i = 2, 3$, {where $y_i(z)$ represents the $\bar{y}_i$ component value in the given point $z$ and $u_i(z)$ represents the $\bar{u}_i$ component value in the given point $z$}. 

{Thus, in the remaining part of this proof, we only need to} decide the values of $\hat{x}_i$ for $i = 1, \cdots, 9$ {such that} $\bar{x}_i = x_i(z) = \sum_{s=1}^{6} \lambda_s x_i(z^s)$ for $i = 1, 2, 3$, i.e., 
\begin{align}
 \bar{x}_1 = \lambda_1 \hat{x}_1 + \lambda_2 \hat{x}_2 + \lambda_3 \hat{x}_4, \ \bar{x}_2 = \lambda_2 \hat{x}_3 + \lambda_3 \hat{x}_5 + \lambda_4 \hat{x}_7, \ \bar{x}_3 = \lambda_3 \hat{x}_6 + \lambda_4 \hat{x}_8 + \lambda_5 \hat{x}_9. \label{eqn-C: x1x2x3bar}
\end{align}

{To show~\eqref{eqn-C: x1x2x3bar}, in the following, we prove that for any $(\bar{x}_1, \bar{x}_2, \bar{x}_3)$ in its feasible region corresponding to a given $(\bar{y}_1, \bar{y}_2, \bar{y}_3, \bar{u}_2, \bar{u}_3)$, we can always find a $(\hat{x}_1, \hat{x}_2, \cdots, \hat{x}_9)$ in its feasible region, corresponding to the same given $(\bar{y}_1, \bar{y}_2, \bar{y}_3, \bar{u}_2, \bar{u}_3)$. Now we describe the feasible regions for $(\hat{x}_1, \hat{x}_2, \cdots, \hat{x}_9)$ and $(\bar{x}_1, \bar{x}_2, \bar{x}_3)$, respectively. First, since}
$y$ and $u$ in $z^1, \cdots, z^6$ are given, by substituting $z^1, \cdots, z^6$ into ${P_3^2}$, the corresponding feasible region for $(\hat{x}_1, \hat{x}_2, \cdots, \hat{x}_9)$ can be described as set $A = \{ (\hat{x}_1, \hat{x}_2, \allowbreak \cdots, \allowbreak \hat{x}_9) \in \R^9: \Clower \leq \hat{x}_1 \leq \Vupper, \ \Clower \leq \hat{x}_2 \leq \Vupper + V, \  \Clower \leq \hat{x}_3 \leq \Vupper, \ -V \leq \hat{x}_3 - \hat{x}_2 \leq \Vupper - \Clower, \  \Clower \leq \hat{x}_4 \leq \Cupper, \ \Clower \leq \hat{x}_5 \leq \Cupper, \ \Clower \leq \hat{x}_6 \leq \Cupper, \ -V \leq \hat{x}_5 - \hat{x}_4 \leq V, \ -V \leq \hat{x}_6 - \hat{x}_5 \leq V, \ \Clower \leq \hat{x}_7 \leq \Vupper, \ \Clower \leq \hat{x}_8 \leq \Vupper + V, \ \Clower - \Vupper \leq \hat{x}_8 - \hat{x}_7 \leq V, \ \Clower \leq \hat{x}_9 \leq \Vupper \}$.
{Second, corresponding to a given $(\bar{y}_1, \bar{y}_2, \bar{y}_3, \bar{u}_2, \bar{u}_3)$, following the description of $Q_3^2$, the feasible region for $(\bar{x}_1, \bar{x}_2, \bar{x}_3)$ can be described as follows:}
\begin{subeqnarray}
& \hspace{-0.5in} C =  \Bigl\{  (\bar{x}_1, \bar{x}_2, \bar{x}_3) \in \R^3: & \bar{x}_1 \geq \Clower \bar{y}_1, \ \bar{x}_2 \geq \Clower \bar{y}_2, \ \bar{x}_3 \geq \Clower \bar{y}_3, \slabel{eqn-C:p-lower-bound-t3l2} \\
&& \bar{x}_1  \leq \Vupper \bar{y}_1 + V (\bar{y}_2 - \bar{u}_2) + (\Cupper - \Vupper - V) (\bar{y}_3 - \bar{u}_3 - \bar{u}_2), \slabel{eqn-C:p-x1-ub-t3l2} \\
&& \bar{x}_2  \leq \Vupper \bar{y}_2 + (\Cupper - \Vupper) (\bar{y}_3 - \bar{u}_3 - \bar{u}_2), \slabel{eqn-C:p-x2-ub-t3l2} \\
&& \bar{x}_3  \leq \Cupper \bar{y}_3 - (\Cupper - \Vupper) \bar{u}_3 - (\Cupper - \Vupper - V) \bar{u}_2, \slabel{eqn-C:p-x3-ub-t3l2} \\
&& \bar{x}_2 - \bar{x}_1  \leq \Vupper \bar{y}_2 - \Clower \bar{y}_1 + (\Clower + V - \Vupper) (\bar{y}_3 - \bar{u}_3 - \bar{u}_2), \slabel{eqn-C:p-x2-x1-ub-t3l2} \\
&& \bar{x}_3 - \bar{x}_2  \leq (\Clower + V) \bar{y}_3 - \Clower \bar{y}_2 - (\Clower + V - \Vupper) \bar{u}_3, \slabel{eqn-C:p-x3-x2-ub-t3l2} \\
&& \bar{x}_1 - \bar{x}_2  \leq \Vupper \bar{y}_1 - (\Vupper - V) \bar{y}_2 - (\Clower + V - \Vupper) \bar{u}_2, \slabel{eqn-C:p-x1-x2-ub-t3l2} \\
&& \bar{x}_2 - \bar{x}_3  \leq \Vupper \bar{y}_2 - \Clower \bar{y}_3 + (\Clower + V - \Vupper) (\bar{y}_3 - \bar{u}_3 - \bar{u}_2), \slabel{eqn-C:p-x2-x3-ub-t3l2} \\
&& \bar{x}_3 - \bar{x}_1  \leq (\Clower + 2 V) \bar{y}_3 - \Clower \bar{y}_1 - (\Clower + 2 V - \Vupper) \bar{u}_3 - (\Clower + V - \Vupper) \bar{u}_2, \slabel{eqn-C:p-x3-x1-ub-t3l2} \\
&& \bar{x}_1 - \bar{x}_3  \leq \Vupper \bar{y}_1 - \Clower \bar{y}_3 + V (\bar{y}_2 - \bar{u}_2) + (\Clower + V - \Vupper) (\bar{y}_3 - \bar{u}_3 - \bar{u}_2), \slabel{eqn-C:p-x1-x3-ub-t3l2} \\
&& \bar{x}_1 - \bar{x}_2 + \bar{x}_3  \leq \Vupper \bar{y}_1 - (\Vupper - V) \bar{y}_2 + \Vupper \bar{y}_3 + (\Cupper - \Vupper) (\bar{y}_3 - \bar{u}_3 - \bar{u}_2) \slabel{eqn-C:p-x1-x2+x3-ub-t3l2} 
\Bigr\}.
\end{subeqnarray}

{Accordingly, we can set up the linear transformation $F$ from $(\hat{x}_1, \hat{x}_2, \cdots, \hat{x}_9) \in A$ to $(\bar{x}_1, \bar{x}_2, \bar{x}_3) \in C$ as follows:}
\begin{equation}       
F = \left(                 
  \begin{array}{ c   c  c   c c  c  c  c c}   
    \lambda_1 & \lambda_2 & 0 & \lambda_3 & 0 & 0 & 0 & 0 & 0  \\  
    0 & 0 & \lambda_2 & 0 & \lambda_3 & 0 & \lambda_4 & 0 & 0 \\  
    0 & 0 & 0 & 0 & 0 & \lambda_3 & 0 & \lambda_4 & \lambda_5 \\  
  \end{array}
\right), \nonumber
\end{equation}
{where $\lambda_1, \lambda_2, \cdots, \lambda_5$ follow the definitions described in \eqref{eqn:lamdba}. Thus, in the following, we only need to prove that $F: A \rightarrow C$ is surjective.}

Since $C$ is a closed and bounded polytope, any point can be expressed as a convex combination of the extreme points in $C$. 
Accordingly, we only need to show that for any extreme point $w^i \in C$ ($i = 1, \cdots, M$), there exists a point $p^i \in A$ such that $F p^i = w^i$, where $M$ represents the number of extreme points in $C$ (because for an arbitrary point $w \in C$, which can be {represented} as $w = \sum_{i=1}^M \mu_i w^i$ and $\sum_{i=1}^M \mu_i = 1$, there exists $p = \sum_{i=1}^M \mu_i p_i \in A$ such that $F p = w$ due to the linearity of $F$ and the convexity of $A$). 
Since it is difficult to enumerate all the extreme points in $C$, in the following proof we show the conclusion holds for any point in the faces of $C$, i.e., satisfying one of \eqref{eqn-C:p-lower-bound-t3l2} - \eqref{eqn-C:p-x1-x2+x3-ub-t3l2}  at equality, which implies the conclusion holds for extreme points.

\textbf{Satisfying $\bar{x}_1 \geq \Clower \bar{y}_1$ at equality}. For this case, substituting $\bar{x}_1 = \Clower \bar{y}_1$ into  \eqref{eqn-C:p-x1-ub-t3l2} - \eqref{eqn-C:p-x1-x2+x3-ub-t3l2}, we obtain the feasible region of $(\bar{x}_2, \bar{x}_3)$ as $C' = \{(\bar{x}_2, \bar{x}_3) \in \R^2: \Clower \bar{y}_2 \leq \bar{x}_2 \leq \Vupper \bar{y}_2 + (\Clower + V - \Vupper) (\bar{y}_3 - \bar{u}_3 - \bar{u}_2), \ \Clower \bar{y}_3 \leq  \bar{x}_3 \leq (\Clower + 2 V) \bar{y}_3 - (\Clower + 2 V - \Vupper) \bar{u}_3 - (\Clower + V - \Vupper) \bar{u}_2, \ \bar{x}_3 - \bar{x}_2  \leq (\Clower + V) \bar{y}_3 - \Clower \bar{y}_2 - (\Clower + V - \Vupper) \bar{u}_3 \}$.

First, by letting $\hat{x}_1 = \hat{x}_2 = \hat{x}_4 = \Clower$, it is easy to check that $\bar{x}_1 = \lambda_1 \hat{x}_1 + \lambda_2 \hat{x}_2 + \lambda_3 \hat{x}_4$. {Then \eqref{eqn-C: x1x2x3bar} holds for $\bar{x}_1$}. 
Note here that once $(\hat{x}_1, \hat{x}_2, \hat{x}_4)$ {is} fixed, the corresponding feasible region for $(\hat{x}_3, \hat{x}_5, \hat{x}_6, \hat{x}_7, \hat{x}_8, \hat{x}_9)$ can be described as set $A' = \{ (\hat{x}_3, \hat{x}_5, \hat{x}_6, \hat{x}_7, \hat{x}_8, \hat{x}_9) \in \R^6: \Clower \leq \hat{x}_3 \leq \Vupper, \  \Clower \leq \hat{x}_5 \leq \Clower + V, \ \Clower \leq \hat{x}_6 \leq \Cupper, \ -V \leq \hat{x}_6 - \hat{x}_5 \leq V, \ \Clower \leq \hat{x}_7 \leq \Vupper, \ \Clower \leq \hat{x}_8 \leq \Vupper + V, \ \Clower - \Vupper \leq \hat{x}_8 - \hat{x}_7 \leq V, \ \Clower \leq \hat{x}_9 \leq \Vupper \}$. In the following, we repeat the argument above to consider that one of inequalities in $C'$ is satisfied at equality to obtain the values of $(\hat{x}_3, \hat{x}_5, \hat{x}_6, \hat{x}_7, \hat{x}_8, \hat{x}_9)$ from $A'$.
\begin{enumerate}[1)]
\item Satisfying $\bar{x}_2 \geq \Clower \bar{y}_2$ at equality. We obtain $ \bar{x}_3 \in C'' = \{ \bar{x}_3 \in \R: \Clower \bar{y}_3 \leq  \bar{x}_3 \leq (\Clower + V) \bar{y}_3 - (\Clower + V - \Vupper) \bar{u}_3 \}$ through substituting $\bar{x}_2 = \Clower \bar{y}_2$ into $C'$. By letting $\hat{x}_3 = \hat{x}_5 = \hat{x}_7 = \Clower$, we have $\bar{x}_2 = \lambda_2 \hat{x}_3 + \lambda_3 \hat{x}_5 + \lambda_4 \hat{x}_7$. {Then \eqref{eqn-C: x1x2x3bar} holds for $\bar{x}_2$}. 
Thus, the corresponding feasible region for $(\hat{x}_6, \hat{x}_8, \hat{x}_9)$ can be described as set $A'' = \{ (\hat{x}_6, \hat{x}_8, \hat{x}_9) \in \R^3: \Clower \leq \hat{x}_6 \leq \Clower + V, \ \Clower \leq \hat{x}_8 \leq \Clower + V, \ \Clower \leq \hat{x}_9 \leq \Vupper \}$. If $\bar{x}_3 \geq \Clower \bar{y}_3$ is satisfied at equality, we let $\hat{x}_6 = \hat{x}_8 = \hat{x}_9 = \Clower$; if $\bar{x}_3 \leq (\Clower + V) \bar{y}_3 - (\Clower + V - \Vupper) \bar{u}_3$ is satisfied at equality, we let $\hat{x}_6 = \hat{x}_8 = \Clower + V$ and $\hat{x}_9 = \Vupper$. It is easy to check that $\bar{x}_3 = \lambda_3 \hat{x}_6 + \lambda_4 \hat{x}_8 + \lambda_5 \hat{x}_9$. {Then \eqref{eqn-C: x1x2x3bar} holds for $\bar{x}_3$}.

\item Satisfying $\bar{x}_2 \leq \Vupper \bar{y}_2 + (\Clower + V - \Vupper) (\bar{y}_3 - \bar{u}_3 - \bar{u}_2)$ at equality. We obtain $ \bar{x}_3 \in C'' = \{ \bar{x}_3 \in \R: \Clower \bar{y}_3 \leq  \bar{x}_3 \leq (\Clower + 2 V) \bar{y}_3 - (\Clower + 2 V - \Vupper) \bar{u}_3 - (\Clower + V - \Vupper) \bar{u}_2 \}$. By letting $\hat{x}_3 = \hat{x}_7 = \Vupper$ and $\hat{x}_5 = \Clower +V$, we have $\bar{x}_2 = \lambda_2 \hat{x}_3 + \lambda_3 \hat{x}_5 + \lambda_4 \hat{x}_7$. {Then \eqref{eqn-C: x1x2x3bar} holds for $\bar{x}_2$}. 
Thus, the corresponding feasible region for $(\hat{x}_6, \hat{x}_8, \hat{x}_9)$ can be described as set $A'' = \{ (\hat{x}_6, \hat{x}_8, \hat{x}_9) \in \R^3: \Clower \leq \hat{x}_6 \leq \Clower + 2V, \ \Clower \leq \hat{x}_8 \leq \Vupper + V, \ \Clower \leq \hat{x}_9 \leq \Vupper \}$. If $\bar{x}_3 \geq \Clower \bar{y}_3$ is satisfied at equality, we let $\hat{x}_6 = \hat{x}_8 = \hat{x}_9 = \Clower$; if $\bar{x}_3 \leq (\Clower + 2 V) \bar{y}_3 - (\Clower + 2 V - \Vupper) \bar{u}_3 - (\Clower + V - \Vupper) \bar{u}_2$ is satisfied at equality, we let $\hat{x}_6 = \Clower + 2V$, $\hat{x}_8 = \Vupper + V$ and $\hat{x}_9 = \Vupper$. 
{For both cases}, we have $\bar{x}_3 = \lambda_3 \hat{x}_6 + \lambda_4 \hat{x}_8 + \lambda_5 \hat{x}_9$. {Then \eqref{eqn-C: x1x2x3bar} holds for $\bar{x}_3$}.

\item Satisfying $\bar{x}_3 \geq \Clower \bar{y}_3$ at equality. We obtain $ \bar{x}_2 \in C'' = \{ \bar{x}_2 \in \R: \Clower \bar{y}_2 \leq  \bar{x}_2 \leq \Vupper \bar{y}_2 + (\Clower + V - \Vupper) (\bar{y}_3 - \bar{u}_3 - \bar{u}_2) \}$. By letting $\hat{x}_6 = \hat{x}_8 = \hat{x}_9 = \Clower$, we have $\bar{x}_3 = \lambda_3 \hat{x}_6 + \lambda_4 \hat{x}_8 + \lambda_5 \hat{x}_9$. {Then \eqref{eqn-C: x1x2x3bar} holds for $\bar{x}_3$}. 
Thus, the corresponding feasible region for $(\hat{x}_3, \hat{x}_5, \hat{x}_7)$ can be described as set $A'' = \{ (\hat{x}_3, \hat{x}_5, \hat{x}_7) \in \R^3: \Clower \leq \hat{x}_3 \leq \Vupper, \ \Clower \leq \hat{x}_5 \leq \Clower + V, \ \Clower \leq \hat{x}_7 \leq \Vupper \}$. 
If $\bar{x}_2 \geq \Clower \bar{y}_2$ is satisfied at equality, we let $\hat{x}_3 = \hat{x}_5 = \hat{x}_7 = \Clower$; if $\bar{x}_2 \leq \Vupper \bar{y}_2 + (\Clower + V - \Vupper) (\bar{y}_3 - \bar{u}_3 - \bar{u}_2)$ \fblue{is satisfied at equality}, we let $\hat{x}_3 = \hat{x}_7 = \Vupper$ and $\hat{x}_5 = \Clower +V$. {For both cases}, we have $\bar{x}_2 = \lambda_2 \hat{x}_3 + \lambda_3 \hat{x}_5 + \lambda_4 \hat{x}_7$. {Then \eqref{eqn-C: x1x2x3bar} holds for $\bar{x}_2$}.

\item Satisfying $\bar{x}_3 \leq (\Clower + 2 V) \bar{y}_3 - (\Clower + 2 V - \Vupper) \bar{u}_3 - (\Clower + V - \Vupper) \bar{u}_2$ at equality. We obtain $ \bar{x}_2 \in C'' = \{ \bar{x}_2 \in \R: \Clower \bar{y}_2 + V(\bar{y}_3 - \bar{u}_3 - \bar{u}_2) + (\Vupper - \Clower) \bar{u}_2 \leq  \bar{x}_2 \leq \Vupper \bar{y}_2 + (\Clower + V - \Vupper) (\bar{y}_3 - \bar{u}_3 - \bar{u}_2) \}$. By letting $\hat{x}_6 = \Clower + 2V$, $\hat{x}_8 = \Vupper + V$, and $\hat{x}_9 = \Vupper$, we have $\bar{x}_3 = \lambda_3 \hat{x}_6 + \lambda_4 \hat{x}_8 + \lambda_5 \hat{x}_9$. {Then \eqref{eqn-C: x1x2x3bar} holds for $\bar{x}_3$}. 
Thus, the corresponding feasible region for $(\hat{x}_3, \hat{x}_5, \hat{x}_7)$ can be described as set $A'' = \{ (\hat{x}_3, \hat{x}_5, \hat{x}_7) \in \R^3: \Clower \leq \hat{x}_3 \leq \Vupper, \hat{x}_5 = \Clower + V, \hat{x}_7 = \Vupper \}$. If $\bar{x}_2 \geq \Clower \bar{y}_2 + V(\bar{y}_3 - \bar{u}_3 - \bar{u}_2) + (\Vupper - \Clower) \bar{u}_2$ is satisfied at equality, we let $\bar{x}_3 = \Clower$; if $\bar{x}_2 \leq \Vupper \bar{y}_2 + (\Clower + V - \Vupper) (\bar{y}_3 - \bar{u}_3 - \bar{u}_2)$ is satisfied at equality, we let $\bar{x}_3 = \Vupper$.
{For both cases}, we have $\bar{x}_2 = \lambda_2 \hat{x}_3 + \lambda_3 \hat{x}_5 + \lambda_4 \hat{x}_7$. {Then \eqref{eqn-C: x1x2x3bar} holds for $\bar{x}_2$}.

\item Satisfying $\bar{x}_3 - \bar{x}_2  \leq (\Clower + V) \bar{y}_3 - \Clower \bar{y}_2 - (\Clower + V - \Vupper) \bar{u}_3$ at equality. We obtain $ \bar{x}_2 \in C'' = \{ \bar{x}_2 \in \R: \Clower \bar{y}_2 \leq \bar{x}_2 \leq \Clower \bar{y}_2 + V(\bar{y}_3 - \bar{u}_3) - (\Clower + V - \Vupper) \bar{u}_2 \}$ through substituting $\bar{x}_3 = \bar{x}_2  + (\Clower + V) \bar{y}_3 - \Clower \bar{y}_2 - (\Clower + V - \Vupper) \bar{u}_3$ into set $C'$. 
By letting $\hat{x}_3 = \Clower$, $\hat{x}_9 = \Vupper$, and $\hat{x}_6 - \hat{x}_5 = \hat{x}_8 - \hat{x}_7 = V$, we have $\bar{x}_3 - \bar{x}_2 = (\lambda_3 \hat{x}_6 + \lambda_4 \hat{x}_8 + \lambda_5 \hat{x}_9) - (\lambda_2 \hat{x}_3 + \lambda_3 \hat{x}_5 + \lambda_4 \hat{x}_7)$. 
If \fblue{$\bar{x}_2 \geq \Clower \bar{y}_2$ is satisfied at equality}, we let $\hat{x}_5 = \hat{x}_7 = \Clower$; if \fblue{$\bar{x}_2 \leq \Clower \bar{y}_2 + V(\bar{y}_3 - \bar{u}_3) - (\Clower + V - \Vupper) \bar{u}_2$ is satisfied at equality}, we let $\hat{x}_5 = \Clower + V$, and $\hat{x}_7 = \Vupper$. For both cases, we have $\bar{x}_2 = \lambda_2 \hat{x}_3 + \lambda_3 \hat{x}_5 + \lambda_4 \hat{x}_7$ and thus $\bar{x}_3 = \lambda_3 \hat{x}_6 + \lambda_4 \hat{x}_8 + \lambda_5 \hat{x}_9$. {Then \eqref{eqn-C: x1x2x3bar} holds for both $\bar{x}_2$ and $\bar{x}_3$}.
\end{enumerate}

Similar analyses hold for $\bar{x}_2 \geq \Clower \bar{y}_2$ and $\bar{x}_3 \geq \Clower \bar{y}_3$ due to the similar structure {among} $\bar{x}_1 \geq \Clower \bar{y}_1$, $\bar{x}_2 \geq \Clower \bar{y}_2$, and $\bar{x}_3 \geq \Clower \bar{y}_3$ and thus are omitted here.
Furthermore, similar analyses also hold for inequalities \eqref{eqn-C:p-x1-ub-t3l2}-\eqref{eqn-C:p-x1-x2+x3-ub-t3l2}, with the details included in Appendix \ref{apx:sec:int-extpoint}.
\end{proof}

\begin{theorem} \label{thm:conv-t3l2}
{For $T=3$, $L=\ell=2$, and $\Cupper - \Clower - 2V \geq 0$, the convex hull representation can be described as} $Q_3^2 = $ conv($P_3^2$).
\end{theorem}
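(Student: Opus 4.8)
\noindent\emph{Proof plan.} The equality will follow from the two set inclusions $\mathrm{conv}(P_3^2)\subseteq Q_3^2$ and $Q_3^2\subseteq\mathrm{conv}(P_3^2)$, and both of these are essentially already in hand from the preceding propositions; the theorem is the short assembly of those pieces. First I would dispatch $\mathrm{conv}(P_3^2)\subseteq Q_3^2$. Since $Q_3^2$ is a polyhedron, hence closed and convex, it suffices to check $P_3^2\subseteq Q_3^2$, i.e.\ that every $(x,y,u)\in P_3^2$ satisfies each defining inequality of $Q_3^2$: the relations \eqref{eqn:p-minup-t3l2}--\eqref{eqn:p-lower-bound-t3l2} are part of the definition of $P_3^2$ and hold trivially; the bounds $u_2,u_3\ge0$ in \eqref{eqn:u2-3period-t3l2} hold because $u\in\B^2$; and the refined inequalities \eqref{eqn:p-x1-ub-t3l2}--\eqref{eqn:p-x1-x2+x3-ub-t3l2} are valid for $\mathrm{conv}(P_3^2)\supseteq P_3^2$ by Proposition~\ref{prop:valid-t3l2}. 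Taking convex hulls gives $\mathrm{conv}(P_3^2)\subseteq Q_3^2$. (As a sanity check, under the Case~1 hypotheses $\Cupper-\Vupper-V\ge0$ and $\Cupper-\Clower-2V\ge0$ the new inequalities carry nonpositive coefficients on $u_2,u_3$, so they dominate the original bound constraints \eqref{eqn:p-upper-bound-t3l2} and ramp constraints \eqref{eqn:p-ramp-up-t3l2}--\eqref{eqn:p-ramp-down-t3l2} that were dropped when forming $Q_3^2$; nothing is lost in passing from $P$ to the system used to build $Q_3^2$.)

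The reverse inclusion $Q_3^2\subseteq\mathrm{conv}(P_3^2)$ is exactly Proposition~\ref{prop:t3l2-int-extpoint}. Combining the two inclusions yields $Q_3^2=\mathrm{conv}(P_3^2)$. Proposition~\ref{prop:t3l2-full} is not logically needed for the equality itself, but it records that $\mathrm{conv}(P_3^2)$ is full-dimensional, which is the natural prerequisite for the companion claim that \eqref{eqn:p-x1-ub-t3l2}--\eqref{eqn:p-x1-x2+x3-ub-t3l2} are facet-defining, so I would mention it here for completeness.

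Consequently the only genuinely hard step is the one already absorbed into Proposition~\ref{prop:t3l2-int-extpoint}: expressing an arbitrary $z\in Q_3^2$ as a convex combination of points of $P_3^2$. If I had to reprove that from scratch, the main obstacle would be twofold: (i) guessing the right family of ``extreme-pattern'' points $z^1,\dots,z^6$, one for each relevant on/off/start-up pattern over the three periods, together with multipliers $\lambda_1,\dots,\lambda_6$ written as affine functions of $(\bar y,\bar u)$ so that the $y$- and $u$-coordinates match automatically; and (ii) proving that the induced linear map $F\colon A\to C$ from the box-type region $A$ of admissible $(\hat x_1,\dots,\hat x_9)$ onto the projected region $C$ of admissible $(\bar x_1,\bar x_2,\bar x_3)$ is surjective. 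Step (ii) is the paper's new technique: rather than enumerating the many vertices of $C$, one shows $F$ hits every point on a facet of $C$ by fixing one defining inequality of $C$ at equality, recursing on the lower-dimensional face, and exhibiting an explicit preimage in $A$ at each stage; since every vertex of $C$ lies on such a face, surjectivity onto all faces forces surjectivity onto the vertices, and then linearity of $F$ together with convexity of $A$ propagates it to all of $C$. Everything outside Proposition~\ref{prop:t3l2-int-extpoint} is routine bookkeeping.
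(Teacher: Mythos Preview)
Your proposal is correct and follows exactly the paper's approach: the inclusion $\mathrm{conv}(P_3^2)\subseteq Q_3^2$ comes from the validity of all defining inequalities (Proposition~\ref{prop:valid-t3l2} plus the trivially inherited ones), and the reverse inclusion is precisely Proposition~\ref{prop:t3l2-int-extpoint}. Your additional commentary on the role of Proposition~\ref{prop:t3l2-full} and the surjectivity argument inside Proposition~\ref{prop:t3l2-int-extpoint} accurately summarizes the paper's technique, though the theorem's own proof in the paper is just the two-line assembly you describe.
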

\begin{proof}
Since all the inequalities in $Q_3^2$ are valid for conv($P_3^2$) from Propositions \ref{prop:valid-t3l2}, we have $Q_3^2 \supseteq$ conv($P_3^2$). Meanwhile, we have $Q_3^2 \subseteq$ conv($P_3^2$) from Proposition \ref{prop:t3l2-int-extpoint}. Thus $Q_3^2 =$ conv($P_3^2$).
\end{proof}

\begin{remark}
{Note here that we provide a new proof technique to show $Q_3^2 \subseteq$ conv($P_3^2$), which is the key part in this convex hull proof. Our proof utilizes the mapping $F: A \rightarrow C$ and considers the equality of each inequality in $C$, to develop a system of integral points {in} $P_3^2$ to represent any point in $Q_3^2$ in the form of a convex combination of these integral points. Therefore, this proof is different from the conventional proofs (e.g., total-unimodularity \cite{nw}, integral extreme points \cite{sherali2001convex, lee2004min, rajan2005minimum, damci1777polyhedral}, primal-dual proof \cite{barany1984uncapacitated, kuccukyavuz2009uncapacitated}, and optimality condition construction \cite{atamturk2006strong, shebalov2015lifting}) and the alternative proofs \cite{luedtke2009strategic, sullivan2014convex}, among others.}
\end{remark}

{Under the setting} $L = \ell = 1$ ({with} $\Cupper - \Clower - 2V \geq 0$), we can obtain the convex hull representation of the original polytope (e.g., defined as $P_3^{1}$) described as follows:
\begin{theorem}
For {$T=3$}, $L = \ell = 1$, and $\Cupper - \Clower - 2V \geq 0$, the convex hull representation {can be described as} 
\begin{subeqnarray}
& Q_3^1 = conv(P_3^1) = \Bigl\{ & (x, y, u) \in \R^8: \eqref{eqn:p-udef-t3l2} - \eqref{eqn:p-lower-bound-t3l2}, \eqref{eqn:u2-3period-t3l2}, \nonumber \\
&& u_2 \leq y_2, \ u_3 \leq y_3, \slabel{eqn:Q-3-1-mu}  \\
&& y_1 + u_2 \leq 1, \ y_2+ u_3 \leq 1, \slabel{eqn:Q-3-1-md} \\
&& x_1 \leq \Vupper y_1 + (\Cupper - \Vupper) (y_2 - u_2), \slabel{eqn:Q-3-1-x1} \\
&& x_1 \leq \Vupper y_1 + V (y_2 - u_2) + (\Cupper - \Vupper - V) (y_3 - u_3), \slabel{eqn:Q-3-1-x1-2} \\
&& x_2 \leq \Cupper y_2 - (\Cupper - \Vupper) u_2, \slabel{eqn:Q-3-1-x2-1} \\
&& x_2 \leq \Vupper y_2 + (\Cupper - \Vupper) (y_3 - u_3), \slabel{eqn:Q-3-1-x2} \\
&& x_3 \leq \Cupper y_3 - (\Cupper - \Vupper) u_3, \slabel{eqn:Q-3-1-x3-1} \\
&& x_3 \leq (\Vupper + V) y_3 - V u_3 + (\Cupper - \Vupper - V) (y_2 - u_2), \slabel{eqn:Q-3-1-x3-2} \\
&& x_2 - x_1 \leq \Vupper y_2 - \Clower y_1 + (\Clower + V - \Vupper) (y_3 - u_3), \slabel{eqn:Q-3-1-x2-x1-1} \\
&& x_2 - x_1 \leq (\Clower + V) y_2 - \Clower y_1 - (\Clower + V - \Vupper) u_2, \slabel{eqn:Q-3-1-x2-x1} \\
&& x_3 - x_2 \leq (\Clower + V) y_3 - \Clower y_2 - (\Clower + V - \Vupper) u_3, \slabel{eqn:Q-3-1-x3-x2} \\
&& x_1 - x_2 \leq \Vupper y_1 - (\Vupper - V) y_2 - (\Clower + V - \Vupper) u_2, \slabel{eqn:Q-3-1-x1-x2} \\
&& x_2 - x_3 \leq \Vupper y_2 - (\Vupper - V) y_3 - (\Clower + V - \Vupper) u_3, \slabel{eqn:Q-3-1-x2-x3} \\
&& x_2 - x_3 \leq (\Clower + V) y_2 - \Clower y_3 - (\Clower + V - \Vupper) u_2, \slabel{eqn:Q-3-1-x2-x3-2} \\ 
&& x_3 - x_1 \leq (\Clower + 2 V) y_3 - \Clower y_1 - (\Clower + 2 V - \Vupper) u_3, \slabel{eqn:Q-3-1-x3-x1} \\
&& x_3 - x_1 \leq (\Vupper + V) y_3 - V u_3 - \Clower y_1 + (\Clower + V - \Vupper) (y_2 - u_2), \slabel{eqn:Q-3-1-x3-x1-2} \\
&& x_1 - x_3 \leq \Vupper y_1 - \Clower y_3 + (\Clower + 2V - \Vupper) (y_2 - u_2), \slabel{eqn:Q-3-1-x1-x3} \\
&& x_1 - x_3 \leq \Vupper y_1 - \Clower y_3 + V (y_2 - u_2) + (\Clower + V - \Vupper) (y_3 - u_3) \slabel{eqn:Q-3-1-x1-x3-2} \Bigr\}.
\end{subeqnarray}
\end{theorem}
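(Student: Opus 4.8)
The plan is to mirror the three-part structure already used for Theorem~\ref{thm:conv-t3l2}. \textbf{Step 1 (validity, giving $Q_3^1 \supseteq \mathrm{conv}(P_3^1)$).} Every $x$-inequality \eqref{eqn:Q-3-1-x1}--\eqref{eqn:Q-3-1-x1-x3-2} would be obtained by the same ``successive tightening'' device as in the proof of Proposition~\ref{prop:valid-t3l2}: start from a crude bound such as $x_t \le \Cupper y_t$ (or a one-step ramp bound), rewrite its right-hand side by splitting a coefficient (e.g. $\Cupper y_t = \Vupper y_t + (\Cupper-\Vupper) y_t$), and replace a $y_s$ by $y_{s'}-u_{s'}$ using the logical relations \eqref{eqn:p-udef-t3l2}, checking at each step the single residual integral case where the substitution is not an equality. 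The only genuinely new point relative to the $L=\ell=2$ case is that under $L=\ell=1$ the on--off--on pattern $\bar y=(1,0,1)$ and the off--on--off pattern $\bar y=(0,1,0)$ are now feasible; this is exactly why $Q_3^1$ carries \emph{two} upper bounds on each of $x_1,x_2,x_3$ (the pairs \eqref{eqn:Q-3-1-x1}/\eqref{eqn:Q-3-1-x1-2}, \eqref{eqn:Q-3-1-x2-1}/\eqref{eqn:Q-3-1-x2}, \eqref{eqn:Q-3-1-x3-1}/\eqref{eqn:Q-3-1-x3-2}) and on several of the differences, and the residual cases in the validity arguments must be checked against these extra patterns, in which the machine ramps only from a start-up or down to a shut-down and $x$ is therefore capped by $\Vupper$ (or $\Vupper+V$) rather than by $\Cupper$.

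\textbf{Step 2 (the hard inclusion $Q_3^1 \subseteq \mathrm{conv}(P_3^1)$) via the surjective-map technique.} Fix $z=(\bar x,\bar y,\bar u)\in Q_3^1$. The integral $(y,u)$-patterns feasible for $P_3^1$ are the eight vectors $(0,0,0,0,0)$, $(1,0,0,0,0)$, $(0,1,0,1,0)$, $(1,1,0,0,0)$, $(0,0,1,0,1)$, $(1,0,1,0,1)$, $(0,1,1,1,0)$, $(1,1,1,0,0)$ (coordinates ordered $y_1,y_2,y_3,u_2,u_3$); I would take as candidate points $z^1,\dots,z^8\in P_3^1$ the points carrying these $(y,u)$-coordinates with the $x$-coordinates left as free parameters $\hat x_i$, and pick multipliers $\lambda_s\ge 0$ with $\sum_s\lambda_s=1$ reproducing $(\bar y,\bar u)$, written explicitly in $\bar y_1,\bar y_2,\bar y_3,\bar u_2,\bar u_3$ as in \eqref{eqn:lamdba}; their nonnegativity follows from \eqref{eqn:p-udef-t3l2}, \eqref{eqn:Q-3-1-mu}, \eqref{eqn:Q-3-1-md} and $\bar u_2,\bar u_3\ge 0$. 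It then remains only to match $(\bar x_1,\bar x_2,\bar x_3)$: writing $F$ for the linear map that sends the vector of free $x$-parameters, ranging over the box $A$ of their individual ramp/bound feasible intervals obtained by substituting each $z^s$ into $P_3^1$, to the $\lambda_s$-weighted sums, one must show $F$ maps $A$ \emph{onto} the polytope $C$ cut out of $\R^3$ by the $x$-inequalities of $Q_3^1$ at the fixed $(\bar y,\bar u)$. Since $C$ is bounded it suffices to hit every extreme point of $C$, and by linearity of $F$ and convexity of $A$ it suffices to hit an arbitrary point on any single facet of $C$; this is done by recursively fixing coordinates of the preimage as each successive $x$-inequality is made tight, exactly as in the five-way case analysis carried out for $Q_3^2$.

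\textbf{Main obstacle.} Compared with the $L=\ell=2$ case the difficulty is volume of bookkeeping rather than a new idea: there are now eight candidate points instead of six, so the system determining the $\lambda_s$ is underdetermined (six linear equations in eight unknowns), and one must exhibit a valid \emph{nonnegative} particular solution --- most cleanly by a case split on which of $\bar u_2,\bar u_3$ is positive, using $\bar u_2+\bar u_3\le 1$ (which is forced by \eqref{eqn:Q-3-1-mu}--\eqref{eqn:Q-3-1-md}). Likewise $C$ now has roughly twice as many facets, so the surjectivity verification branches into more subcases, each paired bound having to be treated; I expect that, after discarding the index-symmetric cases, the argument again collapses to a short list of explicit assignments of the $\hat x_i$ to the interval endpoints $\Clower$, $\Vupper$, $\Clower+V$, $\Vupper+V$, $\Clower+2V$. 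Combining Steps~1 and~2 yields $Q_3^1=\mathrm{conv}(P_3^1)$; full-dimensionality of $Q_3^1$ (the analogue of Proposition~\ref{prop:t3l2-full}) can be recorded separately but is not needed for the equality itself.
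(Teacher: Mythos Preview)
Your plan matches the paper's own approach: the paper simply says the proof is ``similar with those for Theorem~\ref{thm:conv-t3l2}'', so the validity-plus-surjective-map template you outline is exactly what is intended, and your identification of the eight feasible $(y,u)$-patterns and the resulting two degrees of freedom in the multiplier system is correct.

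One refinement you should anticipate: your proposed resolution of the underdetermined $\lambda$-system --- fixing a single nonnegative particular solution via a case split on which of $\bar u_2,\bar u_3$ is positive --- is not quite how the paper handles the analogous situation. In the fully-written-out $L=\ell=1$ proof (Theorem~\ref{thm:general1}, Appendix~\ref{apx:thm:general1}), the two free multipliers $\lambda_3,\lambda_5$ are \emph{not} pinned down at the outset; they are left as parameters in intervals such as $\max\{0,\bar y_3-\bar u_3-\bar y_2+\bar u_2\}\le\lambda_5\le\min\{\bar u_2,\bar y_3-\bar u_3\}$, and their precise values are then chosen \emph{facet by facet} during the surjectivity verification (e.g.\ $\lambda_5$ is set to one endpoint of its interval on one facet of $C$ and to the other endpoint on another). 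A single fixed choice of $(\lambda_3,\lambda_5)$ will generally not make $F(A)$ cover all of $C$, because the extra $(y,u)$-patterns $(1,0,1,0,1)$ and $(0,1,0,1,0)$ contribute to different coordinates of $\bar x$ and must be weighted differently depending on which $x$-inequality is tight. So when you execute Step~2, carry $\lambda_3,\lambda_5$ as free parameters into the facet-by-facet analysis rather than eliminating them beforehand.
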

\begin{proof}
The proofs are similar with those for Theorem \ref{thm:conv-t3l2} and thus are omitted here.
\end{proof}

Following the similar approach as described above, we can obtain the convex hull representations for other cases in terms of different minimum-up/-down times.

\begin{theorem} \label{thm:t3l1l2-2v}
For $T=3$, $L = 1$ and $\ell = 2$, and $\Cupper - \Clower - 2V \geq 0$, the convex hull representation is the same as $Q_3^1$ except that \eqref{eqn:Q-3-1-md} is replaced by~\eqref{eqn:p-mindn-t3l2}. For $T=3$, $L = 2$ and $\ell = 1$, and $\Cupper - \Clower - 2V \geq 0$, the convex hull representation is the same as $Q_3^2$ except that~\eqref{eqn:p-mindn-t3l2} is replaced by \eqref{eqn:Q-3-1-md}.
\end{theorem}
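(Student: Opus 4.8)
The plan is to follow verbatim the three-step template used for Theorem~\ref{thm:conv-t3l2}: (i) establish the strong valid inequalities, (ii) check full-dimensionality, and (iii) prove the reverse inclusion $Q\subseteq\mathrm{conv}(P)$ through the mapping $F:A\to C$. The first thing to record is that both remaining minimum-up/-down settings differ from the already-settled polytopes only in the minimum-down-time row: the $L=1,\ell=2$ polytope is $P_3^1$ with \eqref{eqn:Q-3-1-md} replaced by $y_1+u_2+u_3\le 1$ (i.e.~\eqref{eqn:p-mindn-t3l2}), and the $L=2,\ell=1$ polytope is $P_3^2$ with \eqref{eqn:p-mindn-t3l2} replaced by \eqref{eqn:Q-3-1-md}. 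Using \eqref{eqn:p-udef-t3l2} together with $u_2,u_3\ge 0$ one sees that $y_1+u_2+u_3\le 1$ implies both $y_1+u_2\le 1$ and $y_2+u_3\le 1$, so dropping the two implied rows and inserting the single one (or conversely) changes the feasible set exactly as intended and the descriptions stated in the theorem are legitimate relaxations.

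For the easy inclusion $Q\supseteq\mathrm{conv}(P)$ I would note that every inequality inherited from $Q_3^1$ (resp.~$Q_3^2$) stays valid: the validity arguments in Proposition~\ref{prop:valid-t3l2} (and their $Q_3^1$ analogues) invoke, among the combinatorial rows, only the minimum-\emph{up}-time constraint, the logical constraints \eqref{eqn:p-udef-t3l2}, the bound constraints, and the ramp constraints — none of which is touched when the minimum-down row is swapped — so those $x$-upper-bound and ramp inequalities remain valid for the new polytope, while the new minimum-down inequality is valid by construction. Full-dimensionality follows exactly as in Proposition~\ref{prop:t3l2-full}, the witnessing affinely independent points needing at most a trivial adjustment.

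The substance is the reverse inclusion. The feasible $(y,u)$ patterns for $T=3$ are driven by $(y_1,y_2,y_3)$, and a short enumeration shows that $L=2,\ell=1$ admits precisely the six patterns used for $P_3^2$ together with the new pattern $(y,u)=(1,0,1,0,1)$, while $L=1,\ell=2$ admits those six together with $(0,1,0,1,0)$; in each case the extra pattern is a genuine new vertex of the integer hull. Accordingly I would add one extra candidate integral point $z^7$ — of the form $(\hat x,0,\hat x',1,0,1,0,1)$, respectively $(0,\hat x,0,0,1,0,1,0)$, whose continuous coordinate(s) are confined to $[\Clower,\Vupper]$ by the start-up/shut-down ramp rows — enlarge the box $A$ by the corresponding interval(s), and replace the multiplier formulas \eqref{eqn:lamdba} by adjusted affine functions $\lambda_1(\bar y,\bar u),\dots,\lambda_7(\bar y,\bar u)$; the reworked multipliers are dictated by the requirements that they reproduce $(\bar y,\bar u)$, sum to one, and stay nonnegative on all of the new $Q$ (the extra weight being siphoned from one of the existing components). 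Then I would write the $3\times(9+k)$ matrix $F$ from $A$ to the new $(\bar x_1,\bar x_2,\bar x_3)$-slice $C$ of $Q$ and prove $F$ surjective by the device of Proposition~\ref{prop:t3l2-int-extpoint}: reduce to the extreme points of $C$, and for each defining inequality of $C$ set it to equality and exhibit a preimage in $A$ explicitly, recursing on the lower-dimensional residual boxes.

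The hard part will be this last step: once the pattern list changes, one must re-derive the face description of the new slice $C$ — now including the faces induced by the altered minimum-down inequality — verify the nonnegativity of the reworked multipliers and the partition-of-unity identity over the whole of $Q$, and carry out the face-by-face preimage construction. Since $\Cupper-\Clower-2V\ge 0$ holds throughout, no further case split into $\Cupper-\Clower-2V<0$ is needed, and every piece is mechanically parallel to the $P_3^2$ argument, so the remaining work is bookkeeping rather than new ideas — which is precisely why the proof can be safely abbreviated.
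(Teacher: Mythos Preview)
Your plan is correct and mirrors exactly what the paper intends: it states only that the proof follows ``the similar approach as described above,'' i.e., the validity/full-dimensionality/surjective-$F$ template of Theorem~\ref{thm:conv-t3l2}, adapted to the seven feasible $(y,u)$ patterns that arise in each case; your identification of the extra pattern $(1,0,1,0,1)$ for $L=2,\ell=1$ and $(0,1,0,1,0)$ for $L=1,\ell=2$, and the need to introduce a seventh candidate point with one free multiplier, is precisely the required adaptation. One small caveat: your assertion that the validity proofs in Proposition~\ref{prop:valid-t3l2} never invoke the minimum-down row is not literally true (ruling out $y_2=1,\,u_3=1$ in the derivation of \eqref{eqn:p-x1-ub-t3l2} does use it), but the $\ell=1$ inequalities $y_1+u_2\le 1$ and $y_2+u_3\le 1$ suffice wherever $\ell=2$ was used, so the conclusion stands.
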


\begin{remark}
Note here that \sred{we can observe} convex hulls corresponding to different values of $\ell$ \sred{for the same $L$ value}
can be described by replacing~\eqref{eqn:Q-3-1-md} with~\eqref{eqn:p-mindn-t3l2} \sred{($\ell=2$ from $\ell=1$)} or replacing~\eqref{eqn:p-mindn-t3l2} with~\eqref{eqn:Q-3-1-md} \sred{($\ell=1$ from $\ell=2$)}. This conclusion holds for all the remaining theorems. Thus, we only need to provide the convex hulls for the cases $L=\ell=2$ and $L=\ell=1$ in the presentation.
\end{remark}

Now we consider the case in which $\Cupper - \Clower - 2V < 0$. We realize that the condition $\Cupper - \Clower - 2V < 0$ is more restrictive than the condition $\Cupper - \Clower - 2V \geq 0$. Thus, a smaller number of constraints are required to describe the convex hull.
 The proofs are similar with those for Theorem \ref{thm:conv-t3l2}, and thus we present the convex hull descriptions below without proofs. 

\begin{theorem}
For {$T=3$}, $L = \ell = 1$, and $\Cupper - \Clower - 2V < 0$, the convex hull representation of the original polytope (e.g., denoted as $\hat{P}_3^{1}$) can be described as $\hat{Q}_3^{1} =$ conv($\hat{P}_3^{1}$) $= \{ (x, y, u) \in \R^8 : \eqref{eqn:p-udef-t3l2} - \eqref{eqn:p-lower-bound-t3l2}, \eqref{eqn:u2-3period-t3l2}, \eqref{eqn:Q-3-1-mu} - \eqref{eqn:Q-3-1-x2-x3-2}  \}$.
For {$T=3$}, $L = \ell = 2$, and $\Cupper - \Clower - 2V < 0$, the convex hull representation of the original polytope (e.g., denoted as $\hat{P}_3^{2}$) can be described as $\hat{Q}_3^{2} =$ conv($\hat{P}_3^{2}$) $= \{ (x, y, u) \in \R^8 : \eqref{eqn:p-minup-t3l2} - \eqref{eqn:p-lower-bound-t3l2}, \eqref{eqn:p-x1-ub-t3l2} - \eqref{eqn:p-x2-x3-ub-t3l2}, \eqref{eqn:p-x1-x2+x3-ub-t3l2} - \eqref{eqn:u2-3period-t3l2}\}$.
\end{theorem}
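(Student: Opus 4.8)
The plan is to reproduce, for each of the two regimes, the three-part scheme that established Theorem~\ref{thm:conv-t3l2}: first the inclusion $\hat Q\supseteq\mathrm{conv}(\hat P)$ by validity of each retained inequality, then full-dimensionality of $\hat Q$, and finally $\hat Q\subseteq\mathrm{conv}(\hat P)$ via the surjective linear map $F:A\to C$ of Proposition~\ref{prop:t3l2-int-extpoint}. For the validity direction essentially no new work is needed: the derivations in Proposition~\ref{prop:valid-t3l2} (and the analogous ones behind the rows of $Q_3^1$) invoked only $\Clower\le\Vupper<\Clower+V$, the logical constraints~\eqref{eqn:p-udef-t3l2}, nonnegativity of $u$, and the ramp constraints --- never $\Cupper-\Clower-2V\ge0$ --- so every retained inequality stays valid verbatim when $\Cupper-\Clower-2V<0$. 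What does change is that the description shrinks, and I would account for this first: when $\Cupper<\Clower+2V$ the unit cannot reach an output level $\Clower+2V$, so the rows bounding the two-period span $x_3-x_1$ and $x_1-x_3$ (namely \eqref{eqn:p-x3-x1-ub-t3l2}--\eqref{eqn:p-x1-x3-ub-t3l2} in the $L=\ell=2$ list, and \eqref{eqn:Q-3-1-x3-x1}--\eqref{eqn:Q-3-1-x1-x3-2} in the $L=\ell=1$ list) become implied by the surviving single-period bounds on $x_1$ and $x_3$ together with the retained mixed rows; I would verify each such domination by a one-step aggregation (Fourier--Motzkin) argument, which is precisely where the hypothesis $\Cupper-\Clower-2V<0$ is consumed.

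Full-dimensionality of $\hat Q_3^1$ and $\hat Q_3^2$ is then argued exactly as in Proposition~\ref{prop:t3l2-full}, by exhibiting eight affinely independent points of $\hat P$ (the origin, the all-on profile with $x$ perturbed within $[\Clower,\Cupper]$ and along the $\pm V$ ramp directions, and the one- and two-period ``on'' patterns); none of these constructions uses the sign of $\Cupper-\Clower-2V$, so they carry over unchanged.

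The core is the inclusion $\hat Q\subseteq\mathrm{conv}(\hat P)$, and here I would reuse the machinery of Proposition~\ref{prop:t3l2-int-extpoint} with only two substantive modifications. The six candidate integral points $z^1,\dots,z^6\in\hat P$ and the multipliers $\lambda_1,\dots,\lambda_6$ of~\eqref{eqn:lamdba} are kept as they are --- they already satisfy the $L=\ell=2$ min-up/-down constraints and hence the weaker $L=\ell=1$ ones --- and so is the matrix $F$ sending $(\hat x_1,\dots,\hat x_9)$ to $(\bar x_1,\bar x_2,\bar x_3)$. The first modification concerns the domain $A$: on the all-on block $(\hat x_4,\hat x_5,\hat x_6)$ the box is still $[\Clower,\Cupper]$, but since $\Cupper-\Clower<2V$ the reachable polytope under the $\pm V$ ramp links (and likewise the interaction of the $\hat x_8-\hat x_7$ link with the cap $\Cupper$) is a genuinely smaller set whose extreme configurations must be re-listed. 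The second is that the target $C$ is now cut out by the shorter surviving inequality list. I would then show $F:A\to C$ is onto by the same reduction: since $C$ is bounded it suffices to reach every point on a facet of $C$, i.e.\ satisfying one retained inequality at equality; that equality removes one degree of freedom, and one recurses --- as in the enumerated cases 1)--5) of Proposition~\ref{prop:t3l2-int-extpoint} --- pinning the relevant $\hat x_i$ to the appropriate endpoints ($\Clower$, $\Vupper$, $\Vupper+V$, or now $\Cupper$ wherever $\Clower+2V$ used to appear) so that $Fp=w$ holds with $p\in A$. Combining the three parts yields $\hat Q_3^1=\mathrm{conv}(\hat P_3^1)$ and $\hat Q_3^2=\mathrm{conv}(\hat P_3^2)$.

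The main obstacle is bookkeeping rather than a conceptual gap: one must pin down, under $\Cupper-\Clower-2V<0$, exactly which inequalities survive and verify that the dropped span-inequalities are redundant, and then re-derive $A$ and $C$ precisely enough that the endpoint-pinning in the surjectivity argument still produces preimages lying inside the now-tighter box of $A$ --- in particular, that for every facet of $C$ the pinned point lands in $A$ and not merely in the affine hull of the old, larger domain. Once the $F$-surjectivity case split is reorganized around the surviving inequalities, each individual case reduces to the same one-line endpoint assignment used in the $\Cupper-\Clower-2V\ge0$ proof.
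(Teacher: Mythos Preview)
Your scheme is sound for the $L=\ell=2$ part and matches the paper's (omitted) argument: the same six candidate points and multipliers of~\eqref{eqn:lamdba} work, with the domain $A$ tightened where $\Cupper$ replaces $\Clower+2V$, and the surviving facet list $C$ is shorter, so the surjectivity case-split simply has fewer branches.

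There is, however, a real gap in the $L=\ell=1$ part. You propose to reuse the six points $z^1,\dots,z^6$ and the multipliers of~\eqref{eqn:lamdba} on the grounds that points feasible for $L=\ell=2$ are automatically feasible for $L=\ell=1$. That inclusion goes the wrong way for what you need: when $L=\ell=1$ the feasible region of $(\bar y,\bar u)$ is strictly \emph{larger} (constraints~\eqref{eqn:p-minup-t3l2}--\eqref{eqn:p-mindn-t3l2} are replaced by the weaker~\eqref{eqn:Q-3-1-mu}--\eqref{eqn:Q-3-1-md}), and the nonnegativity of the $\lambda_s$ in~\eqref{eqn:lamdba} was established precisely from the stronger constraints. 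Concretely, the on--off--on pattern $(\bar y_1,\bar y_2,\bar y_3,\bar u_2,\bar u_3)=(1,0,1,0,1)$ is feasible for $L=\ell=1$ but gives $\lambda_6=1-\bar y_1-\bar u_2-\bar u_3=-1$, so your convex-combination representation breaks down. The fix is to enlarge the list of candidate integral profiles to include the additional $L=\ell=1$ patterns $(1,0,1)$ and $(0,1,0)$ --- exactly as the paper does in the eight-point decomposition~\eqref{eqn:lamdba-extra} for Theorem~\ref{thm:general1} --- and to redefine the multipliers accordingly (with $\lambda_3$ and $\lambda_5$ ranging over intervals rather than being pinned down). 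Once you adopt this larger template the rest of your plan (re-listing $A$, re-running the facet-by-facet surjectivity check with endpoint pinning) goes through as you describe.
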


{Next, we extend the study to other cases (i.e., \textbf{Cases 2 - 5}) in the rest of this section.
The derived convex hull results are very similar and we list them below for the readers' reference. Proofs for these results are omitted for description brevity, except that for Theorem~\ref{thm:general1} because it is the most complicated case among the rest convex hull results.}

{For \textbf{Cases 2} and \textbf{3}, we have $\Cupper - \Clower - 2V < 0$ for both of them due to their parameter settings. Therefore, we do not need to consider the case in which $\Cupper - \Clower - 2V \geq 0$ as described above {and the conclusion holds as follows}.}

\begin{theorem} \label{thm:general6}
{For \textbf{Case 2}, the convex hull can be described as
\begin{itemize}
\item[1.] $\{(x, y, u) \in \R^8 :  \eqref{eqn:p-minup-t3l2} - \eqref{eqn:p-lower-bound-t3l2},~\eqref{eqn-q2:x1-ub},~\eqref{eqn:p-x2-ub-t3l2},~\eqref{eqn:Q-3-1-x3-1},~\eqref{eqn:p-x2-x1-ub-t3l2}-\eqref{eqn:p-x2-x3-ub-t3l2}, \mbox{and}~\eqref{eqn:p-x1-x2+x3-ub-t3l2} \}$ {for $L = \ell = 2$ and}
\item[2.] $\{(x, y, u) \in \R^8 :  \eqref{eqn:p-udef-t3l2} - \eqref{eqn:p-lower-bound-t3l2},~\eqref{eqn:Q-3-1-mu}- \eqref{eqn:Q-3-1-x1},~\eqref{eqn:Q-3-1-x2-1}-\eqref{eqn:Q-3-1-x3-1}, \ \mbox{and} \ \eqref{eqn:Q-3-1-x2-x1-1}-\eqref{eqn:Q-3-1-x2-x3-2} \}$ {for $L = \ell = 1$.}
\end{itemize} 
For \textbf{Case 3}, the convex hull can be described as
\begin{itemize}
\item[1.] $\{(x, y, u) \in \R^8 :  \eqref{eqn:p-minup-t3l2} - \eqref{eqn:p-lower-bound-t3l2},~\eqref{eqn-q2:x1-ub},~\eqref{eqn:p-x2-ub-t3l2}, \mbox{and}~\eqref{eqn:Q-3-1-x3-1} \}$ {for $L = \ell = 2$ and}
\item[2.] $\{(x, y, u) \in \R^8 : \eqref{eqn:p-udef-t3l2} - \eqref{eqn:p-lower-bound-t3l2},~\eqref{eqn:Q-3-1-mu}- \eqref{eqn:Q-3-1-x1},~\eqref{eqn:Q-3-1-x2-1}-\eqref{eqn:Q-3-1-x3-1} \}$ {for $L = \ell = 1$.}
\end{itemize}}
\end{theorem}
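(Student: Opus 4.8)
The plan is to mirror the two-step proof of Theorem~\ref{thm:conv-t3l2}: validity of the listed inequalities gives the inclusion conv($P_3^2$)$\,\subseteq Q$ (writing $Q$ for the set displayed in the statement), and a mapping argument in the style of Proposition~\ref{prop:t3l2-int-extpoint} gives the reverse inclusion $Q\subseteq$ conv($P_3^2$). The first observation to record is that in both \textbf{Case 2} and \textbf{Case 3} one has $\Cupper-\Clower-2V<0$: from $\Clower\leq\Vupper<\Clower+V$ together with either $\Cupper<\Vupper+V$ (Case~2) or $\Cupper<\Clower+V$ (Case~3) we get $\Cupper<\Clower+2V$. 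Thus the parameter regime is the restrictive one, the base polytope is \eqref{eqn:p-minup-t3l2}--\eqref{eqn:p-ramp-down-t3l2} with $L=\ell=2$ (resp.\ its $L=\ell=1$ analogue), and we expect a description with fewer inequalities than in Theorem~\ref{thm:conv-t3l2}.

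For the easy direction, almost every inequality in $Q$ already belongs to a family proved valid earlier: \eqref{eqn:p-minup-t3l2}--\eqref{eqn:p-lower-bound-t3l2} are original constraints; \eqref{eqn:p-x2-ub-t3l2}, \eqref{eqn:p-x2-x1-ub-t3l2}--\eqref{eqn:p-x2-x3-ub-t3l2}, \eqref{eqn:p-x1-x2+x3-ub-t3l2} are valid by the arguments of Proposition~\ref{prop:valid-t3l2}, since their $y$- and $u$-coefficients $\Cupper-\Vupper$, $\Clower+V-\Vupper$, etc.\ remain nonnegative under $\Clower\leq\Vupper<\Clower+V$; \eqref{eqn-q2:x1-ub} is valid for any two consecutive periods by Theorem~\ref{thm1} and its Remark; and \eqref{eqn:Q-3-1-x3-1} follows from the same ``when does the machine start up'' argument used for \eqref{eqn:p-x3-ub-t3l2}. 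Hence conv($P_3^2$)$\,\subseteq Q$. It is also worth noting, to explain why the list shrinks, that \eqref{eqn:p-x1-ub-t3l2} and \eqref{eqn:p-x3-ub-t3l2} are pointwise dominated by \eqref{eqn-q2:x1-ub} and \eqref{eqn:Q-3-1-x3-1} once $\Cupper-\Vupper-V<0$ (using $y_3-u_3\le y_2$ and $u_2\ge 0$), the span-three difference inequalities \eqref{eqn:p-x3-x1-ub-t3l2}, \eqref{eqn:p-x1-x3-ub-t3l2} become implied by the generation bounds because $\Cupper-\Clower<2V$, and in \textbf{Case~3} the ramp-difference inequalities \eqref{eqn:p-x2-x1-ub-t3l2}--\eqref{eqn:p-x2-x3-ub-t3l2} are implied by $x_i\ge\Clower y_i$ together with \eqref{eqn:p-x2-ub-t3l2}, \eqref{eqn:Q-3-1-x3-1} since $\Cupper-\Clower-V<0$.

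For the reverse inclusion I would reuse the construction of Proposition~\ref{prop:t3l2-int-extpoint} essentially verbatim: the same six candidate points $z^1,\dots,z^6$ of the base polytope, the same multipliers $\lambda_1,\dots,\lambda_6$ of \eqref{eqn:lamdba} (still nonnegative and summing to one, since they depend only on $(\bar y,\bar u)$ through \eqref{eqn:p-minup-t3l2}--\eqref{eqn:p-udef-t3l2} and $u_2,u_3\ge 0$), and the same $3\times 9$ transformation $F$, reducing everything to surjectivity of $F\colon A\to C$. Here $A$ is the box of admissible $(\hat x_1,\dots,\hat x_9)$ obtained by substituting the $z^s$ under the \textbf{Case 2}/\textbf{3} parameters and $C$ is the smaller polytope cut out in $(\bar x_1,\bar x_2,\bar x_3)$ by the retained inequalities. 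The only change in $A$ is that the coordinate ranges at fully-on periods tighten from $\Clower\le\hat x_i\le\Vupper+V$ down to $\Clower\le\hat x_i\le\Cupper$, and because $\Cupper-\Clower<2V$ every value in this box is still reachable along the ramp chains $-V\le\hat x_{i+1}-\hat x_i\le V$ and $\Clower-\Vupper\le\hat x_8-\hat x_7\le V$; this is exactly why the span-three difference constraints drop out of $C$. Surjectivity is then proved as before: fix each defining inequality of $C$ at equality, read off the induced two-variable polytope, assign the $\hat x$-coordinates by the ``$\Clower$ at the lower faces and $\Vupper/\Clower+V/\Cupper$ at the tight upper faces'' recipe, recurse, and check the identities \eqref{eqn-C: x1x2x3bar}; the symmetry among the faces $\bar x_i\ge\Clower\bar y_i$ collapses most branches just as in Proposition~\ref{prop:t3l2-int-extpoint}.

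The main obstacle I expect is purely the bookkeeping forced by the altered box $A$: one must recheck, facet by facet of $C$, that the preimage coordinates chosen for each face still lie in $A$ under the tightened ranges — the most sensitive being the transition coordinates $\hat x_2$ and $\hat x_8$, whose upper bound is now $\Cupper$ (and in \textbf{Case~3} even $\Cupper<\Clower+V$) rather than $\Vupper+V$, and the ramp links at those coordinates. Because $C$ has strictly fewer facets than in Theorem~\ref{thm:conv-t3l2}, this case analysis is shorter, not longer, than the one already carried out; in particular the sub-cases in Proposition~\ref{prop:t3l2-int-extpoint} that relied on the now-absent face \eqref{eqn:p-x3-x1-ub-t3l2} (and used choices such as $\hat x_6=\Clower+2V$, which would be infeasible here) simply do not arise. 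Concatenating the two inclusions gives the claimed descriptions for the $L=\ell=2$ parts; the $L=\ell=1$ parts are identical with the min-up/-down constraints \eqref{eqn:Q-3-1-mu}--\eqref{eqn:Q-3-1-md} replacing \eqref{eqn:p-minup-t3l2}--\eqref{eqn:p-mindn-t3l2} and the corresponding $P_3^1$-style candidate points.
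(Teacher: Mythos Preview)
Your proposal is correct and matches the paper's intended approach: the paper actually omits the proof of this theorem, stating only that it follows the same pattern as Theorem~\ref{thm:conv-t3l2} (validity plus the surjectivity/convex-combination argument of Proposition~\ref{prop:t3l2-int-extpoint}), and your sketch fills in precisely that template, including the correct observation that $\Cupper-\Clower-2V<0$ in both cases and the explanation of why the span-three and (in \textbf{Case~3}) ramp-difference inequalities become redundant. The only point to keep in mind for the $L=\ell=1$ parts is that the ``$P_3^1$-style candidate points'' must include the patterns $(1,0,1)$ and $(0,1,0)$, which are feasible when $L=\ell=1$ (cf.\ the eight-point construction the paper uses for Theorem~\ref{thm:general1}); with that adjustment the rest goes through as you describe.
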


{For \textbf{Case 4}, we have $\Cupper - \Clower - 2V =\Cupper - V - (\Clower + V) \geq \Vupper - (\Clower + V) \geq 0$ due to its parameter setting. Therefore, we do not need to consider the case in which $\Cupper - \Clower - 2V < 0$ as described above. In the following, we report the corresponding convex hull results in Theorems \ref{thm:general1} and \ref{thm:general4}.}

\begin{theorem} \label{thm:general1}
{For {\textbf{Case 4}, when} $L = \ell = 1$, the convex hull representation of the original polytope (e.g., denoted as $\bar{P}_3^{1}$) can be described as }
\begin{subeqnarray}
&\hspace{-0.3in} {\bar{Q}_3^{1} = conv(\bar{P}_3^{1})  =} \Bigl\{ & {(x, y, u) \in \R^8: \eqref{eqn:p-udef-t3l2} - \eqref{eqn:p-lower-bound-t3l2}, \eqref{eqn:u2-3period-t3l2}, \eqref{eqn:Q-3-1-mu}-\eqref{eqn:Q-3-1-x3-2}, \eqref{eqn:Q-3-1-x2-x1}-\eqref{eqn:Q-3-1-x2-x3}, \eqref{eqn:p-x1-x3-ub-t3l2},} \nonumber \\
&& {x_3 - x_1 \leq (\Clower + 2 V) y_3 - \Clower y_1 - (\Clower + 2 V - \Vupper) u_3 - (\Clower + V - \Vupper) u_2,}  \slabel{ineq:general1-3-1} \\
&& {(\Cupper - \Clower - 2V)(x_1 - \Vupper y_1 - V(y_2 - u_2)) \leq  (\Cupper - \Vupper - V) (x_3 - \Clower y_3),}  \slabel{ineq:general1-1-3-Maverick}\\
&& {(\Cupper - \Clower - 2V) (x_3 - (\Vupper + V) y_3 + V u_3) \leq (\Cupper - \Vupper - V) (x_1 - \Clower y_1)} \footnotemark \slabel{ineq:general1-3-1-Maverick} \Bigr\}. 
\end{subeqnarray}
\end{theorem}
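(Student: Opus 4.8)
The plan is to follow the same three-part scheme used for Theorem~\ref{thm:conv-t3l2}: (i) show every inequality in $\bar{Q}_3^1$ is valid for $\bar{P}_3^1$, which gives $\bar{Q}_3^1 \supseteq \mathrm{conv}(\bar{P}_3^1)$; (ii) verify $\bar{Q}_3^1$ is full-dimensional (so that each face analysis is meaningful); and (iii) prove the reverse inclusion $\bar{Q}_3^1 \subseteq \mathrm{conv}(\bar{P}_3^1)$ by expressing an arbitrary $z \in \bar{Q}_3^1$ as a convex combination of integral points of $\bar{P}_3^1$ via the surjectivity of an induced linear map $F: A \to C$, exactly as in Proposition~\ref{prop:t3l2-int-extpoint}.

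For (i), the ``single-variable'' and ``two-variable'' bounds \eqref{eqn:Q-3-1-mu}--\eqref{eqn:Q-3-1-x3-2}, \eqref{eqn:Q-3-1-x2-x1}--\eqref{eqn:Q-3-1-x2-x3}, \eqref{eqn:p-x1-x3-ub-t3l2}, and \eqref{ineq:general1-3-1} are obtained and validated by the same ``tighten $\Cupper y_t$, or tighten the ramp bound, by peeling off a nonnegative term $(y_{t'}-u_{t'}-\cdots)$'' arguments as in Proposition~\ref{prop:valid-t3l2}, now invoking the Case~4 inequalities $\Vupper \geq \Clower + V$ and $\Cupper - \Vupper - V \geq 0$; I would state them and omit the repetitive verifications. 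The genuinely new ingredients are the two ``Maverick'' inequalities \eqref{ineq:general1-1-3-Maverick}--\eqref{ineq:general1-3-1-Maverick}, which are honest linear inequalities because in Case~4 one has $\Cupper - \Clower - 2V = \Cupper - V - (\Clower + V) \geq \Vupper - (\Clower + V) \geq 0$, so the multiplier $\Cupper - \Clower - 2V$ on one side is nonnegative and $\Cupper - \Vupper - V \geq 0$ on the other. I would prove these by disjunction on the on/off pattern and startup times of an integral point: in each pattern $x_1$ (resp. $x_3$) is squeezed between a ramp-limited envelope and the capacity $\Cupper$, and the Maverick inequality is precisely the line through the two extreme feasible $(x_1,x_3)$ corner behaviors, hence tight at those corners and valid in between by convexity of the $(x_1,x_3)$-slice of $\bar{P}_3^1$ for fixed $(y,u)$.

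For (ii), exhibit eight affinely independent points of $\bar{P}_3^1$ satisfying the defining system; this parallels Proposition~\ref{prop:t3l2-full} and I would relegate it to an appendix. For (iii), fix $z=(\bar{x},\bar{y},\bar{u})\in\bar{Q}_3^1$. Since $L=\ell=1$ admits more on/off--startup patterns than the $L=\ell=2$ case, I would take candidate integral points $z^1,\dots,z^K$ covering the patterns $(1,0,0),(1,1,0),(1,1,1),(0,1,1),(0,0,1),(0,0,0)$ together with the newly admissible states $(1,0,1)$ and $(0,1,0)$, assign convex multipliers $\lambda_s$ that are affine in $(\bar{y},\bar{u})$ (so that $\sum_s\lambda_s=1$, $\lambda_s\geq 0$ follows from \eqref{eqn:p-udef-t3l2}, \eqref{eqn:Q-3-1-mu}, \eqref{eqn:Q-3-1-md}, \eqref{eqn:u2-3period-t3l2}, and the $(y,u)$-components reconstruct $(\bar{y},\bar{u})$), and reduce to the statement that the induced linear map $F$ from the box $A$ of admissible continuous values on the candidate points onto the polytope $C$ of $(\bar{x}_1,\bar{x}_2,\bar{x}_3)$ cut out by the $x$-constraints of $\bar{Q}_3^1$ at the fixed $(\bar{y},\bar{u})$ is surjective. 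Because $C$ is a bounded polytope it suffices to hit every point on a facet of $C$, which I would verify face-by-face, recursively pinning $\hat{x}$-coordinates to the endpoints of their intervals so that the corresponding $\bar{x}_i$ equalities hold, just as in Proposition~\ref{prop:t3l2-int-extpoint}.

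The main obstacle is the facets of $C$ coming from the two Maverick inequalities \eqref{ineq:general1-1-3-Maverick}--\eqref{ineq:general1-3-1-Maverick}: unlike the box-type faces, setting one of these to equality imposes a genuinely coupled relation between $\bar{x}_1$ and $\bar{x}_3$, so the residual feasible interval for the remaining coordinate has endpoints that are affine in $(\bar{y},\bar{u})$ only after dividing by $\Cupper - \Clower - 2V>0$ (the degenerate subcase $\Cupper - \Clower - 2V=0$ must be handled separately, where the Maverick inequality collapses into one of the ordinary bounds). On such a face I would build the preimage by assigning the continuous values on the $(1,1,1)$ and the $(1,0,1)$ candidate points to their extreme ramp-feasible profiles --- precisely the two corner behaviors the Maverick line interpolates --- and the values on the remaining candidate points to suitable constants, so that $Fp=w$; checking that this $p$ lies in $A$ is where the Case~4 parameter inequalities are used crucially. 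Everything else is a direct transcription of the Theorem~\ref{thm:conv-t3l2} argument, so the write-up can present only these new elements in detail.
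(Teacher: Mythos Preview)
Your overall plan mirrors the paper's: validity (omitted there as similar to Proposition~\ref{prop:valid-t3l2}) plus a surjectivity argument as in Proposition~\ref{prop:t3l2-int-extpoint}. You also correctly identify the need for the extra integral patterns $(1,0,1)$ and $(0,1,0)$, bringing the number of candidate points to eight. However, there is a genuine missing idea in your part~(iii).

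You write that you will ``assign convex multipliers $\lambda_s$ that are affine in $(\bar{y},\bar{u})$'' and then reproduce the linear-map surjectivity argument of Proposition~\ref{prop:t3l2-int-extpoint}. That worked for $L=\ell=2$ because there were six candidate points and six linear conditions (five $(y,u)$-coordinates plus $\sum_s\lambda_s=1$), so the $\lambda$'s were uniquely determined by $(\bar{y},\bar{u})$. Here, with eight candidate points and the same six conditions, two of the multipliers---$\lambda_3$ (weight on the $(1,0,1)$ pattern) and $\lambda_5$ (weight on the $(0,1,1)$ pattern)---are \emph{not} determined by $(\bar{y},\bar{u})$; they range over intervals of the form
\[
\max\{0,\bar{y}_1+\bar{u}_2+\bar{u}_3-1\}\le\lambda_3\le\min\{\bar{u}_3,\bar{y}_1-\bar{y}_2+\bar{u}_2\},\qquad
\max\{0,\bar{y}_3-\bar{u}_3-\bar{y}_2+\bar{u}_2\}\le\lambda_5\le\min\{\bar{u}_2,\bar{y}_3-\bar{u}_3\}.
\]
The paper's proof exploits this freedom essentially: for different faces of $C$ (and even for the two endpoints of the residual interval within a single sub-face), it selects \emph{different} values of $\lambda_5$ (sometimes the lower endpoint, sometimes the upper). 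For example, on the face $\bar{x}_1=\Clower\bar{y}_1$ with $\bar{x}_2$ at its upper bound, the two extreme values of $\bar{x}_3$ are hit with $\lambda_5=\max\{0,\bar{y}_3-\bar{u}_3-\bar{y}_2+\bar{u}_2\}$ and $\lambda_5=\min\{\bar{u}_2,\bar{y}_3-\bar{u}_3\}$, respectively. Since $F$ depends on $\lambda_5$, the ``linearity of $F$'' step is replaced by the observation that each extreme point of $C$ is individually shown to lie in $\mathrm{conv}(\bar{P}_3^1)$, and convexity of $\mathrm{conv}(\bar{P}_3^1)$ then finishes. If you pin all $\lambda_s$ to fixed affine expressions in $(\bar{y},\bar{u})$ up front, the induced $F(A)$ will not cover $C$ in general, and your face-by-face verification will fail precisely on the faces where the paper switches the value of $\lambda_5$. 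The ``Maverick'' faces \eqref{ineq:general1-1-3-Maverick}--\eqref{ineq:general1-3-1-Maverick} are where this flexibility is most visibly used, but it also appears in several of the ordinary box faces. So the key new mechanism relative to Theorem~\ref{thm:conv-t3l2} is not only the Maverick inequalities themselves but the face-dependent choice of $(\lambda_3,\lambda_5)$; your proposal omits this.
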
 \footnotetext{\footnotesize Note that \eqref{ineq:general1-3-1-Maverick} was discovered independently and early in \cite{liustudy}.}
\begin{proof}
{Validity proof is similar to that described in Propositions~\ref{prop:valid-t3l2} and thus {is} omitted here. In this part, we only prove $\bar{Q}_3^{1} \subseteq$ conv$(\bar{P}_3^1)$. The details are shown in Appendix \ref{apx:thm:general1}.}
\end{proof}

\begin{theorem} \label{thm:general4}
{For {\textbf{Case 4}, when} $L = \ell = 2$, the convex hull representation of the original polytope (e.g., denoted as $\bar{P}_3^{2}$) can be described as $\bar{Q}_3^{2} =$conv($\bar{P}_3^{2}$), which is the same as $Q_3^2$ except that 
\eqref{eqn:p-x2-x1-ub-t3l2} and \eqref{eqn:p-x2-x3-ub-t3l2} are replaced by \eqref{eqn:Q-3-1-x2-x1} and \eqref{eqn:Q-3-1-x2-x3}, plus the following one:
\begin{equation}
x_1 - x_2 + x_3 \geq \Clower y_1 - (\Clower + V) y_2 + \Clower y_3. \label{newoneforuse}
\end{equation}}
\end{theorem}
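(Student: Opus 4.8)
The plan is to follow the two-step template established for Theorem~\ref{thm:conv-t3l2}. First I would show conv$(\bar{P}_3^2)\subseteq\bar{Q}_3^2$ by verifying that every inequality defining $\bar{Q}_3^2$ is valid for $\bar{P}_3^2$, and then show $\bar{Q}_3^2\subseteq$ conv$(\bar{P}_3^2)$ by the surjective-mapping argument of Proposition~\ref{prop:t3l2-int-extpoint}; together these two inclusions give $\bar{Q}_3^2=$ conv$(\bar{P}_3^2)$.

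For the validity half, the key observation is that every inequality inherited verbatim from $Q_3^2$ stays valid under $\Vupper\ge\Clower+V$: the configuration-by-configuration checks of Proposition~\ref{prop:valid-t3l2} go through with the sign of $\Clower+V-\Vupper$ reversed, because each inherited inequality already carries the $u_2$- or $u_3$-correction term that absorbs the (now larger) start-up jump. The only two that do not survive unchanged are \eqref{eqn:p-x2-x1-ub-t3l2} and \eqref{eqn:p-x2-x3-ub-t3l2}: their derivations in Proposition~\ref{prop:valid-t3l2} used $\Vupper<\Clower+V$ (via $x_2-x_1\le(\Clower+V)y_2-\Clower y_1$), so under Case~4 they remain valid but are no longer strong enough to describe the hull, and the sharper \eqref{eqn:Q-3-1-x2-x1} and \eqref{eqn:Q-3-1-x2-x3} replace them. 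I would verify those two, together with the new lower bound \eqref{newoneforuse}, by the same case split on $(y_1,y_2,y_3,u_2,u_3)$: for \eqref{eqn:Q-3-1-x2-x1} the only nontrivial configuration is in-region operation $y_1=y_2=1,\ u_2=0$, which reduces to the raw ramp-up bound $x_2-x_1\le V$, whereas when $u_2=1$ the right-hand side relaxes to exactly $\Vupper$, matching the start-up ramp; for \eqref{newoneforuse} the only nontrivial configuration is $y_1=y_2=y_3=1$, where $x_2\le x_1+V$ and $x_2\le x_3+V$ together with $x_1,x_3\ge\Clower$ give $x_1-x_2+x_3\ge\max\{x_1,x_3\}-V\ge\Clower-V$.

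For the harder inclusion $\bar{Q}_3^2\subseteq$ conv$(\bar{P}_3^2)$, I would reuse the six candidate integral points $z^1,\dots,z^6$ and the multipliers \eqref{eqn:lamdba} from the proof of Proposition~\ref{prop:t3l2-int-extpoint}: the identities $\sum_s\lambda_s=1$, $\lambda_s\ge0$, and the exact recovery of $(\bar{y},\bar{u})$ are unaffected, since they involve only the $y,u$ coordinates and the min-up/-down/logical constraints, which are unchanged in Case~4. Two things have to be redone. First, the admissible set $A$ for $(\hat{x}_1,\dots,\hat{x}_9)$ changes, because the cross-period bounds on $\hat{x}_3-\hat{x}_2$ and on $\hat{x}_8-\hat{x}_7$ tighten under $\Vupper\ge\Clower+V$, namely to $[-V,V]$ in place of $[-V,\Vupper-\Clower]$ and $[\Clower-\Vupper,V]$ respectively; all other bounds of $A$ stay the same. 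Second, the polytope $C$ of feasible $(\bar{x}_1,\bar{x}_2,\bar{x}_3)$ given $(\bar{y},\bar{u})$ now carries \eqref{eqn:Q-3-1-x2-x1} and \eqref{eqn:Q-3-1-x2-x3} in place of \eqref{eqn:p-x2-x1-ub-t3l2} and \eqref{eqn:p-x2-x3-ub-t3l2}, plus the extra lower-bound facet \eqref{newoneforuse}. With these updated $A$ and $C$ I would re-run the facet-by-facet surjectivity argument for $F:A\to C$: for every face of $C$ (one of its defining inequalities at equality), fix $\bar{x}$ accordingly, then fix the $\hat{x}$-coordinates feeding $\bar{x}_1$, then those feeding $\bar{x}_2$, then those feeding $\bar{x}_3$, each at the appropriate vertex of $A$, and check that $F$ reproduces the remaining components, exactly as in the itemized cases of Proposition~\ref{prop:t3l2-int-extpoint}, deferring the new cases to an appendix.

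The main obstacle is the new facet \eqref{newoneforuse}. Unlike every facet of $C$ in the $Q_3^2$ analysis, it is a lower bound coupling all three of $\bar{x}_1,\bar{x}_2,\bar{x}_3$, so on that face $\bar{x}_2$ is pinned from above in terms of $\bar{x}_1$ and $\bar{x}_3$, and the preimage in $A$ must be constructed around a ramp peak at period~2: on the integral pieces that are on in all three periods or start up at period~2 one ramps up by exactly $V$ from period~1 to period~2 and then down by exactly $V$ to period~3 (so $\hat{x}_5-\hat{x}_4=-(\hat{x}_6-\hat{x}_5)=V$ and $\hat{x}_7-\hat{x}_8=V$), after which the remaining free coordinates are chosen to hit $\bar{x}_1$ and $\bar{x}_3$. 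Further care will be needed where \eqref{newoneforuse} meets the modified ramp facets \eqref{eqn:Q-3-1-x2-x1} and \eqref{eqn:Q-3-1-x2-x3}, because under $\Vupper\ge\Clower+V$ the configurations in which a period operates at its start-up level next to a shutdown period --- the ones that make those facets tight --- sit differently than in the $\Vupper<\Clower+V$ picture. Once this handful of additional faces is dispatched, the remaining facets of $C$ reduce to literal transcriptions of the Proposition~\ref{prop:t3l2-int-extpoint} casework, and combining the two inclusions yields $\bar{Q}_3^2=$ conv$(\bar{P}_3^2)$.
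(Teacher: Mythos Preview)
Your proposal is correct and follows the same template as the paper. The paper explicitly omits the proof of this theorem, remarking that it is ``very similar'' to the argument for Theorem~\ref{thm:conv-t3l2}; your plan---validity by configuration checking, then $\bar{Q}_3^2\subseteq\mathrm{conv}(\bar{P}_3^2)$ by reusing the six candidate points $z^1,\dots,z^6$, the multipliers \eqref{eqn:lamdba}, and the surjectivity argument of Proposition~\ref{prop:t3l2-int-extpoint} with the updated sets $A$ and $C$---is precisely that template, so there is nothing materially different to compare.

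Two minor remarks on presentation. First, your statement that the cross-period bounds ``tighten'' is slightly misleading: the set $A$ is always the intersection of the original constraints evaluated at the $z^i$; what changes in Case~4 is only which of the implied bounds are non-redundant (e.g.\ $\hat{x}_3-\hat{x}_2\le V$ now binds instead of $\hat{x}_3-\hat{x}_2\le\Vupper-\Clower$), while other box bounds such as $\hat{x}_1\le\Vupper$ actually loosen. Second, on the new face \eqref{newoneforuse} your ``ramp peak at period~2'' heuristic is right, but carrying it through forces $\hat{x}_1=\hat{x}_4=\hat{x}_6=\hat{x}_9=\Clower$ and $\hat{x}_5=\Clower+V$, leaving only $\hat{x}_2$ and $\hat{x}_8$ free in $[\Clower,\Vupper-V]$; one should then check that the reduced region $C'$ for $(\bar{x}_1,\bar{x}_3)$ on this face is indeed covered, which is the place where the extra inequality \eqref{newoneforuse} earns its keep.
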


{Finally, we consider \textbf{Case 5} and report the corresponding convex hull results in Theorem \ref{thm:general7} by further considering $\Cupper - \Clower - 2V \geq$ or $< 0$ and different minimum-up/-down time limits.}

\begin{theorem} \label{thm:general7}
{For {\textbf{Case 5}, when} $\Cupper - \Clower - 2V \geq 0$, the convex hull {can be} described as
\begin{itemize}
\item[1.] $\{(x, y, u) \in \R^8 :  \eqref{eqn:p-minup-t3l2} - \eqref{eqn:p-lower-bound-t3l2}, \eqref{eqn:Q-3-1-x1}, \eqref{eqn:p-x2-ub-t3l2}, \eqref{eqn:Q-3-1-x3-1}, \eqref{eqn:Q-3-1-x2-x1}-\eqref{eqn:Q-3-1-x2-x3}, \eqref{eqn:p-x1-x2+x3-ub-t3l2}, \eqref{newoneforuse}, \eqref{ffinal1}, \mbox{and} \  \eqref{ffinal2}\}$ {for $L = \ell = 2$ and}
\item[2.] $\{(x, y, u) \in \R^8 :  \eqref{eqn:p-udef-t3l2} - \eqref{eqn:p-lower-bound-t3l2},~\eqref{eqn:Q-3-1-mu}- \eqref{eqn:Q-3-1-x1},~\eqref{eqn:Q-3-1-x2-1}-\eqref{eqn:Q-3-1-x3-1}, \eqref{eqn:Q-3-1-x2-x1}-\eqref{eqn:Q-3-1-x2-x3}, \eqref{ffinal1}, \mbox{and} \  \eqref{ffinal2}\}$ {for $L = \ell = 1$}, \\
{where \eqref{ffinal1} and \eqref{ffinal2} are defined as follows:}
\begin{subeqnarray}
&&\hspace{-0.4in}  x_3 - x_1 \leq (\Clower + 2V) y_3 - \Clower y_1 - (\Clower + 2V - \Vupper) u_3 + (\Cupper - \Clower - 2V) u_2, \slabel{ffinal1} \\
&&\hspace{-0.4in} x_1 - x_3 \leq \Vupper y_1 + (\Cupper - \Vupper) y_2 - (\Cupper - 2V) y_3 - (\Clower + 2V - \Vupper) u_2 + (\Cupper - \Clower - 2V) u_3. \slabel{ffinal2}
\end{subeqnarray}
\end{itemize} 
For {\textbf{Case 5}, when} $\Cupper - \Clower - 2V < 0$, the convex hull descriptions for the $L = \ell = 2$ and $L = \ell = 1$ cases {can be obtained} by removing \eqref{ffinal1} and \eqref{ffinal2} from the above expressions.}
\end{theorem}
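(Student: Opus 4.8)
The plan is to prove Theorem~\ref{thm:general7} with the same two-step template used for Theorem~\ref{thm:conv-t3l2} and Theorem~\ref{thm:general1}. Write $Q$ for the candidate polytope in whichever of the two subcases ($\Cupper-\Clower-2V\ge 0$ or $<0$) and whichever of $L=\ell=2$, $L=\ell=1$ we are treating. First I would show $Q\supseteq\mathrm{conv}(P)$ by verifying that every inequality listed is valid for the integer points of the three-period polytope; then I would show $Q\subseteq\mathrm{conv}(P)$ by writing an arbitrary $z\in Q$ as a convex combination of integer feasible points, via the surjective-linear-map argument of Proposition~\ref{prop:t3l2-int-extpoint}. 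Combining the two inclusions gives $Q=\mathrm{conv}(P)$; by the Remark following Theorem~\ref{thm:t3l1l2-2v} it suffices to do $L=\ell=2$ and $L=\ell=1$, since the $\ell=1$ versus $\ell=2$ distinction is just swapping the minimum-down constraints \eqref{eqn:p-mindn-t3l2} and \eqref{eqn:Q-3-1-md}.

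For validity, the inequalities carried over from earlier results (the constraints \eqref{eqn:p-minup-t3l2}--\eqref{eqn:p-lower-bound-t3l2} or \eqref{eqn:p-udef-t3l2}--\eqref{eqn:p-lower-bound-t3l2}, \eqref{eqn:Q-3-1-mu}--\eqref{eqn:Q-3-1-x1}, \eqref{eqn:p-x2-ub-t3l2}, \eqref{eqn:Q-3-1-x2-1}--\eqref{eqn:Q-3-1-x3-1}, \eqref{eqn:Q-3-1-x2-x1}--\eqref{eqn:Q-3-1-x2-x3}, \eqref{eqn:p-x1-x2+x3-ub-t3l2}, \eqref{newoneforuse}) are already known to hold, the last from Theorem~\ref{thm:general4}, and their validity arguments go through unchanged under $\Vupper\ge\Clower+V$. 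The genuinely new work is the validity of \eqref{ffinal1} and \eqref{ffinal2}. I would prove these as in Proposition~\ref{prop:valid-t3l2}: condition on whether a start-up occurs and at which period (at most one start-up, by \eqref{eqn:p-minup-t3l2}/\eqref{eqn:Q-3-1-mu}), i.e.\ on $u_2=1$, $u_3=1$, or $u_2=u_3=0$; in the no-start-up branch telescope the ramp-up/-down constraints \eqref{eqn:p-ramp-up-t3l2}/\eqref{eqn:p-ramp-down-t3l2} to get the coefficient $\Clower+2V$, and in the late-start branches use $\Cupper<\Vupper+V$ together with $x_1=0$ (resp.\ $x_3=\Clower y_3$) to cap the remaining variable at $\Cupper$, which is exactly what the extra term $(\Cupper-\Clower-2V)u_2$ (resp.\ $u_3$) relaxes the bound to. For the $\Cupper-\Clower-2V<0$ subcase I would then check that \eqref{ffinal1} and \eqref{ffinal2} are implied by the remaining inequalities (the $u_2$/$u_3$ term is now nonpositive, and the inequalities \eqref{eqn:Q-3-1-x2-x1}--\eqref{eqn:Q-3-1-x2-x3} chained over the two consecutive gaps dominate them), so that dropping them is legitimate.

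For $Q\subseteq\mathrm{conv}(P)$ I would reuse the mapping construction verbatim: take the same six candidate points $z^1,\dots,z^6$ and multipliers $\lambda_1,\dots,\lambda_6$ of \eqref{eqn:lamdba}, so that the $y$- and $u$-coordinates of $z$ are reproduced automatically and $\sum_s\lambda_s=1$, $\lambda_s\ge 0$; the problem reduces to showing the linear map $F:A\to C$ is surjective, where $A$ is the box of feasible $(\hat x_1,\dots,\hat x_9)$ with the ramp and bound limits recomputed for $\Vupper\ge\Clower+V$ (in particular the shutdown-adjacent entries now bind at $\Vupper$, and since $\Vupper+V>\Cupper$ the post-start entries bind at $\Cupper$), and $C$ is the $(\bar x_1,\bar x_2,\bar x_3)$-polytope cut out by the $x$-inequalities of $Q$ at the fixed $(\bar y,\bar u)$. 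Because $C$ is bounded it suffices to hit its extreme points, and as before it is enough to walk through the facets of $C$ one at a time—set one defining inequality to equality, project out a variable, and recurse on the resulting lower-dimensional face, at each step choosing the $\hat x_i$ at suitable box endpoints so that the identities of the form \eqref{eqn-C: x1x2x3bar} hold.

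The main obstacle will be the faces coming from \eqref{ffinal1}, \eqref{ffinal2}, and, for $L=\ell=2$, their interplay with the reverse inequality \eqref{newoneforuse}: these carry the parameter $\Cupper-\Clower-2V$ as a coefficient, so on those faces one cannot simply push every $\hat x_i$ to a corner of the box—one must select interior values of the $\hat x_i$ that are themselves forced convex combinations, exactly as the Maverick inequalities \eqref{ineq:general1-1-3-Maverick}--\eqref{ineq:general1-3-1-Maverick} are handled in Theorem~\ref{thm:general1}. Concretely, on the face where \eqref{ffinal1} is tight the value of $\bar x_3-\bar x_1$ is pinned, and I would solve for $\hat x_6,\hat x_8,\hat x_9$ and $\hat x_1,\hat x_2,\hat x_4$ simultaneously (using that $\lambda_4=\bar u_2$ and $\lambda_5=\bar u_3$ are precisely the weights on the late-start points $z^4,z^5$) so that the pinned value is reproduced while all nine $\hat x_i$ remain in $A$; the sign and magnitude of the $\Cupper-\Clower-2V$ coefficient are exactly what guarantees that such a choice exists. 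Once this case is dispatched, the remaining facets of $C$ are routine variants of the cases already worked out in Proposition~\ref{prop:t3l2-int-extpoint} and Appendix~\ref{apx:sec:int-extpoint}, and the whole argument specializes to the $\Cupper-\Clower-2V<0$ subcase by simply deleting the \eqref{ffinal1}/\eqref{ffinal2} facets from the enumeration.
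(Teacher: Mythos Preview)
Your overall template is correct and matches the paper's approach (which the paper in fact omits, referring back to Theorems~\ref{thm:conv-t3l2} and~\ref{thm:general1}). The validity arguments you sketch for \eqref{ffinal1} and \eqref{ffinal2} are sound, and the redundancy check in the $\Cupper-\Clower-2V<0$ subcase is the right idea.

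However, there is a genuine gap in the $Q\subseteq\mathrm{conv}(P)$ direction for the $L=\ell=1$ case. You propose to ``reuse the mapping construction verbatim'' with the six candidate points $z^1,\dots,z^6$ and the multipliers \eqref{eqn:lamdba}. That construction is specific to $L=\ell=2$: it relies on $\lambda_6=1-\bar y_1-\bar u_2-\bar u_3\ge 0$ (the minimum-down constraint \eqref{eqn:p-mindn-t3l2}) and $\lambda_3=\bar y_3-\bar u_2-\bar u_3\ge 0$ (the minimum-up constraint \eqref{eqn:p-minup-t3l2}). When $L=\ell=1$ neither inequality is available, and indeed the binary patterns $(y_1,y_2,y_3)=(1,0,1)$ and $(0,1,0)$ are feasible but force $\lambda_6=-1$ and $\lambda_3=-1$ respectively under \eqref{eqn:lamdba}. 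So the six-point map cannot even reproduce the $(\bar y,\bar u)$ coordinates, let alone the $\bar x$'s.

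What you need for $L=\ell=1$ is precisely the eight-point construction used in the proof of Theorem~\ref{thm:general1} (Appendix~\ref{apx:thm:general1}): two extra candidate points carrying the patterns $(1,0,1)$ and $(0,1,0)$, together with the \emph{ranged} multipliers $\lambda_3\in[\max\{0,\bar y_1+\bar u_2+\bar u_3-1\},\min\{\bar u_3,\bar y_1-\bar y_2+\bar u_2\}]$ and $\lambda_5\in[\max\{0,\bar y_3-\bar u_3-\bar y_2+\bar u_2\},\min\{\bar u_2,\bar y_3-\bar u_3\}]$. The freedom in $\lambda_3,\lambda_5$ is not merely a device for the Maverick faces---it is what makes the convex-combination weights nonnegative in the first place. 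Once you adopt that construction, the facet-by-facet enumeration for \eqref{ffinal1} and \eqref{ffinal2} proceeds as you outline, choosing $\lambda_5$ (resp.\ $\lambda_3$) at an endpoint of its range on those faces, just as in the seven numbered subcases at the end of Appendix~\ref{apx:thm:general1}. Your six-point plan is fine for the $L=\ell=2$ part.
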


\begin{remark}
Since the start-up decision is not considered in the {first time} period, we do not need to consider the cases in which $L = 3$ or $\ell=3$ and it also follows that the strong valid inequalities in this section can be applied to any three consecutive time periods for the multi-period {cases}.
\end{remark}

\section{Strong Valid Inequalities for Multi-period Cases} \label{sec:multi-period}

In this section, we further strengthen the formulation for the general polytope $P$ by exploring the inequalities covering multiple {time} periods. 
Strong valid inequalities containing one, two, and three continuous variables are derived in Sections \ref{subsec:multi-period-single}, \ref{subsec:multi-period-two}, and \ref{subsec:multi-period-three} respectively, through considering the  effects of minimum-up/-down time, ramp rate, start-up decision, and capacity constraints.
{Without loss of generality, we} assume $T \geq L+1$.

\subsection{Strong Valid Inequalities with Single Continuous Variable}  \label{subsec:multi-period-single}
In this subsection, we target to bound the generation amount at each time period (e.g., $x_t$) from above (i.e., tightening constraints \eqref{eqn:p-upper-bound}) and provide better upper bound representations for $x_t$ in inequalities \eqref{eqn:x_t-ub-2-multi-period}, \eqref{eqn:x_t-up-exp}, and \eqref{eqn:x_t-down-exp}.

{Besides the capacity upper bound (i.e., $\Cupper$) to bound $x_t$ from above, {the start-up decisions before $t$ also have effects on the generation upper bound at $t$ when a machine} is online at time $t$.
For instance, if the machine starts up at $t-s$ ($s \geq 0$), then $x_t$ should be bounded from above by $\Vupper+sV$ due to ramp-up constraints \eqref{eqn:p-ramp-up}, which is a tighter upper bound for $x_t$ than $\Cupper$ if $\Vupper+sV < \Cupper$.
Based on this observation, we have the following strong valid inequality \eqref{eqn:x_t-ub-2-multi-period} to tighten \eqref{eqn:p-upper-bound}.}

\begin{proposition} \label{prop:x_t-ub-2-multi-period}
For $1 \leq k \leq \min \{L, {\lfloor (\Cupper - \Vupper)/V \rfloor}  + 2 \}$ {and} $t \in [k, T-1]_{\Z}$, the {following} inequality
\begin{equation}
x_t \leq \Vupper y_t + (\Cupper - \Vupper) (y_{t+1} - u_{t+1}) - \sum_{s=1}^{k-1} \bigg(\Cupper - \Vupper - (s-1) V\bigg)  u_{t-s+1} \label{eqn:x_t-ub-2-multi-period}
\end{equation}
is valid for conv($P$). Furthermore, it is facet-defining for conv($P$) when one of the following conditions is satisfied:
(1) $L \leq 3$ {and} $k = \min \{L, {\lfloor (\Cupper - \Vupper)/V \rfloor} + 2 \}$ for all $t \in [k,T-1]_{\Z}$;
(2) $L \geq 4$ {and} $k = \min \{L, {\lfloor (\Cupper - \Vupper)/V \rfloor} + 2 \}$ for $t = T-1$.
\end{proposition}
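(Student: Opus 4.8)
The plan is to establish validity first and then the facet-defining property by exhibiting eight (or more generally $\dim(\mathrm{conv}(P))$ many) affinely independent points of $P$ that satisfy \eqref{eqn:x_t-ub-2-multi-period} at equality. For validity, I would follow the same ``peeling'' argument used in Proposition~\ref{prop:valid-t3l2} for \eqref{eqn:p-x1-ub-t3l2}: start from the weaker bound $x_t \le \Vupper y_t + (\Cupper-\Vupper)(y_{t+1}-u_{t+1})$, whose validity is checked by cases on whether $y_{t+1}-u_{t+1}$ equals $y_t$ (trivial) or the machine shuts down at $t+1$ (forcing $x_t\le\Vupper$ by the ramp-down constraint \eqref{eqn:p-ramp-down}), and then successively subtract the terms $(\Cupper-\Vupper-(s-1)V)u_{t-s+1}$ for $s=2,\dots,k-1$. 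The key observation driving each step is that when $u_{t-s+1}=1$ (a start-up $s-1$ periods before $t$), the ramp-up constraints \eqref{eqn:p-ramp-up} chained from $t-s+1$ to $t$ give $x_t\le\Vupper+(s-1)V$, and since $k\le L$ at most one of the $u_{t-s+1}$, $s=1,\dots,k-1$, together with $u_{t+1}$, can be nonzero by the minimum-up constraint \eqref{eqn:p-minup}; the bound $k\le\lfloor(\Cupper-\Vupper)/V\rfloor+2$ ensures the coefficients $\Cupper-\Vupper-(s-1)V$ stay nonnegative so the inequality genuinely tightens \eqref{eqn:p-upper-bound}.

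For the facet-defining part, since $\mathrm{conv}(P)$ is full-dimensional (the analogue of Proposition~\ref{prop:t3l2-full} for the multi-period polytope), I must produce enough affinely independent feasible points lying on the hyperplane \eqref{eqn:x_t-ub-2-multi-period}. I would organize the construction around the ``tight'' scenarios: (a) the machine is online throughout a window ending at $t+1$ with no start-up in $[t-k+2,t+1]$ and $x_t=\Cupper$ (this activates only the $\Vupper y_t+(\Cupper-\Vupper)y_{t+1}$ part and saturates it), perturbing the far-away periods $1,\dots,t-k+1$ and $t+2,\dots,T$ freely; (b) the machine starts up at $t-s+1$ for each $s\in\{1,\dots,k-1\}$, forcing $u_{t-s+1}=1$ and setting $x_t=\Vupper+(s-1)V$, $x_{t+1}=\Vupper+sV$ (feasible by ramp-up, and $\le\Cupper$ by the range condition), again with freedom in the remaining periods; (c) the machine shuts down at $t+1$ with $x_t=\Vupper$; and (d) the all-off point and points where $y_t=0$, $x_t=0$, which satisfy the inequality at equality because then all of $y_t,y_{t+1},u_{t+1},u_{t-s+1}$ vanish by \eqref{eqn:p-minup} (using $k\le L$). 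Counting the degrees of freedom: the free generation levels and on/off patterns in the periods outside the critical window $[t-k+1,t+1]$ contribute the bulk of the affinely independent directions, and the $k-1$ start-up scenarios plus the shut-down scenario supply the rest; one checks linear independence of the resulting difference vectors by a staircase/triangular argument on the coordinates $u_{t-s+1}$, $x$ at the boundary periods, and $y$ at distant periods.

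The two conditions on $k$ and $t$ in the statement are exactly what make the point count work out. When $t=T-1$ the ``right side'' of the window is at the horizon end, so constraints \eqref{eqn:p-minup}, \eqref{eqn:p-mindn} impose no restriction past $t+1=T$, which frees up the needed directions; this is why condition (2) only asks for $t=T-1$ when $L\ge 4$. When $L\le 3$ the window $[t-k+1,t+1]$ is short enough (length at most $5$) that even for interior $t$ one can still exhibit enough independent points without running into the minimum-up/-down coupling on both sides, giving condition (1) for all $t$. I would make this dichotomy precise by splitting the facet proof into the boundary case $t=T-1$ (handled uniformly for all $L$) and the interior case $t<T-1$ (only needed, and only valid, when $L\le 3$, where the short window is checked essentially by hand, mirroring the three-period analysis of Section~\ref{sec:three-period}).

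The main obstacle I anticipate is the facet-defining verification for the interior case $t<T-1$ under $L\le 3$: there the chosen points must simultaneously respect the minimum-up/-down constraints on \emph{both} sides of the critical window as well as the ramping constraints linking $x_{t-k+1},\dots,x_{t+1}$, so the available perturbations are tightly constrained and one cannot simply wiggle distant periods. Establishing affine independence of the full collection in that regime — as opposed to merely writing the points down — is the delicate combinatorial bookkeeping; I expect to handle it by a careful triangular ordering of the points against the coordinates they ``switch on,'' analogous to (but more involved than) the extreme-point argument in the proof of Proposition~\ref{prop:t3l2-int-extpoint}. The validity argument and the boundary facet case, by contrast, should be routine given the techniques already developed in Section~\ref{sec:three-period}.
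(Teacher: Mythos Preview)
Your proposal is correct and follows essentially the same approach as the paper. For validity, the paper does a direct four-case analysis on $(y_t,y_{t+1})$ rather than your sequential ``peeling'' presentation, but the content is identical (one minor slip: the subtracted terms run over $s=1,\dots,k-1$, not $s=2,\dots,k-1$, since the $s=1$ term is $(\Cupper-\Vupper)u_t$); for the facet part, the paper likewise constructs $3T-2$ explicit linearly independent points organized around exactly your scenarios (a)--(d) and verifies independence via a lower-triangular ordering, and it in fact writes out the ``interior'' case $L\le 3$ (your anticipated obstacle) in full, leaving the $t=T-1$ case as similar.
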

\begin{proof}
See Appendix \ref{apx:subsec:x_t-ub-2-multi-period} for the proof.
\end{proof}

Note here that the inequalities derived in Proposition \ref{prop:x_t-ub-2-multi-period} are polynomial in the order of $\mathcal{O}(T)$.
In the following, we explore a further study and discover \sred{several} families of inequalities in exponential sizes
with single continuous variable (i.e., $x_t$) to strengthen the multi-period formulation.

{The key idea of deriving these inequalities is based on the fact that
the online/offline status of a machine at time $t$ is affected by not only the online/offline status before $t$ (e.g., inequality \eqref{eqn:x_t-ub-2-multi-period}) but also the status after $t$. 
For instance, in Figure \ref{fig:insight-1}, as shown in the upper figure with the horizontal axis indicating the time range and the vertical axis indicating the generation amount $x$, if a machine starts up at time $t-m$ (i.e., $u_{t-m}=y_{t-m}=1$), then $x_t$ should be bounded from above by $\Vupper+mV$ due to ramp-up constraints \eqref{eqn:p-ramp-up}, which is a tighter upper bound than $\Cupper$ if $\Vupper+mV<\Cupper$. Meanwhile, if this machine shuts down at time $t+n+1$ (i.e., $y_{t+n}=1$ and $y_{t+n+1}=0$) with $n \leq m$, then $x_t$ should be bounded from above by $\Vupper+nV$ due to ramp-down constraints \eqref{eqn:p-ramp-down}, which leads to a further tighter upper bound than $\Vupper+mV$ since $n \leq m$. The lower figure in Figure \ref{fig:insight-1} provides the online/offline status of this machine.
Therefore, through considering {the effects generated by both the start-up decision before $t$ and the shut-down decision after $t$, we derive} the following strong valid inequality \eqref{eqn:x_t-up-exp} to provide a {tighter} upper bound representation for $x_t$.}

\begin{figure}
\centering
\begin{tikzpicture}[
	scale=1, 
	wdot/.style = {
      draw,
      fill = white,
      circle,
      inner sep = 0pt,
      minimum size = 4pt
    },
   	bdot/.style = {
      draw,
      fill = black,
      circle,
      inner sep = 0pt,
      minimum size = 4pt
    } ]
\coordinate (O) at (0,0);
\draw[->] (0,0) -- (10.5,0) coordinate[label = {below:$T$}] (tmax);
\draw[->] (0,0) -- (0,5.5) coordinate[label = {left:$x$}] (ymax);

\draw (1,0) node[wdot, label = {below: \footnotesize{$t-m$}}] {};
\draw (2,0) node[wdot] {};
\draw (6,0) node[wdot, label = {below: \footnotesize{$t$}}] {};
\draw (9,0) node[wdot, label = {below: \footnotesize{$t+n$}}] {};
\draw (10,0) node[wdot] {};
\draw (4,-0.18) node{   \footnotesize{$\cdots$}  };
\draw (7.5,-0.18) node{   \footnotesize{$\cdots$}  };

\draw (0,0.6) node[wdot, label = {left: \footnotesize{$\Clower$}}] {};
\draw (0,1) node[wdot, label = {left:  \footnotesize{$\Vupper$}}] {};
\draw (0,4.5) node[wdot, label = {left: \footnotesize{$\Vupper+mV$} } ] {};
\draw (0,5) node[wdot, label = {left:  \footnotesize{$\Cupper$}}] {};

\draw[dotted] (2,1.7) -- (2,0);
\draw[dotted] (2,1.7) -- (0,1.7);
\draw[thick, dashdotted] (1,1) -- (6,4.5);
\draw (6,4.5) node[bdot] {};
\draw (1,1) node[bdot] {};
\draw[->, dotted] (1,0) -- (1,0.9);
\draw[dotted] (1,1) -- (0,1);
\draw (0,1.7) node[wdot, label = {left: \footnotesize{$\Vupper+V$} } ] {};

\draw[dotted] (6,4.5) -- (6,0);
\draw[dotted] (6,4.5) -- (0,4.5);
\draw (7.5,4.5) node{   \footnotesize{$x_t \leq \Vupper+mV$}  };

\draw (0,3.1) node[wdot, label = {left: \footnotesize{$\Vupper+nV$} } ] {};
\draw (9,1) node[bdot] {};
\draw (6,3.1) node[bdot] {};
\draw[thick, draw=myblue] (6,3.1) -- (9,1);
\draw[->, dotted] (10,0.5) -- (10,0.1);
\draw (10,0.7) node{   \footnotesize{$t+n+1$}  };
\draw[dotted] (9,0) -- (9,1) -- (0,1);
\draw[dotted] (6,3.1) -- (0,3.1);
\draw (7.5,3.1) node{   \footnotesize{$x_t \leq \Vupper+nV$}  };

\draw (9,3.8) node{   \scriptsize{$\Longrightarrow x_t \leq \min \{\Vupper+nV, \Vupper + mV \} = \Vupper+nV$}  };
\draw (1,1) node[bdot] {};
\draw[->, dotted] (1,0) -- (1,0.9);
\draw[thick, draw=myblue] (6,3.1) -- (1,1);
\end{tikzpicture}
\begin{tikzpicture}[
	scale=1, 
	wdot/.style = {
      draw,
      fill = white,
      circle,
      inner sep = 0pt,
      minimum size = 4pt
    },
   	bdot/.style = {
      draw,
      fill = black,
      circle,
      inner sep = 0pt,
      minimum size = 4pt
    } ]
\coordinate (O) at (0,0);
\draw[->] (0,0) -- (10.75,0) coordinate[label = {below:$T$}] (tmax);
\draw[->] (0,0) -- (0,1.5) coordinate[label = {left:$y$}] (ymax);

\draw (2,0) node[wdot] {};
\draw (6,0) node[wdot, label = {below: \footnotesize{$t$}}] {};
\draw (9,0) node[wdot, label = {below: \footnotesize{$t+n$}}] {};
\draw (10,0) node[wdot] {};
\draw (4,-0.18) node{   \footnotesize{$\cdots$}  };
\draw (7.5,-0.18) node{   \footnotesize{$\cdots$}  };
\draw (9,0.5) node{   \footnotesize{$n \leq m$}  };

\draw (0,1) node[wdot, label = {left:$1$}] {};
\draw[thick, draw=red] (0,0) -- (1,0);
\draw (1,0) node[wdot, label = {below: \footnotesize{$t-m$}}] {};
\draw (0,0) node[wdot, label = {left:$0$}] {};
\draw (1,-0.8) node{$u_{t-m}=1$};
\draw[->, dotted] (1,0) -- (1,1);

\draw[thick, draw=myblue] (1,1) -- (10,1);
\draw[->, dotted] (10,1) -- (10,0);
\draw[thick, draw=red] (10,0) -- (10.5,0);
\end{tikzpicture}
\caption{The basic insight}\label{fig:insight-1}
\end{figure}
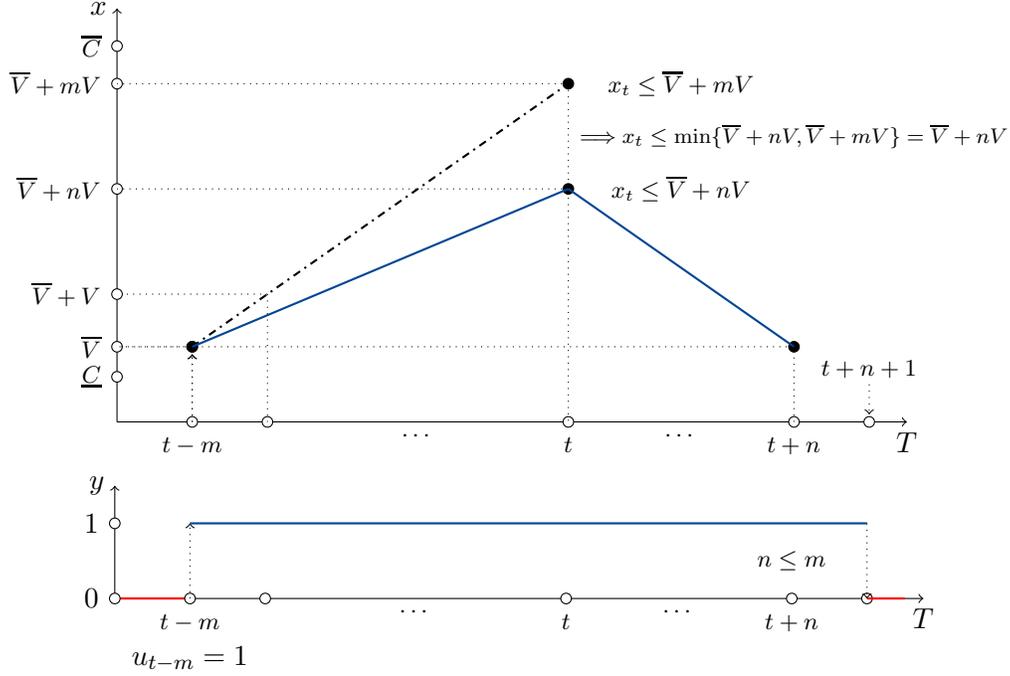

\begin{proposition} \label{prop:x_t-up-exp}
{For each $t \in [L+1, T]_{\Z}$, $m \in [0, \min \{t-L-1, {[(\Cupper-\Vupper)/V-L+1]^+}\}]_{\Z}$, $n \in [\min \{1, T-t\}, \min \{L-1, T-t\}]_{\Z}$ if $L \geq 2$ and $n = 0$ if $L=1$, {and} $S \subseteq [t-m+1, t]_{\Z}$, the {following} inequality
\begin{align}
x_t & \leq \Vupper y_t + V \sum_{i \in S} \bigg(i - d_i\bigg) \bigg(y_i - \sum_{j=0}^{L-1} u_{i-j}\bigg) + V \sum_{k=1}^{{[n-1]^+}} \bigg(y_{t+k} - \sum_{j=0}^{L-1} u_{t+k-j}\bigg) \nonumber \\
 & + \alpha(S,t) V\bigg(y_{t+n} - \sum_{j=0}^{L-1} u_{t+n-j}\bigg) + \beta(t) \bigg(y_{t-m} - \sum_{j=0}^{L-1} u_{t-m-j}\bigg) + \phi(u,t), \label{eqn:x_t-up-exp}
\end{align}
where $d_i = \max \{a \in S \cup \{t-m\}: a < i \}$ for each $i \in S$, $\alpha(S,t) = m+L-1 - \sum_{i \in S} (i - d_i) - {[n-1]^+}$, $\beta(t) = \Cupper - \Vupper - (m+L-1) V$, and $\phi(u,t) = V \sum_{k=1}^{ {t+L-T-1} } k u_{t-k} + V \sum_{k={[t+L-T]^+}}^{L-1} \min \{L-1-k,k\}u_{t-k}$,
is valid and facet-defining for conv($P$),
under the condition that $n \geq {(L-1)/2}$ {if $n \leq T-t-1$}.}
\end{proposition}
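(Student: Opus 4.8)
The plan is to establish validity of \eqref{eqn:x_t-up-exp} by a case analysis on the on/off pattern of the machine around period~$t$, and then to prove the facet-defining claim by exhibiting $3T-1$ affinely independent points of $P$ that satisfy \eqref{eqn:x_t-up-exp} at equality (recall that $\mathrm{conv}(P)$ is full-dimensional, as in Proposition~\ref{prop:t3l2-full} for the three-period case, so the face must carry $3T-1$ such points).

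For validity, the recurring elementary fact is that $y_i-\sum_{j=0}^{L-1}u_{i-j}\in\{0,1\}$ for every index $i\ge L+1$ occurring in \eqref{eqn:x_t-up-exp}, and equals $0$ whenever $y_i=0$; this is immediate from the minimum-up constraints \eqref{eqn:p-minup} and $u\ge 0$, and the quantity equals $1$ exactly when the machine is online at $i$ and has not started up within the previous $L-1$ periods. First, if $y_t=0$ then $x_t=0$ by \eqref{eqn:p-upper-bound}, and the right-hand side of \eqref{eqn:x_t-up-exp} is nonnegative: each bracketed term is $\ge 0$ by the fact above, $\beta(t)=\Cupper-\Vupper-(m+L-1)V\ge 0$ because $m\le(\Cupper-\Vupper)/V-L+1$, and $\phi(u,t)\ge 0$ since each of its summands is nonnegative; hence \eqref{eqn:x_t-up-exp} holds. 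Now suppose $y_t=1$, and let $t-m'$ be the start-up of the online block containing $t$ (set $m'=+\infty$ if the block reaches back to period~$1$) and let $t+n'$ be the last period of that block, so $y_{t+n'}=1$ and either $y_{t+n'+1}=0$ or $t+n'=T$. An online block contains no shut-down, so ramp-up \eqref{eqn:p-ramp-up} yields $x_t\le\Vupper+m'V$, ramp-down \eqref{eqn:p-ramp-down} yields $x_t\le\Vupper+n'V$ when the block ends strictly before~$T$, and \eqref{eqn:p-minup} forces the block length to be at least~$L$ --- precisely the content of the coefficient $\min\{L-1-k,k\}$ in $\phi(u,t)$ and of the split into the two sums defining $\phi(u,t)$ near the horizon. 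It then remains to check, in each on/off pattern (start-up inside versus outside $[t-m,t]$; the block ending inside versus outside $[t,t+n]$; and, for each choice of which cells of $S$ are ``fresh'' versus ``steady''), that the surviving bracketed indicators make the right-hand side of \eqref{eqn:x_t-up-exp} collapse to exactly one of $\Vupper+m'V$, $\Vupper+n'V$, or $\Cupper$. The telescoping identity $\sum_{i\in S}(i-d_i)=\max S-(t-m)$ and the definitions of $\alpha(S,t)$ and $\beta(t)$ carry the algebra: e.g.\ when no relevant cell is fresh all indicators equal~$1$, the bracket sums to $m+L-1$, and the right-hand side equals $\Vupper+(m+L-1)V+\beta(t)=\Cupper$; when the block shuts down at $t+n+1$ the stipulation $n\ge(L-1)/2$ (relevant only when $n\le T-t-1$) is what makes the coefficient $\min\{L-1-k,k\}$ line up with a forward window of length~$n$.

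For the facet-defining claim, I would assemble the $3T-1$ points from a handful of templates: (i) a base schedule where the machine starts up at $t-m$, stays online exactly through $t+n$, then shuts down, with $x$ ramping up at rate $V$ and then holding at the bound, which meets \eqref{eqn:x_t-up-exp} with equality; (ii) perturbations of the generation in periods whose coefficient on the right-hand side of \eqref{eqn:x_t-up-exp} is zero (those before $t-m-L+1$ and after $t+n+1$), which move freely while preserving equality; (iii) schedules that flip the $(y,u)$ pattern outside the window $[t-m,t+n+1]$, each completed with a compatible $x$; and (iv) a small number of points that translate the unique ``fresh start-up'' cell across $S$ and through the forward periods $t+1,\dots,t+n$ while keeping equality --- this is exactly the role of the coefficients $(i-d_i)$ and of the split in $\phi(u,t)$. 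Ordering these points so that the matrix of pairwise differences is block upper-triangular with nonzero pivots establishes affine independence, and a count confirms there are $3T-1$ of them.

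The step I expect to be the main obstacle is the validity bookkeeping when $y_t=1$: the sub-cases genuinely interact (location of the start-up relative to both $m$ and the horizon through $\phi(u,t)$; location of the shut-down relative to $n$; and the overlap of $S$ with the fresh cells), and one must confirm in every one of them that the surviving coefficients sum to precisely the correct ramp bound. The facet count is lengthier but routine once the templates are pinned down; its only delicate point is affine independence of the $S$-translation points, which again relies on the structure of the $d_i$'s.
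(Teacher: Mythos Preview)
Your overall approach matches the paper's: validity by case analysis on the on/off pattern around $t$, and facet-defining by exhibiting $3T-1$ affinely independent tight points (the paper equivalently takes the origin plus $3T-2$ linearly independent tight points). The high-level ingredients you name---the indicator interpretation of $y_i-\sum_{j=0}^{L-1}u_{i-j}$, the telescoping $\sum_{i\in S}(i-d_i)=\max S-(t-m)$, the three target bounds $\Vupper+m'V$, $\Vupper+n'V$, $\Cupper$, and the role of the hypothesis $n\ge(L-1)/2$---are exactly those the paper uses.

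The gap is that you stop at the plan where the paper's proof actually begins. Two points in particular:

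\emph{Validity.} Your sentence ``the surviving bracketed indicators make the right-hand side collapse to exactly one of $\Vupper+m'V$, $\Vupper+n'V$, or $\Cupper$'' is not what happens, and glossing it this way hides the real work. In general the right-hand side does \emph{not} collapse to the tight ramp bound; it is only $\ge$ it, and the margin depends on how $S$ meets the ``fresh'' interval $[t-s,t-s+L-1]$. Concretely, when the start-up is at $t-s$ with $s\in[L-1,L+m-1]$ and the block runs past $t+n$, the paper computes the right-hand side as $\Vupper+\tilde f\,V+(m+L-1-f)V$ with $\tilde f=\sum_{i\in S\setminus[t-m,t-s+L-1]}(i-d_i)$, and then needs three further sub-cases (on whether $\max S\le t-s+L-1$, and if not, whether $\max\{a\in S:a\le t-s+L-1\}$ exists) to conclude $\tilde f+(m+L-1-f)\ge s$. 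Likewise, the hypothesis $n\ge(L-1)/2$ is invoked not just to ``line up'' $\phi$, but in the specific sub-case $t-s+L-1\in[t+n,t+\hat s-1]$ to force $s\le L-1-n\le(L-1)/2$, so that $\min\{L-1-s,s\}=s$ and $\phi(u,t)=sV$ exactly. Your sketch names the right case split but does not carry it out; that execution is the substance of the validity proof.

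\emph{Facet.} Your templates (i)--(iv) are in the right spirit, but the paper's construction is considerably more specific: fifteen families $(\tilde x^{\tilde r},\tilde y^{\tilde r},\tilde u^{\tilde r})_{\tilde r=2}^{T}$, $(\hat x^{\hat r},\hat y^{\hat r},\hat u^{\hat r})_{\hat r\ne t}$, $(\bar x^{\bar r},\bar y^{\bar r},\bar u^{\bar r})_{\bar r=1}^{T}$, with the $\bar{\cdot}$ family built around each $n_p\in S\cup\{t-m\}$ (these are your ``$S$-translation'' points) and with distinct $x$-profiles near $t$ and near the horizon. Independence is argued by arranging $(\bar\cdot)$ and $(\hat\cdot)$ into a lower-triangular block on $(x,y)$ and $(\tilde\cdot)$ into an upper-triangular block on $u$. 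Your description would need to be made this concrete before the count and the triangular argument go through.
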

\begin{proof}
See Appendix \ref{apx:subsec:x_t-up-exp} for the proof.
\end{proof}

{\textbf{Separation:} Since the size of {inequalities} \eqref{eqn:x_t-up-exp} is exponential, we explore a separation scheme to find the most violated inequality (correspondingly the set $S$ in \eqref{eqn:x_t-up-exp}) in polynomial time.
Without loss of generality, {we} consider the case in which $L \geq 2$. {For a given point $(\hat{x}, \hat{y}, \hat{u}) \in \R_+^{3n-1}$}, to find {the most} violated {inequality} \eqref{eqn:x_t-up-exp} {corresponding to each combination of $(t,m,n)$}, we construct a shortest path problem on a directed acyclic graph $\Gb = (\V, \mathbb{A})$ as shown in Figure \ref{fig:dag-1}. 
The graph is represented as follows:
\begin{enumerate}[(i)]
\item Node set $\V = \{o,d,r\} \cup \V'$ with $o$ representing the origin, $d$ representing the destination, $r$ {aggregating} time indices from $t+1$ to $t+n$, {and} $\V'=\{t, t-1, \cdots, t-m\}$ representing a set of time indices from $t-m$ to $t$ in \eqref{eqn:x_t-up-exp};
\item Arc set $\mathbb{A} = \{a_{or}, a_{(t-m)d}\} \cup \mathbb{A}_1 \cup \mathbb{A}_2$ with $\mathbb{A}_1 = \cup_{t-m \leq s \leq t} a_{rs}$ (i.e., dashed arcs) and $\mathbb{A}_2 = \cup_{t-m \leq t_2 < t_1 \leq t} a_{t_1 t_2}$ (i.e., dashdotted arcs). Accordingly, we let $w_{ij}$ represent the cost of arc $(i,j)$ and provide the details as follows:
	\begin{enumerate}[(1)]
	\item $w_{or} = \Vupper \hat{y}_t - \hat{x}_t$;
	\item $w_{(t-m)d}=(\Cupper - \Vupper - (m+L-1) V) (\hat{y}_{t-m} - \sum_{j=0}^{L-1} \hat{u}_{t-m-j}) + {\phi}(\hat{u},t)$, where ${\phi}(\hat{u},t) = V \sum_{k=1}^{{t+L-T-1}} k \hat{u}_{t-k} + V \sum_{k={[t+L-T]^+}}^{L-1} \min \{L-1-k,k\} \hat{u}_{t-k}$;
	\item $w_{rs} = V \sum_{k=1}^{{[n-1]^+}} (\hat{y}_{t+k} - \sum_{j=0}^{L-1} \hat{u}_{t+k-j}) + (L-1+t-s - {[n-1]^+})V(\hat{y}_{t+n} - \sum_{j=0}^{L-1} \hat{u}_{t+n-j})$ for $s \in [t-m,t]_{\Z}$  (Note {here} that $\sum_{i \in S} (i - d_i) = 0$ if $S = \emptyset$ and $\sum_{i \in S} (i - d_i) = q-(t-m)$ otherwise {with} $q=\max\{a \in S\}$);
	\item $w_{t_1 t_2} = V(t_1-t_2)(\hat{y}_{t_1} - \sum_{j=0}^{L-1} \hat{u}_{t_1-j})$ for $t_1,t_2 \in [t-m,t]_{\Z}$ and $t_1 > t_2$.
	\end{enumerate}
\end{enumerate}
It is obvious that the shortest path from nodes $o$ to $d$ represents the maximum violation of inequality \eqref{eqn:x_t-up-exp} if the value is negative.
Meanwhile, the visited nodes in $\V'$ on the shortest path determine the set $S$. Since it is an acyclic graph and there are $\mathcal{O}(T^2)$ arcs and $\mathcal{O}(T)$ nodes, the shortest path can be found in $\mathcal{O}(T^2)$ time following the Topological Sorting Algorithm \cite{cormen2009introduction} for each combination of $(t,m,n)$.
Therefore, considering the fact that $m$ and $n$ are bounded from above by two constant numbers, i.e., ${[(\Cupper-\Vupper)/V-L+1]^+}$ and $L-1$, respectively, there is an $\mathcal{O}(T^3)$ algorithm to solve the separation problem for all $t$.}

\begin{figure}[htb]
\centering
\begin{tikzpicture}[thick,scale=0.7, every node/.style={scale=0.7}] 

		\tikzstyle{state} = [draw, thick, fill=white, circle, text width = 2em, 
		 text badly centered, node distance=8em, inner sep=1pt,font=\sffamily]
		\tikzstyle{stateEdgePortion} = [black];
		\tikzstyle{stateEdge} = [stateEdgePortion,->];
		\tikzstyle{edgeLabel} = [pos=0.5, text centered, font={\sffamily}];
  
\node[state, name=d] at (0,0) {\Large $d$};  
\node[state, name=nj] at (3,0) {\small $t-m$};
\draw (5,0) node{ \Large  \textcolor{brown}{$\cdots$}  };
\node[state, name=nq] at (7,0) { \Large $t_2$};
\draw (9,0) node{ \Large  \textcolor{brown}{$\cdots$}  };
\node[state, name=np] at (11,0) {\Large $t_1$};
\draw (13,0) node{  \Large \textcolor{brown}{$\cdots$}  };
\node[state, name=nt] at (15,0) {\Large $t$};
\node[state, name=t] at (18,0) {\Large $r$};
\node[state, name=o] at (21,0) {\Large $o$};

\draw (18,-2) node{  \Large $t-m \leq t_2 < t_1 \leq t$  };
  
\draw (o) edge[stateEdge] node[edgeLabel, xshift=0em, yshift=0.8em]{\Large $\Vupper \hat{y}_t - \hat{x}_t$} (t);
\draw (t) edge[stateEdge, draw=myblue, dashed] node[edgeLabel, xshift=0em, yshift=0.8em]{\Large $w_{rt}$} (nt);
\draw (t) edge[stateEdge,  bend right=35, draw=myblue, dashed] node[edgeLabel, xshift=0em, yshift=-0.8em]{\Large $w_{rt_1}$} (np);
\draw (t) edge[stateEdge,  bend right=40, draw=myblue, dashed] node[edgeLabel, xshift=0em, yshift=0.8em]{\Large $w_{rt_2}$} (nq);
\draw (t) edge[stateEdge,  bend right=45, draw=myblue, dashed] node[edgeLabel, xshift=0em, yshift=0.8em]{\Large $w_{r(t-m)}$} (nj);
\draw (nj) edge[stateEdge] node[edgeLabel, xshift=0em, yshift=0.8em]{\Large $w_{(t-m)d}$} (d);

\draw (nt) edge[stateEdge,  bend left=35, draw=brown, dashdotted] node[edgeLabel, xshift=0em, yshift=0.8em]{ \Large $w_{tt_1}$} (np);
\draw (nt) edge[stateEdge,  bend left=40, draw=brown, dashdotted] node[edgeLabel, xshift=0em, yshift=0.8em]{ \Large $w_{tt_2}$} (nq);
\draw (nt) edge[stateEdge,  bend left=45, draw=brown, dashdotted] node[edgeLabel, xshift=0em, yshift=0.8em]{ \Large $w_{t(t-m)}$} (nj);
  
\draw (np) edge[stateEdge,  bend left=35, draw=brown, dashdotted] node[edgeLabel, xshift=0em, yshift=0.8em]{\Large $w_{t_1 t_2}$} (nq);
\draw (nq) edge[stateEdge,  bend left=35, draw=brown, dashdotted] node[edgeLabel, xshift=0em, yshift=0.8em]{ \Large $w_{t_2(t-m)}$} (nj);
  
\draw (np) edge[stateEdge,  bend right=40, draw=brown, dashdotted] node[edgeLabel, xshift=0em, yshift=0.8em]{ \Large $w_{t_1(t-m)}$} (nj);
\end{tikzpicture}
\caption{Directed acyclic graph}\label{fig:dag-1}
\end{figure}

\begin{proposition}
{Given a point $(\hat{x}, \hat{y}, \hat{u}) \in \R_+^{3n-1}$, there {exists} an $\mathcal{O}(T^3)$ {time separation} algorithm to find the most violated inequality \eqref{eqn:x_t-up-exp}, if any.}
\end{proposition}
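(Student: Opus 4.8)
The plan is to verify that the shortest‑path construction on the directed acyclic graph $\Gb=(\V,\mathbb{A})$ of Figure~\ref{fig:dag-1}, already sketched above, is correct and runs within the stated bound; essentially all the content lies in matching the right‑hand side of \eqref{eqn:x_t-up-exp} to the cost of an $o$--$d$ path. First I would peel off the ``easy'' parameters. The family \eqref{eqn:x_t-up-exp} is indexed by a quadruple $(t,m,n,S)$, and since $m\le [(\Cupper-\Vupper)/V-L+1]^+$ and $n\le L-1$ are bounded above by constants independent of $T$ while $t$ ranges over $\mathcal{O}(T)$ values, there are only $\mathcal{O}(T)$ admissible triples $(t,m,n)$. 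Hence it suffices to show that, for each fixed $(t,m,n)$, the inner optimization $\max_{S\subseteq[t-m+1,t]_{\Z}}\big(\hat x_t-\mathrm{RHS}_{(t,m,n,S)}(\hat x,\hat y,\hat u)\big)$ can be solved in $\mathcal{O}(T^2)$ time; taking the maximum violation over all triples then returns the globally most violated inequality, if any. (I follow the convention $L\ge 2$; for $L=1$ one has $n=0$ and the same construction applies with node $r$ degenerate.)

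The key step, which I would prove before anything else, is that the dependence of $\mathrm{RHS}_{(t,m,n,S)}$ on $S$ \emph{telescopes}. Writing $S=\{s_1>s_2>\cdots>s_p\}$ with $q:=s_1=\max S$ (and $S=\emptyset$ as a degenerate case), the definition $d_i=\max\{a\in S\cup\{t-m\}:a<i\}$ gives $\sum_{i\in S}(i-d_i)=q-(t-m)$, so $\alpha(S,t)$ depends on $S$ only through $q$, and $V\sum_{i\in S}(i-d_i)\big(y_i-\sum_{j=0}^{L-1}u_{i-j}\big)$ splits into one term per consecutive pair along the descending chain $q=s_1\to s_2\to\cdots\to s_p\to t-m$. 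This is exactly why a shortest‑path model is available: a subset $S$ is the same datum as the $o$--$d$ path $o\to r\to s_1\to\cdots\to s_p\to(t-m)\to d$ (with $o\to r\to(t-m)\to d$ when $S=\emptyset$). I would then check, arc by arc, that with the weights $w_{or}$, $w_{rs}$, $w_{t_1t_2}$, $w_{(t-m)d}$ defined above the total cost of that path equals $\mathrm{RHS}_{(t,m,n,S)}(\hat x,\hat y,\hat u)-\hat x_t$: the $w_{or}$ arc supplies $\Vupper\hat y_t-\hat x_t$; the $w_{rs}$ arc (with $s=q$, or $s=t-m$ if $S=\emptyset$) supplies the $\sum_{k=1}^{[n-1]^+}$ block together with $\alpha(S,t)V(\hat y_{t+n}-\ldots)$, using the identity just noted; the $w_{t_1t_2}$ arcs supply $V\sum_{i\in S}(i-d_i)(\hat y_i-\ldots)$; and $w_{(t-m)d}$ supplies $\beta(t)(\hat y_{t-m}-\ldots)+\phi(\hat u,t)$. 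Conversely, every $o$--$d$ path in $\Gb$ has this shape, so the correspondence $S\leftrightarrow$ path is a bijection. Consequently the shortest $o$--$d$ path has length $-\max_S(\hat x_t-\mathrm{RHS})$, it is negative iff some \eqref{eqn:x_t-up-exp} with these $(t,m,n)$ is violated, and the visited nodes of $\V'$ other than $t-m$ recover the maximizing $S$.

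For the running time, $\Gb$ is acyclic with $\mathcal{O}(T)$ nodes and $\mathcal{O}(T^2)$ arcs, so a single‑source shortest path with (possibly negative) arc weights is computed in $\mathcal{O}(T^2)$ by relaxing arcs in topological order~\cite{cormen2009introduction}; repeating over the $\mathcal{O}(T)$ triples $(t,m,n)$ gives the claimed $\mathcal{O}(T^3)$ bound. The main—indeed only—obstacle is the bijection‑plus‑cost‑identity of the second paragraph, in particular the telescoping identity $\sum_{i\in S}(i-d_i)=q-(t-m)$ and the resulting reduction of $\alpha(S,t)$ to a function of $\max S$; once those are in hand, matching each summand of $\mathrm{RHS}$ to an arc weight is bookkeeping and the complexity count is immediate.
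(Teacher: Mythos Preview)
Your proposal is correct and follows exactly the approach the paper takes: the separation scheme described just before the proposition (the DAG of Figure~\ref{fig:dag-1} with the stated arc weights) is precisely what the paper uses, and the paper's justification consists of the same three ingredients you identify---the telescoping identity $\sum_{i\in S}(i-d_i)=q-(t-m)$ (noted parenthetically in the definition of $w_{rs}$), the bijection between subsets $S$ and $o$--$d$ paths, and the $\mathcal{O}(T^2)$ topological-order shortest path repeated over $\mathcal{O}(T)$ triples $(t,m,n)$. Your write-up is in fact more explicit than the paper's on the cost-identity bookkeeping and on the observation that the visited nodes in $\V'$ \emph{other than} $t-m$ recover $S$.
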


{Now we consider the effect on $x_t$ from the machine status {only} after time $t$ and develop {a family} of inequalities {in exponential size} in \eqref{eqn:x_t-down-exp} as follows. 
Note that since we only consider the machine status after time $t$, the corresponding separation problem for \eqref{eqn:x_t-down-exp} can be solved faster than that for \eqref{eqn:x_t-up-exp}, which considers the machine statuses from both before and after time $t$.}

\begin{proposition} \label{prop:x_t-down-exp}
{For each $t \in [1, T-1]_{\Z}$, $m \in [{[\hat{t}-t-1]^+}, \min \{T-t-1, {(\Cupper-\Vupper)/V}\}]_{\Z}$ {with $\hat{t} = t+\min \{t-2, L-2\}$ if $\min \{t-2, L-2\} \geq L/2$ and $\hat{t} = \max \{t+1, L+1\}$ otherwise}, $S_0 = [t+1, \hat{t}-1]_{\Z}$, {and} $S \subseteq [\hat{t}+1, t+m]_{\Z}$, the {following} inequality
\begin{align}
x_t & \leq \Vupper y_t + V \sum_{i \in S_0} \bigg(y_i - \sum_{j=0}^{\min \{L-1,i-2\}} u_{i-j}\bigg) + V \sum_{i \in S \cup \{\hat{t}\}} \bigg(d_i - i\bigg) \bigg(y_i - \sum_{j=0}^{L-1} u_{i-j}\bigg)  \nonumber \\
 & + \bigg(\Cupper - \Vupper - m V\bigg) \bigg(y_{t+m+1} - \sum_{j=0}^{L-1} u_{t+m+1-j}\bigg) + \phi (u,t), \label{eqn:x_t-down-exp}
\end{align}
where $d_i = \min \{a \in S \cup \{t+m+1\}: a > i \}$ for each $i \in S \cup \{\hat{t}\}$ and $\phi (u,t) = V \sum_{k=1}^{{t+L-T-1}} k u_{t-k} + V \sum_{k={[t+L-T]^+}}^{\min\{L-1, t-2\}} \min \{L-1-k,k\}u_{t-k}$,
is valid and facet-defining for conv($P$).}
\end{proposition}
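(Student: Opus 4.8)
The plan is to prove Proposition~\ref{prop:x_t-down-exp} in two stages: first establish validity of \eqref{eqn:x_t-down-exp} by a case analysis on the commitment trajectory around period $t$, and then prove the facet-defining property by exhibiting $3T-1$ affinely independent points of $P$ at which \eqref{eqn:x_t-down-exp} holds with equality, in the same spirit as the arguments behind Proposition~\ref{prop:valid-t3l2} and Proposition~\ref{prop:x_t-ub-2-multi-period}, and reusing as much as possible from the proof of Proposition~\ref{prop:x_t-up-exp}.

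For validity, the first observation is that every bracketed quantity appearing on the right-hand side of \eqref{eqn:x_t-down-exp}, namely $y_i-\sum_{j=0}^{L-1}u_{i-j}$ and $y_i-\sum_{j=0}^{\min\{L-1,i-2\}}u_{i-j}$, is nonnegative by the minimum-up time constraints \eqref{eqn:p-minup}, that each coefficient $d_i-i$ is at least $1$, that $\Cupper-\Vupper-mV\geq 0$ by the restriction $m\leq(\Cupper-\Vupper)/V$, and that $\phi(u,t)\geq 0$. Hence if $y_t=0$, then \eqref{eqn:p-upper-bound} forces $x_t=0$, so the inequality holds trivially. If $y_t=1$, I would consider the maximal block of consecutive online periods containing $t$: say the machine starts up at $t-p$ (or $p$ is as large as the horizon permits when it has been online since period $1$) and shuts down at $t+q+1$ (or the block extends past the window governed by $S_0$ and $S$). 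Ramp-up \eqref{eqn:p-ramp-up} then forces $x_t\leq\Vupper+pV$ (read as $x_t\leq\Cupper$ when $p$ is large), and ramp-down \eqref{eqn:p-ramp-down} forces $x_t\leq\Vupper+qV$. The remaining work is bookkeeping: one plugs the actual $y,u$ values of this trajectory into the right-hand side of \eqref{eqn:x_t-down-exp} and checks it evaluates to at least $\Vupper+\min\{p,q,\dots\}V$ (or $\Cupper$). Here the $V\sum_{i\in S_0}$ and $V\sum_{i\in S\cup\{\hat t\}}(d_i-i)$ terms ``count'' the online periods strictly after $t$ that lie at least $L$ periods past the start-up $t-p$, the last term absorbs the $\Cupper$ bound when the block never shuts down within reach, and $\phi(u,t)$ accounts for the ramp-up bound $\Vupper+pV$ exactly when $t-p$ is near enough to $t$ to be the binding restriction --- this is where the weights $\min\{L-1-k,k\}$ and the split of the $\phi$-sum at $k=t+L-T$ (machine forced online past $T$ versus free to shut down inside the horizon) come in. I would organize the subcases as ``shut-down inside the window controlled by $S_0\cup S$'' versus ``no shut-down there,'' and within the first, track the shut-down position relative to $\hat t$ and the chosen set $S$.

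For the facet-defining claim I would use $\dim(\mathrm{conv}(P))=3T-1$ (full-dimensionality, which holds analogously to Proposition~\ref{prop:t3l2-full}) and produce $3T-1$ affinely independent points of $P$ on the hyperplane $\{x_t=\text{RHS of }\eqref{eqn:x_t-down-exp}\}$. The base point would be a canonical tight trajectory: run the machine online over a block up to $\hat t$ or $t+m+1$ with the start-up/shut-down pattern that makes the right-hand side smallest, push $x_t$ up to that value via a ramp-up from the start-up followed by a ramp-down into the shut-down, and set $x=0$ on all offline periods. From this base point I would generate new tight points by (i) independently raising the generation amounts $x_i$ at periods far enough from $t$ that neither the local ramp constraints nor \eqref{eqn:x_t-down-exp} is affected, giving on the order of $T$ directions; (ii) toggling the commitment on or off at periods outside the block that controls \eqref{eqn:x_t-down-exp}, respecting minimum-up/-down time, each toggle perturbing one $y$ and possibly one $u$; and (iii) shifting the start-up/shut-down location of auxiliary online blocks, which perturbs $u$-coordinates. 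Ordering the resulting direction vectors by time index gives an essentially triangular dependence pattern from which affine independence follows, and one checks all points satisfy \eqref{eqn:x_t-down-exp} with equality --- the points from family (i) trivially, since moving generation away from $t$ changes neither side.

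The main obstacle I anticipate is the case structure in the facet argument: the definition of $\hat t$ has two regimes (according to whether $\min\{t-2,L-2\}\geq L/2$), the index $t+m+1$ may or may not exceed $T$, and $S$ may be empty; each of these alters which coordinates are pinned by the tightness condition and which remain free, so the family of $3T-1$ points must be assembled somewhat differently in each regime while preserving affine independence. A secondary subtlety is making the $\phi(u,t)$ correction tight simultaneously with the after-$t$ pattern --- that is, choosing the base trajectory so that the ramp-up-from-the-left and ramp-down-from-the-right bounds on $x_t$ coincide --- which constrains where the canonical start-up may be placed and how much freedom family (iii) has near the horizon boundary. Since the ``after $t$'' structure of \eqref{eqn:x_t-down-exp} matches that of \eqref{eqn:x_t-up-exp}, I would import the relevant point families from the proof of Proposition~\ref{prop:x_t-up-exp} verbatim and only rework the portions touched by $\phi$ and by the boundary split at $t+L-T$.
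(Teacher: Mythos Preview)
Your proposal is correct and follows essentially the same approach as the paper: validity by case analysis on the last start-up time $t-s$ before $t$ and the first shut-down time $t+\hat{s}$ after $t$ (with the two-regime definition of $\hat t$ used precisely to rule out the troublesome subcase $s\geq L-s$ by contradiction), and the facet-defining claim by explicitly constructing three families of $3T-2$ tight points in $P$---one varying the online block endpoint through each $y$, one $\epsilon$-perturbing generation levels through each $x$, and one varying the start-up position through each $u$---whose linear independence follows from a triangular pattern. The only imprecision is that the set-$S$ telescoping structure of \eqref{eqn:x_t-down-exp} sits \emph{after} $t$ and is therefore the mirror of the \emph{before-$t$} portion of \eqref{eqn:x_t-up-exp}, not its after-$t$ portion; the point families to adapt are thus the $S$-indexed ones from that proof, reflected in time across $t$.
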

\begin{proof}
See Appendix \ref{apx:subsec:x_t-down-exp} for the proof.
\end{proof}

{\textbf{Separation:} 
Since the size of {inequalities} \eqref{eqn:x_t-down-exp} is exponential, we explore a separation scheme to find the most violated inequality (correspondingly the set $S$ in \eqref{eqn:x_t-down-exp}) in polynomial time. 
{Similar to the separation procedure for inequalities \eqref{eqn:x_t-up-exp}, for} a given point $(\hat{x}, \hat{y}, \hat{u}) \in \R_+^{3n-1}$, to find the most violated {inequality} \eqref{eqn:x_t-down-exp} corresponding to each combination of $(t,m)$,
we construct a shortest path problem on a directed acyclic graph $\Gb = (\V, \mathbb{A})$ {with different node and arc sets shown} as follows:
\begin{enumerate}[(i)]
\item Node set $\V = \{o,d\} \cup \V'$ with $o$ representing the origin, $d$ representing the destination, {and} $\V'=\{\hat{t}, \hat{t}+1, \cdots, t+m, t+m+1\}$ representing a set of time indices from $\hat{t}$ to $t+m+1$ in \eqref{eqn:x_t-down-exp};
\item Arc set $\mathbb{A} = \{a_{or}, a_{(t-m)d}\} \cup \mathbb{A}'$ with $\mathbb{A}' = \cup_{\hat{t} \leq t_1 < t_2 \leq t+m+1} a_{t_1 t_2}$. Accordingly, we let $w_{ij}$ represent the cost of arc $(i,j)$ and provide the details as follows:
	\begin{enumerate}[(1)]
	\item $w_{o\hat{t}} = \Vupper \hat{y}_t - \hat{x}_t + V \sum_{i \in S_0} (\hat{y}_i - \sum_{j=0}^{\min \{L-1,i-2\}} \hat{u}_{i-j})$;
	\item $w_{(t+m+1)d} = (\Cupper - \Vupper - m V) (\hat{y}_{t+m+1} - \sum_{j=0}^{L-1} \hat{u}_{t+m+1-j}) + {\phi} (\hat{u},t)$, where ${\phi} (\hat{u},t) \allowbreak = V \sum_{k=1}^{{t+L-T-1}} \allowbreak k \hat{u}_{t-k} \allowbreak + V \sum_{k={[t+L-T]^+}}^{\min\{L-1, t-2\}} \min \{L-1-k,k\} \hat{u}_{t-k}$;
	\item $w_{t_1 t_2} = V(t_2-t_1)(\hat{y}_{t_1} - \sum_{j=0}^{L-1} \hat{u}_{t_1-j})$ for $t_1,t_2 \in [\hat{t}, t+m+1]_{\Z}$ and $t_1 < t_2$.
	\end{enumerate}
\end{enumerate}
{Similarly,} the shortest path from nodes $o$ to $d$ represents the maximum violation of inequality \eqref{eqn:x_t-down-exp} if the value is negative, {and} the visited nodes in $\V'$ on the shortest path determine the set $S$. Since it is an acyclic graph and there are $\mathcal{O}(T^2)$ arcs and $\mathcal{O}(T)$ nodes, the shortest path can be found in $\mathcal{O}(T^2)$ time following the Topological Sorting Algorithm \cite{cormen2009introduction} for each combination of $(t,m)$. 
Therefore, there is an $\mathcal{O}(T^3)$ algorithm to solve the separation problem for all $t$, considering the fact that $m$ is bounded from above by a constant number, i.e., ${(\Cupper-\Vupper)/V}$.}


\begin{proposition}
{Given a point $(\hat{x}, \hat{y}, \hat{u}) \in \R_+^{3n-1}$, there {exists} an $\mathcal{O}(T^3)$ {time separation} algorithm to find the most violated inequality \eqref{eqn:x_t-down-exp}, if any.}
\end{proposition}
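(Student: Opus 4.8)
The plan is to cast, for each admissible pair $(t,m)$ from Proposition~\ref{prop:x_t-down-exp}, the problem of choosing the subset $S \subseteq [\hat{t}+1, t+m]_{\Z}$ that maximizes the violation of \eqref{eqn:x_t-down-exp} at the given point $(\hat{x},\hat{y},\hat{u})$ as a single-source shortest-path problem on the directed acyclic graph $\Gb = (\V,\mathbb{A})$ described above (the ``downward'' counterpart of the graph in Figure~\ref{fig:dag-1}, in which $o$ connects to $\hat{t}$, node $t+m+1$ connects to $d$, and $\mathbb{A}'$ contains one arc from $t_1$ to $t_2$ for every $\hat{t}\le t_1<t_2\le t+m+1$). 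First I would make the path/subset correspondence precise: since every arc of $\mathbb{A}'$ goes from a smaller time index to a larger one while $o$ is the unique source and $d$ the unique sink, any $o$--$d$ path traverses a strictly increasing chain $\hat{t}=a_0<a_1<\cdots<a_k<a_{k+1}=t+m+1$, and conversely each such chain is a path; setting $S:=\{a_1,\dots,a_k\}$ gives a bijection with the subsets $S\subseteq[\hat{t}+1,t+m]_{\Z}$, under which $d_{a_j}=a_{j+1}$ in the notation of \eqref{eqn:x_t-down-exp}.

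The key step is to check the cost-to-slack identity: the length of the path equals $\mathrm{RHS}(S)-\hat{x}_t$, i.e.\ minus the violation of \eqref{eqn:x_t-down-exp} for the corresponding $S$. This is verified by summing the arc costs term by term: $w_{o\hat{t}}$ supplies $\Vupper\hat{y}_t-\hat{x}_t$ together with the fixed prefix $V\sum_{i\in S_0}(\hat{y}_i-\sum_{j=0}^{\min\{L-1,i-2\}}\hat{u}_{i-j})$; each internal arc $w_{a_j a_{j+1}}=V(a_{j+1}-a_j)(\hat{y}_{a_j}-\sum_{r=0}^{L-1}\hat{u}_{a_j-r})$ reproduces exactly the $i=a_j$ summand of $V\sum_{i\in S\cup\{\hat{t}\}}(d_i-i)(\hat{y}_i-\sum_{r=0}^{L-1}\hat{u}_{i-r})$ because $d_{a_j}=a_{j+1}$; and $w_{(t+m+1)d}$ supplies $(\Cupper-\Vupper-mV)(\hat{y}_{t+m+1}-\sum_{j=0}^{L-1}\hat{u}_{t+m+1-j})+\phi(\hat{u},t)$. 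The sum telescopes to $\mathrm{RHS}(S)-\hat{x}_t$, with the degenerate case $S=\emptyset$ (the single arc $\hat{t}\to t+m+1$) checked directly. Consequently a shortest $o$--$d$ path has length $\min_{S}(\mathrm{RHS}(S)-\hat{x}_t)$, inequality \eqref{eqn:x_t-down-exp} for this $(t,m)$ is violated iff that value is negative, and the nodes of $\V'$ on a shortest path identify a most violated $S$.

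For the running time, $\Gb$ is acyclic under the order $o\prec\hat{t}\prec\hat{t}+1\prec\cdots\prec t+m+1\prec d$, which every arc respects, and it has $\mathcal{O}(T)$ nodes and $\mathcal{O}(T^2)$ arcs, so a shortest path from $o$ can be computed in $\mathcal{O}(|\V|+|\mathbb{A}|)=\mathcal{O}(T^2)$ time by processing the nodes in topological order \cite{cormen2009introduction}. Repeating this over the $\mathcal{O}(T)$ values of $t$ and the $\mathcal{O}(1)$ values of $m$ (bounded by $(\Cupper-\Vupper)/V$) and keeping the most negative slack encountered yields an $\mathcal{O}(T^3)$ procedure that either outputs a most violated inequality \eqref{eqn:x_t-down-exp} or certifies that none exists. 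I expect the main obstacle to be the bookkeeping in the cost-to-slack identity — in particular correctly distributing the fixed terms ($S_0$, $\phi(u,t)$, and the $t+m+1$ capacity term) onto the source and sink arcs and telescoping the $(d_i-i)$ coefficients onto the internal arcs; once that identity is in hand, the remainder is the standard DAG shortest-path argument, structurally identical to the separation of \eqref{eqn:x_t-up-exp} but simpler since only indices at or after $t$ (apart from the fixed tail $\phi(u,t)$) appear.
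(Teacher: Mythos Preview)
Your proposal is correct and follows essentially the same approach as the paper: the same DAG $\Gb=(\V,\mathbb{A})$ with nodes $\{o,d\}\cup\{\hat{t},\dots,t+m+1\}$, the same arc costs $w_{o\hat{t}}$, $w_{(t+m+1)d}$, and $w_{t_1t_2}=V(t_2-t_1)(\hat{y}_{t_1}-\sum_{j=0}^{L-1}\hat{u}_{t_1-j})$, and the same $\mathcal{O}(T^2)$-per-$(t,m)$ topological shortest path yielding $\mathcal{O}(T^3)$ overall. Your write-up is in fact more explicit than the paper's, which simply states the construction and asserts the correspondence, whereas you spell out the path-to-$S$ bijection and verify the cost-to-slack identity term by term.
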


Finally, it can be easily observed that {inequalities} \eqref{eqn:x_t-ub-2-multi-period} {are} neither dominated by {inequalities} \eqref{eqn:x_t-up-exp} nor by {inequalities} \eqref{eqn:x_t-down-exp}.

\subsection{Strong Valid Inequalities with Two Continuous Variables}  \label{subsec:multi-period-two}
In this subsection, we extend the study to derive strong valid inequalities to bound the difference of generation amounts at two different time periods, e.g., $x_t - x_{t-m}$ {or} $x_{t} - x_{t+m}$. These values are bounded from above and below by the combination of generation bound constraints \eqref{eqn:p-lower-bound} - \eqref{eqn:p-upper-bound} and ramp-rate constraints \eqref{eqn:p-ramp-up} - \eqref{eqn:p-ramp-down}.

{First, we consider the ramp-up process from times $t-m$ to $t$, from which we have $x_t-x_{t-m}\leq mV$ due to ramp-up constraints \eqref{eqn:p-ramp-up} if a machine stays online through $t-m$ to $t$. Through additionally {considering} the generation amount at $x_t$, which is affected by the machine status (i.e., online/offline and start-up statuses) both before $t$ and after $t$, we derive an exponential number of inequalities in \eqref{eqn:ru-2-exp-1} and \eqref{eqn:ru-2-exp-2}. Meanwhile, the minimum-up time limits are embedded {in the consideration for} the ramp-up process.}

\begin{proposition} \label{prop:ru-2-exp}
{If $\Vupper < \Clower + V$, then for each $t \in [2, T]_{\Z}$, $m \in [1, \min \{t-1, {(\Cupper-\Clower)/V}\}]_{\Z}$, $n \in [\min \{1, T-t\}, \min \{m, L, T-t\}]_{\Z}$, {and} $S \subseteq [t-m+L, t]_{\Z}$, the {following} inequalities
\begin{align}
x_t - x_{t-m} & \leq \Vupper y_t - \Clower y_{t-m} +  V \sum_{k=1}^{{[n-1]^+}} \bigg(y_{t+k} - \sum_{j=0}^{\min \{L-1,k+m-1\}} u_{t+k-j}\bigg) \nonumber \\
 & + \alpha(t) \bigg(y_{t+n} - \sum_{j=0}^{\min \{L-1, n+m-1\}} u_{t+n-j}\bigg) + \phi (u,t) \ (if \ S=\emptyset), \label{eqn:ru-2-exp-1} \\
 x_t - x_{t-m} & \leq \Vupper y_t - \Clower y_{t-m} + V \sum_{i \in S \setminus \{t-m+L\} } \bigg(i - d_i\bigg) \bigg(y_i - \sum_{j=0}^{L-1} u_{i-j}\bigg) \nonumber \\
 & + V \sum_{k=1}^{{[n-1]^+}} \bigg(y_{t+k} - \sum_{j=0}^{\min \{L-1,k+m-1\}} u_{t+k-j}\bigg)  + \beta(S,t) V \bigg(y_{t+n} - \sum_{j=0}^{ \min \{L-1, n+m-1\}} u_{t+n-j}\bigg) \nonumber \\
 & + \bigg(\Clower + V-\Vupper\bigg) \bigg(y_q - \sum_{j=0}^{\min \{L-1, q-t+m-1\}} u_{q-j}\bigg) + \phi (u,t) \ (if \ S \neq \emptyset), \label{eqn:ru-2-exp-2}
\end{align}
where 
$\alpha(t) = \Clower + (m-{[n-1]^+})V - \Vupper$, 
$\phi (u,t) = V \sum_{k=1}^{\min \{{t+L-T-1}, m-1\}} k u_{t-k} + V \sum_{k= {[t+L-T]^+} }^{\min \{L-1, m-1\}} \min \{L-1-k,k\}u_{t-k}$, and
$\beta(S,t) = m-1-\sum_{i \in S \setminus \{t-m+L\} } (i - d_i)-{[n-1]^+}$,
are valid and facet-defining for conv($P$) under the {conditions}
(i) if $\min \{m-1, L-2\} \geq L/2$, then $n \geq \min \{m-1,L-2,T-t\}$, 
and (ii) when $m \leq L-1$ (i.e., $S=\emptyset$), if $n \leq L-1-m$, then $n \geq \min \{m, L, T-t\}$.}
\end{proposition}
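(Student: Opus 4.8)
The plan is to establish validity and the facet-defining property separately, following the template already used for Propositions~\ref{prop:x_t-up-exp} and~\ref{prop:x_t-down-exp}, since \eqref{eqn:ru-2-exp-1}--\eqref{eqn:ru-2-exp-2} are the two–continuous–variable analogues of the single–variable inequalities \eqref{eqn:x_t-up-exp}--\eqref{eqn:x_t-down-exp}. The organizing observation is that $x_t-x_{t-m}$ admits two competing bounds: telescoping the ramp-up constraints \eqref{eqn:p-ramp-up} over $[t-m,t]$ gives $x_t-x_{t-m}\le mV$ whenever the machine stays online throughout, while combining any upper bound on $x_t$ (from ramp-up after a start-up, or ramp-down before a shut-down) with $x_{t-m}\ge \Clower y_{t-m}$ from \eqref{eqn:p-lower-bound} gives a second relaxation. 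The right-hand sides of \eqref{eqn:ru-2-exp-1}--\eqref{eqn:ru-2-exp-2} are constructed so that whichever mechanism binds, the inequality is tight; the side conditions (i)--(ii) relating $n$ to $L/2$ and to $L-1-m$ are precisely what guarantees that the ``shut-down after $t$'' refinement (the terms indexed by $k=1,\dots,[n-1]^+$ and the $\alpha(t)$/$\beta(S,t)$ terms) never over-tightens the bound below what the raw ramp constraints permit.

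For validity I would argue by case analysis on the machine status, exactly as in the proof of Proposition~\ref{prop:x_t-up-exp}. First split on $y_{t-m}$: if $y_{t-m}=0$, then $x_{t-m}=0$ by \eqref{eqn:p-upper-bound}, every selector term $y_i-\sum_j u_{i-j}$ is nonnegative by \eqref{eqn:p-minup}, and $\phi(u,t)\ge 0$, so the claim reduces to $x_t\le \Vupper y_t + (\text{nonnegative terms})$, which is \eqref{eqn:x_t-up-exp} with the same $(t,m,n,S)$ and is already proved. If $y_{t-m}=1$, split further on the most recent start-up $t-s$ in $[t-m+1,t]$ (set $s:=m$ if there is none) and on the first shut-down $t+n'+1$ at or after $t+1$ (with $n'\le n$, or no shut-down inside the window). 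In each subcase \eqref{eqn:p-ramp-up}--\eqref{eqn:p-ramp-down} force $x_t\le \Vupper+\min\{s,n'\}V$, $x_{t-m}$ is bounded below by $\Clower$ (or by $\Clower+(\text{ramp slack})$), and one verifies that the selector terms activated by this particular $(s,n')$ pattern collapse the right-hand side to exactly this value; the set $S$ in \eqref{eqn:ru-2-exp-2} records which intermediate indices absorb a unit of ramp slack through the $(i-d_i)$ multipliers, and $\phi(u,t)$ supplies the correction when $t+L>T$ so that the full-length min-up argument is unavailable.

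For the facet-defining claim I would exhibit $3T-1$ affinely independent points of $\mathrm{conv}(P)$ lying on the face. Start from a generic trajectory in which the machine is online on one long block containing $[t-m,t+n]$, runs along the ramp-up envelope (so $x_t-x_{t-m}=mV$ and, with $S$ chosen to match the pattern, the inequality holds at equality), and is offline elsewhere. Then generate the required directions by: (a) perturbing $x_\tau$ at each $\tau$ where $x_\tau$ is strictly interior to its feasible band, giving $\Theta(T)$ points; (b) toggling $y_\tau$ together with the companion $u$ adjustments at times outside the critical window, where this changes neither side; (c) shifting the start-up time of separate short online blocks away from the window; and (d) exploiting the absence of a start-up variable in the first period as in the Remark after Theorem~\ref{thm1}. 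Affine independence is then checked by ordering the points so that each one introduces a previously-zero coordinate, exactly as in the appendices for the earlier propositions.

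The main obstacle is the validity case analysis for \eqref{eqn:ru-2-exp-2} when $S\neq\emptyset$: one must confirm that for every admissible on/off pattern the activated combination of terms $V(i-d_i)(y_i-\sum_j u_{i-j})$, the after-$t$ terms $V(y_{t+k}-\sum_j u_{t+k-j})$ and $\beta(S,t)V(\cdots)$, the $(\Clower+V-\Vupper)$ term at $q=\max S$, and $\phi(u,t)$ telescope to exactly $\Vupper+\min\{s,n'\}V-\Clower$ (or to $mV$), and—crucially—that this never falls below the value permitted by \eqref{eqn:p-ramp-up}--\eqref{eqn:p-ramp-down}, which is where conditions (i)--(ii) on $n$ get consumed. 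A secondary, routine-but-error-prone point is the horizon bookkeeping near $T$: verifying that the $\min\{L-1,\cdot\}$ truncations in the $u$-sums and the piecewise definition of $\phi(u,t)$ are the correct corrections. The remaining ingredients (the $y_{t-m}=0$ reduction and the facet-point construction) are direct adaptations of the already-established arguments.
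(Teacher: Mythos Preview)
Your overall strategy—case analysis on the on/off pattern, then comparing the ramp bound on $x_t$ against the collapsed right-hand side—matches the paper. But there is a concrete gap: your reduction of the $y_{t-m}=0$ case to inequality~\eqref{eqn:x_t-up-exp} does not hold. The two right-hand sides are structurally different: the $u$-sum truncations are $\min\{L-1,k+m-1\}$ in \eqref{eqn:ru-2-exp-1}--\eqref{eqn:ru-2-exp-2} versus a flat $L-1$ in \eqref{eqn:x_t-up-exp}; the $y_{t+n}$ coefficient is $\Clower+(m-[n-1]^+)V-\Vupper$ here versus $\alpha(S',t)V=(m'+L-1-f-[n'-1]^+)V$ there; the $\phi$ definitions disagree (upper index $\min\{L-1,m-1\}$ versus $L-1$); and the parameter ranges are incompatible ($t\ge 2$ here, $t\ge L+1$ there; $m\le(\Cupper-\Clower)/V$ here, $m\le[(\Cupper-\Vupper)/V-L+1]^+$ there). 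No choice of $(m',n',S')$ makes \eqref{eqn:x_t-up-exp} coincide with, or dominate, the $y_{t-m}=0$ specialization of \eqref{eqn:ru-2-exp-1}--\eqref{eqn:ru-2-exp-2}, so you cannot simply cite Proposition~\ref{prop:x_t-up-exp}.

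In the paper, $(y_{t-m},y_t)=(0,1)$ is in fact the \emph{main} case and is handled directly: the machine must start at some $t-s$ with $s\in[0,m-1]$, and one sub-splits on where $t-s+L-1$ falls—at or below $t$, inside $[t+1,t+n-1]$, or at least $t+n$. Condition~(i) is consumed in the last sub-case to force $\phi=sV$ (ruling out $s\ge L/2$ by contradiction). The case $(y_{t-m},y_t)=(1,1)$ is then the easy one: either it reduces to $(0,1)$ (if the machine cycles off and on inside $(t-m,t)$), or the machine is online throughout, whence $\phi=0$ and the telescoped bound $x_t-x_{t-m}\le mV$ (or $\Vupper+(\hat s-1)V-\Clower$ if a shut-down occurs by $t+n$) suffices. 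So your start-up/shut-down sub-analysis belongs in the $(0,1)$ branch, not the $(1,1)$ branch. For \eqref{eqn:ru-2-exp-2} with $S\neq\emptyset$ there is, as you anticipate, an additional inner split on $t-q=\max S$ and $t-p=\max\{a\in S:a\le t-s+L-1\}$ to bound $\tilde f=\sum_{i\in S\setminus[t-m,t-s+L-1]}(i-d_i)$; the paper carries this out explicitly and it is where most of the bookkeeping lives.
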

\begin{proof}
See Appendix \ref{apx:subsec:ru-2-exp} for the proof.
\end{proof}

{Following the similar separation procedure as described for \eqref{eqn:x_t-up-exp}, the above inequality \eqref{eqn:ru-2-exp-2} can be separated in polynomial time.}

\begin{proposition}
{Given a point $(\hat{x}, \hat{y}, \hat{u}) \in \R_+^{3n-1}$, there exists an $\mathcal{O}(T^3)$ time separation algorithm to find the most violated inequality \eqref{eqn:ru-2-exp-2}, if any.}
\end{proposition}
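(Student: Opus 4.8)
The plan is to mimic the shortest-path separation schemes already developed for \eqref{eqn:x_t-up-exp} and \eqref{eqn:x_t-down-exp}, exploiting the fact that, for a fixed triple $(t,m,n)$, inequality \eqref{eqn:ru-2-exp-2} is determined entirely by the choice of $S \subseteq [t-m+L, t]_{\Z}$, and that its right-hand side decomposes additively along a chain that visits the elements of $S$ in decreasing order. First I would fix $(t,m,n)$ and rewrite the violation of \eqref{eqn:ru-2-exp-2} at the given point $(\hat x,\hat y,\hat u)$ as $\Vupper \hat y_t - \Clower \hat y_{t-m} - \hat x_t + \hat x_{t-m}$ plus the terms that depend on $S$: the gap sum $V\sum_{i \in S\setminus\{t-m+L\}}(i-d_i)(\hat y_i - \sum_j \hat u_{i-j})$, the coefficient $\beta(S,t)$ multiplying $\hat y_{t+n} - \sum_j \hat u_{t+n-j}$, and the $(\Clower+V-\Vupper)$-term attached to $y_q$ with $q=\max S$, where $d_i$ is the predecessor of $i$ in $S\cup\{t-m+L\}$. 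The crucial observation, exactly as in the treatment of \eqref{eqn:x_t-up-exp}, is the telescoping identity $\sum_{i \in S\setminus\{t-m+L\}}(i-d_i) = q - (t-m+L)$, which makes $\beta(S,t) = m-1-(q-(t-m+L))-[n-1]^+$ a function of $q$ alone; hence all $S$-dependence beyond the individual gap terms is localized at the largest element of $S$.

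Second I would construct a directed acyclic graph $\Gb=(\V,\mathbb{A})$ in the spirit of Figure \ref{fig:dag-1}: node $o$ (origin), node $d$ (destination), an aggregate node $r$ collecting the indices $t+1,\dots,t+n$, the base node $t-m+L$, and $\V' = \{t,t-1,\dots,t-m+L\}$. The arc $o\!\to\!r$ carries the $S$-independent baseline cost $\Vupper\hat y_t - \Clower\hat y_{t-m} - \hat x_t + \hat x_{t-m}$; an arc from $r$ to each candidate $q\in\V'$ carries the contribution $V\sum_{k=1}^{[n-1]^+}(\hat y_{t+k}-\sum_j\hat u_{t+k-j})$, the $\beta$-weighted term $\beta(S,t)V(\hat y_{t+n}-\sum_j\hat u_{t+n-j})$ with $\beta$ evaluated at that $q$, and the $(\Clower+V-\Vupper)$-term at $y_q$; each descending arc $t_1\!\to\!t_2$ with $t-m+L\le t_2<t_1\le t$ carries $V(t_1-t_2)(\hat y_{t_1}-\sum_j\hat u_{t_1-j})$, encoding the gap term $V(i-d_i)(y_i-\sum u)$ with $i=t_1$, $d_i=t_2$; and the arc $(t-m+L)\!\to\!d$ carries $\phi(\hat u,t)$. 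The empty-$S$ case, i.e.\ inequality \eqref{eqn:ru-2-exp-1}, is realized by the path $o\to r\to (t-m+L)\to d$, with a dedicated arc $r\to(t-m+L)$ carrying instead the coefficient $\alpha(t)$ on the $y_{t+n}$-term.

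Third I would verify the correspondence: every $o$--$d$ path visits a strictly decreasing sequence of nodes of $\V'$ whose underlying set is some feasible $S$, and by construction its total cost equals the negative of the left-minus-right slack of the corresponding instance of \eqref{eqn:ru-2-exp-1}/\eqref{eqn:ru-2-exp-2}; conversely every feasible $S$ induces such a path. Hence a shortest $o$--$d$ path (computable by the Topological Sorting Algorithm \cite{cormen2009introduction} on the acyclic $\Gb$) identifies the most violated inequality for this $(t,m,n)$, and a violated member of the family exists iff the shortest-path length is strictly negative. Since $\Gb$ has $\mathcal{O}(T)$ nodes and $\mathcal{O}(T^2)$ arcs, each shortest-path computation costs $\mathcal{O}(T^2)$; iterating over $t\in[2,T]_{\Z}$ (with $m$ and $n$ bounded above by the constants $(\Cupper-\Clower)/V$ and $L$, respectively) yields the claimed $\mathcal{O}(T^3)$ running time.

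The main obstacle I anticipate is bookkeeping rather than anything conceptual: one must check that the variable coefficient $\beta(S,t)=m-1-\sum_{i\in S\setminus\{t-m+L\}}(i-d_i)-[n-1]^+$ can legitimately be charged to the single $r\to q$ arc (which is valid precisely because of the telescoping identity), that the special roles of the endpoint $t-m+L$ and of $q=\max S$ — including the degenerate subcase $q=t-m+L$, where the $S\neq\emptyset$ and $S=\emptyset$ descriptions must agree up to the $(\Clower+V-\Vupper)$-term — are handled consistently, and that the $n=0$ case (when $L=1$) together with $[n-1]^+=0$ does not create spurious or missing paths. Once these are settled, the validity of the path decomposition and the complexity count follow verbatim from the argument already given for \eqref{eqn:x_t-up-exp}.
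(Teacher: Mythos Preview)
Your proposal is correct and follows essentially the same approach as the paper, which for this proposition simply states ``Following the similar separation procedure as described for \eqref{eqn:x_t-up-exp}, the above inequality \eqref{eqn:ru-2-exp-2} can be separated in polynomial time'' without spelling out the DAG. Your construction---an acyclic graph on $\{o,r,d\}\cup\V'$ with $\V'=[t-m+L,t]_{\Z}$, entry arcs $r\to q$ absorbing the $\beta(S,t)$-weighted term via the telescoping identity, descending arcs $t_1\to t_2$ carrying the gap contributions, and the constant bounds on $m,n$ yielding $\mathcal{O}(T^3)$---is exactly the intended adaptation of the scheme in Figure~\ref{fig:dag-1}, and your anticipated bookkeeping caveats (the role of $q$, the boundary case $q=t-m+L$, and $n=0$) are precisely the details one must verify but that present no real obstruction.
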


{Next, we consider the ramp-down process from times $t$ to $t+m$, from which we have $x_t-x_{t+m}\leq mV$ due to ramp-down constraints \eqref{eqn:p-ramp-down} if a machine stays online through $t$ to $t+m$. Through additionally {considering} the generation amount $x_t$, which is affected by the machine status after time $t$ until $t+m$, we {derive} the following strong valid inequalities \eqref{eqn:rd-2-exp}, which is {exponentially} sized.}

\begin{proposition} \label{prop:rd-2-exp}
{If $\Vupper < \Clower + V$, then for each $t \in [1, T-1]_{\Z}$, $\tilde{t} = \min \{\hat{t}, t+m\}$ {with $\hat{t} = t+\min \{t-2, L-2\}$ if $\min \{t-2, L-2\} \geq L/2$ and $\hat{t} = \max \{t+1, L+1\}$ otherwise}, $m \in [1, \min \{T-t, {(\Cupper-\Clower)/V}\}]_{\Z}$, $S_0 = [t+1, \tilde{t}-1]_{\Z}$, $S_1 \subseteq [\tilde{t}+1, t+m]_{\Z}$, {and} $S = S_1 \cup \{\tilde{t}\}$, the {following} inequality
\begin{align}
x_t - x_{t+m} & \leq \Vupper y_t - \Clower y_{t+m} + V \sum_{i \in S_0} \bigg(y_i - \sum_{j=0}^{\min \{L-1,i-2\}} u_{i-j}\bigg) + V \sum_{i \in S \setminus \{t+m\}} \bigg(d_i - i\bigg) \bigg(y_i - \sum_{j=0}^{L-1} u_{i-j}\bigg)  \nonumber \\
 & + \bigg(\Clower + V - \Vupper\bigg) \bigg(y_{q} - \sum_{j=0}^{ \min \{L-1, q-2\} } u_{q-j}\bigg) + \phi(u,t), \label{eqn:rd-2-exp}
\end{align}
 where $q = \max \{a \in S\}$, $d_i = \min \{a \in S \cup \{t+m\}: a > i \}$ for each $i \in S \setminus \{t+m\}$, and $\phi(u,t) = V \sum_{k=1}^{{t+L-T-1}} k u_{t-k} + V \sum_{k={[t+L-T]^+}}^{\min\{L-1, t-2\}} \min \{L-1-k,k\}u_{t-k}$,
is valid and facet-defining for conv($P$) under the condition that if $q = t+m$ and $m \leq L-1$, then $m \geq \lfloor {(L+1)/2} \rfloor$.}
\end{proposition}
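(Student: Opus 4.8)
The plan is to prove Proposition~\ref{prop:rd-2-exp} in two stages. For \textbf{validity}, I would run a case analysis on the machine's operating pattern on $[t,t+m]$, exactly mirroring the ramp-down argument used for \eqref{eqn:x_t-down-exp} and the time-reversed counterpart of the ramp-up inequality \eqref{eqn:ru-2-exp-2}; the guiding fact is that $x_t - x_{t+m}\le mV$ holds by telescoping the ramp-down constraints \eqref{eqn:p-ramp-down} whenever the machine is online throughout $[t,t+m]$, and \eqref{eqn:rd-2-exp} should reduce to this (or to something weaker) in that regime since $y_i-\sum_j u_{i-j}=1$ on the relevant indices and $\phi(u,t)=0$ there. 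For the \textbf{facet-defining} property, since $\mathrm{conv}(P)$ is full-dimensional (analogous to Proposition~\ref{prop:t3l2-full}), it suffices to exhibit $3T-1$ affinely independent points of $\mathrm{conv}(P)$ lying on the face $\{\eqref{eqn:rd-2-exp}\text{ at equality}\}$, which I would build by perturbing a single tight ``template'' trajectory.

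For the validity case analysis the substantive cases are: (a) $y_{t+m}=0$, where $x_{t+m}\ge \Clower y_{t+m}=0$ and the machine shuts down at some $t+r+1\le t+m$, so $x_t\le \Vupper+rV$ by \eqref{eqn:p-ramp-down}, while the right-hand side of \eqref{eqn:rd-2-exp} collapses through the $(d_i-i)$ and $q$-terms to exactly $\Vupper+rV$; (b) $y_{t+m}=1$ but the machine shuts down and restarts inside $(t,t+m]$, where the minimum-down time \eqref{eqn:p-mindn} forces at least $\ell$ offline periods and the start-up indicators absorb the slack in the sums $y_i-\sum_j u_{i-j}$, together with ramp-up from the restart; and (c) the effect of a recent start-up strictly before $t$ on $x_t$, captured by $\phi(u,t)$: if $u_{t-k}=1$ then $x_t\le\Vupper+kV$ by \eqref{eqn:p-ramp-up}, and the $\min\{L-1-k,k\}$/truncation structure of $\phi$ is precisely what keeps the bound valid simultaneously with the post-$t$ contributions. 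In each case one reduces \eqref{eqn:rd-2-exp} to something already implied by \eqref{eqn:p-lower-bound}, \eqref{eqn:p-upper-bound}, and \eqref{eqn:p-ramp-up}--\eqref{eqn:p-ramp-down}, handling the boundary $t+L>T$ via the first sum in $\phi$.

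For the facet-defining part I would start from a tight trajectory (machine online on $[\max\{1,t-m-L+1\},\,t+m]$, ramping down from $t$, with $x_t=\Vupper+mV$ when $m\le(\Cupper-\Vupper)/V$ and $x_t=\Cupper$ otherwise), verify it attains \eqref{eqn:rd-2-exp} at equality for the prescribed $S$, and then generate the remaining points by: shifting the $x$-levels in periods outside the binding window, toggling on/off in periods the inequality does not constrain, and choosing alternative feasible start-up/shut-down times. The hypothesis that $m\ge\lfloor(L+1)/2\rfloor$ when $q=t+m$ and $m\le L-1$ is exactly the slack needed so that after a start-up the machine can stay online long enough for the tight trajectory (and its perturbations) to remain feasible; I would invoke it precisely at that point, as in the facet proofs of Propositions~\ref{prop:x_t-down-exp} and~\ref{prop:ru-2-exp}.

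The main obstacle I anticipate is the bookkeeping around the coefficient pattern $V(d_i-i)$ on $S_0\cup S$ together with the truncated term $\phi(u,t)$ near the horizon boundary: one must check that for every feasible $(x,y,u)$ the telescoped ramp bounds combine \emph{additively} to exactly the right-hand side, with no double counting between the ``post-$t$'' contributions and the ``pre-$t$'' contribution of $\phi$, and that the edge cases $\tilde t=t+m$, $q=t+m$, and $t+L>T$ all line up. Establishing affine independence of the constructed points in exactly those degenerate regimes is where the care is needed; the remainder is a routine, if lengthy, transcription of the ramp-down symmetry of \eqref{eqn:ru-2-exp-2}, so those details are naturally deferred to the appendix.
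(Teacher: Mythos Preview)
Your approach is essentially the paper's: a case analysis on $(y_t,y_{t+m})$ and the surrounding start-up/shut-down pattern for validity, and point construction (deferred as ``similar to Proposition~\ref{prop:ru-2-exp}'') for the facet part. Two refinements worth noting: first, in your case (a) the right-hand side does not collapse to \emph{exactly} $\Vupper+rV$---when the shut-down $t+\hat s$ lies beyond $q$ the RHS becomes $\Clower+mV$ and you need the hypothesis $\Vupper<\Clower+V$ to close the bound via $x_t\le\Vupper+(m-1)V\le\Clower+mV$; second, the paper's case split is organized by where $t-s+L-1$ (the end of the minimum-up window from the last start-up at $t-s$) falls relative to $t$, $\tilde t$, and the shut-down time, and the crux is a short contradiction using the very definition of $\hat t$ to force $\phi(u,t)=sV$ (rather than $(L-1-s)V$) in the delicate subcase $t-s+L-1\in[\tilde t,\,t+\hat s-1]$---this is exactly the ``no double counting between post-$t$ and pre-$t$ contributions'' that you flagged as the main obstacle.
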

\begin{proof}
See Appendix \ref{apx:subsec:rd-2-exp} for the proof.
\end{proof}

{Following the similar separation procedure as described for \eqref{eqn:x_t-down-exp}, the above inequality \eqref{eqn:rd-2-exp} can be separated in polynomial time.}

\begin{proposition}
{Given a point $(\hat{x}, \hat{y}, \hat{u}) \in \R_+^{3n-1}$, there {exists} an $\mathcal{O}(T^3)$ {time separation} algorithm to find the most violated inequality \eqref{eqn:rd-2-exp}, if any.}
\end{proposition}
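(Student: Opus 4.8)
The plan is to mirror, essentially verbatim, the separation scheme already developed for inequality~\eqref{eqn:x_t-down-exp}: for each fixed pair $(t,m)$ with $t \in [1,T-1]_{\Z}$ and $m \in [1,\min\{T-t,(\Cupper-\Clower)/V\}]_{\Z}$, the quantities $\tilde{t}$, $S_0$, and $\phi(u,t)$ in~\eqref{eqn:rd-2-exp} are completely determined, so the only free choice is the set $S = S_1 \cup \{\tilde{t}\}$ with $S_1 \subseteq [\tilde{t}+1,t+m]_{\Z}$. I would recast the task ``find the admissible $S$ maximizing (right-hand side of~\eqref{eqn:rd-2-exp}) $-$ ($x_t-x_{t+m}$) evaluated at the given point $(\hat x,\hat y,\hat u)$'' as a shortest-path problem on a directed acyclic graph analogous to Figure~\ref{fig:dag-1}.

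The key structural observation is that, once $(t,m)$ are fixed, this objective splits into an $S$-independent part, namely $\Vupper \hat y_t - \hat x_t + \hat x_{t+m} - \Clower \hat y_{t+m} + V\sum_{i\in S_0}(\hat y_i - \sum_{j=0}^{\min\{L-1,i-2\}}\hat u_{i-j}) + \phi(\hat u,t)$, plus an $S$-dependent part that telescopes: writing $S$ as an increasing chain $\tilde{t}=s_0<s_1<\dots<s_p=q$, the coefficient attached to each interior index $s_\ell$ $(\ell<p)$ is $V(s_{\ell+1}-s_\ell)(\hat y_{s_\ell}-\sum_{j=0}^{L-1}\hat u_{s_\ell-j})$, while $q=\max S$ additionally carries the terminal term $(\Clower+V-\Vupper)(\hat y_q-\sum_{j=0}^{\min\{L-1,q-2\}}\hat u_{q-j})$ and, when $q<t+m$, the residual term $V(t+m-q)(\hat y_q-\sum_{j=0}^{L-1}\hat u_{q-j})$ coming from $d_q=t+m$. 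I would therefore take the node set $\{o,d\}\cup[\tilde{t},t+m]_{\Z}$, place the $S$-independent cost on the arc $o\to\tilde{t}$, place $w_{t_1 t_2}=V(t_2-t_1)(\hat y_{t_1}-\sum_{j=0}^{L-1}\hat u_{t_1-j})$ on each forward arc $t_1\to t_2$ with $\tilde{t}\le t_1<t_2\le t+m$, and add a close-out arc $q\to d$ from every $q\in[\tilde{t},t+m]_{\Z}$ carrying the terminal term (plus the residual term when $q<t+m$). A path $o\to\tilde{t}\to s_1\to\dots\to q\to d$ then has total cost exactly equal to (right-hand side) $-$ ($x_t-x_{t+m}$) for the set $S=\{s_0,\dots,s_p\}$, and every admissible $S$ arises this way; hence the most violated instance of~\eqref{eqn:rd-2-exp} for this $(t,m)$ corresponds to a shortest $o$--$d$ path, the inequality being violated iff that path has negative length, with $S$ recovered from the interior nodes visited.

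Since the graph is acyclic with $\mathcal{O}(T)$ nodes and $\mathcal{O}(T^2)$ arcs, the shortest path is computed in $\mathcal{O}(T^2)$ time by the topological-sorting shortest-path algorithm~\cite{cormen2009introduction} for each $(t,m)$; because $m\le(\Cupper-\Clower)/V$ is a constant and $t$ ranges over $\mathcal{O}(T)$ values, the overall separation runs in $\mathcal{O}(T^3)$, which is the claim.

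I expect the only real work, and the main obstacle, to be the careful handling of the degenerate and horizon-boundary configurations so that the path/set correspondence is an exact bijection on objective values: the case $S_1=\emptyset$ (so $S=\{\tilde{t}\}$ and the chain has length zero), the case $\tilde{t}=t+m$ (so $S\setminus\{t+m\}=\emptyset$ and only the terminal term survives), and the degradation of the $u$-summation upper limits from $L-1$ to $\min\{L-1,i-2\}$ near the start of the horizon; each of these must be absorbed into the appropriate arc cost, exactly as in the scheme for~\eqref{eqn:x_t-down-exp}, rather than treated separately. No validity or facet argument is needed here, since those are already established in Proposition~\ref{prop:rd-2-exp}; what remains is purely the verification that the telescoped path cost reproduces the right-hand side of~\eqref{eqn:rd-2-exp} in every configuration of $S$.
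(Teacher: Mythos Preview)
Your proposal is correct and follows essentially the same approach as the paper, which simply states that the separation ``follows the similar separation procedure as described for \eqref{eqn:x_t-down-exp}'' without further detail. You actually go further than the paper does: you correctly identify that, unlike in \eqref{eqn:x_t-down-exp} where the path always terminates at a fixed node $t+m+1$, here the terminal contribution $(\Clower+V-\Vupper)(\hat y_q-\cdots)$ depends on $q=\max S$, so the graph needs a close-out arc $q\to d$ from every candidate $q\in[\tilde t,t+m]_{\Z}$ carrying the terminal term together with the residual $V(t+m-q)(\hat y_q-\cdots)$ when $q<t+m$. This is exactly the right adaptation, and your complexity count matches the paper's.
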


\subsection{Strong Valid Inequalities with Three Continuous Variables}  \label{subsec:multi-period-three}
{In this subsection, we extend the study to strengthen the general polytope $P$ through deriving strong valid inequalities considering three continuous variables (e.g., $x_{t-2}$, $x_{t-1}$, and $x_{t}$). We develop strong valid inequalities to bound $x_{t-2} - x_{t-1} + x_{t}$, which is {easily observed to be} bounded from above by $\Cupper+V$ since $x_{t-3} \leq \Cupper$ and $x_{t-1} - x_{t-2} \leq V$ due to constraints \eqref{eqn:p-upper-bound} and \eqref{eqn:p-ramp-up}. 
{From this observation, we notice that there is} a mixture of capacity upper bound and ramp rate embedded in the three continuous variables, whereas capacity upper bound is embedded in {the} single continuous variable {as described} in Section \ref{subsec:multi-period-single} and ramp rate is embedded in {the} two continuous variables {as described} in Section \ref{subsec:multi-period-two}.}

{First, to bound {$x_{t-2} - x_{t-1} + x_{t}$}, we consider the start-up decisions before $t-1$ and the online/offline status at $t$ to develop strong valid inequalities in the following \eqref{eqn:3degree-1-multi-period}, which is polynomial in the order of $\mathcal{O}(T)$.}


\begin{proposition} \label{prop:3degree-1-multi-period}
If $\Vupper < \Clower + V$, then for each $t \in [\max \{L+1, 3\}, T-1]_{\Z}$, the {following} inequality
\begin{align}
x_{t-2} - x_{t-1} + x_{t} & \leq \Vupper y_{t-2} - (\Vupper - V) y_{t-1} + \Vupper y_{t} + (\Clower + V - \Vupper) (y_{t+1} - u_{t+1} - y_{t}) \nonumber \\
& + (\Cupper - \Vupper) (y_{t} - u_{t} - u_{t-1}) - \sum_{s=0}^{L-3} (\Cupper - \Vupper - s V ) u_{t-s-2} \label{eqn:3degree-1-multi-period}
\end{align}
is valid for conv($P$) when $L \geq 2$. Furthermore, it is facet-defining for conv($P$) for each $t \in [\max \{L+1, 3\}, T-1]_{\Z}$ when $L \leq 3$.
\end{proposition}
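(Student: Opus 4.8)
The plan is to prove validity by the ``coefficient‑peeling'' technique of Proposition~\ref{prop:valid-t3l2}: begin with an inequality that is manifestly valid and repeatedly strengthen one coefficient at a time by subtracting a nonnegative quantity, certifying at each step that validity is preserved by examining only the few cases in which the subtracted quantity is not forced to vanish and reducing each of those cases to a ramp‑rate bound \eqref{eqn:p-ramp-up}--\eqref{eqn:p-ramp-down} or a capacity bound \eqref{eqn:p-lower-bound}--\eqref{eqn:p-upper-bound} already present in $P$. Since $\Vupper < \Clower+V$, the ramp‑down constraint \eqref{eqn:p-ramp-down} between $t-2$ and $t-1$ gives $x_{t-2}-x_{t-1}\le \Vupper y_{t-2}-(\Vupper-V)y_{t-1}$ (checking the four $(y_{t-2},y_{t-1})$ patterns, the only nontrivial one being a start‑up at $t-1$, where $\Vupper\le\Clower+V$ is used). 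Adding $x_t\le\Cupper y_t$ from \eqref{eqn:p-upper-bound} and writing $\Cupper y_t=\Vupper y_t+(\Cupper-\Vupper)y_t$ yields a base inequality, from which \eqref{eqn:3degree-1-multi-period} is reached by three successive tightenings.

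First, because $L\ge 2$, constraint \eqref{eqn:p-minup} forces $u_{t-1}+u_t\le y_t$, so $y_t-u_t-u_{t-1}\ge 0$ and $(\Cupper-\Vupper)y_t$ may be replaced by $(\Cupper-\Vupper)(y_t-u_t-u_{t-1})$; the cases to verify are $u_t=1$ (then $x_t\le\Vupper$ by \eqref{eqn:p-ramp-up}) and $u_{t-1}=1$ (then \eqref{eqn:p-udef} and \eqref{eqn:p-mindn} force $y_{t-2}=0$, so $x_{t-2}=0$, while \eqref{eqn:p-minup} gives $y_t=1$ and \eqref{eqn:p-ramp-up} gives $x_t\le x_{t-1}+V\le\Clower+V\le\Vupper+V$ using $x_{t-1}\ge\Clower$ and $\Clower\le\Vupper$), and in each the lost term is recovered. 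Second, since $y_{t+1}-u_{t+1}\le y_t$ by \eqref{eqn:p-udef} and $\Clower+V-\Vupper>0$, adding the nonpositive quantity $(\Clower+V-\Vupper)(y_{t+1}-u_{t+1}-y_t)$ to the right‑hand side can only strengthen the inequality; it is strictly negative precisely when the machine is on at $t$ and off at $t+1$, where $x_t\le\Vupper$ by \eqref{eqn:p-ramp-down}, and one checks the coefficients match. Third, I peel $-\sum_{s=0}^{L-3}(\Cupper-\Vupper-sV)u_{t-s-2}$: by \eqref{eqn:p-minup} at most one $u_{t-s-2}$ with $s\in[0,L-3]_{\Z}$ equals $1$, and if $u_{t-s-2}=1$ then the machine started $s+2$ periods before $t$, whence $x_{t-2}\le\Vupper+sV$ and $x_t-x_{t-1}\le V$ by \eqref{eqn:p-ramp-up}, so that $x_{t-2}-x_{t-1}+x_t\le\Vupper+(s+1)V$, which is exactly the value of the strengthened right‑hand side (also when this start‑up coexists with a shut‑down at $t+1$). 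Collecting these three steps gives \eqref{eqn:3degree-1-multi-period}.

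For the facet claim (only $L\in\{2,3\}$), the plan is to exhibit $3T-1$ affinely independent points of $\mathrm{conv}(P)$ satisfying \eqref{eqn:3degree-1-multi-period} at equality, following the scheme used for the other multi‑period propositions and carried out in the appendix: fix an ``on'' interval containing $\{t-2,t-1,t\}$ with the appropriate start‑up location, activate in turn the ramp‑up chain from $t-2$ to $t$, a shut‑down at $t+1$, or the capacity bound at $t$ so as to trace equality while sweeping the power profile within the ramp limits, and pad out the count with points where the machine is off on complementary intervals; the hypothesis $L\le 3$ keeps the start‑up window $[t-L+1,t]_{\Z}$ short enough that all of these points lie inside $[1,T]$ for every $t\in[\max\{L+1,3\},T-1]_{\Z}$. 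The main obstacle will be the bookkeeping in the validity argument, specifically the third tightening: one must confirm that, when a start‑up at $t-s-2$ coexists with a possible shut‑down at $t+1$, the bounds $x_{t-2}\le\Vupper+sV$, $x_t-x_{t-1}\le V$ and $x_t\le\Vupper$ telescope so that all right‑hand‑side coefficients line up exactly, and one must also handle the ``spurious'' start‑up configurations that \eqref{eqn:p-minup}--\eqref{eqn:p-mindn} rule out; the facet count for $L=2$, where the sum in \eqref{eqn:3degree-1-multi-period} is empty, is a secondary delicate point.
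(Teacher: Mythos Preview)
Your validity argument proceeds by successive tightening (``coefficient peeling'' in the style of Proposition~\ref{prop:valid-t3l2}), whereas the paper organizes the same case analysis differently: it fixes the value of $y_t$ and then branches on which (if any) of $u_t,u_{t-1},\dots,u_{t-L+1}$ equals~$1$, reducing \eqref{eqn:3degree-1-multi-period} in each branch to a small number of residual inequalities that are checked against the four possible values of $(y_{t-2},y_{t+1})$. Your three-stage peeling ultimately visits the same configurations, but with the roles of the terms reversed: you fix which tightening is being applied and then chase the $(y,u)$ patterns in which that tightening could fail. Both routes work; the paper's route is slightly more economical because fixing the start-up location up front collapses the terms $(\Cupper-\Vupper)(y_t-u_t-u_{t-1})$, $(\Clower+V-\Vupper)(y_{t+1}-u_{t+1}-y_t)$ and the sum over $u_{t-s-2}$ simultaneously, so one never has to track interactions between separate tightenings.

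Two concrete points. First, in your $u_{t-1}=1$ sub-case you write ``$x_t\le x_{t-1}+V\le\Clower+V$ using $x_{t-1}\ge\Clower$''; the second inequality is backwards. What you actually need there is only $x_t-x_{t-1}\le V$, which matches the right-hand side $V$ directly, so the slip is harmless. Second, your second tightening (``one checks the coefficients match'') hides a sub-case that is not covered by $x_t\le\Vupper$ alone: when $y_{t-2}=y_{t-1}=y_t=1$, $y_{t+1}=0$ and no start-up occurs in $[t-1,t]$, the right-hand side evaluates to $\Vupper+\Cupper-\Clower$, and to meet it you must invoke the capacity bounds $x_{t-2}\le\Cupper$, $x_{t-1}\ge\Clower$ rather than the ramp bound $x_{t-2}-x_{t-1}\le V$ used in your base inequality. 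This is exactly the kind of ``telescoping'' you flag as the main obstacle, and it does go through, but it should be made explicit. The paper's proof glosses over the same sub-case with ``can be easily verified,'' so neither treatment is fully written out. Your facet sketch matches the paper's plan (carried out only for $L=3$ in the appendix).
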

\begin{proof}
See Appendix \ref{apx:subsec:3degree-1-multi-period} for the proof.
\end{proof}

{Next, we derive {another family of} strong valid inequalities with three continuous variables (i.e., $x_{t-2} - x_{t-1} + x_t$) to strengthen the multi-period formulation. {In particular},
we develop the strong valid inequalities in {the following} \eqref{eqn:ru-3-exp} by considering {how} the machine status before time $t$ {to affect the bound} {of $x_{t-2} - x_{t-1} + x_{t}$}. Different from inequality \eqref{eqn:3degree-1-multi-period}, the machine status after time $t$ is not considered in \eqref{eqn:ru-3-exp}.}

\begin{proposition} \label{prop:ru-3-exp}
{For each $t \in [L+1, T]_{\Z}$, $m \in [{[3-L]^+}, \min \{{[t-L-1]^+, [(\Cupper-\Vupper)/V-L+3]^+} \}]_{\Z}$, {and $S \subseteq [t-m+1, t-1]_{\Z}$ if $L \geq 3$ and $S \subseteq [t-m+1, t-2]_{\Z}$ if $L=2$}, the {following} inequality
\begin{align}
\hspace{-0.1in} x_{t-2} - x_{t-1} + x_t & \leq \Vupper y_{t-2} - (\Vupper - V) y_{t-1} + \Vupper y_t + V \sum_{i \in S} \bigg(i - d_i\bigg) \bigg(y_i - \sum_{j=0}^{L-1} u_{i-j}\bigg) - \phi(u,t) \nonumber \\
 & + \alpha(S,t) V \bigg(y_t - \sum_{k=0}^{L-1} u_{t-k}\bigg) + \sum_{k=3}^{L-1} (k-2) V u_{t-k} + \beta(t) \bigg(y_{t-m} - \sum_{j=0}^{L-1} u_{t-m-j}\bigg),  \label{eqn:ru-3-exp}
\end{align}
where $d_i = \max \{a \in S \cup \{t-m\}: a < i \}$ for each $i \in S$, $\phi(u,t) = {[\Clower+V-\Vupper]^+} u_{t-2}$ if $L=2$ and $\phi(u,t) =0$ otherwise, $\alpha(S,t) = {[m+L-3]^+} - \sum_{i \in S} (i - d_i)$, and $\beta(t) = \Cupper - \Vupper - {[m+L-3]^+} V$, is valid for conv($P$) when $L \geq 2$.
Furthermore, it is facet-defining for conv($P$) when $t=T$.}
\end{proposition}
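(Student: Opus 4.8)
The plan is to follow the two–part template used for the other multi-period inequalities in this section: first establish validity by a case analysis that reduces \eqref{eqn:ru-3-exp} to one of the elementary ramp/capacity bounds, and then establish the facet property for $t=T$ by exhibiting $3T-1$ affinely independent points of $P$ lying on the face. Structurally this inequality is the ``three continuous variables, machine status before $t$ only'' analogue of the $m$-part of \eqref{eqn:x_t-up-exp} combined with the polynomial-size bound \eqref{eqn:3degree-1-multi-period}, and I would exploit that parallel.

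For validity, the starting point is that $x_{t-2}-x_{t-1}\le V y_{t-1}+\Vupper(1-y_{t-1})$ from the ramp-down constraints \eqref{eqn:p-ramp-down}, so it suffices to control $x_t$ together with this difference, where $x_t$ is bounded by how recently the machine started up. I would split on $y_t$. If $y_t=0$, the minimum-up constraint \eqref{eqn:p-minup} forces $u_{t-k}=0$ for $k\in[0,L-1]$ and $x_t=0$, every term $y_i-\sum_{j=0}^{L-1}u_{i-j}$ in the active window is nonnegative, and the right-hand side dominates $\Vupper y_{t-2}-(\Vupper-V)y_{t-1}$, which in turn bounds $x_{t-2}-x_{t-1}$ (the only nontrivial subcase being $y_{t-1}=y_{t-2}=1$, a pure ramp-down bound). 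If $y_t=1$, I would further split on when the machine last started up: if there is no start-up in $[t-m,t]$ then $y_{t-m}=1$, all start-up indicators in the window vanish, and the coefficients $(i-d_i)$, $\alpha(S,t)$, $\beta(t)$ are chosen precisely so that the right-hand side telescopes to $V+\Cupper$, which bounds $x_{t-2}-x_{t-1}+x_t$ since $x_t\le\Cupper$ and $x_{t-2}-x_{t-1}\le V$; if the machine starts up at $t-s$ with $s$ in the window, ramp-up \eqref{eqn:p-ramp-up} gives $x_t\le\Vupper+sV$ while $x_{t-2}-x_{t-1}$ is at most $V$, or at most $\Vupper-\Clower$ when the start-up is at $t-1$ or $t-2$, and one checks that the ``activated'' indicator terms collapse the right-hand side to exactly this value. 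The correction pieces $-\phi(u,t)$ and $\sum_{k=3}^{L-1}(k-2)Vu_{t-k}$ are exactly the adjustments needed in the boundary subcases $L=2$ and when the start-up falls inside the minimum-up window $[t-L+1,t-2]$, so the only genuinely algebraic content is the telescoping identity $\sum_{i\in S}(i-d_i)+\alpha(S,t)=[m+L-3]^+$ (immediate from the definition of $\alpha$) plus matching constants in each regime.

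For the facet claim with $t=T$, I would first invoke full-dimensionality of $\mathrm{conv}(P)$ (shown exactly as in the proof of Proposition \ref{prop:t3l2-full}), so it suffices to produce $3T-1$ affinely independent feasible points satisfying \eqref{eqn:ru-3-exp} at equality. I would assemble these from a few ``template'' schedules that make the inequality tight — the machine online throughout with generation ramped to the cap so that $x_{T-2}-x_{T-1}+x_T=V+\Cupper$; schedules with the machine starting up at various times $T-s$ and following the ramp-up trajectory so that $x_T=\Vupper+sV$; and schedules where the machine is off on an initial or an interior segment — and then perturb each template within the continuous coordinates left free by the binding constraints (and by \eqref{eqn:ru-3-exp} itself) and within the $y,u$ coordinates whose coefficients are inactive, checking that the resulting family spans an affine space of dimension $3T-2$ inside the face.

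The main obstacle I anticipate is the validity case analysis rather than the facet construction: the coefficient pattern is delicate, and there are several genuinely distinct regimes — $y_t=0$ versus $y_t=1$, start-up outside versus inside the window $[t-L+1,t]$, start-up at $t-1$ or $t-2$ where $x_{t-2}-x_{t-1}\le\Vupper-\Clower$ rather than $\le V$, and the small-$L$ boundary behaviour where $\phi$ and the $\sum_{k=3}^{L-1}$ term switch on or off — and in each one must confirm that the surviving terms sum to exactly the correct ramp/capacity constant. Managing the bookkeeping of which indicator terms vanish in which regime is where the real care lies; the facet proof is then routine but lengthy point-counting, and I would relegate both sets of details to the appendix, matching the presentation of the surrounding propositions.
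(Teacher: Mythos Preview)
Your overall plan is right, but the core decomposition you propose for the validity proof is the wrong one, and it does not close. You write that when the machine starts at $t-s$ you will use $x_t\le\Vupper+sV$ together with $x_{t-2}-x_{t-1}\le V$. That gives the bound $\Vupper+(s+1)V$ on the left-hand side, which is $2V$ too weak to match the right-hand side. Concretely, take $L=3$, $m=2$, $S=\emptyset$, a start-up at $t-3$ (so $s=3$), and no shut-down. Then $y_t-\sum_{k=0}^{2}u_{t-k}=1$ while $y_{t-2}-\sum_{j=0}^{2}u_{t-2-j}=0$, so the right-hand side evaluates to $\Vupper+3V$; your bound for the left-hand side is $\Vupper+4V$, and the argument collapses.

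The decomposition that works, and the one the paper uses, is the opposite grouping: bound $x_{t-2}$ by the start-up information (if the machine started at $t-s$ with $s\ge2$ then $x_{t-2}\le\Vupper+(s-2)V$) and pair this with the ramp-up bound $x_t-x_{t-1}\le V$. This is why the correction term $\sum_{k=3}^{L-1}(k-2)Vu_{t-k}$ has the coefficients it has: the $(k-2)$ is exactly the number of ramp periods from the start-up at $t-k$ up to period $t-2$, not up to period $t$. Once you use this decomposition, the case analysis (splitting on whether there is a start-up and where $t-s+L-1$ falls relative to $[t-m,t-1]$, $\{t\}$, and $[t+1,\infty)$, then on the shut-down time) goes through along the lines you outline. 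Your facet-defining plan is fine and matches the paper's construction.
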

\begin{proof}
See Appendix \ref{apx:subsec:ru-3-exp} for the proof.
\end{proof}

{Following the similar separation procedure as described for \eqref{eqn:x_t-up-exp}, the above inequality \eqref{eqn:ru-3-exp} can be separated in polynomial time.}

\begin{proposition}
{Given a point $(\hat{x}, \hat{y}, \hat{u}) \in \R_+^{3n-1}$, there {exists} an $\mathcal{O}(T^3)$ {time separation} algorithm to find the most violated inequality \eqref{eqn:ru-3-exp}, if any.}
\end{proposition}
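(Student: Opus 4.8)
The plan is to mirror the shortest-path separation already developed for \eqref{eqn:x_t-up-exp}, exploiting the fact that the only exponential ingredient in \eqref{eqn:ru-3-exp} is the choice of $S \subseteq [t-m+1, t-1]_\Z$ (or $[t-m+1,t-2]_\Z$ when $L=2$), whereas $t$ ranges over $\mathcal{O}(T)$ values and $m$ is capped by the constant $[(\Cupper-\Vupper)/V - L + 3]^+$. The first step is to isolate the $S$-dependent part of the violation. For a fixed pair $(t,m)$ and a given point $(\hat{x},\hat{y},\hat{u})$, write $g_i = \hat{y}_i - \sum_{j=0}^{L-1}\hat{u}_{i-j}$ and $G = g_t$. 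Using the telescoping identity $\sum_{i\in S}(i-d_i) = q-(t-m)$ with $q=\max S$ (and $0$ when $S=\emptyset$), substitute $\alpha(S,t) = [m+L-3]^+ - \sum_{i\in S}(i-d_i)$ into the right-hand side of \eqref{eqn:ru-3-exp}. The terms carrying $S$ then collapse to $V\sum_{i\in S}(i-d_i)(g_i - G)$, while every remaining piece --- the linear $\hat{y}_{t-2},\hat{y}_{t-1},\hat{y}_t$ terms, the correction $\sum_{k=3}^{L-1}(k-2)V\hat{u}_{t-k}$, $-\phi(\hat{u},t)$, the constant $[m+L-3]^+ V G$, and the $\beta(t)$ term on $g_{t-m}$ --- is independent of $S$. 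This is the crucial simplification: the coupling coefficient $\alpha(S,t)$, which ties $q$ to the whole of $S$, is redistributed across the consecutive gaps of the chain $\{t-m\}\cup S$, yielding a purely additive objective.

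Second, I would build a directed acyclic graph $\Gb=(\V,\mathbb{A})$ whose $o$--$d$ paths are in bijection with the admissible sets $S$. Unlike Figure \ref{fig:dag-1}, no aggregating node $r$ is needed, because \eqref{eqn:ru-3-exp} ignores the machine status after time $t$. Take $\V = \{o,d\}\cup\{t-m,t-m+1,\dots,t-1\}$ and let each path descend from the largest chosen index $q=\max S$ through the elements of $S$ down to the base $t-m$. Concretely, I would install an arc $o\to i$ for every candidate top index $i\in\{t-m,\dots,t-1\}$ carrying the full $S$-independent cost (so $S=\emptyset$ is realized by the arc $o\to(t-m)$), a gap arc $t_1\to t_2$ for every $t-m\le t_2<t_1\le t-1$ with weight $w_{t_1 t_2}=V(t_1-t_2)(g_{t_1}-G)$, and a terminal arc $(t-m)\to d$ carrying $\beta(t)\,g_{t-m}$. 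By the telescoping identity above, the length of the path corresponding to a set $S$ equals exactly $\mathrm{RHS}(S)-\mathrm{LHS}$ of \eqref{eqn:ru-3-exp} evaluated at the given point; hence a shortest $o$--$d$ path of negative length certifies the most violated inequality, and the intermediate nodes it visits recover the optimal $S$.

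Third, I would bound the running time. The graph has $\mathcal{O}(T)$ nodes and $\mathcal{O}(T^2)$ arcs, and since it is acyclic its shortest path is computed in $\mathcal{O}(T^2)$ time by the Topological Sorting Algorithm \cite{cormen2009introduction}. This computation is repeated once per pair $(t,m)$; as $m$ is bounded above by the constant $[(\Cupper-\Vupper)/V-L+3]^+$ and $t$ ranges over $\mathcal{O}(T)$ values, the total separation cost is $\mathcal{O}(T^3)$, giving the claimed bound.

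The main obstacle is the bookkeeping in the first step: one must verify that after substituting $\alpha(S,t)$ the $S$-dependent contribution is \emph{exactly} the telescoping sum $V\sum_{i\in S}(i-d_i)(g_i-G)$, so that assigning $w_{t_1 t_2}=V(t_1-t_2)(g_{t_1}-G)$ to each gap reproduces the right-hand side with no residual $q$-dependence leaking outside the chain. A secondary care point is confirming that the path/$S$ correspondence is a genuine bijection at the boundary cases --- the empty set $S=\emptyset$ (realized by $o\to(t-m)\to d$) and the restriction $S\subseteq[t-m+1,t-2]_\Z$ when $L=2$, enforced simply by deleting node $t-1$ from $\V$ --- and that the endpoint arcs absorb the fixed $\hat{y}_{t-2},\hat{y}_{t-1},\hat{y}_t$, $\phi(\hat u,t)$, and $\beta(t)$ terms without double counting.
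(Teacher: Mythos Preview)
Your proposal is correct and follows essentially the same approach as the paper, which simply states that the separation mirrors that for \eqref{eqn:x_t-up-exp}; you have supplied considerably more detail than the paper itself. The one minor stylistic difference is that you redistribute the $\alpha(S,t)VG$ contribution across the gap arcs via the weights $w_{t_1t_2}=V(t_1-t_2)(g_{t_1}-G)$, whereas the paper's construction for \eqref{eqn:x_t-up-exp} keeps the gap weights as $V(t_1-t_2)g_{t_1}$ and loads the $q$-dependent piece onto the entry arc into the top node of the chain --- both encodings are equivalent and yield the same $\mathcal{O}(T^3)$ bound.
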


{Similar to inequalities \eqref{eqn:x_t-down-exp} and \eqref{eqn:rd-2-exp}, we continue to consider the machine status after time $t$ to provide a {tighter} upper bound for $x_{t-2} - x_{t-1} + x_t$ in {the following} \eqref{eqn:rd-3-exp}, which is {exponentially} sized. Note here that the machine status after $t$ will also have an effect on $x_{t-2}$ and $x_{t-1}$, besides $x_t$, which is mainly studied in \eqref{eqn:x_t-down-exp} and \eqref{eqn:rd-2-exp}.}

\begin{proposition} \label{prop:rd-3-exp}
{For each $t \in [3, T-1]_{\Z}$, $\hat{t} = \max \{t+1, L+1\}$, $m \in [{[\hat{t}-t-1]^+}, \min \{T-t-1, {(\Cupper-\Vupper)/V}\}]_{\Z}$, $S_0 = [t+1, \hat{t}-1]_{\Z}$, {and} $S \subseteq [\hat{t}+1, t+m]_{\Z}$, the {following} inequality
\begin{align}
x_{t-2} - x_{t-1} + x_t & \leq \Vupper y_{t-2} - (\Vupper - V) y_{t-1} + \Vupper y_t + V \sum_{i \in S_0} \bigg(y_i - \sum_{j=0}^{\min \{L-1,i-2\}} u_{i-j}\bigg) + \phi(u,t) \nonumber \\
 & + V \sum_{i \in S \cup \{\hat{t}\}} \bigg(d_i - i\bigg) \bigg(y_i - \sum_{j=0}^{L-1} u_{i-j}\bigg)  + \alpha(t) \bigg(y_{t+m+1} - \sum_{j=0}^{L-1} u_{t+m+1-j}\bigg),  \label{eqn:rd-3-exp}
\end{align}
where $\phi(u,t) = V \sum_{k=3}^{\min \{t-2, L-1\}} (k-2)u_{t-k}$, $d_i = \min \{a \in S \cup \{t+m+1\}: a > i \}$ for each $i \in S \cup \{\hat{t}\}$, and $\alpha(t) = \Cupper - \Vupper - m V$, is valid for conv($P$).
Furthermore, it is facet-defining for conv($P$) when $t=3$.}
\end{proposition}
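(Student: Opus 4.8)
The plan is to prove validity by a case analysis on the commitment pattern around and after time $t$, reducing \eqref{eqn:rd-3-exp} in each branch to the fundamental constraints and to bounds that are structurally identical to those used for Propositions \ref{prop:x_t-down-exp} and \ref{prop:rd-2-exp}, and then to establish the facet property at $t=3$ by exhibiting $3T-1$ affinely independent feasible points of $P$ on the hyperplane. Throughout, I would repeatedly use that every bracketed expression of the form $y_i-\sum_j u_{i-j}$ appearing in \eqref{eqn:rd-3-exp} is nonnegative by the minimum-up constraints \eqref{eqn:p-minup} (with the truncation reflected by the $\min$'s in the summation limits), so dropping such terms only weakens the inequality.

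For validity I would first dispose of $y_t=0$: then $x_t=0$, the minimum-up constraints \eqref{eqn:p-minup} kill the trailing $u$-terms, and what remains is an upper bound on $x_{t-2}-x_{t-1}$ that follows from the ramp-down constraint \eqref{eqn:p-ramp-down} at time $t-1$ together with $x_{t-1}\ge\Clower y_{t-1}$. When $y_t=1$, I would split on (i) whether the machine started up during $[t-\min\{t-2,L-1\},\,t-2]$ and (ii) the first period $t+n+1$ (with $n\le m$), if any, at which it shuts down after $t$. On the "down" side, $x_t\le\Vupper+nV$ by \eqref{eqn:p-ramp-down}, which is precisely what the coefficients $d_i-i$, the $S_0$-block of weight $V$, and the tail coefficient $\alpha(t)=\Cupper-\Vupper-mV$ encode once one records which bracketed terms are forced to zero for the given shut-down time (choosing $S$ to be the set of online periods in $[\hat t+1,t+m]$). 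On the "up" side, $x_{t-2}-x_{t-1}\le V$ whenever $y_{t-1}=1$, while the correction $\phi(u,t)=V\sum_{k=3}^{\min\{t-2,L-1\}}(k-2)u_{t-k}$ absorbs the cases in which the machine started up shortly before $t-2$ (using ramp-up \eqref{eqn:p-ramp-up} and $x_{t-1}\ge\Clower$). The whole argument is a mechanical extension of the validity proof of Proposition \ref{prop:rd-2-exp}, the only new feature being the extra $+V$ (or $\phi(u,t)$) contributed by the $x_{t-2}-x_{t-1}$ block; I would present it by the same bookkeeping and refer back to Appendix \ref{apx:subsec:rd-2-exp} for the repeated sub-cases.

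For the facet proof at $t=3$ I would invoke full-dimensionality of $\mathrm{conv}(P)$ and produce $3T-1$ affinely independent points of $P$ satisfying \eqref{eqn:rd-3-exp} at equality. At $t=3$ the construction simplifies, since $\phi(u,3)=0$ and $\hat t=\max\{4,L+1\}$. I would start from a canonical tight trajectory: the machine online in periods $1,\dots,t+m$ and offline afterward (all $u_k=0$), with $x_{t+m}=\Vupper$, ramping down at rate $V$ from $x_t=\Vupper+mV$ through $t+m$, ramping up at rate $V$ before $t$ from $x_1$ (truncated at $\Clower$ when needed); note $m\le(\Cupper-\Vupper)/V$ guarantees $x_t=\Vupper+mV\le\Cupper$ is feasible, and on this point \eqref{eqn:rd-3-exp} holds with equality. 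The remaining independent points are generated by (a) shifting the shut-down time within $[t+1,t+m+1]$, i.e. choosing different subsets $S$; (b) perturbing $x_k$ in periods $k$ where the constraint has slack; and (c) toggling $y_k$ and the associated $u$ for periods far enough after $t+m+1$ (or before the ramp-up start) where \eqref{eqn:rd-3-exp} is insensitive. I would arrange these points as the rows of a matrix in the order (canonical trajectory, $S$-variations, coordinate perturbations) so that full rank is visible, deferring the detailed verification to the appendix.

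The main obstacle is not the validity argument, which is routine given Propositions \ref{prop:x_t-down-exp} and \ref{prop:rd-2-exp}, but the affine-independence bookkeeping in the facet proof: several coefficients in \eqref{eqn:rd-3-exp} of the form $(d_i-i)\,(y_i-\sum_j u_{i-j})$ and $\alpha(t)(y_{t+m+1}-\sum_j u_{t+m+1-j})$ couple the $x$-, $y$-, and $u$-coordinates, so one must be careful that the $S$-variations and the coordinate perturbations do not accidentally lie in a common affine subspace of dimension below $3T-1$. I would mirror the organization used in Appendix \ref{apx:subsec:ru-3-exp} for the symmetric inequality \eqref{eqn:ru-3-exp} (whose facet claim is at $t=T$) to keep this manageable.
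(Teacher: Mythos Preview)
Your proposal is essentially correct and follows the same approach as the paper. The paper's validity proof also proceeds by case analysis on the start-up time $t-s$ and shut-down time $t+\hat s$ (first disposing of the cases where some $y_i=0$ for $i\in\{t-2,t-1,t\}$, then splitting on whether $t-s+L-1\le t$ or $t-s+L-1\ge t+1$, the latter giving $\phi=(s-2)V$ exactly as you describe), and the paper's facet-defining proof is likewise deferred to the symmetric construction in Appendix~\ref{apx:subsec:ru-3-exp} for Proposition~\ref{prop:ru-3-exp}. One small organizational point: your initial split only names $y_t=0$, whereas the paper handles all of $y_{t-2},y_{t-1},y_t$ vanishing in one stroke before moving to the ``all online'' case---make sure your write-up also covers $y_{t-1}=0$ or $y_{t-2}=0$ with $y_t=1$ (start-up at $t$ or $t-1$), since your stated start-up interval $[t-\min\{t-2,L-1\},\,t-2]$ does not include those.
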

\begin{proof}
See Appendix \ref{apx:subsec:rd-3-exp} for the proof.
\end{proof}

{Following the similar separation procedure as described for \eqref{eqn:x_t-down-exp}, the above inequality \eqref{eqn:rd-3-exp} can be separated in polynomial time.}

\begin{proposition}
{Given a point $(\hat{x}, \hat{y}, \hat{u}) \in \R_+^{3n-1}$, there {exists} an $\mathcal{O}(T^3)$ {time separation} algorithm to find the most violated inequality \eqref{eqn:rd-3-exp}, if any.}
\end{proposition}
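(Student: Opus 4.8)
The plan is to adapt the separation scheme already developed for inequality~\eqref{eqn:x_t-down-exp}. Fix an admissible pair $(t,m)$; recall that $m$ ranges over an interval of length $\mathcal{O}(1)$ and that $\hat t=\max\{t+1,L+1\}$ is determined by $t$. For the given point $(\hat x,\hat y,\hat u)$, every term on the right-hand side of~\eqref{eqn:rd-3-exp} is then a constant, \emph{except} the sum $V\sum_{i\in S\cup\{\hat t\}}(d_i-i)\bigl(\hat y_i-\sum_{j=0}^{L-1}\hat u_{i-j}\bigr)$, whose value depends on the choice of $S\subseteq[\hat t+1,t+m]_{\Z}$ only through the forward pointers $d_i=\min\{a\in S\cup\{t+m+1\}:a>i\}$. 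Thus, for each $(t,m)$, separating~\eqref{eqn:rd-3-exp} amounts to a combinatorial minimization of that sum over $S$.

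First I would build a directed acyclic graph $\Gb=(\V,\mathbb{A})$ exactly as in the case of inequality~\eqref{eqn:x_t-down-exp}: take $\V=\{o,d\}\cup\V'$ with $\V'=\{\hat t,\hat t+1,\dots,t+m,t+m+1\}$; add an arc $a_{o\hat t}$ of cost $w_{o\hat t}=\Vupper\hat y_{t-2}-(\Vupper-V)\hat y_{t-1}+\Vupper\hat y_t-(\hat x_{t-2}-\hat x_{t-1}+\hat x_t)+V\sum_{i\in S_0}\bigl(\hat y_i-\sum_{j=0}^{\min\{L-1,i-2\}}\hat u_{i-j}\bigr)+\phi(\hat u,t)$; add an arc $a_{(t+m+1)d}$ of cost $w_{(t+m+1)d}=(\Cupper-\Vupper-mV)\bigl(\hat y_{t+m+1}-\sum_{j=0}^{L-1}\hat u_{t+m+1-j}\bigr)$; and for every $\hat t\le t_1<t_2\le t+m+1$ add an arc $a_{t_1t_2}$ of cost $w_{t_1t_2}=V(t_2-t_1)\bigl(\hat y_{t_1}-\sum_{j=0}^{L-1}\hat u_{t_1-j}\bigr)$. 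Since $\hat t$ is forced to lie in $S\cup\{\hat t\}$, every path from $o$ to $d$ has the form $o\to\hat t=a_0\to a_1\to\cdots\to a_\ell\to a_{\ell+1}=t+m+1\to d$, which we identify with $S=\{a_1,\dots,a_\ell\}$; then $d_{a_k}=a_{k+1}$ for $k=0,\dots,\ell$, so the costs of the intermediate arcs sum to exactly $V\sum_{i\in S\cup\{\hat t\}}(d_i-i)\bigl(\hat y_i-\sum_{j=0}^{L-1}\hat u_{i-j}\bigr)$. Consequently, the length of this path equals the right-hand side of~\eqref{eqn:rd-3-exp} evaluated at $S$ minus $(\hat x_{t-2}-\hat x_{t-1}+\hat x_t)$, i.e.\ the negative of the violation at $S$. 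Hence the shortest path from $o$ to $d$ gives the maximum violation over all admissible $S$ for this $(t,m)$, is negative precisely when some inequality~\eqref{eqn:rd-3-exp} with these $(t,m)$ is violated, and its visited nodes in $\V'$ recover the separating set $S$.

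Finally I would count the work: $\Gb$ is acyclic with $\mathcal{O}(T)$ nodes and $\mathcal{O}(T^2)$ arcs, so one topological-order sweep~\cite{cormen2009introduction} computes the shortest path in $\mathcal{O}(T^2)$ time. Repeating this for each of the $\mathcal{O}(1)$ values of $m$ and each of the $\mathcal{O}(T)$ values of $t\in[3,T-1]_{\Z}$, and reporting the smallest path length encountered (together with the associated $(t,m,S)$ when it is negative), yields an $\mathcal{O}(T^3)$ algorithm that returns the most violated inequality~\eqref{eqn:rd-3-exp}, if any.

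The main obstacle I anticipate is not the complexity count but pinning down the bijection between paths and admissible sets so that the induced pointers agree with the definitions in~\eqref{eqn:rd-3-exp}: one must check that the mandatory membership of $\hat t$, the role of the auxiliary endpoint $t+m+1$ (which appears only as the value of $d_i$ for $i=\max\{a\in S\}$ and is not itself an element of $S$), and the facts that the sum over $S_0=[t+1,\hat t-1]_{\Z}$ and the term $\phi(u,t)$ are independent of $S$ and hence legitimately absorbed into the single arc $a_{o\hat t}$, all match~\eqref{eqn:rd-3-exp} verbatim. Once this correspondence is verified, which proceeds exactly as in the separation argument for~\eqref{eqn:x_t-down-exp}, the correctness of the violation test and the $\mathcal{O}(T^3)$ bound both follow immediately.
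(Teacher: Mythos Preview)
Your proposal is correct and follows essentially the same approach as the paper, which simply states that the separation procedure for \eqref{eqn:x_t-down-exp} applies here as well. You have spelled out the adaptation in full detail—absorbing the three-variable left-hand side and the $S$-independent terms $\phi(\hat u,t)$ and $\sum_{i\in S_0}(\cdots)$ into the mandatory arc $a_{o\hat t}$, and otherwise reusing the same acyclic graph and shortest-path argument—which is exactly what the paper intends.
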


Finally, it can be easily observed that {inequalities} \eqref{eqn:3degree-1-multi-period} {are} neither dominated by {inequalities} \eqref{eqn:ru-3-exp} nor by {inequalities} \eqref{eqn:rd-3-exp}.

\begin{remark}
{Note that inequalities \eqref{eqn:ru-2-exp-1}, \eqref{eqn:ru-2-exp-2}, \eqref{eqn:rd-2-exp}, and \eqref{eqn:3degree-1-multi-period} are derived for the most common cases in practice (i.e., $\Vupper < \Clower + V$). Strong valid inequalities for other parameter settings can be derived similarly. Here we list three families of inequalities for illustration purpose. For instance, under the condition $\Vupper > \Clower + V,\ \Cupper - \Clower - V > 0, \ \mbox{and} \ \Cupper - \Vupper - V > 0$, the following conclusion holds.}
\end{remark}
\begin{proposition} 
{The following inequalities
\begin{align}
x_t-x_{t-1} & \leq (\Clower + V)y_t - \Clower y_{t-1} - (\Clower + V - \Vupper)u_t, \ \forall t \in [2, T]_{\Z}, \nonumber \\
x_{t-1}-x_t & \leq \Vupper y_{t-1} - (\Vupper-V) y_t -(\Clower+V-\Vupper)u_t, \ \forall t \in [2, T]_{\Z}, \nonumber \\
 x_t-x_{t+1} + x_{t+2} & \geq \Clower y_t - (\Clower + V)y_{t+1} + \Clower y_{t+2}, \ \forall t \in [1, T-2]_{\Z} \nonumber
\end{align}
are valid and facet-defining for conv($P$).}
\end{proposition}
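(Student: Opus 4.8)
The plan is to dispatch validity by reducing each inequality to a convex-hull result already established for a short window, and then to obtain the facet property along the standard route of exhibiting $\dim(\mathrm{conv}(P))$ affinely independent feasible points on the corresponding face. For validity, note that the first two inequalities are exactly inequalities \eqref{eqn-q2:x2-x1-ub} and \eqref{eqn-q2:x1-x2-ub} of the two-period convex hull $Q_2$ with the index pair $(1,2)$ replaced by $(t-1,t)$; since $\Cupper-\Clower-V>0$, the remark following Theorem~\ref{thm1} gives that these are valid for $\mathrm{conv}(P)$ for every $t\in[2,T]_{\Z}$. Likewise the third inequality is inequality \eqref{newoneforuse} of Theorem~\ref{thm:general4} shifted to the window $t,t+1,t+2$; it is valid for the three-period restriction under Case~4 (which subsumes $\Vupper>\Clower+V$, $\Cupper-\Vupper-V>0$) and hence for $\mathrm{conv}(P)$ for every $t\in[1,T-2]_{\Z}$, provided the minimum-up rule within any three-period window is at least as restrictive as the $L=\ell=2$ case, i.e.\ $L\ge 2$.

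Alternatively, a direct case analysis works. For the first inequality, split on $(y_{t-1},y_t)$: when $y_{t-1}=0<y_t$, constraint \eqref{eqn:p-udef} forces $u_t=1$ and the inequality becomes $x_t\le\Vupper$, the start-up bound from \eqref{eqn:p-ramp-up}; in the remaining sub-cases it reduces to $x_t-x_{t-1}\le V$ (ramp-up) or to $x_{t-1}\ge\Clower$ (constraint \eqref{eqn:p-lower-bound}), and the term $-(\Clower+V-\Vupper)u_t\ge 0$ is harmless because $\Vupper-\Clower-V>0$. The second is symmetric with ramp-down \eqref{eqn:p-ramp-down} playing the role of ramp-up. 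For the third, split on $(y_t,y_{t+1},y_{t+2})$ and use the ramp constraints on $t+1$ and $t+2$ together with $x_i\ge\Clower y_i$; the only configuration that would force $x_{t+1}\le\Clower+V$ is the one-period-on pattern $y_{t+1}=1,\ y_t=y_{t+2}=0$, which is infeasible precisely when $L\ge 2$ — this is where the standing parameter convention on $L$ enters.

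For the facet-defining part, one first records that $\mathrm{conv}(P)$ is full-dimensional, $\dim\mathrm{conv}(P)=3T-1$, by a direct extension of the construction behind Proposition~\ref{prop:t3l2-full}. Fixing one inequality $\pi^\top z\le\pi_0$ with face $F$, the task is to build $3T-1$ affinely independent points of $P$ on $F$. I would start from a canonical tight schedule — for the first inequality, the machine on throughout with all start-up variables zero and an output profile satisfying $x_t-x_{t-1}=V$ while keeping $\Clower\le x_i\le\Cupper$ and $|x_i-x_{i-1}|\le V$ elsewhere; for the third, one representative schedule for each on/off pattern that attains equality — and then generate directions by (i) nudging $x_i$ at periods outside the two- or three-period "active window", where the ramp and bound constraints are slack; (ii) shutting the machine off over a tail $\{\tau,\dots,T\}$ or starting it up over a prefix, updating $y,u$ accordingly, which preserves equality since the active window is untouched; (iii) when $t$ is interior, additionally toggling far-away periods on or off in $L$-/$\ell$-feasible blocks. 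Affine independence is then read off from the triangular pattern recording which coordinate each direction moves.

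The main obstacle is steps (ii)–(iii): the perturbed schedules must simultaneously remain feasible for \eqref{eqn:p-minup}–\eqref{eqn:p-mindn}, respect ramp feasibility immediately around the active window, and stay on $F$, while still supplying enough independent directions to reach $3T-1$; and the boundary regimes $t\in\{2,3\}$ and $t\in\{T-1,T\}$, where the window abuts the horizon and the absent variable $u_1$ shortens the count, must be handled separately, exactly as in the appendix arguments for Propositions~\ref{prop:x_t-ub-2-multi-period} and \ref{prop:3degree-1-multi-period}. The remaining verification — feasibility of each constructed point and the rank of the resulting $(3T-1)\times(3T-1)$ system — is routine but lengthy and would be relegated to an appendix.
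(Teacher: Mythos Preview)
Your approach—validity by case analysis on $(y_{t-1},y_t)$ or $(y_t,y_{t+1},y_{t+2})$, then the facet property via construction of $3T-1$ affinely independent tight points—is exactly the template the paper uses throughout and implicitly refers to here (the paper's own proof reads, in full, ``The proofs are similar and thus are omitted here''). Your reductions to the two- and three-period convex hulls are correct, and the direct case checks are accurate.

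One correction worth flagging: there is no ``standing parameter convention on $L$'' in the paper; the only blanket assumption in Section~\ref{sec:multi-period} is $T\ge L+1$. Your observation that the third inequality genuinely requires $L\ge 2$ is right—under $\Vupper>\Clower+V$ the pattern $y_t=y_{t+2}=0$, $y_{t+1}=1$, $x_{t+1}=\Vupper$ violates it—so this is an unstated hypothesis in the proposition rather than something covered by an earlier convention. With that caveat, your sketch is sound and in line with the appendix arguments for Propositions~\ref{prop:x_t-ub-2-multi-period} and~\ref{prop:3degree-1-multi-period}.
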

\begin{proof}
The proofs are similar and thus are omitted here.
\end{proof}

\section{Computational Experiments} \label{sec:comp-exper}
In this section, we show the effectiveness of our proposed strong valid inequalities on solving both the self-scheduling unit commitment (used by market participants)
and network-constrained unit commitment (used by system operators) problems. {Both are fundamental optimization problems in the power industry. In particular, for the U.S. electricity market only, there are more than 1,000 independent power producers (market participants), three regional synchronized power grids, eight electric reliability councils {(independent system operators)}, and about 150 control-area operators \cite{ElectricityRegulation, uselectricstat}. All of these system operators and market participants run the network-constrained and self-scheduling unit commitment problems respectively everyday.}

In our experiments, these two problems were solved on a computer node with two AMD Opteron 2378 Quad Core Processors at 2.4GHz and 4GB memory. 
IBM ILOG CPLEX 12.3 with a single thread was utilized as the MIP solver and the time limit {was set at} one hour per run.

\subsection{Self-Scheduling Unit Commitment Problem} \label{subsec:ssuc-experiment}
In this subsection, we report the computational results for the self-scheduling unit commitment {problem} used by market participants.
{For the self-scheduling problem, a market participant provides the generation scheduling for a unit and sells the electricity to the market so as to maximize {the} total profit, based on which the market participant obtain the optimal bidding strategies submitted to the system operators. The self-scheduling unit commitment problem is an important problem in power generation scheduling and extensive studies exist on this problem (see, e.g., \cite{ni2004optimal, li2005price, plazas2005multimarket, frangioni2006solving, cerisola2009stochastic}, among others).}

Besides the notation defined in Section \ref{sec:introduction}, {we let $\mbox{SU}$ ($\mbox{SD}$) represent the start-up (shut-down) cost of this unit and}
$p_t$ represent the electricity price at time period $t$. The mathematical formulation can be described as follows:
\begin{subeqnarray} \label{eqn-B:whole}
\max\limits_{x,y,u} \ && \sum_{t=1}^{T} \bigg( p_t x_t - f(x_t) \bigg)  - \sum_{t=2}^{T} \bigg( \mbox{SU} u_t + \mbox{SD} (y_{t-1} - y_t + u_t) \bigg)   \slabel{eqn-B:obj} \\
\mbox{s.t.} \ &&  \eqref{eqn:p-minup} - \eqref{eqn:p-ramp-down}, \nonumber \\
&& y_t \in \{0, 1\}, \ \forall t \in [1,T]_{\Z}; \ u_t  \in \{0,1\}, \ \forall t \in [2,T]_{\Z},
\slabel{eqn-B:nonnegativity}
\end{subeqnarray}
where the objective is to maximize the total profit, i.e., the total revenue (from selling electricity) minus the total cost (for producing electricity). Note here that the total cost includes the start-up cost, the shut-down cost, and the generation cost that is represented by $f(x_t)$, which is typically a nondecreasing quadratic function, i.e., $f(x_t) = a (x_t)^2 + b x_t + c y_t$. This function can usually be approximated by a piecewise linear function \cite{carrion2006computationally}.

{We run the experiments for the eight generators {from the data in \cite{carrion2006computationally} and \cite{ostrowski2012tight}, as shown} in Table \ref{tab:8-gen-data} and set the operational time horizon $T=5000$. For each generator in Table \ref{tab:8-gen-data}, we test three instances with the price $p_t$, $\forall t \in [1,T]_{\Z}$, randomly generated {in the interval described in Table \ref{tab:self-sch} (e.g., $p_t \in [0,35]$ for generators 1 and 2)} and report the average result.
We compare two formulations as follows.
\begin{itemize}
\item ``\textbf{MILP}'': The original MILP formulation {as described} in \eqref{eqn-B:whole}.
\item ``\textbf{Strong}'': The original MILP formulation plus our proposed strong valid inequalities.
\end{itemize}
In particular, here in ``Strong'' we add the strong valid inequalities in Section  \ref{sec:convexhullresults}, inequalities \eqref{eqn:x_t-ub-2-multi-period} and \eqref{eqn:ru-2-exp-1} in Section \ref{sec:multi-period}, and a subset of inequalities \eqref{eqn:x_t-up-exp}, \eqref{eqn:x_t-down-exp}, \eqref{eqn:ru-2-exp-2}, \eqref{eqn:rd-2-exp}, \eqref{eqn:ru-3-exp}, and \eqref{eqn:rd-3-exp} as constraints in the {original} model and all the remaining inequalities in the root node through separation process using the callback function.
Meanwhile, the optimality tolerance is set at $0.01\%$.}

\begin{table}[!htb]
  \centering
  \caption{Generator Data}
    \begin{tabular}{| @{} >{\footnotesize} c @{} | >{\footnotesize}c | >{\footnotesize}c |  >{\footnotesize}c | >{\footnotesize}c |  >{\footnotesize}c |  >{\footnotesize}c | >{\footnotesize}c |  >{\footnotesize}c   | >{\footnotesize}c | }
    \hline
    Generators & \tabincell{c}{$\Clower$ \\(MW)}  & \tabincell{c}{$\Cupper$ \\(MW)}  & \tabincell{c}{$L$/$\ell$ \\ (h)} & \tabincell{c}{ $V$ \\ (MW/h)} & \tabincell{c}{ $\Vupper$ \\ (MW/h)} & \tabincell{c}{\mbox{SU}\\(\$/h)} & \tabincell{c}{a \\(\$/$\mbox{MW}^2$h)}   & \tabincell{c}{b\\(\$/MWh)}   & \tabincell{c}{c \\ (\$/h)} \\
    \hline
    1     & 150   & 455   & 8     & 91    & 180   & 2000  & 0.00048 & 16.19 & 1000 \\
    2     & 150   & 455   & 8     & 91    & 180   & 2000  & 0.00031 & 17.26 & 970 \\
    3     & 20    & 130   & 5     & 26    & 35    & 500   & 0.002 & 16.6  & 700 \\
    4     & 20    & 130   & 5     & 26    & 35    & 500   & 0.00211 & 16.5  & 680 \\
    5     & 25    & 162   & 6     & 32.4  & 40    & 700   & 0.00398 & 19.7  & 450 \\
    6     & 20    & 80    & 3     & 16    & 28    & 150   & 0.00712 & 22.26 & 370 \\
    7     & 25    & 85    & 3     & 17    & 33    & 200   & 0.00079 & 27.74 & 480 \\
    8     & 10    & 55    & 1     & 11    & 15    & 60    & 0.00413 & 25.92 & 660 \\
    \hline
    \end{tabular}
  \label{tab:8-gen-data}
\end{table}

\begin{table}[htbp]
  \centering
  \caption{Computational Performance for Eight Single Generators}
	\begin{tabular}{ | @{} >{\footnotesize } c @{} | >{\footnotesize } c | @{} >{\footnotesize } c @{} |  >{\footnotesize } c |  >{\footnotesize } c |   >{\footnotesize } c  |  >{\footnotesize } c  |   >{\footnotesize } c  |   >{\footnotesize } c  |  >{\footnotesize } c   |  @{} >{\footnotesize } c  @{} |}    
    \hline
    \multirow{2}[0]{*}{Gen} & \multirow{2}[0]{*}{ \tabincell{c}{Price \\ (\$)}   } & \multirow{2}[0]{*}{ \tabincell{c}{Integer \\ OBJ. (\$)} } & \multicolumn{2}{>{\footnotesize } c|}{IGap (\%)} & \multirow{2}[0]{*}{\tabincell{c}{Percent \\ -age (\%)}} & \multicolumn{2}{>{\footnotesize } c|}{CPU Time(s) (TGap (\%))} & \multicolumn{2}{ >{\footnotesize } c|}{\# of Nodes} & \# of User \\
    \cline{4-5} \cline{7-11}
          &   &    & MILP  & Strong &       & MILP  & Strong & MILP  & Strong & Strong \\
          \hline
    1     & [0,35] & 2378708 & 30.94 & 0.07  & 99.76 & *** (3.62) [3] & 73    & 31157 & 0     & 0 \\
    2     & [0,35] & 1640999 & 38.88 & 0.12  & 99.69 & *** (4.86) [3] & 178.7 & 46947 & 0     & 0 \\
    3     & [0,44] & 991532 & 36.44 & 0.08  & 99.78 & *** (4.84) [3] & 104.1 & 26640 & 0     & 0 \\
    4     & [0,44] & 1047174 & 35.08 & 0.1   & 99.7  & *** (3.92) [3] & 105.7 & 22841 & 0     & 0 \\
    5     & [0,44] & 1126133 & 31.29 & 0.32  & 98.99 & *** (1.71) [3] & 126.7 & 58398 & 0     & 0 \\
    6     & [0,48] & 295958 & 58.33 & 0.34  & 99.42 & *** (7.68) [3] & 124.1 & 40190 & 0     & 0 \\
    7     & [0,60] & 562857 & 49.86 & 0.19  & 99.61 & *** (5.87) [3] & 48.7  & 31366 & 0     & 0 \\
    8     & [0,67] & 147698 & 79.6  & 0.17  & 99.78 & *** (24.08) [3] & 193.2 & 32211 & 0     & 144 \\
    \hline
    \end{tabular}
  \label{tab:self-sch}
\end{table}

{The computational results are reported in Table \ref{tab:self-sch}.
The column labelled ``Price (\$)'' provides the price setting and the column labelled ``Integer OBJ. (\$)'' {(denoted as $Z_{\tiny \mbox{MILP}}$)} provides the best objective value obtained from both of ``MILP'' and ``Strong'' within the time limit.
The column labelled ``IGap (\%)'' provides the {root-node} integrality gaps of ``MILP'' and ``Strong'', respectively, in which the integrality gap is defined as $(Z_{\tiny \mbox{LP}} - Z_{\tiny \mbox{MILP}})/Z_{\tiny \mbox{LP}}$, where $Z_{\tiny \mbox{LP}}$ is the objective value of the LP relaxation for the corresponding formulation.
We can observe that, our proposed strong valid inequalities tighten the LP relaxation dramatically  because the integrality gap reduction (from ``MILP'' to ``Strong'') is more than $99\%$ for most instances as reported in the column labelled ``Percentage (\%)''.
In the column labelled ``CPU Time(s) (TGap ($\%$))'', we report the computational time to solve the problem for each approach. 
The number in the square bracket indicates the number of instances (out of three for each generator) not solved into optimality within the time limit. {The label} ``***'' indicates that none of three cases are solved into optimality within the time limit, and accordingly we report the terminating gap labelled ``TGap ($\%$)'', which indicates the relative gap between the objective value corresponding to the best integer solution and the best upper bound when the time limit is reached. We can observe that our ``Strong'' approach can solve all instances within $200$ seconds, while ``MILP'' cannot solve any instance within the time limit (i.e., one hour).
The key advantage here is that our approach solves all instances at the root node without getting into the branch-and-bound procedure, due to the strength of our derived inequalities, which is indicated as $0$ in the ``\# of Nodes'' column that provides the number of explored branch-and-bound nodes.
The final column labelled ``\# of User'' reports the number of user cuts added to solve the problem for our proposed approach.}

\subsection{Network-Constrained Unit Commitment Problem} \label{subsec:uc-experiment}
{For the network-constrained unit commitment problem, 
the system operators schedule generators to be online/offline in order to satisfy the system load over a finite discrete time horizon with the objective of minimizing the total cost for a wholesale electricity market. Due to its importance, various algorithmic approaches have been proposed to find the (near-) optimal solutions \cite{padhy2004unit, saravanan2013solution}.}
In our experiments, we report the computational results for the power system data based on \cite{carrion2006computationally} and \cite{ostrowski2012tight}, and a modified IEEE 118-bus system based on the one given online at \url{http://motor.ece.iit.edu/data/SCUC_118/}. The operational time interval is set to be $24$ hours (i.e., $T=24$). 

{We first introduce} the mathematical formulation. {Besides the notation described in Sections \ref{sec:introduction} and \ref{subsec:ssuc-experiment} (adding superscript $k$ to represent generator $k$), we let $\mathbb{G}$ ($\mathbb{G}_b$), $\mathbb{B}$,  and $\mathbb{E}$} represent the set of generators {(the set of generators at bus $b$), the set of buses, and the set of} transmission lines linking two buses.
In addition, we let $d_t^b$ represent the load (demand) at bus $b$ at time period $t$ and $r_t$ represent the system reserve factor at $t$.
For each transmission line {$(m, n) \in \mathbb{E}$}, we let {$C_{mn}$} represent {its} capacity 
and {$K_{mn}^b$} represent the line flow distribution factor for the flow on the transmission line {$(m, n)$} contributed by the net injection at bus $b$.
Accordingly, the network-constrained unit commitment problem can be described as follows:
\begin{subeqnarray} \label{eqn-A:whole}
\min\limits_{x,y,u} \ && \sum_{{k \in \mathbb{G}}} \bigg(\sum_{t=2}^{T} \Bigl( \mbox{SU}^{{k}} u_t^{{k}} + \mbox{SD}^{{k}} (y_{t-1}^{{k}} - y_t^{{k}} + u_t^{{k}}) \Bigr) + \sum_{t=1}^{T} f^{{k}}(x_t^{{k}}) \bigg) \slabel{eqn-A:obj} \\
\mbox{s.t.} \ && \sum_{i=t- L^{{k}} +1}^t u_i^{{k}} \leq y_{t}^{{k}}, \ \ \forall t \in [L^{{k}} +1, T]_{\Z}, \forall {k \in \mathbb{G}}, \slabel{eqn-A:minup} \\
&& \sum_{i=t- \ell^{{k}} +1}^t u_i^{{k}} \leq 1 - y_{t - \ell^{{k}} }^{{k}}, \ \ \forall t \in [\ell^{{k}} +1, T]_{\Z}, \forall {k \in \mathbb{G}},  \slabel{eqn-A:mindown} \\
&& - y_{t-1}^{{k}} + y_t^{{k}} - u_t^{{k}} \leq 0, \ \ \forall t \in [2, T]_{\Z}, \forall {k \in \mathbb{G}}, \slabel{eqn-A:Startup} \\
&& \underline{C}^{{k}} y_t^{{k}} \leq  x_t^{{k}} \leq \overline{C}^{{k}} y_t^{{k}}, \ \ \forall t \in [1,T]_{\Z}, \forall {k \in \mathbb{G}}, \slabel{eqn-A:Generationbound}\\
&& x_t^{{k}} - x_{t-1}^{{k}} \leq V^{{k}} y_{t-1}^{{k}} + \Vupper^{{k}} (1 - y_{t-1}^{{k}}), \ \ \forall t \in [2, T]_{\Z}, \forall {k \in \mathbb{G}}, \slabel{eqn-A:rampup}\\
&& x_{t-1}^{{k}} - x_t^{{k}} \leq V^{{k}} y_t^{{k}} + \Vupper^{{k}} (1 - y_t^{{k}}), \ \ \forall t \in [2, T]_{\Z}, \forall {k \in \mathbb{G}}, \slabel{eqn-A:rampdown}\\
&& \sum_{{k \in \mathbb{G}}} x_t^{{k}} = \sum_{{b \in \mathbb{B}}} d_t^b, \ \ \forall t \in [1, T]_{\Z}, \slabel{eqn-A:Demand}\\
&& \sum_{{k \in \mathbb{G}}} \Cupper^{{k}} y_t^{{k}} \geq (1+r_t) \sum_{{b \in \mathbb{B}}} d_t^b, \ \ \forall t \in [1, T]_{\Z}, \slabel{eqn-A:spinning-reserve}\\
&& -C_{{mn}} \leq \sum_{{b \in \mathbb{B}}} K^b_{{mn}} \Bigl( \sum_{{k \in \mathbb{G}_b}} x_t^{{k}} - d_t^b \Bigr) \leq C_{{mn}}, \ \ \forall t \in [1,T]_{\Z}, \forall {(m,n) \in \mathbb{E}}, \slabel{eqn-A:transmission-constraint} \\
&& y_t^{{k}} \in \{0, 1\}, \ \forall t \in [1,T]_{\Z}, {\forall k \in \mathbb{G}}; \ u_t^{{k}}  \in \{0,1\}, \ \forall t \in [2,T]_{\Z}, \forall {k \in \mathbb{G}},
\slabel{eqn-A:nonnegativity}
\end{subeqnarray}
where the objective is to minimize the total cost.
{Besides the physical constraints for each generator (i.e., constraints \eqref{eqn-A:minup} - \eqref{eqn-A:rampdown}) as described in Section \ref{sec:introduction}, we have} 
constraints \eqref{eqn-A:Demand} {enforcing} the load balance at each time period $t$,
constraints \eqref{eqn-A:spinning-reserve} {describing} the system reserve requirements,
and constraints \eqref{eqn-A:transmission-constraint} {representing} the capacity limit of each transmission line {$(m, n)$} (see, e.g.,~\cite{wang1995short}).

\subsubsection{Power System Data Based on \cite{carrion2006computationally} and \cite{ostrowski2012tight}} \label{subsubsec:ostdata}
{We test twenty instances (see Table \ref{tab:ost-inst-comb}) with each containing different combinations of generators as shown in Table \ref{tab:8-gen-data}.} The system load setting is provided in Table \ref{tab:ost-load} and the system reserve factor $r_t = 3\%$ for all $t \in [1,T]_{\Z}$.
{In this experiment}, constraints \eqref{eqn-A:transmission-constraint} are not included since the transmission data are not provided in \cite{carrion2006computationally} and \cite{ostrowski2012tight}. Meanwhile, we set the optimality tolerance to {be} $0.05\%$.

\begin{table}[!htb]
  \centering
  \caption{Problem Instances \cite{ostrowski2012tight}}
    \begin{tabular}{| >{\footnotesize} c | >{\footnotesize} c >{\footnotesize} c >{\footnotesize} c >{\footnotesize} c >{\footnotesize} c >{\footnotesize} c >{\footnotesize} c >{\footnotesize} c | >{\footnotesize} c | }
    \hline
    \multirow{2}[0]{*}{Instances} & \multicolumn{8}{>{\footnotesize} c |}{Generators}   & \multirow{2}[0]{*}{\tabincell{c}{\# of \\ Generators}} \\
    \cline{2-9}
          & 1     & 2     & 3     & 4     & 5     & 6     & 7     & 8     &  \\
	\hline
    1     & 12    & 11    & 0     & 0     & 1     & 4     & 0     & 0     & 28 \\
    2     & 13    & 15    & 2     & 0     & 4     & 0     & 0     & 1     & 35 \\
    3     & 15    & 13    & 2     & 6     & 3     & 1     & 1     & 3     & 44 \\
    4     & 15    & 11    & 0     & 1     & 4     & 5     & 6     & 3     & 45 \\
    5     & 15    & 13    & 3     & 7     & 5     & 3     & 2     & 1     & 49 \\
    6     & 10    & 10    & 2     & 5     & 7     & 5     & 6     & 5     & 50 \\
    7     & 17    & 16    & 1     & 3     & 1     & 7     & 2     & 4     & 51 \\
    8     & 17    & 10    & 6     & 5     & 2     & 1     & 3     & 7     & 51 \\
    9     & 12    & 17    & 4     & 7     & 5     & 2     & 0     & 5     & 52 \\
    10    & 13    & 12    & 5     & 7     & 2     & 5     & 4     & 6     & 54 \\
    11    & 46    & 45    & 8     & 0     & 5     & 0     & 12    & 16    & 132 \\
    12    & 40    & 54    & 14    & 8     & 3     & 15    & 9     & 13    & 156 \\
    13    & 50    & 41    & 19    & 11    & 4     & 4     & 12    & 15    & 156 \\
    14    & 51    & 58    & 17    & 19    & 16    & 1     & 2     & 1     & 165 \\
    15    & 43    & 46    & 17    & 15    & 13    & 15    & 6     & 12    & 167 \\
    16    & 50    & 59    & 8     & 15    & 1     & 18    & 4     & 17    & 172 \\
    17    & 53    & 50    & 17    & 15    & 16    & 5     & 14    & 12    & 182 \\
    18    & 45    & 57    & 19    & 7     & 19    & 19    & 5     & 11    & 182 \\
    19    & 58    & 50    & 15    & 7     & 16    & 18    & 7     & 12    & 183 \\
    20    & 55    & 48    & 18    & 5     & 18    & 17    & 15    & 11    & 187 \\
    \hline
    \end{tabular}
  \label{tab:ost-inst-comb}
\end{table}

\begin{table}[!htb]
  \centering
  \caption{System Load (\% of Total Capacity) \cite{ostrowski2012tight}}
    \begin{tabular}{ | >{\footnotesize} c | >{\footnotesize} c >{\footnotesize} c >{\footnotesize} c >{\footnotesize} c >{\footnotesize} c >{\footnotesize} c >{\footnotesize} c >{\footnotesize} c >{\footnotesize} c >{\footnotesize} c >{\footnotesize} c >{\footnotesize} c | }
    \hline
    Time  & 1     & 2     & 3     & 4     & 5     & 6     & 7     & 8     & 9     & 10    & 11    & 12 \\
    Load  & 71\%    & 65\%    & 62\%    & 60\%    & 58\%    & 58\%    & 60\%    & 64\%    & 73\%    & 80\%    & 82\%    & 83\% \\
    \hline
    Time  & 13    & 14    & 15    & 16    & 17    & 18    & 19    & 20    & 21    & 22    & 23    & 24 \\
    Load  & 82\%    & 80\%    & 79\%    & 79\%    & 83\%    & 91\%    & 90\%    & 88\%    & 85\%    & 84\%    & 79\%    & 74\% \\
    \hline
    \end{tabular}
  \label{tab:ost-load}
\end{table}

{The computational results are reported in Table \ref{tab:ucresult-1} {with similar format as described in Table \ref{tab:self-sch}}.
For each instance, we compare two formulations as follows.
\begin{itemize}
\item ``\textbf{MILP}'': The original MILP formulation {as described} in \eqref{eqn-A:whole}.
\item ``\textbf{Strong}'': The original MILP formulation plus our proposed strong valid inequalities. {In our experiments}, strong valid inequalities {describing} the two-period convex hull (i.e., \eqref{eqn-q2:x1-ub} - \eqref{eqn-q2:x1-x2-ub}) in Section \ref{sec:two-period} are added as constraints and all the strong valid inequalities in Sections \ref{sec:three-period} and \ref{sec:multi-period} are added as user cuts.
{In particular, we perform separation for the majority of the {exponentially sized} inequalities (i.e., \eqref{eqn:x_t-up-exp}, \eqref{eqn:x_t-down-exp}, \eqref{eqn:ru-2-exp-2}, \eqref{eqn:rd-2-exp}, \eqref{eqn:ru-3-exp}, and \eqref{eqn:rd-3-exp}) and add them in the first fifty branch-and-bound nodes for instances 1-10 and in the root node for instances 11-20 to improve the computational performance. All the remaining inequalities are added into the user cut pool.}
\end{itemize}}

\begin{table}[!htb]
  \centering
  \caption{Computational Performance for the Data Based on \cite{carrion2006computationally} and \cite{ostrowski2012tight}}
	\begin{tabular}{ | @{} >{\footnotesize } c @{} |  >{\footnotesize } c |  >{\footnotesize } c |  >{\footnotesize } c |   >{\footnotesize } c  |  >{\footnotesize } c  |   >{\footnotesize } c  |   >{\footnotesize } c  |  >{\footnotesize } c   |  @{} >{\footnotesize } c  @{} |}    
    \hline
    \multirow{2}[0]{*}{Instance} & \multirow{2}[0]{*}{ \tabincell{c}{Integer \\ OBJ. (\$)} } & \multicolumn{2}{>{\footnotesize } c|}{IGap (\%)} & \multirow{2}[0]{*}{\tabincell{c}{Percent \\ -age (\%)}} & \multicolumn{2}{>{\footnotesize } c|}{CPU Time(s) (TGap (\%))} & \multicolumn{2}{ >{\footnotesize } c|}{\# of Nodes} & \# of User \\
    \cline{3-4} \cline{6-10}
          &       & MILP  & Strong &       & MILP  & Strong & MILP  & Strong & Strong \\
          \hline
    1     & 3794100 & 0.76  & 0.12  & 84.94 & *** (0.176) & *** (0.064) & 69160 & 74943 & 514 \\
    2     & 4770702 & 0.78  & 0.14  & 82.56 & *** (0.316) & *** (0.069) & 37322 & 53580 & 757 \\
    3     & 5080219 & 0.82  & 0.06  & 92.54 & *** (0.209) & 495.2 & 76580 & 7527  & 811 \\
    4     & 4755459 & 0.78  & 0.05  & 93.28 & *** (0.128) & 118   & 82003 & 173   & 538 \\
    5     & 5354093 & 0.91  & 0.04  & 95.63 & *** (0.186) & 192.6 & 64961 & 520   & 1169 \\
    6     & 4383515 & 1.1   & 0.05  & 95.73 & *** (0.092) & 72.8  & 76723 & 20    & 456 \\
    7     & 5784806 & 0.75  & 0.08  & 88.9  & *** (0.224) & 1136.9 & 81341 & 15490 & 455 \\
    8     & 5136989 & 0.96  & 0.04  & 95.63 & *** (0.179) & 270.3 & 68900 & 2016  & 605 \\
    9     & 5584255 & 0.91  & 0.05  & 94.74 & *** (0.202) & 317.5 & 49942 & 3139  & 1242 \\
    10    & 5046533 & 1.16  & 0.07  & 94    & *** (0.255) & *** (0.053) & 83485 & 83336 & 978 \\
    11    & 15681261 & 0.72  & 0.07  & 89.82 & *** (0.371) & 2458.8 & 19929 & 8900  & 2026 \\
    12    & 17080641 & 0.79  & 0.05  & 94.28 & *** (0.26) & 537.3 & 14070 & 454   & 1496 \\
    13    & 16758967 & 0.86  & 0.04  & 95.62 & *** (0.264) & 349.5 & 16302 & 210   & 1200 \\
    14    & 19981542 & 0.82  & 0.06  & 92.72 & *** (0.275) & 974.8 & 8407  & 469   & 2666 \\
    15    & 17245739 & 0.95  & 0.05  & 95.14 & *** (0.264) & 535   & 20800 & 270   & 3262 \\
    16    & 19345288 & 0.75  & 0.06  & 92.25 & *** (0.346) & 920.3 & 11924 & 240   & 2259 \\
    17    & 19537985 & 0.89  & 0.04  & 95.3  & *** (0.251) & 532   & 12471 & 354   & 2339 \\
    18    & 19459519 & 0.87  & 0.05  & 94.55 & *** (0.251) & 3137.3 & 12640 & 1220  & 2765 \\
    19    & 19966729 & 0.83  & 0.05  & 94.44 & *** (0.257) & 389.4 & 11667 & 80    & 1870 \\
    20    & 19574287 & 0.87  & 0.04  & 95.27 & *** (0.237) & 658.3 & 15531 & 80    & 2028 \\
    \hline
    \end{tabular}%
  \label{tab:ucresult-1}
\end{table}%

{In Table \ref{tab:ucresult-1}, the  integrality gap is defined as $(Z_{\tiny \mbox{MILP}} - Z_{\tiny \mbox{LP}})/Z_{\tiny \mbox{MILP}}$.
Note here that since the network-constrained unit commitment problem  is a minimization problem, the integrality gap definition is different from that defined for the self-scheduling unit commitment problem in Section \ref{subsec:ssuc-experiment}. 
From the table, we can observe significant advantages of applying our derived strong valid inequalities as cutting planes.
In particular, the integrality gap reduction ({from} ``MILP'' {to} ``Strong'') is more than $90\%$ for most instances.
All instances cannot be solved into optimality by ``MILP'' within the time limit, while most instances can be solved by the  ``Strong'' approach with our proposed strong valid inequalities added. 
For the instances which cannot be solved into optimality by either one of them,
i.e., Instances 1, 2, and 10, ``Strong'' leads to much better terminating gaps than ``MILP'' does. Meanwhile, our ``Strong'' approach explores much less branch-and-bound nodes than ``MILP'' does.}

\subsubsection{Modified IEEE 118-Bus System} \label{subsubsec:118busdata}
For this experiment {of testing the modified IEEE 118-Bus System}, there are $54$ generators, $118$ buses, $186$ transmission lines, and $91$ load buses. In the experiment, both constraints \eqref{eqn-A:spinning-reserve} and \eqref{eqn-A:transmission-constraint} are included with the system reserve factor $r_t$ set at $3 \%$ for each time period $t \in [1, T]_{\Z}$. Meanwhile, the optimality tolerance {is set at} $0.01\%$.

{We first test seven instances, with different instances corresponding to different load settings, as described in Table \ref{tab:ucresult-2-multi-load}. 
For example, for Instance 1, corresponding to each nominal load {$d_t^b$} given in the IEEE 118-bus system, we randomly generate a load {$\bar{d}_t^b \in [0.5d_t^b, 0.7 d_t^b]$}.
In addition, for each instance, we randomly generate three cases and report the average result. We compare two formulations ``MILP'' and ``Strong'' similarly as {described} in Section \ref{subsubsec:ostdata}.}

\begin{table}[htbp]
  \centering
  \caption{Computational Performance for the IEEE 118-Bus System - Multiple Load Settings}
\begin{tabular}{ | @{} >{\footnotesize } c @{} | >{\footnotesize } c  | @{} >{\footnotesize } c @{} | @{}  >{\footnotesize } c  | @{} >{\footnotesize } c @{} | @{} >{\footnotesize } c @{} | @{} >{\footnotesize } c | @{} >{\footnotesize } c @{} | @{} >{\footnotesize } c @{} | @{} >{\footnotesize } c @{} | @{} >{\footnotesize } c | @{} >{\footnotesize } c @{} | @{} >{\footnotesize } c @{} |}    
    \hline
    \multirow{2}[0]{*}{Inst} & \multirow{2}[0]{*}{Load} & \multirow{2}[0]{*}{ \tabincell{c}{Integer \\ OBJ. (\$)} } & \multicolumn{2}{ >{\footnotesize } c|}{IGap (\%)} & \multirow{2}[0]{*}{ \tabincell{c}{Percent \\ -age (\%)}    } & \multicolumn{2}{ >{\footnotesize }  c|}{ CPU Time (s)  } & \multicolumn{2}{>{\footnotesize }  c|}{TGap (\%)} & \multicolumn{2}{>{\footnotesize }  c|}{\# of Nodes} & \# of User \\
    \cline{4-5} \cline{7-13}
          &       &       & MILP  & Strong &       & MILP  & Strong & MILP  & Strong & MILP  & Strong & Strong \\
          \hline
    1     & [0.5d-0.7d] & 974357 & 1.59  & 0.77  & 51.38 & *** [3] & 1572.1 [2] & 0.365 & 0.084 & 40564 & 17518 & 1511 \\
    2     & [0.7d-0.9d] & 1300697 & 1.14  & 0.33  & 71.18 & *** [3] & 1249.8 & 0.126 & 0.000 & 32544 & 8927  & 1138 \\
    3     & [0.9d-1.1d] & 1627653 & 1.17  & 0.37  & 68.35 & *** [3] & 2283.6 [1] & 0.229 & 0.052 & 23592 & 10576 & 1897 \\
    4     & [1.1d-1.3d] & 1958258 & 1.14  & 0.34  & 70.27 & *** [3] & 2199.6 [1] & 0.263 & 0.077 & 26810 & 16891 & 2213 \\
    5     & [1.3d-1.5d] & 2289810 & 1.09  & 0.18  & 83.73 & *** [3] & 3155.6 [2] & 0.101 & 0.061 & 94305 & 27337 & 3171 \\
    6     & [1.5d-1.7d] & 2635350 & 1.16  & 0.14  & 87.98 & *** [3] & 2470.4 & 0.114 & 0.000 & 56800 & 15305 & 2839 \\
    7     & [1.7d-1.9d] & 2992284 & 1.31  & 0.08  & 93.62 & *** [3] & *** [3] & 0.152 & 0.038 & 63380 & 49099 & 1945 \\
    \hline
    \end{tabular}%
  \label{tab:ucresult-2-multi-load}
\end{table}%

{In Table \ref{tab:ucresult-2-multi-load}, for each instance, we provide the load setting in the column labelled ``Load'' and report the average objective value over three cases in the column labelled ``Integer OBJ. (\$)''.
Besides these, similarly as we did in Table \ref{tab:self-sch}, we report the average root-node integrality gap (IGap (\%)), the percentage of gap reduction, the corresponding time for each approach (CPU Time (s)), the terminating gap (TGap (\%)), the number of nodes, and the number of user cuts added.
We can observe that the ``Strong'' approach performs much better than ``MILP'' does as ``Strong'' tightens the original formulation dramatically (with over $50\%-90\%$ integrality gap reduction), solves the problem faster (e.g., no instance was solved into optimality by the default MILP formulation, while our strong formulation can solve quite a few instances into optimality within the time limit), obtains better (smaller) terminating gaps, and explores less branch-and-bound nodes.}

Next, we explore more instances with the same load setting.
{To create more instances, corresponding to each nominal load $d_t^b$ given in the IEEE 118-bus system, we construct $15$ instances {with} the load for each instance uniformly distributed in $[1.8d_t^b, 2.2 d_t^b]$.}
We compare two formulations ``MILP'' and ``Strong'' as defined in Section \ref{subsubsec:ostdata}. {The} only difference is that here we add all inequalities in Section \ref{sec:three-period}, {inequalities \eqref{eqn:x_t-ub-2-multi-period} and \eqref{eqn:ru-2-exp-1} in Section \ref{sec:multi-period}, and a subset of inequalities \eqref{eqn:x_t-up-exp}, \eqref{eqn:x_t-down-exp}, \eqref{eqn:ru-2-exp-2}, \eqref{eqn:rd-2-exp}, \eqref{eqn:ru-3-exp}, and \eqref{eqn:rd-3-exp}} in the user cut pool instead of using {the callback function}.

\begin{table}[!htb]
  \centering
  \caption{Computational Performance for the IEEE 118-Bus System - Single Load Setting}
    \begin{tabular}{|  >{\footnotesize} c  |  >{\footnotesize} c |  >{\footnotesize} c | >{\footnotesize} c  |  >{\footnotesize} c |  >{\footnotesize} c |  >{\footnotesize} c  | >{\footnotesize} c |  >{\footnotesize} c | @{} >{\footnotesize} c @{} |}
    \hline
    \multirow{2}[0]{*}{Inst} & \multirow{2}[0]{*}{\tabincell{c}{Integer \\ OBJ. (\$)}} & \multicolumn{2}{>{\footnotesize} c|}{IGap (\%)} & \multirow{2}[0]{*}{ \tabincell{c}{Percent \\ -age (\%)} } & \multicolumn{2}{>{\footnotesize} c |}{CPU Time(s) (TGap ($10^{-4}$))} & \multicolumn{2}{>{\footnotesize} c | }{\# of Nodes} & \# of User \\
    \cline{3-4} \cline{6-10}
          &       & MILP  & Strong &       & MILP  & Strong & MILP  & Strong &  Strong \\
          \hline
    1     & 3358217 & 1.54  & 0.09  & 94.42 & *** (1.39) & 1432.02 & 180121 & 85936 & 100 \\
    2     & 3356847 & 1.37  & 0.05  & 96.65 & *** (1.43) & 2371.36 & 229259 & 342774 & 222 \\
    3     & 3367104 & 1.61  & 0.06  & 96.29 & *** (3) & *** (1.8) & 159795 & 136426 & 340 \\
    4     & 3362632 & 1.64  & 0.06  & 96.26 & *** (1.96) & *** (1.37) & 272480 & 238904 & 225 \\
    5     & 3349280 & 1.47  & 0.09  & 93.97 & *** (2.23) & *** (1.47) & 150695 & 373875 & 299 \\
    6     & 3364177 & 1.45  & 0.07  & 95.28 & *** (1.28) & 848.11 & 152427 & 69191 & 257 \\
    7     & 3353272 & 1.58  & 0.08  & 95.19 & *** (2.29) & *** (1.51) & 180557 & 594986 & 182 \\
    8     & 3348885 & 1.27  & 0.04  & 97.12 & 758.44 & 289.94 & 54354 & 28080 & 215 \\
    9     & 3354399 & 1.5   & 0.06  & 96.02 & *** (3.27) & *** (1.9) & 127050 & 102107 & 199 \\
    10    & 3352652 & 1.53  & 0.06  & 96.21 & *** (1.91) & *** (1.38) & 191125 & 187788 & 280 \\
    11    & 3357921 & 1.54  & 0.06  & 95.85 & *** (1.31) & 665.88 & 166568 & 58687 & 249 \\
    12    & 3359379 & 1.55  & 0.05  & 96.57 & 1074.87 & 405.07 & 94365 & 29781 & 262 \\
    13    & 3359624 & 1.57  & 0.07  & 95.78 & *** (1.23) & 1162.33 & 166052 & 66590 & 236 \\
    14    & 3362072 & 1.57  & 0.06  & 96.07 & 671.6 & 480.58 & 36746 & 19262 & 271 \\
    15    & 3351562 & 1.51  & 0.1   & 93.61 & *** (2.12) & 2615.75 & 142626 & 98899 & 294 \\
    \hline
    \end{tabular}
  \label{tab:ucresult-2}
\end{table}

The results are reported in Table \ref{tab:ucresult-2} with the similar format as described in Table \ref{tab:ucresult-1}. Again, we
observe that the strong valid inequalities tighten the LP relaxation significantly, with about $95\%$ reduction {for the integrality gaps between ``MILP'' and ``Strong''.} ``Strong'' also performs much better {in terms of the computational time and terminating gap {than ``MILP'' does}. 
{For instance, ``MILP'' can only solve three (i.e., instances $8$, $12$, and $14$) out of fifteen instances into optimality within the time limit, while ``Strong'' can solve up to nine. Meanwhile, we can observe that significant terminating gap reduction within the time limit is reached.}
The number of explored branch-and-bound nodes is also reduced for most instances.

\section{Conclusion} \label{sec:conclusion}
In this paper, we performed the polyhedral study of the integrated minimum-up/-down time and ramping polytope, {which has broad applications in variant industries}.
We derived strong valid inequalities to strengthen the original MILP formulation. 
{Through developing a new proof technique, we can show that} our derived valid inequalities are strong enough to provide the convex hull descriptions for the polytope up to three time periods with variant minimum-up/-down time limits. To the best of our knowledge, this is {one of the first studies} that provide the convex hull descriptions for the three-period cases. 
In addition, our derived strong valid inequalities for the general multi-period case cover one, two, and three continuous variables, respectively. All these inequalities are facet-defining under certain conditions {and can be separated efficiently through polynomial {time} algorithms}.
Finally, the computational results \sred{verified} the {effectiveness} of our proposed strong valid inequalities by solving both {the self-scheduling unit commitment problem for a market participant and the network-constrained unit commitment problem for a system operator} under various data settings.

\baselineskip=12pt
\bibliographystyle{plain}
\bibliography{duc}

\begin{thebibliography}{10}

\bibitem{ElectricityRegulation}
Electricity \mbox{R}egulation \mbox{I}n the \mbox{US}: \mbox{A Guide}.
\newblock
  \url{http://www.raponline.org/docs/RAP_Lazar_ElectricityRegulationInTheUS_Guide_2011_03.pdf},
  Access on March 28, 2016.

\bibitem{uselectricstat}
\mbox{U.S.} \mbox{E}lectric \mbox{U}tility \mbox{I}ndustry \mbox{S}tatistics.
\newblock
  \url{http://www.publicpower.org/files/PDFs/USElectricUtilityIndustryStatistics.pdf},
  Access on March 28, 2016.

\bibitem{atamturk2006strong}
A.~Atamt{\"u}rk.
\newblock Strong formulations of robust mixed 0--1 programming.
\newblock {\em Mathematical Programming}, 108(2-3):235--250, 2006.

\bibitem{barany1984uncapacitated}
I.~Barany, T.~Van~Roy, and L.~A. Wolsey.
\newblock Uncapacitated lot-sizing: The convex hull of solutions.
\newblock {\em Mathematical Programming Study}, 22:32--43, 1984.

\bibitem{bixby2010mixed}
R.~E. Bixby.
\newblock Mixed-integer programming: It works better than you may think.
\newblock In {\em FERC Conference}, 2010.

\bibitem{carlson2012miso}
B.~Carlson, Y.~Chen, M.~Hong, R.~Jones, K.~Larson, X.~Ma, P.~Nieuwesteeg,
  H.~Song, K.~Sperry, M.~Tackett, D.~Taylor, J.~Wan, and E.~Zak.
\newblock \mbox{MISO} unlocks billions in savings through the application of
  operations research for energy and ancillary services markets.
\newblock {\em Interfaces}, 42(1):58--73, 2012.

\bibitem{carrion2006computationally}
M.~Carri{\'o}n and J.~M. Arroyo.
\newblock A computationally efficient mixed-integer linear formulation for the
  thermal unit commitment problem.
\newblock {\em IEEE Transactions on Power Systems}, 21(3):1371--1378, 2006.

\bibitem{cerisola2009stochastic}
S.~Cerisola, {\'A}.~Ba{\'\i}llo, J.~M. Fern{\'a}ndez-L{\'o}pez, A.~Ramos, and
  R.~Gollmer.
\newblock Stochastic power generation unit commitment in electricity markets: A
  novel formulation and a comparison of solution methods.
\newblock {\em Operations Research}, 57(1):32--46, 2009.

\bibitem{cormen2009introduction}
T.~H. Cormen, C.~E. Leiserson, R.~L. Rivest, and C.~Stein.
\newblock {\em Introduction to Algorithms}.
\newblock MIT press, 2009.

\bibitem{damci1777polyhedral}
P.~Damc{\i}-Kurt, S.~K{\"u}{\c{c}}{\"u}kyavuz, D.~Rajan, and A.~Atamt{\"u}rk.
\newblock A polyhedral study of production ramping.
\newblock {\em Mathematical Programming}, Online First, June 2015.

\bibitem{dubost2005primal}
L.~Dubost, R.~Gonzalez, and C.~Lemar{\'e}chal.
\newblock A primal-proximal heuristic applied to the \mbox{F}rench
  unit-commitment problem.
\newblock {\em Mathematical Programming}, 104(1):129--151, 2005.

\bibitem{frangioni2006solving}
A.~Frangioni and C.~Gentile.
\newblock Solving nonlinear single-unit commitment problems with ramping
  constraints.
\newblock {\em Operations Research}, 54(4):767--775, 2006.

\bibitem{guan1999scheduling}
X.~Guan, A.~Svoboda, and C.~Li.
\newblock Scheduling hydro power systems with restricted operating zones and
  discharge ramping constraints.
\newblock {\em IEEE Transactions on Power Systems}, 14(1):126--131, 1999.

\bibitem{haller2003cycle}
M.~Haller, A.~Peikert, and J.~Thoma.
\newblock Cycle time management during production ramp-up.
\newblock {\em Robotics and Computer-Integrated Manufacturing}, 19(1):183--188,
  2003.

\bibitem{havel2013optimal}
P.~Havel and T.~{\v{S}}imovi{\v{c}}.
\newblock Optimal planning of cogeneration production with provision of
  ancillary services.
\newblock {\em Electric Power Systems Research}, 95:47--55, 2013.

\bibitem{kuccukyavuz2009uncapacitated}
S.~K{\"u}{\c{c}}{\"u}kyavuz and Y.~Pochet.
\newblock Uncapacitated lot sizing with backlogging: the convex hull.
\newblock {\em Mathematical Programming}, 118(1):151--175, 2009.

\bibitem{lakshmanan2009study}
S.~P. Lakshmanan, M.~Pandey, P.~P. Kumar, and K.~N. Iyer.
\newblock Study of startup transients and power ramping of natural circulation
  boiling systems.
\newblock {\em Nuclear Engineering and Design}, 239(6):1076--1083, 2009.

\bibitem{lee2004min}
J.~Lee, J.~Leung, and F.~Margot.
\newblock Min-up/min-down polytopes.
\newblock {\em Discrete Optimization}, 1(1):77--85, 2004.

\bibitem{li2005price}
T.~Li and M.~Shahidehpour.
\newblock Price-based unit commitment: a case of \mbox{L}agrangian relaxation
  versus mixed integer programming.
\newblock {\em IEEE Transactions on Power Systems}, 20(4):2015--2025, 2005.

\bibitem{lin2002optimal}
G.~Lin, J.~Chen, and B.~Hua.
\newblock Optimal analysis on the performance of a chemical engine-driven
  chemical pump.
\newblock {\em Applied Energy}, 72(1):359--370, 2002.

\bibitem{liu2009component}
C.~Liu, M.~Shahidehpour, Z.~Li, and M.~Fotuhi-Firuzabad.
\newblock Component and mode models for the short-term scheduling of
  combined-cycle units.
\newblock {\em IEEE Transactions on Power Systems}, 24(2):976--990, 2009.

\bibitem{liustudy}
S.~Liu and D.~Rajan.
\newblock A study of three-period ramp-up polytope.
\newblock {\em Technical Report}, [Online]
  \url{http://www.optimization-online.org/DB_HTML/2015/09/5124.html}, 2015.

\bibitem{lohndorf2013optimizing}
N.~L{\"o}hndorf, D.~Wozabal, and S.~Minner.
\newblock Optimizing trading decisions for hydro storage systems using
  approximate dual dynamic programming.
\newblock {\em Operations Research}, 61(4):810--823, 2013.

\bibitem{luedtke2009strategic}
J.~Luedtke and G.~L. Nemhauser.
\newblock Strategic planning with start-time dependent variable costs.
\newblock {\em Operations Research}, 57(5):1250--1261, 2009.

\bibitem{muckstadt1977application}
J.~A. Muckstadt and S.~A. Koenig.
\newblock An application of \mbox{L}agrangian relaxation to scheduling in
  power-generation systems.
\newblock {\em Operations Research}, 25(3):387--403, 1977.

\bibitem{nemhauser2013ip}
G.~L. Nemhauser.
\newblock Integer \mbox{P}rogramming: Global \mbox{I}mpact.
\newblock
  \url{https://smartech.gatech.edu/bitstream/handle/1853/49829/presentation.pdf?sequence=1},
  2013.

\bibitem{nw}
G.~L. Nemhauser and L.~A. Wolsey.
\newblock {\em Integer and Combinatorial Optimization}.
\newblock Wiley, 1988.

\bibitem{ni2004optimal}
E.~Ni, P.~B. Luh, and S.~Rourke.
\newblock Optimal integrated generation bidding and scheduling with risk
  management under a deregulated power market.
\newblock {\em IEEE Transactions on Power Systems}, 19(1):600--609, 2004.

\bibitem{ostrowski2012tight}
J.~Ostrowski, M.~F. Anjos, and A.~Vannelli.
\newblock Tight mixed integer linear programming formulations for the unit
  commitment problem.
\newblock {\em IEEE Transactions on Power Systems}, 27(1):39--46, 2012.

\bibitem{outtagarts1997transient}
A.~Outtagarts, P.~Haberschill, and M.~Lallemand.
\newblock The transient response of an evaporator fed through an electronic
  expansion valve.
\newblock {\em International Journal of Energy Research}, 21(9):793--807, 1997.

\bibitem{padhy2004unit}
N.~P. Padhy.
\newblock Unit commitment-a bibliographical survey.
\newblock {\em IEEE Transactions on Power Systems}, 19(2):1196--1205, 2004.

\bibitem{pekelman1975production}
D.~Pekelman.
\newblock Production smoothing with fluctuating price.
\newblock {\em Management Science}, 21(5):576--590, 1975.

\bibitem{plazas2005multimarket}
M.~A. Plazas, A.~J. Conejo, and F.~J. Prieto.
\newblock Multimarket optimal bidding for a power producer.
\newblock {\em IEEE Transactions on Power Systems}, 20(4):2041--2050, 2005.

\bibitem{rajan2005minimum}
D.~Rajan and S.~Takriti.
\newblock Minimum up/down polytopes of the unit commitment problem with
  start-up costs.
\newblock {\em IBM, Research Report RC23628}, Jun. 2005.

\bibitem{sagastizabal2012divide}
C.~Sagastiz{\'a}bal.
\newblock Divide to conquer: decomposition methods for energy optimization.
\newblock {\em Mathematical Programming}, 134(1):187--222, 2012.

\bibitem{saravanan2013solution}
B.~Saravanan, S.~Das, S.~Sikri, and D.~P. Kothari.
\newblock A solution to the unit commitment problem-a review.
\newblock {\em Frontiers in Energy}, 7(2):223--236, 2013.

\bibitem{shebalov2015lifting}
S.~Shebalov, Y.~W. Park, and D.~Klabjan.
\newblock Lifting for mixed integer programs with variable upper bounds.
\newblock {\em Discrete Applied Mathematics}, 186:226--250, 2015.

\bibitem{sherali2001convex}
H.~D. Sherali, J.~C. Smith, and S.~Z. Selim.
\newblock Convex hull representations of models for computing collisions
  between multiple bodies.
\newblock {\em European Journal of Operational Research}, 135(3):514--526,
  2001.

\bibitem{silver1967tutorial}
E.~A. Silver.
\newblock A tutorial on production smoothing and work force balancing.
\newblock {\em Operations Research}, 15(6):985--1010, 1967.

\bibitem{sullivan2014convex}
K.~M. Sullivan, J.~C. Smith, and D.~P. Morton.
\newblock Convex hull representation of the deterministic bipartite network
  interdiction problem.
\newblock {\em Mathematical Programming}, 145(1-2):349--376, 2014.

\bibitem{wang1995short}
S.~Wang, S.~Shahidehpour, D.~Kirschen, S.~Mokhtari, and G.~Irisarri.
\newblock Short-term generation scheduling with transmission and environmental
  constraints using an augmented {L}agrangian relaxation.
\newblock {\em IEEE Transactions on Power Systems}, 10(3):1294--1301, 1995.

\bibitem{xia2007endoreversible}
D.~Xia, L.~Chen, F.~Sun, and C.~Wu.
\newblock Endoreversible four-reservoir chemical pump.
\newblock {\em Applied Energy}, 84(1):56--65, 2007.

\end{thebibliography}

\newpage
\begin{appendices}
\baselineskip=22pt
\setcounter{proposition}{0}
\setcounter{lemma}{0}
\setcounter{theorem}{0}

\section{Proofs for Three-period Formulations} \label{apx:sec:3period}

\subsection{Proof for Proposition \ref{prop:t3l2-full}} \label{apx:sec:t3l2-full}
\begin{proof}
We prove that dim($Q_3^2$) = $8$, because there are eight decision variables in $Q_3^2$. {Thus, we need to} generate nine affinely independent points in $Q_3^2$. Since $0 \in Q_3^2$, {it is sufficient to} generate other eight linearly independent points in $Q_3^2$ as shown in Table \ref{tab:t3l2-full}.
\begin{table}[htbp]
  \centering
  \caption{Eight linearly independent points in $Q_3^2$}
    \begin{tabular}{ >{\footnotesize} c >{\footnotesize} c >{\footnotesize} c | >{\footnotesize} c >{\footnotesize} c >{\footnotesize} c | >{\footnotesize} c >{\footnotesize} c}
    \hline
    $x_1$    & $x_2$    & $x_3$    & $y_1$    & $y_2$    & $y_3$    & $u_2$    & $u_3$ \\
    \hline
    $\Clower$     & 0     & 0     & 1     & 0     & 0     & 0     & 0 \\
    $\Clower+\epsilon$     & 0     & 0     & 1     & 0     & 0     & 0     & 0 \\
    $\Clower$     & $\Clower$     & 0     & 1     & 1     & 0     & 0     & 0 \\
    $\Clower+\epsilon$     & $\Clower+\epsilon$     & 0     & 1     & 1     & 0     & 0     & 0 \\
    $\Clower$     & $\Clower$     & $\Clower$     & 1     & 1     & 1     & 0     & 0 \\
    $\Clower+\epsilon$     & $\Clower+\epsilon$     & $\Clower+\epsilon$     & 1     & 1     & 1     & 0     & 0 \\
    0     & $\Clower$     & $\Clower$     & 0     & 1     & 1     & 1     & 0 \\
    0     & 0     & $\Clower$     & 0     & 0     & 1     & 0     & 1 \\
    \hline
    \end{tabular}
  \label{tab:t3l2-full}
\end{table}
\end{proof}

\subsection{Proof for Proposition \ref{prop:t3l2-int-extpoint}} \label{apx:sec:int-extpoint}
\begin{proof}
\textbf{Satisfying \eqref{eqn-C:p-x1-ub-t3l2} at equality}. For this case, substituting $\bar{x}_1 = \Vupper \bar{y}_1 + V (\bar{y}_2 - \bar{u}_2) + (\Cupper - \Vupper - V) (\bar{y}_3 - \bar{u}_3 - \bar{u}_2)$ into \eqref{eqn-C:p-x2-x1-ub-t3l2} - \eqref{eqn-C:p-x1-x2+x3-ub-t3l2}, we obtain the feasible region of $(\bar{x}_2, \bar{x}_3)$ as $C' = \{(\bar{x}_2, \bar{x}_3) \in \R^2: \Vupper \bar{y}_2 - (\Cupper - \Clower - V) \bar{u}_2 + (\Cupper - \Vupper - V) (\bar{y}_3 - \bar{u}_3) \leq \bar{x}_2 \leq \Vupper \bar{y}_2 + (\Cupper - \Vupper) (\bar{y}_3 - \bar{u}_3 - \bar{u}_2), \ \Clower \bar{y}_3 + (\Cupper - \Clower - 2V) (\bar{y}_3 - \bar{u}_3 - \bar{u}_2) \leq \bar{x}_3 \leq \Cupper \bar{y}_3 - (\Cupper - \Vupper) \bar{u}_3 - (\Cupper - \Vupper - V) \bar{u}_2, \ \bar{x}_3 - \bar{x}_2 \leq (\Vupper + V) \bar{y}_3 - \Vupper \bar{y}_2 - V \bar{u}_3, \ \bar{x}_2 - \bar{x}_3 \leq \Vupper \bar{y}_2 - \Clower \bar{y}_3 + (\Clower + V - \Vupper) (\bar{y}_3 - \bar{u}_3 - \bar{u}_2) \}$.

First, by letting $\hat{x}_1 = \Vupper$, $\hat{x}_2 = \Vupper + V$, and $\hat{x}_4 = \Cupper$, we have $\bar{x}_1 = \lambda_1 \hat{x}_1 + \lambda_2 \hat{x}_2 + \lambda_3 \hat{x}_4$. {Then \eqref{eqn-C: x1x2x3bar} holds for $\bar{x}_1$}.
 Then the corresponding feasible region for $(\hat{x}_3, \hat{x}_5, \hat{x}_6, \hat{x}_7, \hat{x}_8, \hat{x}_9)$ can be described as set $A' = \{ (\hat{x}_3, \hat{x}_5, \hat{x}_6, \hat{x}_7, \hat{x}_8, \hat{x}_9) \in \R^6: \hat{x}_3 = \Vupper, \  \Cupper - V \leq \hat{x}_5 \leq \Cupper, \ \Clower \leq \hat{x}_6 \leq \Cupper, \ -V \leq \hat{x}_6 - \hat{x}_5 \leq V, \ \Clower \leq \hat{x}_7 \leq \Vupper, \ \Clower \leq \hat{x}_8 \leq \Vupper + V, \ \Clower - \Vupper \leq \hat{x}_8 - \hat{x}_7 \leq V, \ \Clower \leq \hat{x}_9 \leq \Vupper \}$.
We consider that one of inequalities in $C'$ is satisfied at equality to obtain the values of $(\hat{x}_3, \hat{x}_5, \hat{x}_6, \hat{x}_7, \hat{x}_8, \hat{x}_9)$ from $A'$ as follows.
\begin{enumerate}[1)]
\item Satisfying $\bar{x}_2 \geq \Vupper \bar{y}_2 - (\Cupper - \Clower - V) \bar{u}_2 + (\Cupper - \Vupper - V) (\bar{y}_3 - \bar{u}_3)$ at equality. We obtain $\Clower \bar{y}_3 + (\Cupper - \Clower - 2V) (\bar{y}_3 - \bar{u}_3 - \bar{u}_2) \leq \bar{x}_3 \leq \Cupper \bar{y}_3 - (\Cupper - \Vupper) \bar{u}_3 - (\Cupper - \Clower - V) \bar{u}_2$. By letting $\hat{x}_3 = \Vupper$, $\hat{x}_5 = \Cupper - V$, and $\hat{x}_7 = \Clower$, we have $\bar{x}_2 = \lambda_2 \hat{x}_3 + \lambda_3 \hat{x}_5 + \lambda_4 \hat{x}_7$. {Then \eqref{eqn-C: x1x2x3bar} holds for $\bar{x}_2$}.
As a result, we have $(\hat{x}_6, \hat{x}_8, \hat{x}_9) \in A'' = \{ (\hat{x}_6, \hat{x}_8, \hat{x}_9) \in \R^3: \Cupper - 2V \leq \hat{x}_6 \leq \Cupper, \ \Clower \leq \hat{x}_8 \leq \Clower + V, \ \Clower \leq \hat{x}_9 \leq \Vupper \}$. If $\bar{x}_3 = \Clower \bar{y}_3 + (\Cupper - \Clower - 2V) (\bar{y}_3 - \bar{u}_3 - \bar{u}_2)$, we let $\hat{x}_6 = \Cupper - 2V$ and $\hat{x}_8 = \hat{x}_9 = \Clower$; if $\bar{x}_3 = \Cupper \bar{y}_3 - (\Cupper - \Vupper) \bar{u}_3 - (\Cupper - \Clower - V) \bar{u}_2$, we let $\hat{x}_6 = \Cupper$, $\hat{x}_8 = \Clower + V$, and $\hat{x}_9 = \Vupper$.
{For both cases}, we have $\bar{x}_3 = \lambda_3 \hat{x}_6 + \lambda_4 \hat{x}_8 + \lambda_5 \hat{x}_9$. {Then \eqref{eqn-C: x1x2x3bar} holds for $\bar{x}_3$}.

\item Satisfying $\bar{x}_2 \leq \Vupper \bar{y}_2 + (\Cupper - \Vupper) (\bar{y}_3 - \bar{u}_3 - \bar{u}_2)$ at equality. We obtain $\Clower \bar{y}_3 + (\Cupper - \Clower - V) (\bar{y}_3 - \bar{u}_3 - \bar{u}_2) \leq \bar{x}_3 \leq \Cupper \bar{y}_3 - (\Cupper - \Vupper) \bar{u}_3 - (\Cupper - \Vupper - V) \bar{u}_2$.
By letting $\hat{x}_3 = \hat{x}_7 = \Vupper$ and $\hat{x}_5 = \Cupper$, we have $\bar{x}_2 = \lambda_2 \hat{x}_3 + \lambda_3 \hat{x}_5 + \lambda_4 \hat{x}_7$. {Then \eqref{eqn-C: x1x2x3bar} holds for $\bar{x}_2$}.
 As a result, we have $(\hat{x}_6, \hat{x}_8, \hat{x}_9) \in A'' = \{ (\hat{x}_6, \hat{x}_8, \hat{x}_9) \in \R^3: \Cupper - V \leq \hat{x}_6 \leq \Cupper, \ \Clower \leq \hat{x}_8 \leq \Vupper + V, \ \Clower \leq \hat{x}_9 \leq \Vupper \}$. If $\bar{x}_3 = \Clower \bar{y}_3 + (\Cupper - \Clower - V) (\bar{y}_3 - \bar{u}_3 - \bar{u}_2)$, we let $\hat{x}_6 = \Cupper - V$ and $\hat{x}_8 = \hat{x}_9 = \Clower$; if $\bar{x}_3 = \Cupper \bar{y}_3 - (\Cupper - \Vupper) \bar{u}_3 - (\Cupper - \Vupper - V) \bar{u}_2$, we let $\hat{x}_6 = \Cupper$, $\hat{x}_8 = \Vupper + V$, and $\hat{x}_9 = \Vupper$.
\fblue{{For both cases}, we have $\bar{x}_3 = \lambda_3 \hat{x}_6 + \lambda_4 \hat{x}_8 + \lambda_5 \hat{x}_9$}. {Then \eqref{eqn-C: x1x2x3bar} holds for $\bar{x}_3$}.

\item Satisfying $\bar{x}_3 \geq \Clower \bar{y}_3 + (\Cupper - \Clower - 2V) (\bar{y}_3 - \bar{u}_3 - \bar{u}_2)$ at equality. We obtain $\Vupper \bar{y}_2 - (\Cupper - \Clower - V) \bar{u}_2 + (\Cupper - \Vupper - V) (\bar{y}_3 - \bar{u}_3) \leq \bar{x}_2 \leq \Vupper \bar{y}_2 + (\Cupper - \Vupper - V) (\bar{y}_3 - \bar{u}_3 - \bar{u}_2)$. By letting $\hat{x}_6 = \Cupper - 2V$ and $\hat{x}_8 = \hat{x}_9 = \Clower$, we have $\bar{x}_3 = \lambda_3 \hat{x}_6 + \lambda_4 \hat{x}_8 + \lambda_5 \hat{x}_9$. {Then \eqref{eqn-C: x1x2x3bar} holds for $\bar{x}_3$}.
As a result, we have $(\hat{x}_3, \hat{x}_5, \hat{x}_7) \in A'' = \{ (\hat{x}_3, \hat{x}_5, \hat{x}_7) \in \R^3: \hat{x}_3 = \Vupper, \ \hat{x}_5 = \Cupper - V, \ \Clower \leq \hat{x}_7 \leq \Vupper \}$. If $\bar{x}_2 = \Vupper \bar{y}_2 - (\Cupper - \Clower - V) \bar{u}_2 + (\Cupper - \Vupper - V) (\bar{y}_3 - \bar{u}_3)$, we let $\hat{x}_7 = \Clower$; if $\bar{x}_2 \leq \Vupper \bar{y}_2 + (\Cupper - \Vupper - V) (\bar{y}_3 - \bar{u}_3 - \bar{u}_2)$, we let $\hat{x}_7 = \Vupper$.
\fblue{{For both cases}, we have $\bar{x}_2 = \lambda_2 \hat{x}_3 + \lambda_3 \hat{x}_5 + \lambda_4 \hat{x}_7$}. {Then \eqref{eqn-C: x1x2x3bar} holds for $\bar{x}_2$}.

\item Satisfying $\bar{x}_3 \leq \Cupper \bar{y}_3 - (\Cupper - \Vupper) \bar{u}_3 - (\Cupper - \Vupper - V) \bar{u}_2$ at equality. We obtain $\Vupper \bar{y}_2 + (\Cupper - \Vupper - V) (\bar{y}_3 - \bar{u}_3 - \bar{u}_2) \leq \bar{x}_2 \leq \Vupper \bar{y}_2 + (\Cupper - \Vupper) (\bar{y}_3 - \bar{u}_3 - \bar{u}_2)$.
By letting $\hat{x}_6 = \Cupper$, $\hat{x}_8 = \Vupper + V$, and $\hat{x}_9 = \Vupper$, we have $\bar{x}_3 = \lambda_3 \hat{x}_6 + \lambda_4 \hat{x}_8 + \lambda_5 \hat{x}_9$. {Then \eqref{eqn-C: x1x2x3bar} holds for $\bar{x}_3$}.
 As a result, we have $(\hat{x}_3, \hat{x}_5, \hat{x}_7) \in A'' = \{ (\hat{x}_3, \hat{x}_5, \hat{x}_7) \in \R^3: \hat{x}_3 = \Vupper, \ \Cupper - V \leq \hat{x}_5 \leq \Cupper, \ \hat{x}_7 = \Vupper\}$. If $\bar{x}_2 = \Vupper \bar{y}_2 + (\Cupper - \Vupper - V) (\bar{y}_3 - \bar{u}_3 - \bar{u}_2)$, we let $\hat{x}_5 = \Cupper - V$; if $\bar{x}_2 = \Vupper \bar{y}_2 + (\Cupper - \Vupper) (\bar{y}_3 - \bar{u}_3 - \bar{u}_2)$, we let $\hat{x}_5 = \Cupper$.
\fblue{{For both cases}, we have $\bar{x}_2 = \lambda_2 \hat{x}_3 + \lambda_3 \hat{x}_5 + \lambda_4 \hat{x}_7$}. {Then \eqref{eqn-C: x1x2x3bar} holds for $\bar{x}_2$}.

\item Satisfying $\bar{x}_3 - \bar{x}_2 \leq (\Vupper + V) \bar{y}_3 - \Vupper \bar{y}_2 - V \bar{u}_3$ at equality. We obtain $\Vupper \bar{y}_2 - (\Cupper - \Clower - V) \bar{u}_2 + (\Cupper - \Vupper - V) (\bar{y}_3 - \bar{u}_3) \leq \bar{x}_2 \leq \Vupper \bar{y}_2 + (\Cupper - \Vupper - V) (\bar{y}_3 - \bar{u}_3 - \bar{u}_2)$ through substituting $\bar{x}_3 = \bar{x}_2 + (\Vupper + V) \bar{y}_3 - \Vupper \bar{y}_2 - V \bar{u}_3$ into set $C'$. By letting $\hat{x}_3 = \hat{x}_9 = \Vupper$ and $\hat{x}_6 - \hat{x}_5 = \hat{x}_8 - \hat{x}_7 = V$, we have $\bar{x}_3 - \bar{x}_2 = (\lambda_3 \hat{x}_6 + \lambda_4 \hat{x}_8 + \lambda_5 \hat{x}_9) - (\lambda_2 \hat{x}_3 + \lambda_3 \hat{x}_5 + \lambda_4 \hat{x}_7)$. If $\bar{x}_2 = \Vupper \bar{y}_2 - (\Cupper - \Clower - V) \bar{u}_2 + (\Cupper - \Vupper - V) (\bar{y}_3 - \bar{u}_3)$, we let $\hat{x}_5 = \Cupper - V$ and $\hat{x}_7 = \Clower$; if $\bar{x}_2 \leq \Vupper \bar{y}_2 + (\Cupper - \Vupper - V) (\bar{y}_3 - \bar{u}_3 - \bar{u}_2)$, we let $\hat{x}_5 = \Cupper - V$ and $\hat{x}_7 = \Vupper$. For both cases, we have $\bar{x}_2 = \lambda_2 \hat{x}_3 + \lambda_3 \hat{x}_5 + \lambda_4 \hat{x}_7$ and thus $\bar{x}_3 = \lambda_3 \hat{x}_6 + \lambda_4 \hat{x}_8 + \lambda_5 \hat{x}_9$. {Then \eqref{eqn-C: x1x2x3bar} holds for both $\bar{x}_2$ and $\bar{x}_3$}.

\item Satisfying $\bar{x}_2 - \bar{x}_3 \leq \Vupper \bar{y}_2 - \Clower \bar{y}_3 + (\Clower + V - \Vupper) (\bar{y}_3 - \bar{u}_3 - \bar{u}_2)$ at equality. We obtain $\Clower \bar{y}_3 + (\Cupper - \Clower - 2V) (\bar{y}_3 - \bar{u}_3 - \bar{u}_2) \leq \bar{x}_3 \leq \Clower \bar{y}_3 + (\Cupper - \Clower - V) (\bar{y}_3 - \bar{u}_3 - \bar{u}_2)$. By letting $\hat{x}_3 = \Vupper$, $\hat{x}_5 - \hat{x}_6 = V$, $\hat{x}_8 - \hat{x}_7 = \Clower - \Vupper$, and $\hat{x}_9 = \Clower$, we have $\bar{x}_2 - \bar{x}_3 = (\lambda_2 \hat{x}_3 + \lambda_3 \hat{x}_5 + \lambda_4 \hat{x}_7) - (\lambda_3 \hat{x}_6 + \lambda_4 \hat{x}_8 + \lambda_5 \hat{x}_9)$. 
If \fblue{$\bar{x}_3 \geq \Clower \bar{y}_3 + (\Cupper - \Clower - 2V) (\bar{y}_3 - \bar{u}_3 - \bar{u}_2)$ is satisfied at equality}, we let $\hat{x}_6 = \Cupper - 2V$ and $\hat{x}_8  = \Clower$; if \fblue{$\bar{x}_3 \leq \Clower \bar{y}_3 + (\Cupper - \Clower - V) (\bar{y}_3 - \bar{u}_3 - \bar{u}_2)$ is satisfied at equality}, we let $\hat{x}_6 = \Cupper - V$ and $\hat{x}_8  = \Clower$.
\fblue{For both cases, we have $\bar{x}_3 = \lambda_3 \hat{x}_6 + \lambda_4 \hat{x}_8 + \lambda_5 \hat{x}_9$ and thus $\bar{x}_2 = \lambda_2 \hat{x}_3 + \lambda_3 \hat{x}_5 + \lambda_4 \hat{x}_7$}. {Then \eqref{eqn-C: x1x2x3bar} holds for both $\bar{x}_2$ and $\bar{x}_3$}.
\end{enumerate}

Similar analyses hold for \eqref{eqn-C:p-x2-ub-t3l2} and \eqref{eqn-C:p-x3-ub-t3l2} due to the similar structure {among} \eqref{eqn-C:p-x1-ub-t3l2}, \eqref{eqn-C:p-x2-ub-t3l2}, and \eqref{eqn-C:p-x3-ub-t3l2} and thus are omitted here.

\textbf{Satisfying \eqref{eqn-C:p-x2-x1-ub-t3l2} at equality}. For this case, substituting $\bar{x}_2 = \bar{x}_1 + \Vupper \bar{y}_2 - \Clower \bar{y}_1 + (\Clower + V - \Vupper) (\bar{y}_3 - \bar{u}_3 - \bar{u}_2)$ into \eqref{eqn-C:p-lower-bound-t3l2} - \eqref{eqn-C:p-x1-x2+x3-ub-t3l2}, we obtain the feasible region of $(\bar{x}_1, \bar{x}_3)$ as $C' = \{(\bar{x}_1, \bar{x}_3) \in \R^2: \Clower \bar{y}_1 \leq \bar{x}_1 \leq \Clower \bar{y}_1 + (\Cupper - \Clower - V) (\bar{y}_3 - \bar{u}_3 - \bar{u}_2), \ \Clower \bar{y}_3 \leq \bar{x}_3 \leq \Cupper \bar{y}_3 - (\Cupper - \Vupper) \bar{u}_3 - (\Cupper - \Vupper - V) \bar{u}_2, \ \bar{x}_3 - \bar{x}_1 \leq (\Clower + 2 V) \bar{y}_3 - \Clower \bar{y}_1 - (\Clower + 2 V - \Vupper) \bar{u}_3 - (\Clower + V - \Vupper) \bar{u}_2, \ \bar{x}_1 - \bar{x}_3 \leq \Clower \bar{y}_1 - \Clower \bar{y}_3 \}$.

First, by letting $\hat{x}_1 = \Clower$, $\hat{x}_3 - \hat{x}_2 = \Vupper - \Clower$, $\hat{x}_5 - \hat{x}_4 = V$, and $\hat{x}_7 = \Vupper$, we have $\bar{x}_2 - \bar{x}_1 = (\lambda_2 \hat{x}_3 + \lambda_3 \hat{x}_5 + \lambda_4 \hat{x}_7) - (\lambda_1 \hat{x}_1 + \lambda_2 \hat{x}_2 + \lambda_3 \hat{x}_4)$. Since $\Clower \leq \hat{x}_3 \leq \Vupper$, it follows that $\hat{x}_2 = \Clower$ and $\hat{x}_3 = \Vupper$.
Then the corresponding feasible region for $(\hat{x}_4, \hat{x}_6, \hat{x}_8, \hat{x}_9)$ can be described as set $A' = \{ (\hat{x}_4, \hat{x}_6, \hat{x}_8, \hat{x}_9) \in \R^6: \Clower \leq \hat{x}_4 \leq \Cupper - V, \ \Clower \leq \hat{x}_6 \leq \Cupper, \ 0 \leq \hat{x}_6 - \hat{x}_4 \leq 2 V, \ \Clower \leq \hat{x}_8 \leq \Vupper + V, \ \Clower \leq \hat{x}_9 \leq \Vupper \}$.
Next, we only need to show $\bar{x}_1 = \lambda_1 \hat{x}_1 + \lambda_2 \hat{x}_2 + \lambda_3 \hat{x}_4$ and $\bar{x}_3 = \lambda_3 \hat{x}_6 + \lambda_4 \hat{x}_8 + \lambda_5 \hat{x}_9$. {Then \eqref{eqn-C: x1x2x3bar} will hold}.
We consider that one of inequalities in $C'$ is satisfied at equality to obtain the values of $(\hat{x}_4, \hat{x}_6, \hat{x}_8, \hat{x}_9)$ from $A'$ as follows.
\begin{enumerate}[1)]
\item Satisfying $\bar{x}_1 \geq \Clower \bar{y}_1$ at equality. We obtain $\Clower \bar{y}_3 \leq \bar{x}_3 \leq (\Clower + 2 V) \bar{y}_3 - (\Clower + 2 V - \Vupper) \bar{u}_3 - (\Clower + V - \Vupper) \bar{u}_2$. 
By letting $\hat{x}_4 = \Clower$, we have $\bar{x}_1 =\lambda_1 \hat{x}_1 + \lambda_2 \hat{x}_2 + \lambda_3 \hat{x}_4$.  
As a result, we have $(\hat{x}_6, \hat{x}_8, \hat{x}_9) \in A'' = \{ (\hat{x}_6, \hat{x}_8, \hat{x}_9) \in \R^3: \Clower \leq \hat{x}_6 \leq \Clower + 2V, \ \Clower \leq \hat{x}_8 \leq \Vupper + V, \ \Clower \leq \hat{x}_9 \leq \Vupper \}$. If $\bar{x}_3 = \Clower \bar{y}_3$, we let $\hat{x}_6 = \hat{x}_8 = \hat{x}_9 = \Clower$; if $\bar{x}_3 = (\Clower + 2 V) \bar{y}_3 - (\Clower + 2 V - \Vupper) \bar{u}_3 - (\Clower + V - \Vupper) \bar{u}_2$, we let $\hat{x}_6 =  \Clower + 2V$, $\hat{x}_8 = \Vupper + V$, and $\hat{x}_9 = \Vupper$.
\fblue{For both cases, we have $\bar{x}_3 = \lambda_3 \hat{x}_6 + \lambda_4 \hat{x}_8 + \lambda_5 \hat{x}_9$.}

\item Satisfying $\bar{x}_1 \leq \Clower \bar{y}_1 + (\Cupper - \Clower - V) (\bar{y}_3 - \bar{u}_3 - \bar{u}_2)$ at equality. We obtain $\Clower \bar{y}_3 + (\Cupper - \Clower - V) (\bar{y}_3 - \bar{u}_3 - \bar{u}_2) \leq \bar{x}_3 \leq \Cupper \bar{y}_3 - (\Cupper - \Vupper) \bar{u}_3 - (\Cupper - \Vupper - V) \bar{u}_2$. By letting $\hat{x}_4 = \Cupper - V$, we have $\bar{x}_1 =\lambda_1 \hat{x}_1 + \lambda_2 \hat{x}_2 + \lambda_3 \hat{x}_4$. As a result, we have $(\hat{x}_6, \hat{x}_8, \hat{x}_9) \in A'' = \{ (\hat{x}_6, \hat{x}_8, \hat{x}_9) \in \R^3: \Cupper - V \leq \hat{x}_6 \leq \Cupper, \ \Clower \leq \hat{x}_8 \leq \Vupper + V, \ \Clower \leq \hat{x}_9 \leq \Vupper \}$. If $\bar{x}_3 = \Clower \bar{y}_3 + (\Cupper - \Clower - V) (\bar{y}_3 - \bar{u}_3 - \bar{u}_2)$, we let $\hat{x}_6 =  \Cupper - V$ and $\hat{x}_8 = \hat{x}_9 = \Clower$; if $ \bar{x}_3 = \Cupper \bar{y}_3 - (\Cupper - \Vupper) \bar{u}_3 - (\Cupper - \Vupper - V) \bar{u}_2$, we let $\hat{x}_6 = \Cupper$, $\hat{x}_8 = \Vupper + V$, and $\hat{x}_9 = \Vupper$.
\fblue{For both cases, we have $\bar{x}_3 = \lambda_3 \hat{x}_6 + \lambda_4 \hat{x}_8 + \lambda_5 \hat{x}_9$.}

\item Satisfying $\bar{x}_3 \geq \Clower \bar{y}_3$ at equality. We obtain $\bar{x}_1 = \Clower \bar{y}_1$ since $\bar{x}_1 - \bar{x}_3 \leq \Clower \bar{y}_1 - \Clower \bar{y}_3$. By letting $\hat{x}_4 = \hat{x}_6 = \hat{x}_8 = \hat{x}_9 = \Clower$, we have $\bar{x}_1 =\lambda_1 \hat{x}_1 + \lambda_2 \hat{x}_2 + \lambda_3 \hat{x}_4$ and $\bar{x}_3 = \lambda_3 \hat{x}_6 + \lambda_4 \hat{x}_8 + \lambda_5 \hat{x}_9$.

\item Satisfying $\bar{x}_3 \leq \Cupper \bar{y}_3 - (\Cupper - \Vupper) \bar{u}_3 - (\Cupper - \Vupper - V) \bar{u}_2$ at equality. 
We obtain $\Clower \bar{y}_1 + (\Cupper - \Clower - 2V) (\bar{y}_3 - \bar{u}_3 - \bar{u}_2) \leq \bar{x}_1 \leq \Clower \bar{y}_1 + (\Cupper - \Clower - V) (\bar{y}_3 - \bar{u}_3 - \bar{u}_2)$.
By letting $\hat{x}_6 = \Cupper$, $\hat{x}_8 = \Vupper + V$, and $\hat{x}_9 = \Vupper$, we have $\bar{x}_3 = \lambda_3 \hat{x}_6 + \lambda_4 \hat{x}_8 + \lambda_5 \hat{x}_9$. Thus it follows that $\Cupper - 2V \leq \hat{x}_4 \leq \Cupper - V$. If $\bar{x}_1 = \Clower \bar{y}_1 + (\Cupper - \Clower - 2V) (\bar{y}_3 - \bar{u}_3 - \bar{u}_2) $, we let $\hat{x}_4 = \Cupper - 2V$; if $\bar{x}_1 = \Clower \bar{y}_1 + (\Cupper - \Clower - V) (\bar{y}_3 - \bar{u}_3 - \bar{u}_2)$, we let $\hat{x}_4 = \Cupper - V$.
\fblue{For both cases, we have $\bar{x}_1 =\lambda_1 \hat{x}_1 + \lambda_2 \hat{x}_2 + \lambda_3 \hat{x}_4$.}

\item Satisfying $\bar{x}_3 - \bar{x}_1 \leq (\Clower + 2 V) \bar{y}_3 - \Clower \bar{y}_1 - (\Clower + 2 V - \Vupper) \bar{u}_3 - (\Clower + V - \Vupper) \bar{u}_2$ at equality. We obtain $\Clower \bar{y}_1 \leq \bar{x}_1 \leq \Clower \bar{y}_1 + (\Cupper - \Clower - 2 V) (\bar{y}_3 - \bar{u}_3 - \bar{u}_2)$. By letting $\hat{x}_6 - \hat{x}_4 = 2 V$, $\hat{x}_8 = \Vupper + V$, and $\hat{x}_9 = \Vupper$, we have $\bar{x}_3 - \bar{x}_1 = (\lambda_3 \hat{x}_6 + \lambda_4 \hat{x}_8 + \lambda_5 \hat{x}_9) - (\lambda_1 \hat{x}_1 + \lambda_2 \hat{x}_2 + \lambda_3 \hat{x}_4)$. If $\bar{x}_1 = \Clower \bar{y}_1$, we let $\hat{x}_4 = \Clower$ and thus $\hat{x}_6 = \Clower + 2V$; if $\bar{x}_1 \leq \Clower \bar{y}_1 + (\Cupper - \Clower - 2 V) (\bar{y}_3 - \bar{u}_3 - \bar{u}_2)$, we let $\hat{x}_4 = \Cupper - 2V$.
\fblue{For both cases, we have $\bar{x}_1 =\lambda_1 \hat{x}_1 + \lambda_2 \hat{x}_2 + \lambda_3 \hat{x}_4$ and thus $\bar{x}_3 = \lambda_3 \hat{x}_6 + \lambda_4 \hat{x}_8 + \lambda_5 \hat{x}_9$.}

\item Satisfying $\bar{x}_1 - \bar{x}_3 \leq \Clower \bar{y}_1 - \Clower \bar{y}_3$ at equality. We obtain $\Clower \bar{y}_3 \leq \bar{x}_3 \leq \Clower \bar{y}_3 + (\Cupper - \Clower - V) (\bar{y}_3 - \bar{u}_3 - \bar{u}_2)$. By letting $\hat{x}_4 = \hat{x}_6$, $\hat{x}_8 = \hat{x}_9 = \Clower$, we have $\bar{x}_1 - \bar{x}_3 =  (\lambda_1 \hat{x}_1 + \lambda_2 \hat{x}_2 + \lambda_3 \hat{x}_4) - (\lambda_3 \hat{x}_6 + \lambda_4 \hat{x}_8 + \lambda_5 \hat{x}_9)$. If $\bar{x}_3 = \Clower \bar{y}_3$, we let $\hat{x}_6 = \Clower$; if $\bar{x}_3 \leq \Clower \bar{y}_3 + (\Cupper - \Clower - V) (\bar{y}_3 - \bar{u}_3 - \bar{u}_2)$, we let $\hat{x}_6 = \Cupper- V$.
\fblue{For both cases, we have $\bar{x}_3 = \lambda_3 \hat{x}_6 + \lambda_4 \hat{x}_8 + \lambda_5 \hat{x}_9$ and thus $\bar{x}_1 =\lambda_1 \hat{x}_1 + \lambda_2 \hat{x}_2 + \lambda_3 \hat{x}_4$.}
\end{enumerate}

Similar analyses hold for \eqref{eqn-C:p-x3-x2-ub-t3l2} - \eqref{eqn-C:p-x2-x3-ub-t3l2} due to the similar structure {among} \eqref{eqn-C:p-x2-x1-ub-t3l2} and \eqref{eqn-C:p-x3-x2-ub-t3l2} - \eqref{eqn-C:p-x2-x3-ub-t3l2} and thus are omitted here.

\textbf{Satisfying \eqref{eqn-C:p-x3-x1-ub-t3l2} at equality}. For this case, substituting $\bar{x}_3 = \bar{x}_1  + (\Clower + 2 V) \bar{y}_3 - \Clower \bar{y}_1 - (\Clower + 2 V - \Vupper) \bar{u}_3 - (\Clower + V - \Vupper) \bar{u}_2$ into \eqref{eqn-C:p-lower-bound-t3l2} - \eqref{eqn-C:p-x1-x2+x3-ub-t3l2}, we obtain the feasible region of $(\bar{x}_1, \bar{x}_2)$ as $C' = \{(\bar{x}_1, \bar{x}_2) \in \R^2: \Clower \bar{y}_1 \leq \bar{x}_1 \leq \Clower \bar{y}_1 + (\Cupper - \Clower - 2 V) (\bar{y}_3 - \bar{u}_3 - \bar{u}_2), \ \bar{x}_2 - \bar{x}_1 \leq \Vupper \bar{y}_2 - \Clower \bar{y}_1 + (\Clower + V - \Vupper) (\bar{y}_3 - \bar{u}_3 - \bar{u}_2), \ \bar{x}_1 - \bar{x}_2 \leq \Clower \bar{y}_1 - \Clower \bar{y}_2 - V \bar{y}_3 + V \bar{u}_3 + (\Clower + V - \Vupper) \bar{u}_2 \}$.

First, by letting $\hat{x}_1 = \hat{x}_2 = \Clower$, $\hat{x}_6 - \hat{x}_4 = 2V$, $\hat{x}_8 = \Vupper + V$, and $\hat{x}_9 = \Vupper$, we have $\bar{x}_3 - \bar{x}_1 = (\lambda_3 \hat{x}_6 + \lambda_4 \hat{x}_8 + \lambda_5 \hat{x}_9) - (\lambda_1 \hat{x}_1 + \lambda_2 \hat{x}_2 + \lambda_3 \hat{x}_4)$.
Since $\Clower \leq \hat{x}_7 \leq \Vupper $ and $\Clower - \Vupper \leq \hat{x}_8 - \hat{x}_7 \leq V$, we have $\hat{x}_7 = \Vupper$.
Then the corresponding feasible region for $(\hat{x}_3, \hat{x}_4, \hat{x}_5)$ can be described as set $A' = \{ (\hat{x}_3, \hat{x}_4, \hat{x}_5) \in \R^3: \Clower \leq \hat{x}_3 \leq \Vupper, \ \Clower \leq \hat{x}_4 \leq \Cupper - 2V, \ \Clower \leq \hat{x}_5 \leq \Cupper - V, \hat{x}_5 - \hat{x}_4 = V \}$.
Next, we only need to show $\bar{x}_1 = \lambda_1 \hat{x}_1 + \lambda_2 \hat{x}_2 + \lambda_3 \hat{x}_4$ and $\bar{x}_2 = \lambda_2 \hat{x}_3 + \lambda_3 \hat{x}_5 + \lambda_4 \hat{x}_7$. {Then \eqref{eqn-C: x1x2x3bar} will hold}.
We consider that one of inequalities in $C'$ is satisfied at equality to obtain the values of $(\hat{x}_3, \hat{x}_4, \hat{x}_5)$ from $A'$ as follows.
\begin{enumerate}[1)]
\item Satisfying $\bar{x}_1 \geq \Clower \bar{y}_1$ at equality. We obtain $\Clower \bar{y}_2 + V (\bar{y}_3 - \bar{u}_3) - (\Clower + V - \Vupper) \bar{u}_2 \leq \bar{x}_2 \leq \Vupper \bar{y}_2 + (\Clower + V - \Vupper) (\bar{y}_3 - \bar{u}_3 - \bar{u}_2) $. By letting $\hat{x}_4 = \Clower$, we have $\bar{x}_1 = \lambda_1 \hat{x}_1 + \lambda_2 \hat{x}_2 + \lambda_3 \hat{x}_4$. As a result, we have $\Clower \leq \hat{x}_3 \leq \Vupper$ and $\hat{x}_5 = \Clower +V$. If $\bar{x}_2 = \Clower \bar{y}_2 + V (\bar{y}_3 - \bar{u}_3) - (\Clower + V - \Vupper) \bar{u}_2$, we let $\hat{x}_3 = \Clower$; if  $\bar{x}_2 = \Vupper \bar{y}_2 + (\Clower + V - \Vupper) (\bar{y}_3 - \bar{u}_3 - \bar{u}_2)$, we let $\hat{x}_3 = \Vupper$.
\fblue{For both cases, we have $\bar{x}_2 = \lambda_2 \hat{x}_3 + \lambda_3 \hat{x}_5 + \lambda_4 \hat{x}_7$.}

\item Satisfying $\bar{x}_1 \leq \Clower \bar{y}_1 + (\Cupper - \Clower - 2 V) (\bar{y}_3 - \bar{u}_3 - \bar{u}_2)$ at equality. We obtain  $ \Clower \bar{y}_2 + (\Cupper - \Clower - V) (\bar{y}_3 - \bar{u}_3) - (\Cupper - \Vupper - V) \bar{u}_2 \leq \fblue{\bar{x}_2} \leq \Vupper \bar{y}_2 + (\Cupper - \Vupper - V) (\bar{y}_3 - \bar{u}_3 - \bar{u}_2)$. By letting $\hat{x}_4 = \Cupper - 2V$, we have $\bar{x}_1 = \lambda_1 \hat{x}_1 + \lambda_2 \hat{x}_2 + \lambda_3 \hat{x}_4$. As a result, we have $\Clower \leq \hat{x}_3 \leq \Vupper$ and $\hat{x}_5 = \Cupper - V$. If $\bar{x}_2 = \Clower \bar{y}_2 + (\Cupper - \Clower - V) (\bar{y}_3 - \bar{u}_3) - (\Cupper - \Vupper - V) \bar{u}_2$, we let $\hat{x}_3 = \Clower$; if $\fblue{\bar{x}_2} = \Vupper \bar{y}_2 + (\Cupper - \Vupper - V) (\bar{y}_3 - \bar{u}_3 - \bar{u}_2)$, we let $\hat{x}_3 = \Vupper$.
\fblue{For both cases, we have $\bar{x}_2 = \lambda_2 \hat{x}_3 + \lambda_3 \hat{x}_5 + \lambda_4 \hat{x}_7$.}

\item Satisfying $\bar{x}_2 - \bar{x}_1 \leq \Vupper \bar{y}_2 - \Clower \bar{y}_1 + (\Clower + V - \Vupper) (\bar{y}_3 - \bar{u}_3 - \bar{u}_2)$ at equality. We obtain $\Vupper \bar{y}_2 + (\Clower + V - \Vupper) (\bar{y}_3 - \bar{u}_3 - \bar{u}_2) \leq \bar{x}_2 \leq \Vupper \bar{y}_2 + (\Cupper - \Vupper - V) (\bar{y}_3 - \bar{u}_3 - \bar{u}_2)$. By letting $\hat{x}_3 = \Vupper$, we have $\bar{x}_2 - \bar{x}_1 = (\lambda_2 \hat{x}_3 + \lambda_3 \hat{x}_5 + \lambda_4 \hat{x}_7) - (\lambda_1 \hat{x}_1 + \lambda_2 \hat{x}_2 + \lambda_3 \hat{x}_4) $. As result, we have $\Clower+V \leq \hat{x}_5 \leq \Cupper- V$. If $\fblue{\bar{x}_2} = \Vupper \bar{y}_2 + (\Clower + V - \Vupper) (\bar{y}_3 - \bar{u}_3 - \bar{u}_2)$, we let $\hat{x}_5 = \Clower + V$; if $\bar{x}_2 = \Vupper \bar{y}_2 + (\Cupper - \Vupper - V) (\bar{y}_3 - \bar{u}_3 - \bar{u}_2)$, we let $\hat{x}_5 = \Cupper - V$.
\fblue{For both cases, we have $\bar{x}_2 = \lambda_2 \hat{x}_3 + \lambda_3 \hat{x}_5 + \lambda_4 \hat{x}_7$ and thus $\bar{x}_1 = \lambda_1 \hat{x}_1 + \lambda_2 \hat{x}_2 + \lambda_3 \hat{x}_4$.}

\item Satisfying $\bar{x}_1 - \bar{x}_2 \leq \Clower \bar{y}_1 - \Clower \bar{y}_2 - V \bar{y}_3 + V \bar{u}_3 + (\Clower + V - \Vupper) \bar{u}_2$ at equality. We obtain $\Clower \bar{y}_2 + V(\bar{y}_3 - \bar{u}_3) - (\Clower + V - \Vupper) \bar{u}_2 \leq \bar{x}_2 \leq \Clower \bar{y}_2 + (\Cupper - \Clower - V) (\bar{y}_3 - \bar{u}_3 - \bar{u}_2) + (\Vupper - \Clower) \bar{u}_2$. By letting $\hat{x}_3 = \Clower$, we have $\bar{x}_1 - \bar{x}_2 = (\lambda_1 \hat{x}_1 + \lambda_2 \hat{x}_2 + \lambda_3 \hat{x}_4) - (\lambda_2 \hat{x}_3 + \lambda_3 \hat{x}_5 + \lambda_4 \hat{x}_7)$. As result, we have $\Clower+V \leq \hat{x}_5 \leq \Cupper- V$. If $\bar{x}_2 = \Clower \bar{y}_2 + V(\bar{y}_3 - \bar{u}_3) - (\Clower + V - \Vupper) \bar{u}_2$, we let $\hat{x}_5 = \Clower + V$; if $\bar{x}_2 = \Clower \bar{y}_2 + (\Cupper - \Clower - V) (\bar{y}_3 - \bar{u}_3 - \bar{u}_2) + (\Vupper - \Clower) \bar{u}_2$, we let $\hat{x}_5 = \Cupper - V$.
\fblue{For both cases, we have $\bar{x}_2 = \lambda_2 \hat{x}_3 + \lambda_3 \hat{x}_5 + \lambda_4 \hat{x}_7$ and thus $\bar{x}_1 = \lambda_1 \hat{x}_1 + \lambda_2 \hat{x}_2 + \lambda_3 \hat{x}_4$.}
\end{enumerate}

Similar analyses hold for \eqref{eqn-C:p-x1-x3-ub-t3l2} due to the similar structure between \eqref{eqn-C:p-x3-x1-ub-t3l2} and \eqref{eqn-C:p-x1-x3-ub-t3l2} and thus are omitted here.

\textbf{Satisfying \eqref{eqn-C:p-x1-x2+x3-ub-t3l2} at equality}. For this case, substituting $\bar{x}_3 = \bar{x}_2 - \bar{x}_1 + \Vupper \bar{y}_1 - (\Vupper - V) \bar{y}_2 + \Vupper \bar{y}_3 + (\Cupper - \Vupper) (\bar{y}_3 - \bar{u}_3 - \bar{u}_2)$ into \eqref{eqn-C:p-lower-bound-t3l2} - \eqref{eqn-C:p-x1-x2+x3-ub-t3l2}, we obtain the feasible region of $(\bar{x}_1, \bar{x}_2)$ as $C' = \{(\bar{x}_1, \bar{x}_2) \in \R^2: \Vupper \bar{y}_1 + (\Clower + V - \Vupper) (\bar{y}_2 - \bar{y}_3 + \bar{u}_3) + (\Cupper - \Vupper) (\bar{y}_3 - \bar{u}_3 - \bar{u}_2) \leq \bar{x}_1 \leq \Vupper \bar{y}_1 + V (\bar{y}_2 - \bar{u}_2) + (\Cupper - \Vupper - V) (\bar{y}_3 - \bar{u}_3 - \bar{u}_2), \ \bar{x}_2 - \bar{x}_1 \leq (\Vupper-V) \bar{y}_2 - \Vupper \bar{y}_1 + V \bar{u}_2, \ \bar{x}_1 - \bar{x}_2 \leq \Vupper \bar{y}_1 - (\Vupper - V) \bar{y}_2 - (\Clower + V - \Vupper) \bar{u}_2 \}$.

First, by letting $\hat{x}_1 = \Vupper$, $\hat{x}_2 - \hat{x}_3 = V$, $\hat{x}_4 = \hat{x}_6 = \Cupper$, $\hat{x}_5 = \Cupper - V$,  $\hat{x}_8 - \hat{x}_7 = V$, and $\hat{x}_9 = \Vupper$, we have $\bar{x}_1 - \bar{x}_2 + \bar{x}_3 = (\lambda_1 \hat{x}_1 + \lambda_2 \hat{x}_2 + \lambda_3 \hat{x}_4) - (\lambda_2 \hat{x}_3 + \lambda_3 \hat{x}_5 + \lambda_4 \hat{x}_7) + (\lambda_3 \hat{x}_6 + \lambda_4 \hat{x}_8 + \lambda_5 \hat{x}_9)$.
Then the corresponding feasible region for $(\hat{x}_3, \hat{x}_7)$ can be described as set $A' = \{ (\hat{x}_3, \hat{x}_7) \in \R^2: \Clower \leq \hat{x}_3 \leq \Vupper, \ \Clower \leq \hat{x}_7 \leq \Vupper \}$.
Next, we only need to show $\bar{x}_1 = \lambda_1 \hat{x}_1 + \lambda_2 \hat{x}_2 + \lambda_3 \hat{x}_4$ and $\bar{x}_2 = \lambda_2 \hat{x}_3 + \lambda_3 \hat{x}_5 + \lambda_4 \hat{x}_7$. {Then \eqref{eqn-C: x1x2x3bar} will accordingly hold}.
We consider that one of inequalities in $C'$ is satisfied at equality to obtain the values of $(\hat{x}_3, \hat{x}_7)$ from $A'$ as follows.
\begin{enumerate}[1)]
\item Satisfying $\bar{x}_1 \geq \Vupper \bar{y}_1 + (\Clower + V - \Vupper) (\bar{y}_2 - \bar{y}_3 + \bar{u}_3) + (\Cupper - \Vupper) (\bar{y}_3 - \bar{u}_3 - \bar{u}_2)$ at equality. We obtain $\Clower \bar{y}_2 + (\Cupper - \Clower - V) (\bar{y}_3 - \bar{u}_3 - \bar{u}_2) \leq \bar{x}_2 \leq \Clower \bar{y}_2 + (\Cupper - \Clower - V) (\bar{y}_3 - \bar{u}_3) - (\Cupper - \Vupper - V) \bar{u}_2$. By letting $\hat{x}_3 = \Clower$ and thus $\hat{x}_2 = \Clower + V$, we have $\bar{x}_1 = \lambda_1 \hat{x}_1 + \lambda_2 \hat{x}_2 + \lambda_3 \hat{x}_4$. If $\bar{x}_2 = \Clower \bar{y}_2 + (\Cupper - \Clower - V) (\bar{y}_3 - \bar{u}_3 - \bar{u}_2)$, we let $\hat{x}_7 = \Clower$ and thus $\hat{x}_8 = \Clower + V$; if $\bar{x}_2 = \Clower \bar{y}_2 + (\Cupper - \Clower - V) (\bar{y}_3 - \bar{u}_3) - (\Cupper - \Vupper - V) \bar{u}_2$, we let $\hat{x}_7 = \Vupper$ and thus $\hat{x}_8 = \Vupper + V$.
\fblue{For both cases, we have $\bar{x}_2 = \lambda_2 \hat{x}_3 + \lambda_3 \hat{x}_5 + \lambda_4 \hat{x}_7$.}

\item Satisfying $\bar{x}_1 \leq \Vupper \bar{y}_1 + V (\bar{y}_2 - \bar{u}_2) + (\Cupper - \Vupper - V) (\bar{y}_3 - \bar{u}_3 - \bar{u}_2)$ at equality. We obtain $\Vupper \bar{y}_2 + (\Cupper - \Vupper - V) (\bar{y}_3 - \bar{u}_3) - (\Cupper - \Clower - V) \bar{u}_2 \leq \bar{x}_2 \leq \Vupper \bar{y}_2 + (\Cupper - \Vupper - V) (\bar{y}_3 - \bar{u}_3 - \bar{u}_2)$. By letting $\hat{x}_3 = \Vupper$ and thus $\hat{x}_2 = \Vupper + V$, we have $\bar{x}_1 = \lambda_1 \hat{x}_1 + \lambda_2 \hat{x}_2 + \lambda_3 \hat{x}_4$. If $\bar{x}_2 = \Vupper \bar{y}_2 + (\Cupper - \Vupper - V) (\bar{y}_3 - \bar{u}_3) - (\Cupper - \Clower - V) \bar{u}_2$, we let $\hat{x}_7 = \Clower$ and thus $\hat{x}_8 = \Clower + V$; if $\bar{x}_2 = \Vupper \bar{y}_2 + (\Cupper - \Vupper - V) (\bar{y}_3 - \bar{u}_3 - \bar{u}_2)$, we let $\hat{x}_7 = \Vupper$ and thus $\hat{x}_8 = \Vupper + V$.
\fblue{For both cases, we have $\bar{x}_2 = \lambda_2 \hat{x}_3 + \lambda_3 \hat{x}_5 + \lambda_4 \hat{x}_7$.}

\item Satisfying $\bar{x}_2 - \bar{x}_1 \leq (\Vupper-V) \bar{y}_2 - \Vupper \bar{y}_1 + V \bar{u}_2$ at equality. We obtain $\Clower \bar{y}_2 + (\Cupper - \Clower - V) (\bar{y}_3 - \bar{u}_3) - (\Cupper - \Vupper - V) \bar{u}_2 \leq \bar{x}_2 \leq \Vupper \bar{y}_2 + (\Cupper - \Vupper - V) (\bar{y}_3 - \bar{u}_3 - \bar{u}_2)$. By letting $\hat{x}_7 = \Vupper$ and thus $\hat{x}_8 = \Vupper + V$, we have $\bar{x}_2 - \bar{x}_1 = (\lambda_2 \hat{x}_3 + \lambda_3 \hat{x}_5 + \lambda_4 \hat{x}_7) - (\lambda_1 \hat{x}_1 + \lambda_2 \hat{x}_2 + \lambda_3 \hat{x}_4)$. If $\bar{x}_2 = \Clower \bar{y}_2 + (\Cupper - \Clower - V) (\bar{y}_3 - \bar{u}_3) - (\Cupper - \Vupper - V) \bar{u}_2$, we let $\hat{x}_3 = \Clower$ and thus $\hat{x}_2 = \Clower + V$; if $\bar{x}_2 = \Vupper \bar{y}_2 + (\Cupper - \Vupper - V) (\bar{y}_3 - \bar{u}_3 - \bar{u}_2)$, we let $\hat{x}_3 = \Vupper$ and thus $\hat{x}_2 = \Vupper + V$.
\fblue{For both cases, we have $\bar{x}_2 = \lambda_2 \hat{x}_3 + \lambda_3 \hat{x}_5 + \lambda_4 \hat{x}_7$ and thus $\bar{x}_1 = \lambda_1 \hat{x}_1 + \lambda_2 \hat{x}_2 + \lambda_3 \hat{x}_4$.}

\item Satisfying $\bar{x}_1 - \bar{x}_2 \leq \Vupper \bar{y}_1 - (\Vupper - V) \bar{y}_2 - (\Clower + V - \Vupper) \bar{u}_2$ at equality. We obtain 
$\Clower \bar{y}_2 + (\Cupper - \Clower - V) (\bar{y}_3 - \bar{u}_3 - \bar{u}_2) \leq \bar{x}_2 \leq \Vupper \bar{y}_2 + (\Cupper - \Vupper - V) (\bar{y}_3 - \bar{u}_3) - (\Cupper - \Clower - V) \bar{u}_2$.  By letting $\hat{x}_7 = \Clower$ and thus $\hat{x}_8 = \Clower + V$, we have $\bar{x}_1 - \bar{x}_2 = (\lambda_1 \hat{x}_1 + \lambda_2 \hat{x}_2 + \lambda_3 \hat{x}_4) - (\lambda_2 \hat{x}_3 + \lambda_3 \hat{x}_5 + \lambda_4 \hat{x}_7)$. If $\bar{x}_2 = \Clower \bar{y}_2 + (\Cupper - \Clower - V) (\bar{y}_3 - \bar{u}_3 - \bar{u}_2)$, we let $\hat{x}_3 = \Clower$ and thus $\hat{x}_2 = \Clower + V$; if $\bar{x}_2 = \Vupper \bar{y}_2 + (\Cupper - \Vupper - V) (\bar{y}_3 - \bar{u}_3) - (\Cupper - \Clower - V) \bar{u}_2$, we let $\hat{x}_3 = \Vupper$ and thus $\hat{x}_2 = \Vupper + V$.
\fblue{For both cases, we have $\bar{x}_2 = \lambda_2 \hat{x}_3 + \lambda_3 \hat{x}_5 + \lambda_4 \hat{x}_7$ and thus $\bar{x}_1 = \lambda_1 \hat{x}_1 + \lambda_2 \hat{x}_2 + \lambda_3 \hat{x}_4$.}
\end{enumerate}
Thus, the whole claim holds, and we have proved the conclusion.
\end{proof}

\subsection{Proof for Theorem \ref{thm:general1}} \label{apx:thm:general1}
\begin{proof}
Here we only provide the proof for the case in which $\Cupper - \Vupper - V > 0$ and $\Cupper - \Clower - 2V > 0$ since the cases in which $\Cupper - \Vupper - V = 0$ or $\Cupper - \Clower - 2V = 0$ can be proved similarly. Similar to the proof for Proposition \ref{prop:t3l2-int-extpoint}, we prove that every point $z \in \bar{Q}_3^1$ can be written as $z = \sum_{s \in S} \lambda_s z^s$ for some $\lambda_s \geq 0$ and $\sum_{s \in S} \lambda_s = 1$, where $z^s \in {\bar{P}_3^1}, \ s \in S$ and $S$ is the index set for the candidate points.

For a given point $z = (\bar{x}_1, \bar{x}_2, \bar{x}_3, \bar{y}_1, \bar{y}_2, \bar{y}_3, \bar{u}_2, \bar{u}_3) \in \bar{Q}_3^1$, we let the candidate points $z^1, z^2, \allowbreak \cdots, z^8 \allowbreak \in {\bar{P}_3^1}$ in the forms such that $z^1 = (\hat{x}_1, \hat{x}_2, \allowbreak \hat{x}_3, \allowbreak 1, \allowbreak 1, \allowbreak 1, 0, 0)$, $z^2 = (\hat{x}_4, \hat{x}_5, 0, 1, 1, 0, 0, 0)$,  $z^3 = (\hat{x}_6, 0, \allowbreak \hat{x}_7, 1, 0, \allowbreak 1, 0, 1)$,  $z^4 = (\hat{x}_8, 0, 0, 1, 0, 0, 0, 0)$, $z^5 = (0, \hat{x}_9, \hat{x}_{10}, 0, \allowbreak 1, 1, 1, 0)$, $z^6 = (0, \hat{x}_{11}, 0, 0, \allowbreak 1, 0, 1, 0)$,  $z^7 = (0, 0, \hat{x}_{12}, 0, \allowbreak 0, 1, 0, 1)$, and $z^8 = (0, 0, 0, \allowbreak 0, 0, 0, 0, 0)$, where $\hat{x}_i, i = 1, \cdots, 12$ are to be decided later. Meanwhile, we let 
\begin{subeqnarray} \label{eqn:lamdba-extra}
&& \lambda_1 = \bar{y}_3 - \bar{u}_3 - \lambda_5, \ \lambda_2 = \bar{y}_2 - \bar{u}_2 - \bar{y}_3 + \bar{u}_3 + \lambda_5, \\
&& \max\{0, \bar{y}_1 + \bar{u}_2 + \bar{u}_3 - 1 \} \leq \lambda_3 \leq \min\{\bar{u}_3, \bar{y}_1 - \bar{y}_2 + \bar{u}_2 \}, \\
&& \lambda_4 = \bar{y}_1 - \bar{y}_2 + \bar{u}_2 - \lambda_3, \ \max\{0, \bar{y}_3 - \bar{u}_3 - \bar{y}_2 + \bar{u}_2 \} \leq \lambda_5 \leq \min\{\bar{u}_2, \bar{y}_3 - \bar{u}_3 \}, \\
&& \lambda_6 = \bar{u}_2 - \lambda_5, \ \lambda_7 = \bar{u}_3 - \lambda_3 \mbox{ and } \lambda_8 = 1 - \bar{y}_1 - \bar{u}_2 - \bar{u}_3 + \lambda_3. 
\end{subeqnarray} 

First of all, {based on this construction, we can check} that {both} $\lambda_3$ and $\lambda_5$ exist, $\sum_{s = 1}^{8} \lambda_s = 1$, and $\lambda_s \geq 0$ for $\forall s = 1, \cdots, 8$ due to \eqref{eqn:p-udef-t3l2}, \eqref{eqn:u2-3period-t3l2}, \eqref{eqn:Q-3-1-mu}, and \eqref{eqn:Q-3-1-md}.
{Meanwhile}, it {can be checked} that $\bar{y}_i = y_i(z) = \sum_{s=1}^{8} \lambda_s y_i(z^s)$ for $i = 1, 2, 3$ and $\bar{u}_i = u_i(z) = \sum_{s=1}^{8} \lambda_s u_i(z^s)$ for $i = 2, 3$, {where $y_i(z)$ represents the $\bar{y}_i$ component value in the given point $z$ and $u_i(z)$ represents the $\bar{u}_i$ component value in the given point $z$}.

{Thus, in the remaining part of this proof, we only need to} decide the values of $\hat{x}_i$ for $i = 1, \cdots, 12$, $\lambda_3$, and $\lambda_5$ such that $\bar{x}_i = x_i(z) = \sum_{s=1}^{8} \lambda_s x_i(z^s)$ for $i = 1, 2, 3$, i.e., 
\begin{subeqnarray} \label{eqn-C: x1x2x3bar-extra}
 && \bar{x}_1 = \lambda_1 \hat{x}_1 + \lambda_2 \hat{x}_4 + \lambda_3 \hat{x}_6 + \lambda_4 \hat{x}_8, \slabel{eqn-C: x1x2x3bar-extra-1} \\
 && \bar{x}_2 = \lambda_1 \hat{x}_2 + \lambda_2 \hat{x}_5 + \lambda_5 \hat{x}_9 + \lambda_6 \hat{x}_{11},\slabel{eqn-C: x1x2x3bar-extra-2} \\
 && \bar{x}_3 = \lambda_1 \hat{x}_3 + \lambda_3 \hat{x}_7 + \lambda_5 \hat{x}_{10} + \lambda_7 \hat{x}_{12}.  \slabel{eqn-C: x1x2x3bar-extra-3}
\end{subeqnarray}

{To show~\eqref{eqn-C: x1x2x3bar-extra}, in the following, we prove that for any $(\bar{x}_1, \bar{x}_2, \bar{x}_3)$ in its feasible region corresponding to a given $(\bar{y}_1, \bar{y}_2, \bar{y}_3, \bar{u}_2, \bar{u}_3)$, we can always find a $(\hat{x}_1, \hat{x}_2, \cdots, \hat{x}_{12})$ in its feasible region, corresponding to the same given $(\bar{y}_1, \bar{y}_2, \bar{y}_3, \bar{u}_2, \bar{u}_3)$. Now we describe the feasible regions for $(\hat{x}_1, \hat{x}_2, \cdots, \hat{x}_{12})$ and $(\bar{x}_1, \bar{x}_2, \bar{x}_3)$, respectively. First, since}
$y$ and $u$ in $z^1, \cdots, z^8$ are given, {by substituting $z^1, \cdots, z^8$ into ${\bar{P}_3^1}$}, the corresponding feasible region for $(\hat{x}_1, \hat{x}_2, \cdots, \hat{x}_{12})$ can be described as set $A = \{ (\hat{x}_1, \hat{x}_2, \allowbreak \cdots, \allowbreak \hat{x}_{12}) \in \R^{12}: \Clower \leq \hat{x}_i \leq \Cupper (i = 1, 2, 3), \ -V \leq \hat{x}_i - \hat{x}_{i+1} \leq V (i = 1, 2), \ \Clower \leq \hat{x}_4 \leq \Vupper + V, \  \Clower \leq \hat{x}_5 \leq \Vupper, \allowbreak \  -V \leq \hat{x}_4 - \hat{x}_5 \leq V, \ \Clower \leq \hat{x}_i \leq \Vupper (i = 6, 7, 8, 11, 12), \allowbreak \ \Clower \leq \hat{x}_9 \leq \Vupper, \allowbreak \ \Clower \leq \hat{x}_{10} \leq \Vupper + V, \    -V \leq \hat{x}_9 - \hat{x}_{10} \leq V \}$.
{Second, corresponding to a given $(\bar{y}_1, \bar{y}_2, \bar{y}_3, \bar{u}_2, \bar{u}_3)$, following the description of $\bar{Q}_3^1$, the feasible region for $(\bar{x}_1, \bar{x}_2, \bar{x}_3)$ can be described as follows:}
\begin{subeqnarray}
& \hspace{-0.5in} C =  \Bigl\{  (\bar{x}_1, \bar{x}_2, \bar{x}_3) \in \R^3: & \bar{x}_1 \geq \Clower \bar{y}_1, \ \bar{x}_2 \geq \Clower \bar{y}_2, \ \bar{x}_3 \geq \Clower \bar{y}_3, \slabel{eqn-C:lower-bound-t3l1} \\
&& \bar{x}_1  \leq \Vupper \bar{y}_1 + (\Cupper - \Vupper) (\bar{y}_2 - \bar{u}_2), \slabel{eqn-C:x1-ub-t3l1-1} \\
&& \bar{x}_1  \leq \Vupper \bar{y}_1 + V (\bar{y}_2 - \bar{u}_2) + (\Cupper - \Vupper - V) (\bar{y}_3 - \bar{u}_3), \slabel{eqn-C:x1-ub-t3l1-2} \\
&& \bar{x}_2 \leq \Cupper \bar{y}_2 - (\Cupper - \Vupper) \bar{u}_2, \slabel{eqn-C:x2-ub-t3l1-1} \\
&& \bar{x}_2  \leq \Vupper \bar{y}_2 + (\Cupper - \Vupper) (\bar{y}_3 - \bar{u}_3), \slabel{eqn-C:x2-ub-t3l1-2} \\
&& \bar{x}_3  \leq \Cupper \bar{y}_3 - (\Cupper - \Vupper) \bar{u}_3, \slabel{eqn-C:x3-ub-t3l1-1} \\
&& \bar{x}_3  \leq (\Vupper + V) \bar{y}_3 - V \bar{u}_3 + (\Cupper - \Vupper - V) (\bar{y}_2 - \bar{u}_2), \slabel{eqn-C:x3-ub-t3l1-2} \\
&& \bar{x}_2 - \bar{x}_1  \leq (\Clower + V) \bar{y}_2 - \Clower \bar{y}_1 - (\Clower + V - \Vupper) \bar{u}_2, \slabel{eqn-C:x2-x1-ub-t3l1} \\
&& \bar{x}_3 - \bar{x}_2  \leq (\Clower + V) \bar{y}_3 - \Clower \bar{y}_2 - (\Clower + V - \Vupper) \bar{u}_3, \slabel{eqn-C:x3-x2-ub-t3l1} \\
&& \bar{x}_1 - \bar{x}_2  \leq \Vupper \bar{y}_1 - (\Vupper - V) \bar{y}_2 - (\Clower + V - \Vupper) \bar{u}_2, \slabel{eqn-C:x1-x2-ub-t3l1} \\
&& \bar{x}_2 - \bar{x}_3  \leq \Vupper \bar{y}_2 - (\Vupper - V) \bar{y}_3 - (\Clower + V - \Vupper) \bar{u}_3, \slabel{eqn-C:x2-x3-ub-t3l1} \\
&& \bar{x}_3 - \bar{x}_1  \leq (\Clower + 2 V) \bar{y}_3 - \Clower \bar{y}_1 - (\Clower + 2 V - \Vupper) \bar{u}_3 - (\Clower + V - \Vupper) \bar{u}_2, \slabel{eqn-C:x3-x1-ub-t3l1} \\
&& \bar{x}_1 - \bar{x}_3  \leq \Vupper \bar{y}_1 - \Clower \bar{y}_3 + V (\bar{y}_2 - \bar{u}_2) + (\Clower + V - \Vupper) (\bar{y}_3 - \bar{u}_3 - \bar{u}_2), \slabel{eqn-C:x1-x3-ub-t3l1} \\
&& \bar{x}_3 - \alpha \bar{x}_1 \leq (\Vupper + V) \bar{y}_3 - V \bar{u}_3 - \alpha \Clower \bar{y}_1, \slabel{eqn-C:x3-ax1-ub-t3l1} \\
&& \bar{x}_1 - \alpha \bar{x}_3 \leq \Vupper \bar{y}_1 + V (\bar{y}_2 - \bar{u}_2) - \alpha \Clower \bar{y}_3, \slabel{eqn-C:x1-ax3-ub-t3l1}
\Bigr\},
\end{subeqnarray}
where $\alpha = (\Cupper - \Vupper - V)/(\Cupper - \Clower - 2V)$ ($0 < \alpha \leq 1$).

{Accordingly, we can set up the linear transformation $F$ from $(\hat{x}_1, \hat{x}_2, \cdots, \hat{x}_{12}) \in A$ to $(\bar{x}_1, \bar{x}_2, \bar{x}_3) \in C$ as follows:}
\begin{equation}       
F = \left(                 
  \begin{array}{ c   c  c   c c  c  c  c c c  c c}   
    \lambda_1 & 0 & 0 & \lambda_2 & 0 & \lambda_3 & 0 & \lambda_4 & 0 & 0  & 0 & 0   \\  
    0 & \lambda_1 & 0 & 0 & \lambda_2 & 0 & 0 & 0 & \lambda_5 & 0 & {\lambda_{6}} & 0 \\  
    0 & 0 & \lambda_1 & 0 & 0 & 0 & \lambda_3 & 0 & 0 & \lambda_5 & 0 & {\lambda_{7}} \\  
  \end{array}
\right), \nonumber
\end{equation}
{where $\lambda_1, \lambda_2, \cdots, \lambda_7$ follow the definitions described in \eqref{eqn:lamdba-extra}. Thus, in the following, we only need to prove that $F: A \rightarrow C$ is surjective.}

Since $C$ is a closed and bounded polytope, any point can be expressed as a convex combination of the extreme points in $C$. 
Accordingly, we only need to show that for any extreme point $w^i \in C$ ($i = 1, \cdots, M$), there exists a point $p^i \in A$ such that $F p^i = w^i$, where $M$ represents the number of extreme points in $C$ (because for an arbitrary point $w \in C$, which can be {represented} as $w = \sum_{i=1}^M \mu_i w^i$ and $\sum_{i=1}^M \mu_i = 1$, there exists $p = \sum_{i=1}^M \mu_i p_i \in A$ such that $F p = w$ due to the linearity of $F$ and the convexity of $A$). 
Since it is difficult to enumerate all the extreme points in $C$, in the following proof we show the conclusion holds for any point in the faces of $C$, i.e., satisfying one of \eqref{eqn-C:lower-bound-t3l1} - \eqref{eqn-C:x1-ax3-ub-t3l1}  at equality, which implies the conclusion holds for extreme points.

\textbf{Satisfying $\bar{x}_1 \geq \Clower \bar{y}_1$ at equality}. For this case, substituting $\bar{x}_1 = \Clower \bar{y}_1$ into  \eqref{eqn-C:x1-ub-t3l1-1} - \eqref{eqn-C:x1-ax3-ub-t3l1}, we obtain the feasible region of $(\bar{x}_2, \bar{x}_3)$ as $C' = \{(\bar{x}_2, \bar{x}_3) \in \R^2: \Clower \bar{y}_2 \leq \bar{x}_2 \leq (\Clower + V) \bar{y}_2 - (\Clower + V - \Vupper) \bar{u}_2, \ \Clower \bar{y}_3 \leq \bar{x}_3 \leq (\Clower + 2 V) \bar{y}_3 - (\Clower + 2 V - \Vupper) \bar{u}_3 + (\Vupper - \Clower - V) \min\{\bar{u}_2, \bar{y}_3 - \bar{u}_3 \}, \bar{x}_3 - \bar{x}_2 \leq (\Clower + V) \bar{y}_3 - \Clower \bar{y}_2 - (\Clower + V - \Vupper) \bar{u}_3, \bar{x}_2 - \bar{x}_3 \leq (\Clower + V) \bar{y}_2 - \Clower \bar{y}_3 - (\Clower + V - \Vupper) \bar{u}_2 + (\Clower + V - \Vupper) \max\{0, \bar{y}_3 - \bar{u}_3 - \bar{y}_2 + \bar{u}_2 \} \}$.

First, by letting $\hat{x}_1 = \hat{x}_4 = \hat{x}_6  = \hat{x}_8 = \Clower$, it is easy to check that $\bar{x}_1 = \lambda_1 \hat{x}_1 + \lambda_2 \hat{x}_4 + \lambda_3 \hat{x}_6 + \lambda_4 \hat{x}_8$. Then \eqref{eqn-C: x1x2x3bar-extra-1} holds. 
Note here that once $(\hat{x}_1, \hat{x}_4, \hat{x}_6, \hat{x}_8)$ {is} fixed, the corresponding feasible region for $(\hat{x}_2, \hat{x}_3, \hat{x}_5, \hat{x}_7, \hat{x}_9, \hat{x}_{10}, \allowbreak \hat{x}_{11}, \hat{x}_{12})$ can be described as set $A' = \{ (\hat{x}_2, \hat{x}_3, \hat{x}_5, \hat{x}_7, \hat{x}_9, \hat{x}_{10}, \allowbreak \hat{x}_{11}, \hat{x}_{12}) \in \R^8: \Clower \leq \hat{x}_2 \leq \Clower + V, \ \Clower \leq \hat{x}_3 \leq \Clower + 2V, \ -V \leq \hat{x}_2 - \hat{x}_3 \leq V, \ \Clower \leq \hat{x}_5 \leq \Clower + V, \ \allowbreak  \Clower \leq \hat{x}_i \leq \Vupper (i = 7, 11, 12), \allowbreak \ \Clower \leq \hat{x}_9 \leq \Vupper, \allowbreak \ \Clower \leq \hat{x}_{10} \leq \Vupper + V, \    -V \leq \hat{x}_9 - \hat{x}_{10} \leq V \}$. In the following, we repeat the argument above to consider that one of inequalities in $C'$ is satisfied at equality to obtain the values of $(\hat{x}_2, \hat{x}_3, \hat{x}_5, \hat{x}_7, \hat{x}_9, \hat{x}_{10}, \allowbreak \hat{x}_{11}, \hat{x}_{12})$ from $A'$. In addition, we decide the corresponding $\lambda_3$ and $\lambda_5$ when a particular value of $\lambda_3$ or $\lambda_5$ is required (it follows other $\lambda$'s are also decided), otherwise we let $\lambda_3$ and $\lambda_5$ be free in their ranges as described in \eqref{eqn:lamdba-extra} respectively. 
\begin{enumerate}[1)]
\item Satisfying $\bar{x}_2 \geq \Clower \bar{y}_2$ at equality. We obtain $ \bar{x}_3 \in C'' = \{ \bar{x}_3 \in \R: \Clower \bar{y}_3 \leq  \bar{x}_3 \leq (\Clower + V) \bar{y}_3 - (\Clower + V - \Vupper) \bar{u}_3 \}$ through substituting $\bar{x}_2 = \Clower \bar{y}_2$ into $C'$. By letting $\hat{x}_2 = \hat{x}_5 = \hat{x}_9 = \hat{x}_{11} = \Clower$, we have $\bar{x}_2 = \lambda_1 \hat{x}_2 + \lambda_2 \hat{x}_5 + \lambda_5 \hat{x}_9 + \lambda_6 \hat{x}_{11}$. Then \eqref{eqn-C: x1x2x3bar-extra-2} holds. 
Thus, the corresponding feasible region for $(\hat{x}_3, \hat{x}_7, \hat{x}_{10}, \hat{x}_{12})$ can be described as set $A'' = \{ (\hat{x}_3, \hat{x}_7, \hat{x}_{10}, \hat{x}_{12}) \in \R^4: \Clower \leq \hat{x}_i \leq \Clower + V (i = 3, 10), \ \Clower \leq \hat{x}_i \leq \Vupper (i = 7, 12) \}$. If $\bar{x}_3 \geq \Clower \bar{y}_3$ is satisfied at equality, we let $\hat{x}_3 = \hat{x}_7 = \hat{x}_{10} = \hat{x}_{12} = \Clower$; if $\bar{x}_3 \leq (\Clower + V) \bar{y}_3 - (\Clower + V - \Vupper) \bar{u}_3$ is satisfied at equality, we let $\hat{x}_3 = \hat{x}_{10} = \Clower + V$ and $\hat{x}_7 = \hat{x}_{12} = \Vupper$. It is easy to check that $\bar{x}_3 = \lambda_1 \hat{x}_3 + \lambda_3 \hat{x}_7 + \lambda_5 \hat{x}_{10} + \lambda_7 \hat{x}_{12}$. Then \eqref{eqn-C: x1x2x3bar-extra-3} holds.

\item Satisfying $\bar{x}_2 \leq (\Clower + V) \bar{y}_2 - (\Clower + V - \Vupper) \bar{u}_2$ at equality. We obtain $ \bar{x}_3 \in C'' = \{ \bar{x}_3 \in \R: \Clower \bar{y}_3 + (\Vupper - \Clower - V) \max\{0, \bar{y}_3 - \bar{u}_3 - \bar{y}_2 + \bar{u}_2 \} \leq  \bar{x}_3 \leq (\Clower + 2 V) \bar{y}_3 - (\Clower + 2 V - \Vupper) \bar{u}_3 + (\Vupper - \Clower - V) \min\{\bar{u}_2, \bar{y}_3 - \bar{u}_3 \} \}$. By letting $\hat{x}_2 = \hat{x}_5 = \Clower + V$ and $\hat{x}_9 = \hat{x}_{11} = \Vupper$, we have $\bar{x}_2 = \lambda_1 \hat{x}_2 + \lambda_2 \hat{x}_5 + \lambda_5 \hat{x}_9 + \lambda_6 \hat{x}_{11}$. Then \eqref{eqn-C: x1x2x3bar-extra-2} holds. 
Thus, the corresponding feasible region for $(\hat{x}_3, \hat{x}_7, \hat{x}_{10}, \hat{x}_{12})$ can be described as set $A'' = \{ (\hat{x}_3, \hat{x}_7, \hat{x}_{10}, \hat{x}_{12}) \in \R^4: \Clower \leq \hat{x}_3 \leq \Clower + 2V, \ \Clower \leq \hat{x}_i \leq \Vupper (i=7,12), \ \Vupper - V \leq \hat{x}_{10} \leq \Vupper+V \}$. If $\bar{x}_3 \geq \Clower \bar{y}_3 + (\Vupper - \Clower - V) \max\{0, \bar{y}_3 - \bar{u}_3 - \bar{y}_2 + \bar{u}_2 \}$ is satisfied at equality, we let $\hat{x}_3 = \hat{x}_7 = \hat{x}_{12} = \Clower$, $\hat{x}_{10} = \Vupper - V $ and $\lambda_5 = \max\{0, \bar{y}_3 - \bar{u}_3 - \bar{y}_2 + \bar{u}_2 \}$; if $\bar{x}_3 \leq (\Clower + 2 V) \bar{y}_3 - (\Clower + 2 V - \Vupper) \bar{u}_3 + (\Vupper - \Clower - V) \min\{\bar{u}_2, \bar{y}_3 - \bar{u}_3 \}$ is satisfied at equality, we let $\hat{x}_3 = \Clower + 2V$, $\hat{x}_7 = \hat{x}_{12} = \Vupper$, $\hat{x}_{10} = \Vupper + V$, and $\lambda_5 = \min\{\bar{u}_2, \bar{y}_3 - \bar{u}_3 \}$. 
{For both cases}, we have $\bar{x}_3 = \lambda_1 \hat{x}_3 + \lambda_3 \hat{x}_7 + \lambda_5 \hat{x}_{10} + \lambda_7 \hat{x}_{12}$. Then \eqref{eqn-C: x1x2x3bar-extra-3} holds.

\item Satisfying $\bar{x}_3 \geq \Clower \bar{y}_3$ at equality. We obtain $\bar{x}_2 \in C'' = \{ \bar{x}_2 \in \R: \Clower \bar{y}_2 \leq \bar{x}_2 \leq (\Clower + V) \bar{y}_2 - (\Clower + V - \Vupper) \bar{u}_2 - (\Vupper - \Clower - V) \max\{0, \bar{y}_3 - \bar{u}_3 - \bar{y}_2 + \bar{u}_2 \} \}$. By letting $\hat{x}_3 = \hat{x}_7 =  \hat{x}_{10} = \hat{x}_{12} = \Clower$, we have $\bar{x}_3 = \lambda_1 \hat{x}_3 + \lambda_3 \hat{x}_7 + \lambda_5 \hat{x}_{10} + \lambda_7 \hat{x}_{12}$. Then \eqref{eqn-C: x1x2x3bar-extra-3} holds. 
Thus, the corresponding feasible region for $(\hat{x}_2, \hat{x}_5, \hat{x}_9, \hat{x}_{11})$ can be described as set $A'' = \{ (\hat{x}_2, \hat{x}_5, \hat{x}_9, \hat{x}_{11}) \in \R^4: \Clower \leq \hat{x}_i \leq \Clower + V (i = 2, 5, 9), \ \Clower \leq \hat{x}_{11} \leq \Vupper \}$. 
If $\bar{x}_2 \geq \Clower \bar{y}_2$ is satisfied at equality, we let $\hat{x}_2 = \hat{x}_5 = \hat{x}_9 = \hat{x}_{11} = \Clower$; if $\bar{x}_2 \leq (\Clower + V) \bar{y}_2 - (\Clower + V - \Vupper) \bar{u}_2 - (\Vupper - \Clower - V) \max\{0, \bar{y}_3 - \bar{u}_3 - \bar{y}_2 + \bar{u}_2 \}$ is satisfied at equality, we let $\hat{x}_2 = \hat{x}_5 = \hat{x}_9 = \Clower + V$, $\hat{x}_{11} = \Vupper$, and $\lambda_5 = \max\{0, \bar{y}_3 - \bar{u}_3 - \bar{y}_2 + \bar{u}_2 \}$. {For both cases}, we have $\bar{x}_2 = \lambda_1 \hat{x}_2 + \lambda_2 \hat{x}_5 + \lambda_5 \hat{x}_9 + \lambda_6 \hat{x}_{11}$. Then \eqref{eqn-C: x1x2x3bar-extra-2} holds. 

\item Satisfying $\bar{x}_3 \leq (\Clower + 2 V) \bar{y}_3 - (\Clower + 2 V - \Vupper) \bar{u}_3 + (\Vupper - \Clower - V) \min\{\bar{u}_2, \bar{y}_3 - \bar{u}_3 \}$ at equality. We obtain $ \bar{x}_2 \in C'' = \{ \bar{x}_2 \in \R: \Clower \bar{y}_2 + V(\bar{y}_3 - \bar{u}_3) + (\Vupper - \Clower - V) \min\{\bar{u}_2, \bar{y}_3 - \bar{u}_3 \}  \leq \bar{x}_2 \leq (\Clower + V) \bar{y}_2 - (\Clower + V - \Vupper) \bar{u}_2 \}$. 
By letting $\hat{x}_3 = \Clower + 2V$, $\hat{x}_7 = \hat{x}_{12} = \Vupper$, $\hat{x}_{10} = \Vupper + V$, and $\lambda_5 = \min\{\bar{u}_2, \bar{y}_3 - \bar{u}_3 \}$, we have $\bar{x}_3 = \lambda_1 \hat{x}_3 + \lambda_3 \hat{x}_7 + \lambda_5 \hat{x}_{10} + \lambda_7 \hat{x}_{12}$. Then \eqref{eqn-C: x1x2x3bar-extra-3} holds. 
In addition, we further have $\hat{x}_2 = \Clower + V$ and $\hat{x}_9 = \Vupper$ based on the definition of $A'$.
Thus, the corresponding feasible region for $(\hat{x}_5, \hat{x}_{11})$ can be described as set $A'' = \{ (\hat{x}_5, \hat{x}_{11}) \in \R^2: \Clower \leq \hat{x}_5 \leq \Clower + V, \ \Clower \leq \hat{x}_{11} \leq \Vupper \}$. 
If $\bar{x}_2 \geq \Clower \bar{y}_2 + V(\bar{y}_3 - \bar{u}_3) + (\Vupper - \Clower - V) \min\{\bar{u}_2, \bar{y}_3 - \bar{u}_3 \}$ is satisfied at equality, we let $\bar{x}_5 = \bar{x}_{11} = \Clower$ {and $\lambda_5 = \min\{\bar{u}_2, \bar{y}_3 - \bar{u}_3 \}$};
if $\bar{x}_2 \leq (\Clower + V) \bar{y}_2 - (\Clower + V - \Vupper) \bar{u}_2$ is satisfied at equality, we let $\hat{x}_5 = \Clower + V$ and $\hat{x}_{11} = \Vupper$.
{For both cases}, we have $\bar{x}_2 = \lambda_1 \hat{x}_2 + \lambda_2 \hat{x}_5 + \lambda_5 \hat{x}_9 + \lambda_6 \hat{x}_{11}$. Then \eqref{eqn-C: x1x2x3bar-extra-2} holds. 

\item Satisfying $\bar{x}_3 - \bar{x}_2 \leq (\Clower + V) \bar{y}_3 - \Clower \bar{y}_2 - (\Clower + V - \Vupper) \bar{u}_3$ at equality. We obtain $\bar{x}_2 \in C'' = \{ \bar{x}_2 \in \R: \Clower \bar{y}_2 \leq \bar{x}_2 \leq \Clower \bar{y}_2 + V(\bar{y}_3 - \bar{u}_3) + (\Vupper - \Clower - V) \min\{\bar{u}_2, \bar{y}_3 - \bar{u}_3 \} \}$ through substituting $\bar{x}_3 = \bar{x}_2  + (\Clower + V) \bar{y}_3 - \Clower \bar{y}_2 - (\Clower + V - \Vupper) \bar{u}_3$ into set $C'$. 
By letting $\hat{x}_3 - \hat{x}_2 = \hat{x}_{10} - \hat{x}_9 = V$, $\hat{x}_7 = \hat{x}_{12} = \Vupper$, and $\hat{x}_5= \hat{x}_{11} = \Clower$, we have $\bar{x}_3 - \bar{x}_2 = (\lambda_1 \hat{x}_3 + \lambda_3 \hat{x}_7 + \lambda_5 \hat{x}_{10} + \lambda_7 \hat{x}_{12}) - (\lambda_1 \hat{x}_2 + \lambda_2 \hat{x}_5 + \lambda_5 \hat{x}_9 + \lambda_6 \hat{x}_{11})$. 
If $\bar{x}_2 \geq \Clower \bar{y}_2$ is satisfied at equality, we let $\hat{x}_2 = \hat{x}_9 = \Clower$ (and then $\hat{x}_3 = \hat{x}_{10} = \Clower+V$); 
if $\bar{x}_2 \leq \Clower \bar{y}_2 + V(\bar{y}_3 - \bar{u}_3) + (\Vupper - \Clower - V) \min\{\bar{u}_2, \bar{y}_3 - \bar{u}_3 \}$ is satisfied at equality, we let $\hat{x}_2 = \Clower + V$ (and then $\hat{x}_3 = \Clower + 2V$), $\hat{x}_9 = \Vupper$ (thus $\hat{x}_{10} = \Vupper + V$), and $\lambda_5 = \min\{\bar{u}_2, \bar{y}_3 - \bar{u}_3 \}$. For both cases, we have 
$\bar{x}_2 = \lambda_1 \hat{x}_2 + \lambda_2 \hat{x}_5 + \lambda_5 \hat{x}_9 + \lambda_6 \hat{x}_{11}$ and thus $\bar{x}_3 = \lambda_1 \hat{x}_3 + \lambda_3 \hat{x}_7 + \lambda_5 \hat{x}_{10} + \lambda_7 \hat{x}_{12}$. Then both \eqref{eqn-C: x1x2x3bar-extra-2} and \eqref{eqn-C: x1x2x3bar-extra-3} hold.

\item Satisfying $\bar{x}_2 - \bar{x}_3 \leq (\Clower + V) \bar{y}_2 - \Clower \bar{y}_3 - (\Clower + V - \Vupper) \bar{u}_2 + (\Clower + V - \Vupper) \max\{0, \bar{y}_3 - \bar{u}_3 - \bar{y}_2 + \bar{u}_2 \}$ at equality. We obtain $\bar{x}_3 \in C'' = \{ \bar{x}_3 \in \R: \Clower \bar{y}_3 \leq \bar{x}_3 \leq \Clower \bar{y}_3 + (\Vupper - \Clower - V) \max\{0, \bar{y}_3 - \bar{u}_3 - \bar{y}_2 + \bar{u}_2 \} \}$ through substituting $\bar{x}_2 = \bar{x}_3 + (\Clower + V) \bar{y}_2 - \Clower \bar{y}_3 - (\Clower + V - \Vupper) \bar{u}_2 + (\Clower + V - \Vupper) \max\{0, \bar{y}_3 - \bar{u}_3 - \bar{y}_2 + \bar{u}_2 \}$ into set $C'$. 
By letting $\hat{x}_2 - \hat{x}_3 = \hat{x}_9 - \hat{x}_{10} = V$, $\hat{x}_7 = \hat{x}_{12} = \Clower$, $\hat{x}_5= \Clower$, $\hat{x}_{11} = \Vupper$, and $\lambda_5 = \max\{0, \bar{y}_3 - \bar{u}_3 - \bar{y}_2 + \bar{u}_2 \}$, we have $\bar{x}_2 - \bar{x}_3 =  (\lambda_1 \hat{x}_2 + \lambda_2 \hat{x}_5 + \lambda_5 \hat{x}_9 + \lambda_6 \hat{x}_{11}) - (\lambda_1 \hat{x}_3 + \lambda_3 \hat{x}_7 + \lambda_5 \hat{x}_{10} + \lambda_7 \hat{x}_{12})$.
In addition, we further have $\hat{x}_2 = \Clower + V$ and $\hat{x}_3 = \Clower$ based on the definition of $A'$.
If $\bar{x}_3 \geq \Clower \bar{y}_3$ is satisfied at equality, we let $\hat{x}_{10} = \Clower$ (and then $\hat{x}_9 = \Clower+V$); 
if $\bar{x}_3 \leq \Clower \bar{y}_3 + (\Vupper - \Clower - V) \max\{0, \bar{y}_3 - \bar{u}_3 - \bar{y}_2 + \bar{u}_2 \}$ is satisfied at equality, we let $\hat{x}_{10} = \Vupper - V$ (and then $\hat{x}_9 = \Vupper$) {and $\lambda_5 = \max\{0, \bar{y}_3 - \bar{u}_3 - \bar{y}_2 + \bar{u}_2 \}$}. For both cases, we have $\bar{x}_3 = \lambda_1 \hat{x}_3 + \lambda_3 \hat{x}_7 + \lambda_5 \hat{x}_{10} + \lambda_7 \hat{x}_{12}$ and thus $\bar{x}_2 = \lambda_1 \hat{x}_2 + \lambda_2 \hat{x}_5 + \lambda_5 \hat{x}_9 + \lambda_6 \hat{x}_{11}$. Then both \eqref{eqn-C: x1x2x3bar-extra-2} and \eqref{eqn-C: x1x2x3bar-extra-3} hold.
\end{enumerate}
Similar analyses hold for $\bar{x}_2 \geq \Clower \bar{y}_2$ and $\bar{x}_3 \geq \Clower \bar{y}_3$ due to the similar structure and also hold for inequalities \eqref{eqn-C:x1-ub-t3l1-1} - \eqref{eqn-C:x1-x3-ub-t3l1} following the proofs for Proposition \ref{prop:t3l2-int-extpoint}, and thus are omitted here.

\textbf{Satisfying \eqref{eqn-C:x3-ax1-ub-t3l1} at equality}. For this case, substituting $\bar{x}_3 = \alpha \bar{x}_1  + (\Vupper + V) \bar{y}_3 - V \bar{u}_3 - \alpha \Clower \bar{y}_1$ into
\eqref{eqn-C:lower-bound-t3l1} - \eqref{eqn-C:x1-ax3-ub-t3l1}, we obtain the feasible region of $(\bar{x}_1, \bar{x}_2)$ as $C' = \{(\bar{x}_1, \bar{x}_2) \in \R^2: 
\Clower \bar{y}_1 + (\Cupper - \Clower - 2V)(\bar{y}_3 - \bar{u}_3 - \min\{\bar{u}_2, \bar{y}_3 - \bar{u}_3 \}) \leq \bar{x}_1 \leq \Clower \bar{y}_1 + (\Cupper - \Clower - 2V)(\bar{y}_3 - \bar{u}_3 -\max\{0, \bar{y}_3 - \bar{u}_3 - \bar{y}_2 + \bar{u}_2 \}), \ 
\Clower \bar{y}_2 + (\Cupper - \Clower - V)(\bar{y}_3 - \bar{u}_3) - (\Cupper - \Vupper - V) \min\{\bar{u}_2, \bar{y}_3 - \bar{u}_3 \} \leq \bar{x}_2 \leq (\Clower + V) \bar{y}_2 - (\Clower + V - \Vupper) \bar{u}_2 + (\Cupper - \Clower - 2V)(\bar{y}_3 - \bar{u}_3 - \max\{0, \bar{y}_3 - \bar{u}_3 - \bar{y}_2 + \bar{u}_2 \}), \
\bar{x}_2 - \bar{x}_1  \leq (\Clower + V) \bar{y}_2 - \Clower \bar{y}_1 - (\Clower + V - \Vupper) \bar{u}_2, \
\bar{x}_1 - \bar{x}_2 \leq \Clower (\bar{y}_1 - \bar{y}_2) - V(\bar{y}_3 - \bar{u}_3) - (\Vupper - \Clower - V) \max\{0, \bar{y}_3 - \bar{u}_3 - \bar{y}_2 + \bar{u}_2 \}, \
\bar{x}_2 - \alpha \bar{x}_1 \geq \Clower \bar{y}_2 + (\Vupper - \Clower) (\bar{y}_3 - \bar{u}_3) - \alpha \Clower \bar{y}_1 \}$.

First, by letting $\hat{x}_3 = \Cupper$, $\hat{x}_1 = \Cupper - 2V$, $\hat{x}_4 = \hat{x}_6 = \hat{x}_8 = \Clower$, $\hat{x}_7 = \hat{x}_{12} = \Vupper$,  and $\hat{x}_{10} = \Vupper + V$, we have $\bar{x}_3 - \alpha \bar{x}_1 = (\lambda_1 \hat{x}_3 + \lambda_3 \hat{x}_7 + \lambda_5 \hat{x}_{10} + \lambda_7 \hat{x}_{12}) - \alpha (\lambda_1 \hat{x}_1 + \lambda_2 \hat{x}_4 + \lambda_3 \hat{x}_6 + \lambda_4 \hat{x}_8)$.
In addition, we further have $\hat{x}_2 = \Cupper - V$ and $\hat{x}_9 = \Vupper$ based on the definition of $A$.
Then the corresponding feasible region for $(\hat{x}_5, \hat{x}_{11})$ can be described as set $A' = \{ (\hat{x}_5, \hat{x}_{11}) \in \R^2: \Clower \leq \hat{x}_5 \leq \Clower + V, \ \Clower \leq \hat{x}_{11} \leq \Vupper \}$.
Next, we only need to show $\bar{x}_1 = \lambda_1 \hat{x}_1 + \lambda_2 \hat{x}_4 + \lambda_3 \hat{x}_6 + \lambda_4 \hat{x}_8$ and $\bar{x}_2 = \lambda_1 \hat{x}_2 + \lambda_2 \hat{x}_5 + \lambda_5 \hat{x}_9 + \lambda_6 \hat{x}_{11}$. Then \eqref{eqn-C: x1x2x3bar-extra} will hold.
We consider that one of inequalities in $C'$ is satisfied at equality to obtain the values of $(\hat{x}_5, \hat{x}_{11})$ from $A'$. In addition, we decide the corresponding $\lambda_3$ and $\lambda_5$ when a particular value of $\lambda_3$ or $\lambda_5$ is required (it follows other $\lambda$'s are also decided), otherwise we let $\lambda_3$ and $\lambda_5$ be free in their ranges as described in \eqref{eqn:lamdba-extra} respectively. 
\begin{enumerate}[1)]
\item Satisfying $\bar{x}_1 \geq \Clower \bar{y}_1 + (\Cupper - \Clower - 2V)(\bar{y}_3 - \bar{u}_3 - \min\{\bar{u}_2, \bar{y}_3 - \bar{u}_3 \})$ at equality. 
We obtain $\bar{x}_2 \in C'' = \{ \bar{x}_2 \in \R: \Clower \bar{y}_2 + (\Cupper - \Clower - V)(\bar{y}_3 - \bar{u}_3) - (\Cupper - \Vupper - V) \min\{\bar{u}_2, \bar{y}_3 - \bar{u}_3 \} \leq \bar{x}_2 \leq (\Clower + V) \bar{y}_2 - (\Clower + V - \Vupper) \bar{u}_2 + (\Cupper - \Clower - 2V)(\bar{y}_3 - \bar{u}_3 - \min\{\bar{u}_2, \bar{y}_3 - \bar{u}_3 \} ) \} $. 
By letting $\lambda_5 = \min\{\bar{u}_2, \bar{y}_3 - \bar{u}_3\}$, we have $\bar{x}_1 = \lambda_1 \hat{x}_1 + \lambda_2 \hat{x}_4 + \lambda_3 \hat{x}_6 + \lambda_4 \hat{x}_8$.
If $\bar{x}_2 = \Clower \bar{y}_2 + (\Cupper - \Clower - V)(\bar{y}_3 - \bar{u}_3) - (\Cupper - \Vupper - V) \min\{\bar{u}_2, \bar{y}_3 - \bar{u}_3 \}$, we let $\hat{x}_5 = \hat{x}_{11} = \Clower$; 
if $\bar{x}_2 = (\Clower + V) \bar{y}_2 - (\Clower + V - \Vupper) \bar{u}_2 + (\Cupper - \Clower - 2V)(\bar{y}_3 - \bar{u}_3 - \min\{\bar{u}_2, \bar{y}_3 - \bar{u}_3 \} )$, we let $\hat{x}_5 = \Clower + V$ and $\hat{x}_{11} = \Vupper$.
For both cases, we have $\bar{x}_2 = \lambda_1 \hat{x}_2 + \lambda_2 \hat{x}_5 + \lambda_5 \hat{x}_9 + \lambda_6 \hat{x}_{11}$.

\item Satisfying $\bar{x}_1 \leq \Clower \bar{y}_1 + (\Cupper - \Clower - 2V)(\bar{y}_3 - \bar{u}_3 -\max\{0, \bar{y}_3 - \bar{u}_3 - \bar{y}_2 + \bar{u}_2 \})$ at equality. 
We obtain $\bar{x}_2 \in C'' = \{ \bar{x}_2 \in \R: \Clower \bar{y}_2 + (\Cupper - \Clower - V)(\bar{y}_3 - \bar{u}_3) - (\Cupper - \Vupper - V) \max\{0, \bar{y}_3 - \bar{u}_3 - \bar{y}_2 + \bar{u}_2 \} \leq \bar{x}_2 \leq (\Clower + V) \bar{y}_2 - (\Clower + V - \Vupper) \bar{u}_2 + (\Cupper - \Clower - 2V)(\bar{y}_3 - \bar{u}_3 - \max\{0, \bar{y}_3 - \bar{u}_3 - \bar{y}_2 + \bar{u}_2 \} ) \} $. 
By letting $\lambda_5 = \max\{0, \bar{y}_3 - \bar{u}_3 - \bar{y}_2 + \bar{u}_2 \}$, we have $\bar{x}_1 = \lambda_1 \hat{x}_1 + \lambda_2 \hat{x}_4 + \lambda_3 \hat{x}_6 + \lambda_4 \hat{x}_8$.
If $\bar{x}_2 = \Clower \bar{y}_2 + (\Cupper - \Clower - V)(\bar{y}_3 - \bar{u}_3) - (\Cupper - \Vupper - V) \max\{0, \bar{y}_3 - \bar{u}_3 - \bar{y}_2 + \bar{u}_2 \}$, we let $\hat{x}_5 = \hat{x}_{11} = \Clower$; 
if $\bar{x}_2 = (\Clower + V) \bar{y}_2 - (\Clower + V - \Vupper) \bar{u}_2 + (\Cupper - \Clower - 2V)(\bar{y}_3 - \bar{u}_3 - \max\{0, \bar{y}_3 - \bar{u}_3 - \bar{y}_2 + \bar{u}_2 \} )$, we let $\hat{x}_5 = \Clower + V$ and $\hat{x}_{11} = \Vupper$.
For both cases, we have $\bar{x}_2 = \lambda_1 \hat{x}_2 + \lambda_2 \hat{x}_5 + \lambda_5 \hat{x}_9 + \lambda_6 \hat{x}_{11}$.

\item Satisfying $\bar{x}_2 \geq \Clower \bar{y}_2 + (\Cupper - \Clower - V)(\bar{y}_3 - \bar{u}_3) - (\Cupper - \Vupper - V) \min\{\bar{u}_2, \bar{y}_3 - \bar{u}_3 \}$ at equality. 
We obtain $\bar{x}_1 =  \Clower \bar{y}_1 + (\Cupper - \Clower - 2V)(\bar{y}_3 - \bar{u}_3 - \min\{\bar{u}_2, \bar{y}_3 - \bar{u}_3 \})$. 
By letting $\hat{x}_5 = \hat{x}_{11} = \Clower $ and $\lambda_5 = \min\{\bar{u}_2, \bar{y}_3 - \bar{u}_3 \}$, we have $\bar{x}_2 = \lambda_1 \hat{x}_2 + \lambda_2 \hat{x}_5 + \lambda_5 \hat{x}_9 + \lambda_6 \hat{x}_{11}$.
Furthermore, we also have  $\bar{x}_1 = \lambda_1 \hat{x}_1 + \lambda_2 \hat{x}_4 + \lambda_3 \hat{x}_6 + \lambda_4 \hat{x}_8$.

\item Satisfying $\bar{x}_2 \geq (\Clower + V) \bar{y}_2 - (\Clower + V - \Vupper) \bar{u}_2 + (\Cupper - \Clower - 2V)(\bar{y}_3 - \bar{u}_3 - \max\{0, \bar{y}_3 - \bar{u}_3 - \bar{y}_2 + \bar{u}_2 \})$ at equality. 
We obtain $\bar{x}_1 =  \Clower \bar{y}_1 + (\Cupper - \Clower - 2V)(\bar{y}_3 - \bar{u}_3 - \max\{0, \bar{y}_3 - \bar{u}_3 - \bar{y}_2 + \bar{u}_2 \})$. 
By letting $\hat{x}_5 = \Clower + V$, $\hat{x}_{11} = \Vupper$, and $\lambda_5 = \max\{0, \bar{y}_3 - \bar{u}_3 - \bar{y}_2 + \bar{u}_2 \}$, we have $\bar{x}_2 = \lambda_1 \hat{x}_2 + \lambda_2 \hat{x}_5 + \lambda_5 \hat{x}_9 + \lambda_6 \hat{x}_{11}$.
Furthermore, we also have  $\bar{x}_1 = \lambda_1 \hat{x}_1 + \lambda_2 \hat{x}_4 + \lambda_3 \hat{x}_6 + \lambda_4 \hat{x}_8$.

\item Satisfying $\bar{x}_2 - \bar{x}_1  \leq (\Clower + V) \bar{y}_2 - \Clower \bar{y}_1 - (\Clower + V - \Vupper) \bar{u}_2$ at equality. 
We obtain $\bar{x}_1 \in C'' = \{ \bar{x}_1 \in \R: \Clower \bar{y}_1 + (\Cupper - \Clower - 2V)(\bar{y}_3 - \bar{u}_3 - \min\{\bar{u}_2, \bar{y}_3 - \bar{u}_3 \}) \leq \bar{x}_1 \leq \Clower \bar{y}_1 + (\Cupper - \Clower - 2V)(\bar{y}_3 - \bar{u}_3 -\max\{0, \bar{y}_3 - \bar{u}_3 - \bar{y}_2 + \bar{u}_2 \}) \} $. 
By letting $\hat{x}_5 = \Clower + V$ and $\hat{x}_{11} = \Vupper$, we have $\bar{x}_2 - \bar{x}_1  = (\lambda_1 \hat{x}_2 + \lambda_2 \hat{x}_5 + \lambda_5 \hat{x}_9 + \lambda_6 \hat{x}_{11}) - (\lambda_1 \hat{x}_1 + \lambda_2 \hat{x}_4 + \lambda_3 \hat{x}_6 + \lambda_4 \hat{x}_8)$.
If $\bar{x}_1 = \Clower \bar{y}_1 + (\Cupper - \Clower - 2V)(\bar{y}_3 - \bar{u}_3 - \min\{\bar{u}_2, \bar{y}_3 - \bar{u}_3 \})$, we let $\lambda_5 = \min\{\bar{u}_2, \bar{y}_3 - \bar{u}_3 \}$; 
if $\bar{x}_1 = \Clower \bar{y}_1 + (\Cupper - \Clower - 2V)(\bar{y}_3 - \bar{u}_3 - \max\{0, \bar{y}_3 - \bar{u}_3 - \bar{y}_2 + \bar{u}_2 \})$, we let $\lambda_5 = \max\{0, \bar{y}_3 - \bar{u}_3 - \bar{y}_2 + \bar{u}_2 \}$.
For both cases, we have $\bar{x}_1 = \lambda_1 \hat{x}_1 + \lambda_2 \hat{x}_4 + \lambda_3 \hat{x}_6 + \lambda_4 \hat{x}_8$ and thus $\bar{x}_2 = \lambda_1 \hat{x}_2 + \lambda_2 \hat{x}_5 + \lambda_5 \hat{x}_9 + \lambda_6 \hat{x}_{11}$.

\item Satisfying $\bar{x}_1 - \bar{x}_2 \leq \Clower (\bar{y}_1 - \bar{y}_2) - V(\bar{y}_3 - \bar{u}_3) - (\Vupper - \Clower - V) \max\{0, \bar{y}_3 - \bar{u}_3 - \bar{y}_2 + \bar{u}_2 \}$ at equality. 
We obtain $\bar{x}_2 = \Clower \bar{y}_2 + (\Cupper - \Clower - V)(\bar{y}_3 - \bar{u}_3) - (\Cupper - \Vupper - V) \max\{0, \bar{y}_3 - \bar{u}_3 - \bar{y}_2 + \bar{u}_2 \} $.
By letting $\hat{x}_5 = \hat{x}_{11} = \Clower $ and $\lambda_5 = \max\{0, \bar{y}_3 - \bar{u}_3 - \bar{y}_2 + \bar{u}_2 \}$, we have $\bar{x}_1 - \bar{x}_2 = (\lambda_1 \hat{x}_1 + \lambda_2 \hat{x}_4 + \lambda_3 \hat{x}_6 + \lambda_4 \hat{x}_8) - (\lambda_1 \hat{x}_2 + \lambda_2 \hat{x}_5 + \lambda_5 \hat{x}_9 + \lambda_6 \hat{x}_{11})$.
Furthermore, we also have  $\bar{x}_1 = \lambda_1 \hat{x}_1 + \lambda_2 \hat{x}_4 + \lambda_3 \hat{x}_6 + \lambda_4 \hat{x}_8$ and thus $\bar{x}_2 = \lambda_1 \hat{x}_2 + \lambda_2 \hat{x}_5 + \lambda_5 \hat{x}_9 + \lambda_6 \hat{x}_{11}$.

\item Satisfying $\bar{x}_2 - \alpha \bar{x}_1 \geq \Clower \bar{y}_2 + (\Vupper - \Clower) (\bar{y}_3 - \bar{u}_3) - \alpha \Clower \bar{y}_1$ at equality.
We obtain $\bar{x}_1 \in C'' = \{ \bar{x}_1 \in \R: \Clower \bar{y}_1 + (\Cupper - \Clower - 2V)(\bar{y}_3 - \bar{u}_3 - \min\{\bar{u}_2, \bar{y}_3 - \bar{u}_3 \}) \leq \bar{x}_1 \leq \Clower \bar{y}_1 + (\Cupper - \Clower - 2V)(\bar{y}_3 - \bar{u}_3 -\max\{0, \bar{y}_3 - \bar{u}_3 - \bar{y}_2 + \bar{u}_2 \}) \} $. 
By letting $\hat{x}_5 = \hat{x}_{11} = \Clower$, we have $\bar{x}_2 - \alpha \bar{x}_1  = (\lambda_1 \hat{x}_2 + \lambda_2 \hat{x}_5 + \lambda_5 \hat{x}_9 + \lambda_6 \hat{x}_{11}) - \alpha (\lambda_1 \hat{x}_1 + \lambda_2 \hat{x}_4 + \lambda_3 \hat{x}_6 + \lambda_4 \hat{x}_8)$.
If $\bar{x}_1 = \Clower \bar{y}_1 + (\Cupper - \Clower - 2V)(\bar{y}_3 - \bar{u}_3 - \min\{\bar{u}_2, \bar{y}_3 - \bar{u}_3 \})$, we let $\lambda_5 = \min\{\bar{u}_2, \bar{y}_3 - \bar{u}_3 \}$; 
if $\bar{x}_1 = \Clower \bar{y}_1 + (\Cupper - \Clower - 2V)(\bar{y}_3 - \bar{u}_3 - \max\{0, \bar{y}_3 - \bar{u}_3 - \bar{y}_2 + \bar{u}_2 \})$, we let $\lambda_5 = \max\{0, \bar{y}_3 - \bar{u}_3 - \bar{y}_2 + \bar{u}_2 \}$.
For both cases, we have $\bar{x}_1 = \lambda_1 \hat{x}_1 + \lambda_2 \hat{x}_4 + \lambda_3 \hat{x}_6 + \lambda_4 \hat{x}_8$ and thus $\bar{x}_2 = \lambda_1 \hat{x}_2 + \lambda_2 \hat{x}_5 + \lambda_5 \hat{x}_9 + \lambda_6 \hat{x}_{11}$.
\end{enumerate}
Similar {analyses hold} for \eqref{eqn-C:x1-ax3-ub-t3l1} due to the similar structure and thus are omitted here.
\end{proof}

\section{Proofs for Multi-period Formulations} \label{apx:sec:multi-period}

\subsection{Proof for Proposition \ref{prop:x_t-ub-2-multi-period}} \label{apx:subsec:x_t-ub-2-multi-period}
\begin{proof}
\textbf{(Validity)} We discuss the following four \fblue{possible} cases in terms of the values of $y_t$ and $y_{t+1}$:
\begin{enumerate}[1)]
\item If $y_t = y_{t+1} = 1$, \fblue{then} we have $u_{t+1} = 0$ due to constraints \eqref{eqn:p-mindn} and $\sum_{s=1}^{k-1} u_{t-s+1} \leq 1$ due to constraints \eqref{eqn:p-minup} since $k \leq L$. We further discuss the following two \fblue{possible} cases.
\begin{itemize}
\item If $u_{t-s+1} = 0$ for all $s \in [1, k-1]_{\Z}$, then \eqref{eqn:x_t-ub-2-multi-period} converts to $x_t \leq \Cupper$, which is valid due to \eqref{eqn:p-upper-bound}.
\item If $u_{t-s+1} = 1$ for some $s \in [1, k-1]_{\Z}$, then \eqref{eqn:x_t-ub-2-multi-period} converts to $x_t \leq \Vupper + (s-1)V$, which is valid due to \eqref{eqn:p-ramp-up}.
\end{itemize}
\item If $y_t = 1$ and $y_{t+1} = 0$, then $u_{t-s+1} = 0$ for all $s \in [0, k-1]_{\Z}$ due to constraints \eqref{eqn:p-minup} since $k \leq L$. It follows that \eqref{eqn:x_t-ub-2-multi-period} converts to $x_t \leq \Vupper$, which is valid because of ramp-down constraints \eqref{eqn:p-ramp-down}.
\item If $y_t = 0$ and $y_{t+1} = 1$, \fblue{then} we have $u_{t+1} = 1$ due to constraints \eqref{eqn:p-udef} and $u_{t-s+1} = 0$ for all $s \in [1, k-1]_{\Z}$ due to \eqref{eqn:p-minup} since $k \leq L$. It follows \eqref{eqn:x_t-ub-2-multi-period} is valid.
\item If $y_t = y_{t+1} = 0$, \eqref{eqn:x_t-ub-2-multi-period} is clearly valid.
\end{enumerate}

\textbf{(Facet-defining)} Here we only provide the facet-defining proof for condition (1), as the proof for condition (2) is similar and thus is omitted here. We generate $3T-1$ affinely independent points in conv($P$) that satisfy \eqref{eqn:x_t-ub-2-multi-period} at equality.
Since $0 \in $ conv($P$), we generate another $3T-2$ linearly independent points in conv($P$) in the following groups. In the following proofs of Appendix, we use the superscript of $(x,y,u)$, e.g., $r$ in $(x^r, y^r, u^r)$, to indicate the index of different points in conv($P$), and define $\epsilon$ as an arbitrarily small positive real number.

\begin{enumerate}[1)]
\item For each $r \in [1,t-1]_{\Z}$ (totally $t-1$ points), we create $(\acute{x}^r, \acute{y}^r, \acute{u}^r) \in$ conv($P$) such that
\begin{equation*}
\acute{x}_s^r = \left\{\begin{array}{l}
  \Clower, \ s \in [1,r]_{\Z} \\
  0, \ s \in [r+1,T]_{\Z}
\end{array} \right., \
\acute{y}_s^r = \left\{\begin{array}{l}
  1, \ s \in [1,r]_{\Z} \\
  0, \ s \in [r+1,T]_{\Z}
\end{array} \right., \ \mbox{and} \
\begin{array}{l}
\acute{u}_s^r = 0, \\
\forall s
\end{array}.
\end{equation*}

\item For each $r \in [1,t-1]_{\Z}$ (totally $t-1$ points), we create $(\bar{x}^r, \bar{y}^r, \bar{u}^r) \in$ conv($P$) such that
\begin{equation*}
\bar{x}_s^r = \left\{\begin{array}{l}
  \Clower + \epsilon, \ s \in [1,r]_{\Z} \\
  0, \ s \in [r+1,T]_{\Z}
\end{array} \right., \
\bar{y}_s^r = \left\{\begin{array}{l}
  1, \ s \in [1,r]_{\Z} \\
  0, \ s \in [r+1,T]_{\Z}
\end{array} \right., \ \mbox{and} \
\begin{array}{l}
\bar{u}_s^r = 0, \\
\forall s
\end{array}.
\end{equation*}

\item For $r = t$ (totally one point), we create $(\bar{x}^r, \bar{y}^r, \bar{u}^r) \in$ conv($P$) such that
\begin{equation*}
\bar{x}_s^r = \left\{\begin{array}{l}
  \Vupper, \ s \in [1,r]_{\Z} \\
  0, \ s \in [r+1,T]_{\Z}
\end{array} \right., \
\bar{y}_s^r = \left\{\begin{array}{l}
  1, \ s \in [1,r]_{\Z} \\
  0, \ s \in [r+1,T]_{\Z}
\end{array} \right., \ \mbox{and} \
\begin{array}{l}
\bar{u}_s^r = 0, \\
\forall s
\end{array}.
\end{equation*}

\item For each $r \in [t+1,T-1]_{\Z}$ (totally $T-t-1$ points), we create $(\bar{x}^r, \bar{y}^r, \bar{u}^r) \in$ conv($P$) such that $\bar{y}_s^r = 1$ for each $s \in [t-k+2,r]_{\Z}$ and $\bar{y}_s^r = 0$ otherwise. Thus $\bar{u}_s^r = 1$ for $s = t-k+2$ and $\bar{u}_s^r = 0$ otherwise {due to constraints \eqref{eqn:p-minup} - \eqref{eqn:p-udef}}. Moreover, we let $\bar{x}_s^r = \max\{\Clower, \Vupper + (s-(t-k+2))V\}$ for each $s \in [t-k+2,t]_{\Z}$, $\bar{x}_s^r = \max\{\Clower, \Vupper + (k-3)V\}$ for each $s \in [t+1,r]_{\Z}$, and $\bar{x}_s^r = 0$ otherwise.

\item For $r = T$ (totally one point), we create $(\bar{x}^r, \bar{y}^r, \bar{u}^r) \in$ conv($P$) such that $\bar{y}_s^r = 1$ for each $s \in [1,T]_{\Z}$, $\bar{u}_s^r = 0$ for each $s \in [2,T]_{\Z}$, and $\bar{x}_s^r = \Cupper$ for each $s \in [1,T]_{\Z}$.

\item For each $r \in [2,t-k+1]_{\Z}$ (totally $t-k$ points), we create $(\hat{x}^r, \hat{y}^r, \hat{u}^r) \in$ conv($P$) such that
\begin{equation*}
\hat{x}_s^r = \left\{\begin{array}{l}
  \Vupper, \ s \in [r,t]_{\Z} \\
  0, \ \mbox{o.w.}
\end{array} \right., \
\hat{y}_s^r = \left\{\begin{array}{l}
  1, \ s \in [r,t]_{\Z} \\
  0, \ \mbox{o.w.}
\end{array} \right., \ \mbox{and} \
\hat{u}_s^r = \left\{\begin{array}{l}
  1, \ s = r \\
  0, \ \mbox{o.w.}
\end{array} \right..
\end{equation*}

\item For each $r \in [t-k+2,t]_{\Z}$ (totally $k-1$ points), we create $(\hat{x}^r, \hat{y}^r, \hat{u}^r) \in$ conv($P$) such that
\begin{equation*}
\hat{x}_s^r = \left\{\begin{array}{l}
  \Vupper + (s-r)V, \ s \in [r,t]_{\Z} \\
  \max \{\Clower, \Vupper + (t-r-1)V \}, \\
  \hspace{0.8in} s \in [t+1,T]_{\Z} \\
  0, \ \mbox{o.w.}
\end{array} \right., \
\hat{y}_s^r = \left\{\begin{array}{l}
  1, \ s \in [r,T]_{\Z} \\
  0, \ \mbox{o.w.}
\end{array} \right., \ \mbox{and} \
\hat{u}_s^r = \left\{\begin{array}{l}
  1, \ s = r \\
  0, \ \mbox{o.w.}
\end{array} \right..
\end{equation*}

\item For each $r \in [t+1,T]_{\Z}$ (totally $T-t$ points), we create $(\hat{x}^r, \hat{y}^r, \hat{u}^r) \in$ conv($P$) such that
\begin{equation*}
\hat{x}_s^r = \left\{\begin{array}{l}
  \Clower, \ s \in [r,T]_{\Z} \\
  0, \ \mbox{o.w.}
\end{array} \right., \
\hat{y}_s^r = \left\{\begin{array}{l}
  1, \ s \in [r,T]_{\Z} \\
  0, \ \mbox{o.w.}
\end{array} \right., \ \mbox{and} \
\hat{u}_s^r = \left\{\begin{array}{l}
  1, \ s = r \\
  0, \ \mbox{o.w.}
\end{array} \right..
\end{equation*}

\item For each $r \in [t+1,T]_{\Z}$ (totally $T-t$ points), we create $(\grave{x}^r, \grave{y}^r, \grave{u}^r) \in$ conv($P$) such that
\begin{equation*}
\grave{x}_s^r = \left\{\begin{array}{l}
  \Clower + \epsilon, \ s \in [r,T]_{\Z} \\
  0, \ \mbox{o.w.}
\end{array} \right., \
\grave{y}_s^r = \left\{\begin{array}{l}
  1, \ s \in [r,T]_{\Z} \\
  0, \ \mbox{o.w.}
\end{array} \right., \ \mbox{and} \
\grave{u}_s^r = \left\{\begin{array}{l}
  1, \ s = r \\
  0, \ \mbox{o.w.}
\end{array} \right..
\end{equation*}
\end{enumerate}

Finally, it is clear that $(\bar{x}^r, \bar{y}^r, \bar{u}^r)_{r=1}^{T}$ and $(\hat{x}^r, \hat{y}^r, \hat{u}^r)_{r=2}^{T}$ are linearly independent because they can construct a lower-diagonal matrix. In addition, $(\acute{x}^r, \acute{y}^r, \acute{u}^r)_{r=1}^{t-1}$ and $(\grave{x}^r, \grave{y}^r, \grave{u}^r)_{r=t+1}^{T}$ are also linearly independent with them after Gaussian eliminations between $(\acute{x}^r, \acute{y}^r, \acute{u}^r)_{r=1}^{t-1}$ and $(\bar{x}^r, \bar{y}^r, \bar{u}^r)_{r=1}^{t-1}$, and between $(\hat{x}^r, \hat{y}^r, \hat{u}^r)_{r=t+1}^{T}$ and $(\grave{x}^r, \grave{y}^r, \grave{u}^r)_{r=t+1}^{T}$.
\end{proof}

\subsection{Proof for Proposition \ref{prop:x_t-up-exp}} \label{apx:subsec:x_t-up-exp}
\begin{proof}
\textbf{(Validity)} It is clear that inequality \eqref{eqn:x_t-up-exp} is valid when $y_t = 0$ due to constraints \eqref{eqn:p-minup}. We then discuss the following two possible cases in which $y_t = 1$ by considering the last start-up time {(denoted as $t-s$, $s \geq 0$)} before $t$. Meanwhile, we let $f = \sum_{i \in S} (i - d_i)$.
\begin{enumerate}[1)]
\item $s \geq m + L$, i.e., $t-s+L-1 \leq t-m-1$. We further discuss the following two possible cases in terms of the first shut-down time {(denoted as $t+\hat{s}$, $\hat{s} \geq 1$)} after $t$. 
\begin{enumerate}[(1)]
\item $\hat{s} \geq n+1$, i.e., $t+\hat{s} \geq t+n+1$. Inequality \eqref{eqn:x_t-up-exp} converts to $x_t \leq \Cupper y_t$, which is clearly valid.
\item $\hat{s} \in [1,n]_{\Z}$, i.e., $t+1 \leq t+\hat{s} \leq t+n$. In this case, we have $x_t \leq \Vupper + \min \{s, \hat{s}-1\}V \leq \Vupper + (\hat{s}-1)V \leq \Vupper y_t + V \sum_{k=1}^{{[n-1]^+}} (y_{t+k} - \sum_{j=0}^{L-1} u_{t+k-j}) \leq \mbox{the right-hand side (RHS) of}$ \eqref{eqn:x_t-up-exp}.
\end{enumerate}
\item $s \in [0, m+L-1]_{\Z}$, i.e., $t-s+L-1 \in [t-m, t+L-1]_{\Z}$. \sred{In this case, we} have $y_i - \sum_{j=0}^{L-1}u_{i-j} = 0$ for all $i \in [t-m, t-s+L-1]_{\Z}$. We further discuss the following two possible cases in terms of the first shut-down time {(denoted as $t+\hat{s}$, $\hat{s} \geq 1$)} after $t$.
\begin{enumerate}[(1)]
\item $\hat{s} \geq n+1$, i.e., $t+\hat{s} \geq t+n+1$. We further discuss the following three possible cases in terms of the value of $t-s+L-1$.
\begin{enumerate}[(a)]
\item $t-s+L-1 \in [t-m, t]_{\Z}$, i.e., $L-1 \leq s \leq L+m-1$, which indicates $\phi = 0$ in the RHS of \eqref{eqn:x_t-up-exp}. In this case, we have $x_t \leq \Vupper + \min \{s, \hat{s}-1\}V$. From inequality \eqref{eqn:x_t-up-exp}, we have $x_t \leq \Vupper + \tilde{f}V + (m+L-1-f)V$, where $\tilde{f} = \sum_{i \in S \setminus [t-m,t-s+L-1]_{\Z}} (i-d_i)$. Now we {prove} the validity by showing that 
\begin{equation}
\Vupper + \min \{s, \hat{s}-1\}V \leq \Vupper + \tilde{f}V + (m+L-1-f)V. \label{eq:inter-validlity-1}
\end{equation}
If $S = \emptyset$, then \eqref{eq:inter-validlity-1} holds since $f = \tilde{f} = 0$ and $s \leq m+L-1$. We then discuss the following three possible cases in which $S \neq \emptyset$. Before that, we let $t-q = \max \{a \in S\}$ and $t-p = \max \{a \in S, a \leq t-s+L-1\}$. Note that $t-q$ exists since $S \neq \emptyset$ and therefore $f = m-q$.
\begin{itemize}
\item If $t-q \leq t-s+L-1$, i.e., $s \leq q+L-1$, then we have $\tilde{f} = 0$. It follows that  \eqref{eq:inter-validlity-1} holds since $\min \{s, \hat{s}-1\} \leq s \leq q+L-1 = \tilde{f} + (m+L-1-f)$.
\item If $t-q \geq t-s+L$ and $t-p$ does not exist, then we have $\tilde{f} = f = m-q$. It follows that  \eqref{eq:inter-validlity-1} holds since $\min \{s, \hat{s}-1\} \leq s \leq m+L-1 = \tilde{f} + (m+L-1-f)$.
\item If $t-q \geq t-s+L$ and $t-p$ exists, then we have $\tilde{f} = p-q$ and meanwhile $s \leq p+L-1$ since $t-p \leq t-s+L-1$. It follows that  \eqref{eq:inter-validlity-1} holds since $\min \{s, \hat{s}-1\} \leq s \leq p+L-1 = \tilde{f} + (m+L-1-f)$.
\end{itemize}
\item $t-s+L-1 \in [t+1, t+n-1]_{\Z}$, i.e., $s \in [L-1-n, L-2]_{\Z}$. In this case, we have $x_t \leq \Vupper + \min \{s, \hat{s}-1\}V$. From inequality \eqref{eqn:x_t-up-exp}, we have $x_t \leq \Vupper + (t+{[n-1]^+} - (t-s+L-1))V + (m+L-1-f-{[n-1]^+})V + \phi = \Vupper + sV + (m-f)V+\phi$. Clearly we have $\Vupper + \min \{s, \hat{s}-1\}V \leq \Vupper + sV + (m-f)V+\phi$ since $f \leq m$ and $\phi \geq 0$ and therefore it {proves} the validity of \eqref{eqn:x_t-up-exp}.
\item $t-s+L-1 \in [t+n, t+\hat{s}-1]_{\Z}$, i.e., $s \in [L-\hat{s}, L-1-n]_{\Z}$. Since $n \leq T-t-1$ in this case, we should have $n \geq (L-1)/2$ by the proposition condition and therefore it follows that $s \leq L-1-n \leq (L-1)/2$ and furthermore $\min \{L-1-s, s\} = s$. In this case, we have $x_t \leq \Vupper + \min \{s, \hat{s}-1\}V \leq \Vupper + sV = \Vupper + \min \{L-1-s, s\}V = \Vupper + \phi \leq \mbox{{the} RHS of}$ \eqref{eqn:x_t-up-exp}.
\end{enumerate}
\item $\hat{s} \in [1,n]_{\Z}$. It follows that $t-s+L-1 \leq t+\hat{s}-1$, i.e., $s \geq L-\hat{s}$. We further discuss the following two possible cases in terms of the value of $t-s+L-1$.
\begin{enumerate}[(a)]
\item $t-s+L-1 \in [t-m, t]_{\Z}$, i.e., $s \in [L-1, L+m-1]_{\Z}$. In this case, we have $x_t \leq \Vupper + \min \{s, \hat{s}-1\}V$. From inequality \eqref{eqn:x_t-up-exp}, we have $x_t \leq \Vupper + \tilde{f} V + (\hat{s}-1) V + \phi$, where $\tilde{f} = \sum_{i \in S \setminus [t-m,t-s+L-1]_{\Z}} (i-d_i)$. Clearly we have $\Vupper + \min \{s, \hat{s}-1\}V \leq \Vupper + \tilde{f} V + (\hat{s}-1)  V + \phi$ since $\tilde{f} \geq 0$ and $\phi \geq 0$ and therefore it {proves} the validity of \eqref{eqn:x_t-up-exp}.
\item $t-s+L-1 \in [t+1, t+\hat{s}-1]_{\Z}$, i.e., $s \in [L-\hat{s}, L-2]_{\Z}$. In this case, we have $x_t \leq \Vupper + \min \{s, \hat{s}-1\}V$. From inequality \eqref{eqn:x_t-up-exp}, we have $x_t \leq \Vupper + (t+\hat{s}-1 - (t-s+L-1)) V + \phi = \Vupper + (s+\hat{s}-L)V + \phi$. Therefore we only need to show 
\begin{equation}
\Vupper + \min \{s, \hat{s}-1\}V \leq \Vupper + (s+\hat{s}-L)V + \phi. \label{eq:inter-validlity-2}
\end{equation}
We discuss two possible cases in the following in terms of the value $s$.
\begin{itemize}
\item If $s \leq {t+L-T-1}$, then $\phi = sV$ and \eqref{eq:inter-validlity-2} holds since $\min \{s, \hat{s}-1\} \leq s$ and $s+\hat{s}-L \geq 0$.
\item If $s \geq {[t+L-T]^+}$, then $\phi = \min \{L-1-s, s\}V$ and {the} RHS of \eqref{eq:inter-validlity-2} becomes $\Vupper + \min \{\hat{s}-1, s + s + \hat{s} - L\}$, which shows that \eqref{eq:inter-validlity-2} holds since $s+\hat{s}-L \geq 0$.
\end{itemize}
\end{enumerate}
\end{enumerate}
\end{enumerate}

\textbf{(Facet-defining)} We generate $3T-2$ linearly independent points (i.e., $(\tilde{x}^{\tilde{r}}, \tilde{y}^{\tilde{r}}, \tilde{u}^{\tilde{r}})_{\tilde{r}=2}^{T}$,  $(\hat{x}^{\hat{r}}, \allowbreak \hat{y}^{\hat{r}}, \allowbreak \hat{u}^{\hat{r}})_{\hat{r}=1, \hat{r} \neq t}^{T}$, and $(\bar{x}^{\bar{r}}, \bar{y}^{\bar{r}}, \bar{u}^{\bar{r}})_{\bar{r}=1}^{T}$) in conv($P$) that satisfy \eqref{eqn:x_t-up-exp} at equality.

\begin{enumerate}[1)]
\item For each $\tilde{r} \in [2, t-L]_{\Z}$ (totally $t-L-1$ points), we create $(\tilde{x}^{\tilde{r}}, \tilde{y}^{\tilde{r}}, \tilde{u}^{\tilde{r}}) \in$ conv($P$) such that $\tilde{y}_s^{\tilde{r}} = 1$ for $s \in [\tilde{r}, \tilde{r}+L-1 ]_{\Z}$ and $\tilde{y}_s^{\tilde{r}} = 0$ otherwise, $\tilde{u}_s^{\tilde{r}} = 1$ for $s = \tilde{r}$ and $\tilde{u}_s^{\tilde{r}} = 0$ otherwise, and $\tilde{x}_s^{\tilde{r}} = \Clower$ for $s \in [\tilde{r},\tilde{r}+L-1]_{\Z}$ and $\tilde{x}_s^{\tilde{r}} = 0$ otherwise.
\item For each $\tilde{r} \in [t-L+1, t+n-L+1 ]_{\Z}$ (totally $n+1$ points), we create $(\tilde{x}^{\tilde{r}}, \tilde{y}^{\tilde{r}}, \tilde{u}^{\tilde{r}}) \in$ conv($P$) such that $\tilde{y}_s^{\tilde{r}} = 1$ for $s \in [\tilde{r}, \tilde{r}+L-1 ]_{\Z}$ and $\tilde{y}_s^{\tilde{r}} = 0$ otherwise and $\tilde{u}_s^{\tilde{r}} = 1$ for $s = \tilde{r}$ and $\tilde{u}_s^{\tilde{r}} = 0$ otherwise. For the value of $\tilde{x}^{\tilde{r}}$, \sred{we have the following two cases:}
	\begin{enumerate}[(1)]
	\item if $\tilde{r} = t+n-L+1$ and $n=T-t$, then we let $\tilde{x}_s^{\tilde{r}} = \Vupper + (s-\tilde{r}) V$ for $s \in [\tilde{r}, \tilde{r}+L-1 ]_{\Z}$ and $\tilde{x}_s^{\tilde{r}} = 0$ otherwise;
	\item otherwise, we let $\tilde{x}_s^{\tilde{r}} = \Vupper + \min \{ s-\tilde{r}, L-1-s+\tilde{r} \}V$ for $s \in [\tilde{r}, \tilde{r}+L-1 ]_{\Z}$ and $\tilde{x}_s^{\tilde{r}} = 0$ otherwise.
	\end{enumerate}
\item For each $\tilde{r} \in [t+n-L+2,t]_{\Z}$ (totally $[L-n-1]^+$ points), we create $(\tilde{x}^{\tilde{r}}, \tilde{y}^{\tilde{r}}, \tilde{u}^{\tilde{r}}) \in$ conv($P$) such that $\tilde{y}_s^{\tilde{r}} = 1$ for $s \in [\tilde{r}, \min \{\tilde{r}+L-1, T\}]_{\Z}$ and $\tilde{y}_s^{\tilde{r}} = 0$ otherwise and  $\tilde{u}_s^{\tilde{r}} = 1$ for $s = \tilde{r}$ and $\tilde{u}_s^{\tilde{r}} = 0$ otherwise.
For the value of $\tilde{x}^{\tilde{r}}$, \sred{we have the following two cases:}
	\begin{enumerate}[(1)]
	\item if $\tilde{r} = T-L+1$, then we let $\tilde{x}_s^{\tilde{r}} = \Vupper + (s-\tilde{r}) V$ for $s \in [\tilde{r}, \tilde{r}+L-1 ]_{\Z}$ and $\tilde{x}_s^{\tilde{r}} = 0$ otherwise;
	\item otherwise, we let $\tilde{x}_s^{\tilde{r}} = \Vupper + \min \{ s-\tilde{r}, L-1-s+\tilde{r} \}V$ for $s \in [\tilde{r}, \tilde{r}+L-1 ]_{\Z}$ and $\tilde{x}_s^{\tilde{r}} = 0$ otherwise.
	\end{enumerate}
\item For each $\tilde{r} \in [t+1, T]_{\Z}$ (totally $T-t$ points), 
we create $(\tilde{x}^{\tilde{r}}, \tilde{y}^{\tilde{r}}, \tilde{u}^{\tilde{r}}) \in$ conv($P$) such that $\tilde{y}_s^{\tilde{r}} = 1$ for $s \in [\tilde{r}, \min \{\tilde{r}+L-1, T\}]_{\Z}$ and $\tilde{y}_s^{\tilde{r}} = 0$ otherwise, $\tilde{u}_s^{\tilde{r}} = 1$ for $s = \tilde{r}$ and $\tilde{u}_s^{\tilde{r}} = 0$ otherwise, and $\tilde{x}_s^{\tilde{r}} = \Clower$ for $s \in [\tilde{r},\min \{\tilde{r}+L-1, T\}]_{\Z}$ and $\tilde{x}_s^{\tilde{r}} = 0$ otherwise.
\item For each $\hat{r} \in [1,t-m-1]_{\Z}$ (totally $t-m-1$ points), 
we create $(\hat{x}^{\hat{r}}, \hat{y}^{\hat{r}}, \hat{u}^{\hat{r}}) \in$ conv($P$) such that
$\hat{y}_s^{\hat{r}} = 1$ for $s \in [1,\hat{r}]_{\Z}$ and $\hat{y}_s^{\hat{r}} = 0$ otherwise, $\hat{u}_s^{\hat{r}} = 0$ for all $s \in [2,T]_{\Z}$, and $\hat{x}_s^{\hat{r}} = \Clower+\epsilon$ for $s \in [1,\hat{r}]_{\Z}$ and $\hat{x}_s^{\hat{r}} = 0$ otherwise.
\item For each $\hat{r} \in [t-m, t-1]_{\Z}$ (totally $m$ points), 
we create $(\hat{x}^{\hat{r}}, \hat{y}^{\hat{r}}, \hat{u}^{\hat{r}}) \in$ conv($P$) such that
$\hat{y}_s^{\hat{r}} = 1$ for $s \in [\hat{r}-L+1,\hat{r}]_{\Z}$ and $\hat{y}_s^{\hat{r}} = 0$ otherwise,  $\hat{u}_s^{\hat{r}} = 1$ for $s = \hat{r}-L+1$ and $\hat{u}_s^{\hat{r}} = 0$ otherwise, and
$\hat{x}_s^{\hat{r}} = \Clower+\epsilon$ for $s \in [\hat{r}-L+1,\hat{r}]_{\Z}$ and $\hat{x}_s^{\hat{r}} = 0$ otherwise.
\item For each $\hat{r} \in [t+1, T]_{\Z}$ (totally $T-t$ points), 
we create $(\hat{x}^{\hat{r}}, \hat{y}^{\hat{r}}, \hat{u}^{\hat{r}}) \in$ conv($P$) such that
$\hat{y}_s^{\hat{r}}=1$ for $s \in [\hat{r}, \min \{\hat{r}+L-1, T\}]_{\Z}$ and $\hat{y}_s^{\hat{r}} = 0$ otherwise, $\hat{u}_s^{\hat{r}}=1$ for $s = \hat{r}$ and $\hat{u}_s^{\hat{r}} = 0$ otherwise,
 and $\hat{x}_s^{\hat{r}} = \Clower + \epsilon$ for $s \in [\hat{r}, \min \{\hat{r}+L-1, T\}]_{\Z}$ and $\hat{x}_s^{\hat{r}} = 0$ otherwise.
\item For each $\bar{r} \in [1,t-m-1]_{\Z}$ (totally $t-m-1$ points), 
we create $(\bar{x}^{\bar{r}}, \bar{y}^{\bar{r}}, \bar{u}^{\bar{r}}) \in$ conv($P$) such that
$\bar{y}_s^{\bar{r}} = 1$ for $s \in [1,\bar{r}]_{\Z}$ and $\bar{y}_s^{\bar{r}} = 0$ otherwise, $\bar{u}_s^{\bar{r}} = 0$ for all $s \in [2,T]_{\Z}$, and $\bar{x}_s^{\bar{r}} = \Clower$ for $s \in [1,\bar{r}]_{\Z}$ and $\bar{x}_s^{\bar{r}} = 0$ otherwise.
\item For $\bar{r} = t-m$ (totally one point),
we create $(\bar{x}^{\bar{r}}, \bar{y}^{\bar{r}}, \bar{u}^{\bar{r}}) \in$ conv($P$) such that
$\bar{y}_s^{\bar{r}} = 1$ for all $s \in [1,T]_{\Z}$, $\bar{u}_s^{\bar{r}} = 0$ for all $s \in [2,T]_{\Z}$, and $\bar{x}_s^{\bar{r}} = \Cupper$ all $s \in [1,T]_{\Z}$. \\
Finally, we create the remaining $T-t+m$ points $(\bar{x}^{\bar{r}}, \bar{y}^{\bar{r}}, \bar{u}^{\bar{r}})$ ($\bar{r} \in [t-m+1, T]_{\Z}$).
Without loss of generality, we let $S = \{n_1, \cdots, n_p, \cdots, n_q, \cdots, n_{|S|}\} \subseteq [t-m+1,t]_{\Z}$ and $S' = S \cup \{t-m\}$. For notation convenience, we define $n_0 = t-m$ and $n_{|S|+1} = t$ if $n_{|S|} \neq t$ and $n_{|S|} = t$ otherwise.
For simplicity we assume $n_{|S|} \neq t$ and $n \geq 1$, as the case $ n_{|S|} = t$ or $n=0$ can be analyzed similarly.
\item For each $n_p \in S'$, $p \in [0,|S|]_{\Z}$, we create $n_{p+1} - n_p - 1$ points $(\bar{x}^{\bar{r}}, \bar{y}^{\bar{r}}, \bar{u}^{\bar{r}}) \in$ conv($P$) with $\bar{r} \in [n_p+1, n_{p+1}-1]_{\Z}$ (totally there are $\sum_{p=0}^{|S|} (n_{p+1} - n_p - 1) = m-|S|-1$ points) such that 
$\bar{y}_s^{\bar{r}} = 1$ for $s \in [n_p-L+1, \bar{r}]_{\Z}$ and $\bar{y}_s^{\bar{r}} = 0$ otherwise, 
$\bar{u}_s^{\bar{r}} = 1$ for $s = n_p-L+1$ and $\bar{u}_s^{\bar{r}} = 0$ otherwise, and
$\bar{x}_s^{\bar{r}} = \Clower$ for $s \in [n_p-L+1, \bar{r}]_{\Z}$ and $\bar{x}_s^{\bar{r}} = 0$ otherwise.
\item For each $n_p \in S$ ($p \in [1, |S|]_{\Z}$), we create one point $(\bar{x}^{\bar{r}}, \bar{y}^{\bar{r}}, \bar{u}^{\bar{r}})$  with $\bar{r} = n_p$ (totally there are $|S|$ points) such that
$\bar{y}_s^{\bar{r}} = 1$ for $s \in [n_{p-1}-L+1, T]_{\Z}$ and $\bar{y}_s^{\bar{r}} = 0$ otherwise, $\bar{u}_s^{\bar{r}} = 1$ for $s = n_{p-1}-L+1$ and $\bar{u}_s^{\bar{r}} = 0$ otherwise,
$\bar{x}_s^{\bar{r}} = \Vupper + (\min\{s,t\}-n_{p-1}+L-1)V$ for $s \in [n_{p-1}-L+1, T]_{\Z}$ and $\bar{x}_s^{\bar{r}} = 0$ otherwise.
\item For $\bar{r} = n_{|S|+1} = t$ (totally one point),
we create $(\bar{x}^{\bar{r}}, \bar{y}^{\bar{r}}, \bar{u}^{\bar{r}}) \in$ conv($P$) such that
$\bar{y}_s^{\bar{r}} = 1$ for $s \in [n_{|S|}-L+1, \bar{r}]_{\Z}$ and $\bar{y}_s^{\bar{r}} = 0$ otherwise, 
$\bar{u}_s^{\bar{r}} = 1$ for $s = n_{|S|}-L+1$ and $\bar{u}_s^{\bar{r}} = 0$ otherwise, and
$\bar{x}_s^{\bar{r}} = \Clower$ for $s \in [n_{|S|}-L+1, \bar{r}]_{\Z} \setminus \{t\}$ and  $\bar{x}_s^{\bar{r}} = \Vupper$ for $s = t$ and $\bar{x}_s^{\bar{r}} = 0$ otherwise.
\item For each $\bar{r} \in [t+1, t+n-1]_{\Z}$ (totally $n-1$ points),
we create $(\bar{x}^{\bar{r}}, \bar{y}^{\bar{r}}, \bar{u}^{\bar{r}}) \in$ conv($P$) such that
\sred{(i)} $\bar{y}_s^{\bar{r}} = 1$ for $s \in [t-L+1, \bar{r}]_{\Z}$ and $\bar{y}_s^{\bar{r}} = 0$ otherwise, \sred{(ii)} $\bar{u}_s^{\bar{r}} = 1$ for $s = t-L+1$ and $\bar{u}_s^{\bar{r}} = 0$ otherwise,
and \sred{(iii)} $\bar{x}_s^{\bar{r}} = \Vupper + \min \{s-t+L-1, n-1\} V$ for $s \in [t-L+1, t]_{\Z}$, $\bar{x}_s^{\bar{r}} = \Vupper + (n-1-s) V$ for $s \in [t+1, \bar{r}]_{\Z}$, and $\bar{x}_s^{\bar{r}} = 0$ otherwise.
\item For $\bar{r} = t+n$ (totally one point),
we create $(\bar{x}^{\bar{r}}, \bar{y}^{\bar{r}}, \bar{u}^{\bar{r}}) \in$ conv($P$) such that
$\bar{y}_s^{\bar{r}} = 1$ for $s \in [n_{|S|}-L+1, \min \{2t-n_{|S|}+L-1, T\}]_{\Z}$ and $\bar{y}_s^{\bar{r}} = 0$ otherwise and $\bar{u}_s^{\bar{r}} = 1$ for $s = n_{|S|}-L+1$ and $\bar{u}_s^{\bar{r}} = 0$ otherwise.
For the value of $\bar{x}^{\bar{r}}$, \sred{we have the following two cases:}
	\begin{enumerate}[(1)]
	\item if $2t-n_{|S|}+L-1 \leq T-1$, then we let $\bar{x}_s^{\bar{r}} = \Vupper + (\min \{s, 2t-s\}-n_{|S|}+L-1)V$ for $s \in [n_{|S|}-L+1, \min \{2t-n_{|S|}+L-1, T\}]_{\Z}$ and $\bar{x}_s^{\bar{r}} = 0$ otherwise;
	\item otherwise, we let $\bar{x}_s^{\bar{r}} = \Vupper + (\min \{s, t\}-n_{|S|}+L-1)V$ for $s \in [n_{|S|}-L+1, \min \{2t-n_{|S|}+L-1, T\}]_{\Z}$ and $\bar{x}_s^{\bar{r}} = 0$ otherwise.
	\end{enumerate}
\item For each $\bar{r} \in [t+n+1, T]_{\Z}$ (totally $T-t-n$ points),
we create $(\bar{x}^{\bar{r}}, \bar{y}^{\bar{r}}, \bar{u}^{\bar{r}}) \in$ conv($P$) such that
\sred{(i)} $\bar{y}_s^{\bar{r}} = 1$ for $s \in [t+n-L+1, \bar{r}]_{\Z}$ and $\bar{y}_s^{\bar{r}} = 0$ otherwise, \sred{(ii)} $\bar{u}_s^{\bar{r}} = 1$ for $s = t+n-L+1$ and $\bar{u}_s^{\bar{r}} = 0$ otherwise, and \sred{(iii)} $\bar{x}_s^{\bar{r}} = \Vupper + (s-t-n+L-1)V$ for $s \in [t+n-L+1, t]_{\Z}$, $\bar{x}_s^{\bar{r}} = \Vupper + \min \{L-1-n, \bar{r}-s \}V$ for $s \in [t+1, \bar{r}]_{\Z}$, and $\bar{x}_s^{\bar{r}} = 0$ otherwise.
\end{enumerate}

In summary, we create $3T-2$ points $(\bar{x}^{\bar{r}}, \bar{y}^{\bar{r}}, \bar{u}^{\bar{r}})_{\bar{r}=1}^{T}$, $(\hat{x}^{\hat{r}}, \hat{y}^{\hat{r}}, \hat{u}^{\hat{r}})_{\hat{r}=1, \hat{r} \neq t}^{T}$, and $(\tilde{x}^{\tilde{r}}, \tilde{y}^{\tilde{r}}, \tilde{u}^{\tilde{r}})_{\tilde{r}=2}^{T}$. It is easy to see that they are valid and satisfy \eqref{eqn:x_t-up-exp} at equality. Meanwhile, based on the construction, they are clearly linearly independent since $(\bar{x}, \bar{y}, \bar{u})$ and $(\hat{x}, \hat{y}, \hat{u})$ can construct a lower-triangular matrix in terms of the values of $x$ and $y$ and $(\tilde{x}, \tilde{y}, \tilde{u})$ is further linearly independent with them since it constructs an upper-triangular matrix in terms of the value of $u$.
\end{proof}

\subsection{Proof for Proposition \ref{prop:x_t-down-exp}} \label{apx:subsec:x_t-down-exp}
\begin{proof}
\textbf{(Validity)} It is clear that inequality \eqref{eqn:x_t-down-exp} is valid when $y_t = 0$ due to constraints \eqref{eqn:p-minup}. Also, for the case in which $t = 1$ when $y_t = 1$, it is easy to show \eqref{eqn:x_t-down-exp} is valid due to constraints \eqref{eqn:p-minup} and \eqref{eqn:p-ramp-down}. In the following, we continue to prove the validity by discussing the cases in which $y_t = 1$ with $t \in [2,T-1]_{\Z}$. We consider the last start-up time
{(denoted as $t-s$, $s \geq 0$)}
before $t$ in the following two possible cases.
\begin{enumerate}[1)]
\item $s \geq L-1$. It follows that $\phi = 0$ and $t-s+L-1 \leq t$. We further discuss the following two possible cases in terms of the first shut-down time 
{(denoted as $t+\hat{s}$, $\hat{s} \geq 1$)}
after $t$. We observe that $y_i - \sum_{j=0}^{\min \{L-1, i-2\}} u_{i-j} = 1$ for each $i \in [t,t+\min\{m+1,\hat{s}-1\}]_{\Z}$.
\begin{enumerate}[(1)]
\item $\hat{s} \in [1, \hat{t}]_{\Z}$. In this case, we have $x_t \leq \Vupper + \min \{s, \hat{s}-1\} V \leq \Vupper + (\hat{s}-1)V = \Vupper y_t + V \sum_{i =1 }^{\hat{s}-1} (y_i - \sum_{j=0}^{\min \{L-1,i-2\}} u_{i-j})$, which is clearly less than the RHS of \eqref{eqn:x_t-down-exp}.
\item $\hat{s} \in [\hat{t}+1, m+1]_{\Z}$. In this case, we have  $x_t \leq \Vupper + \min \{s, \hat{s}-1\} V \leq \Vupper + (\hat{s}-1)V = \Vupper + (\hat{t}-1)V + (\hat{s}-\hat{t})V \leq \Vupper y_t + V \sum_{i \in S_0} (y_i - \sum_{j=0}^{\min \{L-1,i-2\}} u_{i-j}) + V \sum_{i \in (S \cap [\hat{t}+1, t+\hat{s}-1]_{\Z}) \cup \{\hat{t}\}} (d_i - i) (y_i - \sum_{j=0}^{L-1} u_{i-j})$, which is clearly less than the RHS of \eqref{eqn:x_t-down-exp}.
\item $\hat{s} \geq m+2$. Inequality \eqref{eqn:x_t-down-exp} converts to $x_t \leq \Cupper$, which is clearly valid due to constraints \eqref{eqn:p-upper-bound}.
\end{enumerate}
\item $s \in [0,L-2]_{\Z}$. It follows that $t-s+L-1 \geq t+1$ and $y_i - \sum_{j=0}^{\min \{L-1, i-2\}}u_{i-j} = 0$ for all $i \in [t+1, t-s+L-1]_{\Z}$. We let $t+\hat{s}$ ($\hat{s} \geq 1$) be the first shut-down time after $t$ and further discuss the following three possible cases in terms of the value of $t-s+L-1$.
\begin{enumerate}[(1)]
\item $t-s+L-1 \in [t+1, \hat{t}-1]_{\Z}$. We further discuss the following three possible cases in terms of the value $t+\hat{s}$.
\begin{enumerate}[(a)]
\item $t+\hat{s} \geq t+m+2$. In this case, we have $x_t \leq \min \{\Cupper, \Vupper + \min \{s, \hat{s}-1\} V\}$. From inequality \eqref{eqn:x_t-down-exp}, we have $x_t \leq \Vupper + (\hat{t}-1-(t-s+L-1))V + V \sum_{i \in S \cup \{\hat{t}\}} (d_i - i) + (\Cupper - \Vupper - mV) + \phi = \Vupper + (\hat{t}-t+s-L)V+(t+m+1-\hat{t})V + (\Cupper - \Vupper - mV) + \phi$ since $V \sum_{i \in S \cup \{\hat{t}\}} (d_i - i) = t+m+1-\hat{t}$. We only need to show 
\begin{equation}
\min \{\Cupper, \Vupper + \min \{s, \hat{s}-1\} V\} \leq \Vupper + (s-L+m+1)V + (\Cupper - \Vupper - mV) + \phi. \label{eq:inter-validlity-1down-2}
\end{equation}
If $s \leq t+L-T-1$, then \eqref{eq:inter-validlity-1down-2} holds since $\phi = sV$. If $s \geq t+L-T$, we have $\phi = \min \{L-1-s, s\}V$. If $s \leq L-1-s$, we further have $\phi = sV$ and then \eqref{eq:inter-validlity-1down-2} holds; otherwise, $s \geq L-s$, we have $\phi = L-1-s$ and the RHS of \eqref{eq:inter-validlity-1down-2} becomes $\Cupper$. It follows that \eqref{eq:inter-validlity-1down-2} holds.
\item $t+\hat{s} \in [\hat{t}+1, t+m+1]_{\Z}$. In this case, inequality \eqref{eqn:x_t-down-exp} converts to $x_t \leq \Vupper + (\hat{t}-1 - (t-s+L-1))V + \tilde{f} + \phi = \Vupper + (\hat{t}-t+s-L)V+\tilde{f}+\phi$, where $\tilde{f} = V \sum_{i \in (S \cup \{\hat{t}) \cap [\hat{t}, t+\hat{s}-1]_{\Z} \}} (d_i - i)$. It is easy to observe that $\tilde{f} \geq t+\hat{s}-\hat{t}$. Now we only need to show 
\begin{equation}
\Vupper + \min \{s, \hat{s}-1\} V \leq \Vupper + (\hat{t}-t+s-L)V+\tilde{f}+\phi. \label{eq:inter-validlity-1down-3}
\end{equation}
If $s \leq t+L-T-1$ or $s \in [t+L-T,L-1-s]_{\Z}$, we have $\phi = sV$, indicating \eqref{eq:inter-validlity-1down-3} holds; otherwise, $s \geq L-s$ and then we have $\phi = L-1-s$. It follows that the RHS of \eqref{eq:inter-validlity-1down-3} becomes $\Vupper + (\hat{t}-t-1)V+\tilde{f} \geq \Vupper+(\hat{s}-1)V$, indicating \eqref{eq:inter-validlity-1down-3} holds.
\item $t+\hat{s} \in [t+1, \hat{t}]_{\Z}$. It follows that $\hat{s}-1 \leq \hat{t}-t-1$. In this case, we have $x_t \leq \Vupper + \min \{s, \hat{s}-1\} V \leq \Vupper + sV = \Vupper + \phi \leq \mbox{{the} RHS of}$ \eqref{eqn:x_t-down-exp}  if $s \leq t+L-T-1$ or $s \in [t+L-T,L-1-s]_{\Z}$ following the argument above. Hence, we only need to consider the case in which $s \geq L-s$ and therefore $\phi = L-1-s$. In this case, inequality \eqref{eqn:x_t-down-exp} converts to $x_t \leq \Vupper + (\hat{t}-1-(t-s+L-1))V + (L-1-s)V = \Vupper + (\hat{t}-t-1)V \geq \Vupper + (\hat{s}-1)V \geq \Vupper + \min \{s, \hat{s}-1\} V$, indicating that \eqref{eqn:x_t-down-exp} is valid.
\end{enumerate}
\item $t-s+L-1 \in [\hat{t}, t+m]_{\Z}$. Similar to the argument above, we only need to show
\begin{equation}
\Vupper + \min \{s, \hat{s}-1\} V \leq \Vupper + \tilde{f}+\phi + (\Cupper-\Vupper-mV)y_{t+m+1}, \label{eq:inter-validlity-1down-4}
\end{equation}
where $\tilde{f} = V \sum_{i \in (S \cup \{\hat{t}\}) \cap [t-s+L, t+\hat{s}-1]_{\Z} } (d_i - i)$. If $s \leq t+L-T-1$ or $s \in [t+L-T,L-1-s]_{\Z}$, we have $\phi = sV$ and therefore \eqref{eq:inter-validlity-1down-4} holds since $\tilde{f} \geq 0$ and $(\Cupper-\Vupper-mV)y_{t+m+1} \geq 0$. If $s \geq L-s$, i.e., $L-2 \geq s \geq L/2$ then $\phi = (L-1-s)V$. Next, in the following we assume $s \geq L-s$ and try to obtain the contradiction, which indicates that $s \geq L-s$ is not possible to happen.
\begin{enumerate}[(a)]
\item If $t \geq L$, then $\min \{t-2, L-2\} = L-2 \geq L/2$ and therefore $\hat{t} = t + (L-2)$ by the definition of $\hat{t}$. It follows that $t-s+L-1 \leq t+L/2-1 \leq t+(L-2)-1 = \hat{t}-1$ (the first and second inequalities follow since $L-2 \leq s \leq L/2$), which contradicts to the condition  $t-s+L-1 \geq \hat{t}$.
\item If $t \leq L-1$, then $t -s +L-1 \leq 2(L-1)-s \leq -2$ since $s \geq L/2$, which contradicts to the condition $t-s+L-1 \geq \hat{t}$.
\end{enumerate} 
\item $t-s+L-1 \geq t+m+1$. Hence $t+\hat{s} \geq t-s+L$. It follows that for all $i \in S_0 \cup S \cup \{\hat{t},t+m+1\}$, $y_i - \sum_{j=0}^{\min \{L-1, i-2\}}u_{i-j} = 0$ and thus inequality \eqref{eqn:x_t-down-exp} converts to $x_t \leq \Vupper + \phi$. In this case, $x_t \leq \Vupper + \min \{s, \hat{s}-1\}V$ and therefore we only need to show 
\begin{equation}
\min \{s, \hat{s}-1\}V \leq \phi. \label{eq:inter-validlity-1down-1}
\end{equation}
If $s \leq t+L-T-1$, then \eqref{eq:inter-validlity-1down-1} holds since $\phi = sV$. If $s \geq t+L-T$, we have $\phi = \min \{L-1-s, s\}V$. If $s \leq L-1-s$, we further have $\phi = sV$ and then \eqref{eq:inter-validlity-1down-1} holds; otherwise, $s \geq L-s$, i.e., $L-2 \geq s \geq L/2$. Next, in the following we assume $s \geq L-s$ and try to obtain the contradiction, which indicates that $s \geq L-s$ is not possible to happen.
\begin{enumerate}[(a)]
\item If $t \geq L$, then $\min \{t-2, L-2\} = L-2 \geq L/2$ and therefore $\hat{t} = t + (L-2)$ by the definition of $\hat{t}$. It follows that $t-s+L-1 \leq t+L/2-1 \leq t+(L-2)-1 = \hat{t}-1 \leq t+m$, which contradicts to the condition  $t-s+L-1 \geq t+m+1$.
\item If $t \leq L-1$, then $t -s +L-1 \leq 2(L-1)-s \leq -2$ since $s \geq L/2$, which contradicts to the condition  $t-s+L-1 \geq t+m+1$.
\end{enumerate}
\end{enumerate}
\end{enumerate}

\textbf{(Facet-defining)} 
We generate $3T-2$ linearly independent points (i.e., $(\tilde{x}^{\tilde{r}}, \tilde{y}^{\tilde{r}}, \tilde{u}^{\tilde{r}})_{\tilde{r}=2}^{T}$,  $(\hat{x}^{\hat{r}}, \allowbreak \hat{y}^{\hat{r}}, \allowbreak \hat{u}^{\hat{r}})_{\hat{r}=1, \hat{r} \neq t}^{T}$, and $(\bar{x}^{\bar{r}}, \bar{y}^{\bar{r}}, \bar{u}^{\bar{r}})_{\bar{r}=1}^{T}$) in conv($P$) that satisfy \eqref{eqn:x_t-down-exp} at equality.

\begin{enumerate}[1)]
\item For each $\tilde{r} \in [2, T]_{\Z}$ (totally $T-1$ points), we create $(\tilde{x}^{\tilde{r}}, \tilde{y}^{\tilde{r}}, \tilde{u}^{\tilde{r}}) \in$ conv($P$) such that
$\tilde{y}_s^{\tilde{r}} = 1$ for $s \in [\tilde{r}, \min \{\tilde{r}+L-1, T\}]_{\Z}$ and $\tilde{y}_s^{\tilde{r}} = 0$ otherwise and $\tilde{u}_s^{\tilde{r}} = 1$ for $s = \tilde{r}$ and $\tilde{u}_s^{\tilde{r}} = 0$ otherwise.
For the value of $\tilde{x}^{\tilde{r}}$, \sred{we have the following three cases:}
	\begin{enumerate}[(1)]
	\item if $t \notin [\tilde{r}, \min \{\tilde{r}+L-1, T\}]_{\Z}$, then we let $\tilde{x}_s^{\tilde{r}} = \Clower$ for $s \in  [\tilde{r}, \min \{\tilde{r}+L-1, T\}]_{\Z}$ and $\tilde{x}_s^{\tilde{r}} = 0$ otherwise;
	\item if $t \in [\tilde{r}, \min \{\tilde{r}+L-1, T\}]_{\Z}$ and $\tilde{r}+L-1 \leq T-1$, then we let $\tilde{x}_s^{\tilde{r}} = \Vupper + \min \{s-\tilde{r}, L-1-s+\tilde{r}\}V$ for $s \in  [\tilde{r}, \min \{\tilde{r}+L-1, T\}]_{\Z}$ and $\tilde{x}_s^{\tilde{r}} = 0$ otherwise;
	\item otherwise, we let $\tilde{x}_s^{\tilde{r}} = \Vupper + (\min \{s,t\}-\tilde{r})V$ for $s \in  [\tilde{r}, \min \{\tilde{r}+L-1, T\}]_{\Z}$ and $\tilde{x}_s^{\tilde{r}} = 0$ otherwise.
	\end{enumerate}
\item For each $\hat{r} \in [1,t-1]_{\Z}$ (totally $t-1$ points), 
we create $(\hat{x}^{\hat{r}}, \hat{y}^{\hat{r}}, \hat{u}^{\hat{r}}) \in$ conv($P$) such that
$\hat{y}_s^{\hat{r}} = 1$ for $s \in [1,\hat{r}]_{\Z}$ and $\hat{y}_s^{\hat{r}} = 0$ otherwise, $\hat{u}_s^{\hat{r}} = 0$ for all $s \in [2,T]_{\Z}$, and $\hat{x}_s^{\hat{r}} = \Clower+\epsilon$ for $s \in [1,\hat{r}]_{\Z}$ and $\hat{x}_s^{\hat{r}} = 0$ otherwise.
\item For each $\hat{r} \in [t+1, T]_{\Z}$ (totally $T-t$ points), 
we create $(\hat{x}^{\hat{r}}, \hat{y}^{\hat{r}}, \hat{u}^{\hat{r}}) \in$ conv($P$) such that
$\hat{y}_s^{\hat{r}} = 1$ for $s \in [\hat{r}, \min \{\hat{r}+L-1, T\}]_{\Z}$ and $\hat{y}_s^{\hat{r}} = 0$ otherwise, $\hat{u}_s^{\hat{r}} = 1$ for $s = \hat{r}$ and $\hat{u}_s^{\hat{r}} = 0$ otherwise, and $\hat{x}_s^{\hat{r}} = \Clower+\epsilon$ for $s \in [\hat{r}, \min \{\hat{r}+L-1, T\}]_{\Z}$ and $\hat{x}_s^{\hat{r}} = 0$ otherwise.
\item For each $\bar{r} \in [1,t-1]_{\Z}$ (totally $t-1$ points), 
we create $(\bar{x}^{\bar{r}}, \bar{y}^{\bar{r}}, \bar{u}^{\bar{r}}) \in$ conv($P$) such that
$\bar{y}_s^{\bar{r}} = 1$ for $s \in [1,\bar{r}]_{\Z}$ and $\bar{y}_s^{\bar{r}} = 0$ otherwise, $\bar{u}_s^{\bar{r}} = 0$ for all $s \in [2,T]_{\Z}$, and $\bar{x}_s^{\bar{r}} = \Clower$ for $s \in [1,\bar{r}]_{\Z}$ and $\bar{x}_s^{\bar{r}} = 0$ otherwise.
\item For each $\bar{r} \in [t,\hat{t}-1]_{\Z}$ (totally $\hat{t}-t$ points), 
we create $(\bar{x}^{\bar{r}}, \bar{y}^{\bar{r}}, \bar{u}^{\bar{r}}) \in$ conv($P$) such that
$\bar{y}_s^{\bar{r}} = 1$ for $s \in [1, \bar{r}]_{\Z}$ and $\bar{y}_s^{\bar{r}} = 0$ otherwise, $\bar{u}_s^{\bar{r}} = 0$ for all $s \in [2,T]_{\Z}$, and $\bar{x}_s^{\bar{r}} = \Vupper+(\bar{r}-\max\{t,s\})V$ for $s \in [1, \bar{r}]_{\Z}$ and $\bar{x}_s^{\bar{r}} = 0$ otherwise. \\
In the following, we assume $S = \{n_1, \cdots, n_p, \cdots, n_q, \cdots, n_{|S|}\}$ without loss of generality and let $S' = S \cup \{\hat{t}\}$. For simplicity, we define $n_0 = \hat{t}$ and $n_{|S|+1}=t+m+1$.
\item For each $\bar{r} = n_p \in S'$ with $p \in [0, |S|]_{\Z}$ (totally $|S|+1$ points), 
we create $(\bar{x}^{\bar{r}}, \bar{y}^{\bar{r}}, \bar{u}^{\bar{r}}) \in$ conv($P$) such that
$\bar{y}_s^{n_p} = 1$ for $s \in [1, n_{p+1}-1]_{\Z}$ and $\bar{y}_s^{n_p} = 0$ otherwise, $\bar{u}_s^{n_p} = 0$ for all $s \in [2,T]_{\Z}$, and $\bar{x}_s^{\bar{r}} = \Vupper+(n_{p+1}-1-\max\{s,t\})V$ for $s \in [1, n_{p+1}-1]_{\Z}$ and $\bar{x}_s^{n_p} = 0$ otherwise.
\item For $\bar{r} = t+m+1$ (totally one point), 
we create $(\bar{x}^{\bar{r}}, \bar{y}^{\bar{r}}, \bar{u}^{\bar{r}}) \in$ conv($P$) such that
$\bar{y}_s^{\bar{r}}=1$ for all $s \in [1,T]_{\Z}$, $\bar{u}_s^{\bar{r}}=0$ for all $s \in [2,T]_{\Z}$, and $\bar{x}_s^{\bar{r}}=\Cupper$ for all $s \in [1,T]_{\Z}$.
\item For each $n_p \in S'$, $p \in [0,|S|]_{\Z}$, we create $n_{p+1}-n_p - 1$ points $(\bar{x}^{\bar{r}}, \bar{y}^{\bar{r}}, \bar{u}^{\bar{r}}) \in$ conv($P$) with $\bar{r} \in [n_p+1, n_{p+1}-1]_{\Z}$ (totally there are $\sum_{p=0}^{|S|} (n_{p+1} - n_p - 1) = t+m-\hat{t}-|S|$ points) such that 
$\bar{y}_s^{\bar{r}} = 1$ for $s \in [n_p-L+1, \bar{r}]_{\Z}$ and $\bar{y}_s^{\bar{r}} = 0$ otherwise and $\bar{u}_s^{\bar{r}} = 1$ for $s = n_p-L+1$ and $\bar{u}_s^{\bar{r}} = 0$ otherwise.
For the value of $\bar{x}^{\bar{r}}$, \sred{we have the following two cases:}
	\begin{enumerate}[(1)]
	\item if $t \in [n_p-L+1, \bar{r}]_{\Z}$, then we let $\bar{x}_s^{\bar{r}} = \Vupper + \min \{\bar{r}-s, s-n_p+L-1\}V$ for $s \in [n_p-L+1, \bar{r}]_{\Z}$ and $\bar{y}_s^{\bar{r}} = 0$ otherwise;
	\item otherwise, we let $\bar{x}_s^{\bar{r}} = \Clower$ for all $s \in [n_p-L+1, \bar{r}]_{\Z}$.
	\end{enumerate}
\item For each $\bar{r} \in [t+m+2, T]_{\Z}$ (totally $T-t-m-1$ points), 
we create $(\bar{x}^{\bar{r}}, \bar{y}^{\bar{r}}, \bar{u}^{\bar{r}}) \in$ conv($P$) such that
$\bar{y}_s^{\bar{r}} = 1$ for $s \in [t+m-L+2, \bar{r}]_{\Z}$ and $\bar{y}_s^{\bar{r}} = 0$ otherwise and $\bar{u}_s^{\bar{r}} = 1$ for $s = t+m-L+2$ accordingly and $\bar{u}_s^{\bar{r}} = 0$ otherwise.
For the value of $\bar{x}^{\bar{r}}$, \sred{we have the following three cases:}
	\begin{enumerate}[(1)]
	\item if $t \notin [t+m-L+2, \bar{r}]_{\Z}$, we let 
$\bar{x}_s^{\bar{r}} = \Clower$ for $s \in [t+m-L+2, \bar{r}]_{\Z}$ and $\bar{x}_s^{\bar{r}} = 0$ otherwise;
	\item if $t \in [t+m-L+2, \bar{r}]_{\Z}$ and $\bar{r} \leq T-1$, we let 
$\bar{x}_s^{\bar{r}} = \Vupper + \min \{m+1, s-t-m+L-2, \bar{r}-s\}V$ for $s \in [t+m-L+2, \bar{r}]_{\Z}$ and $\bar{x}_s^{\bar{r}} = 0$ otherwise;
	\item otherwise, we let $\bar{x}_s^{\bar{r}} = \Vupper + (\min \{s,t\}-t-m+L-2)V$ for $s \in [t+m-L+2, \bar{r}]_{\Z}$ and $\bar{x}_s^{\bar{r}} = 0$ otherwise.
	\end{enumerate}
\end{enumerate}

In summary, we create $3T-2$ points $(\bar{x}^{\bar{r}}, \bar{y}^{\bar{r}}, \bar{u}^{\bar{r}})_{\bar{r}=1}^{T}$, $(\hat{x}^{\hat{r}}, \hat{y}^{\hat{r}}, \hat{u}^{\hat{r}})_{\hat{r}=1, \hat{r} \neq t}^{T}$, and $(\tilde{x}^{\tilde{r}}, \tilde{y}^{\tilde{r}}, \tilde{u}^{\tilde{r}})_{\tilde{r}=2}^{T}$. It is easy to see that they are valid and satisfy \eqref{eqn:x_t-down-exp} at equality.
Meanwhile, based on the construction, they are clearly linearly independent since $(\bar{x}, \bar{y}, \bar{u})$ and $(\hat{x}, \hat{y}, \hat{u})$ can construct a lower-triangular matrix in terms of the values of $x$ and $y$ and $(\tilde{x}, \tilde{y}, \tilde{u})$ is further linearly independent with them since it constructs an upper-triangular matrix in terms of the value of $u$.
\end{proof}

\subsection{Proof for Proposition \ref{prop:ru-2-exp}} \label{apx:subsec:ru-2-exp}
\begin{proof}
\textbf{(Validity)} We first prove that inequality \eqref{eqn:ru-2-exp-1} is valid. It is clear that inequality \eqref{eqn:ru-2-exp-1} is valid when $y_{t-m} = y_t = 0$ and $y_{t-m} = 1, \ y_t = 0$ due to constraints  \eqref{eqn:p-minup} and \eqref{eqn:p-lower-bound}. We continue to discuss the remaining two cases in terms of the values of $y_{t-m}$ and $y_t$. Meanwhile, we let $f = \sum_{i \in S \setminus \{t-m+L\} } (i - d_i)$.
\begin{enumerate}[1)]
\item $y_{t-m} = 0, \ y_t = 1$. There is at least a start-up between $t-m$ and $t$ and without loss of generality we consider there is only one start-up between $t-m$ and $t$. We let this start-up time be $t-s$ ($s \in [0,m-1]_{\Z}$). Meanwhile, we only consider the case in which $n \geq 1$ since the case in which $n=0$ can be proved similarly, and let the first shut-down time after $t$ be $t+\hat{s}$ ($\hat{s} \geq 1$). We further discuss the following three possible cases in terms of the value of $t-s+L-1$.
\begin{enumerate}[(1)]
\item $t-s+L-1 \leq t$, i.e., $s \geq L-1$. It follows that $\phi = 0$ and $y_i - \sum_{j=0}^{L-1}u_{i-j} = 1$ for all $i \in [t+1, t+\hat{s}-1]_{\Z}$. Therefore, in this case, $x_{t} \leq \Vupper + \min \{s, \hat{s}-1\}V \leq \min \{\Vupper + (\hat{s}-1)V, \Clower + mV\} = \mbox{{the} RHS of}$ \eqref{eqn:ru-2-exp-1}, indicating \eqref{eqn:ru-2-exp-1} is valid. Note that the second inequality holds because $\Vupper + sV \leq \Vupper + (m-1)V \leq \Clower + mV$.
\item $t-s+L-1 \in [t+1, t+n-1]_{\Z}$. From inequality \eqref{eqn:ru-2-exp-1}, we have $x_t \leq \Vupper + (t+{[n-1]^+} - (t-s+L-1))V + (\Clower + (m-{[n-1]^+})V - \Vupper) + \phi$. We only need to show
\begin{equation}
\Vupper + \min \{s, \hat{s}-1\}V \leq \Vupper + (t+{[n-1]^+} - (t-s+L-1))V + (\Clower + (m-{[n-1]^+})V - \Vupper) + \phi. \label{eq:inter-validlity-2up-1}
\end{equation}
If $s \leq t+L-T-1$ or $s \in [t+L-T,L-1-s]_{\Z}$, we have $\phi = sV$ and therefore \eqref{eq:inter-validlity-2up-1} holds since $t+{[n-1]^+} - (t-s+L-1) \geq 0$ and $\Clower + (m-{[n-1]^+})V - \Vupper \geq 0$; otherwise, $s \geq L-s$, we have $\phi = (L-1-s)V$ and thus {the} RHS of \eqref{eq:inter-validlity-2up-1} becomes $\Clower+mV$. It follows that \eqref{eq:inter-validlity-2up-1} holds since $\Vupper + sV \leq \Vupper + (m-1)V \leq \Clower + mV$.
\item $t-s+L-1 \geq t+n$. It follows that $t-s \geq t+n-L+1 \geq t-L+2$ and $y_i - \sum_{j=0}^{L-1}u_{i-j} = 0$ for all $i \in [t+1, t+n]_{\Z}$ and inequality \eqref{eqn:ru-2-exp-1} converts to $x_t \leq \Vupper + \phi$. We only need to show
\begin{equation}
\Vupper + \min \{s, \hat{s}-1\}V \leq \Vupper + \phi. \label{eq:inter-validlity-2up-2}
\end{equation}
If $s \leq t+L-T-1$ or $s \in [t+L-T,L-1-s]_{\Z}$, we have $\phi = sV$ and therefore \eqref{eq:inter-validlity-2up-2} holds; otherwise, $s \geq L-s$, i.e., $s \geq L/2$, we have $\phi = (L-1-s)V$ and $\max \{t-L+2, t-m+1\} \leq t-s \leq t-L/2$, i.e., $\min \{m-1, L-2\} \geq L/2$. On the other side, by condition (i) in Proposition \ref{prop:ru-2-exp}, we have $t-s+L-1 \leq t+L/2-1 \leq t+\min \{m-1, L-2\}-1 \leq t+n-1$, which contradicts to the condition $t-s+L-1 \geq t+n$. It indicates that $s \geq L-s$ is not possible to happen and \eqref{eq:inter-validlity-2up-2} holds.
\end{enumerate}
\item $y_{t-m} = y_t = 1$. If there is a shut-down and thereafter a start-up between $t-m+1$ and $t-1$, the discussion is the same as the case discussed above since $x_{t-m} \geq \Clower y_{t-m}$; otherwise, we consider the case in which the {machine} keeps online throughout $t-m$ to $t$. It follows that $\phi = 0$. We continue to discuss the following two possible cases in terms of the first shut-down time {(denoted as $t+\hat{s}$, $\hat{s} \geq 1$)} after time $t$.
\begin{enumerate}[(1)]
\item $t+\hat{s} \geq t+n+1$. In this case, we have $x_t - x_{t-m} \leq \min \{mV, \Vupper + (\hat{s}-1)V -\Clower\} \leq mV = \mbox{{the} RHS of}$ \eqref{eqn:ru-2-exp-1}.
\item $t+\hat{s} \in [t+1, t+n]_{\Z}$. In this case, we have $x_t - x_{t-m} \leq \min \{mV, \Vupper + (\hat{s}-1)V -\Clower\} \leq \Vupper + (\hat{s}-1)V -\Clower = \mbox{{the} RHS of}$ \eqref{eqn:ru-2-exp-1}.
\end{enumerate}
\end{enumerate}

Next, we prove that inequality \eqref{eqn:ru-2-exp-2} is valid. It is clear that inequality \eqref{eqn:ru-2-exp-2} is valid when $y_{t-m} = y_t = 0$ and $y_{t-m} = 1, \ y_t = 0$ due to constraints  \eqref{eqn:p-minup} and \eqref{eqn:p-lower-bound}. We continue to discuss the remaining two cases in terms of the values of $y_{t-m}$ and $y_t$.
\begin{enumerate}[1)]
\item $y_{t-m} = 0, \ y_t = 1$.  There is at least a start-up between $t-m$ and $t$. We let the last start-up time before $t$ be $t-s$ ($s \in [0,m-1]_{\Z}$). Meanwhile, we consider the case in which $n \geq 1$ since the case in which $n=0$ indicates $t = T$ and clearly \eqref{eqn:ru-2-exp-2} is valid, and let the first shut-down time after $t$ be $t+\hat{s}$ ($\hat{s} \geq 1$). We further discuss the following three possible cases in terms of the value of $t-s+L-1$.
\begin{enumerate}[(1)]
\item $t-s+L-1 \leq t$, i.e., $s \geq L-1$. It follows that $\phi = 0$, $y_i - \sum_{j=0}^{L-1}u_{i-j} = 0$ for all $i \in [t-s, t-s+L-1]_{\Z}$,  and $y_i - \sum_{j=0}^{L-1}u_{i-j} = 1$ for all $i \in [t-s+L, t+\hat{s}-1]_{\Z}$. We further discuss the following two possible cases in terms of the value of $t+\hat{s}$.
\begin{enumerate}[(a)]
\item $t+\hat{s} \geq t+n+1$, i.e., $\hat{s} \geq n+1$. In this case, we have $x_t \leq \Vupper + \min \{s, \hat{s}-1\}V$. From inequality \eqref{eqn:ru-2-exp-2}, we have $x_t \leq \Vupper + \tilde{f}V + (m-1-f)V + \psi $, where 
$\tilde{f} = \sum_{i \in (S \setminus \{t-m+L\}) \cap [t-s+L,t]_{\Z} } (i - d_i)$ and $\psi = (\Clower + V-\Vupper) (y_q - \sum_{j=0}^{\min \{L-1, q-t+m-1\}} u_{q-j}) \geq 0$. We only need to show that 
\begin{equation}
\Vupper + \min \{s, \hat{s}-1\}V \leq \Vupper + \tilde{f}V + (m-1-f)V + \psi. \label{eq:inter-validlity-2up-3}
\end{equation}
We let $t-q = \max \{a \in S\}$ and $t-p = \max \{a \in S, a \leq t-s+L-1\}$. Note that $t-q$ exists since $S \neq \emptyset$ and therefore $f = (t-q) - (t-m+L) = m-L-q$.
\begin{itemize}
\item If $t-q \leq t-s+L-1$, i.e., $s \leq q+L-1$, then we have $\tilde{f} = 0$. It follows that  \eqref{eq:inter-validlity-2up-3} holds since $\min \{s, \hat{s}-1\} \leq s \leq q+L-1 = \tilde{f} + (m-1-f)$.
\item If $t-q \geq t-s+L$ and $t-p$ does not exist, then we have $\tilde{f} = f = m-q$. It follows that  \eqref{eq:inter-validlity-2up-3} holds since $\min \{s, \hat{s}-1\} \leq s \leq m-1 = \tilde{f} + (m-1-f)$.
\item If $t-q \geq t-s+L$ and $t-p$ exists, then we have $\tilde{f} = p-q$ and meanwhile $s \leq p+L-1$ since $t-p \leq t-s+L-1$. It follows that  \eqref{eq:inter-validlity-2up-3} holds since $\min \{s, \hat{s}-1\} \leq s \leq p+L-1 = \tilde{f} + (m-1-f)$.
\end{itemize}
\item $t+\hat{s} \in [t+1, t+n]_{\Z}$. We only need to show $\Vupper + \min \{s, \hat{s}-1\}V \leq \Vupper + \tilde{f}V + (\hat{s}-1)V + \psi$, where $\tilde{f}$ and $\psi$ are defined in \eqref{eq:inter-validlity-2up-3}. Clearly it is true since $\min \{s, \hat{s}-1\} \leq (\hat{s}-1)$ and $\tilde{f}, \psi \geq 0$.
\end{enumerate}
\item $t-s+L-1 \in [t+1, t+n-1]_{\Z}$. From inequality \eqref{eqn:ru-2-exp-2}, we have $x_t \leq \Vupper + (t+{[n-1]^+} - (t-s+L-1)) V + (m-1-f-{[n-1]^+})V + \phi = \Vupper+sV+(m-L-f)V+\phi$. Since $\Vupper + \min \{s, \hat{s}-1\}V \leq \Vupper + sV \leq \Vupper+sV+(m-L-f)V+\phi$, it follows that inequality \eqref{eqn:ru-2-exp-2} is valid. Note that $m-L-f \geq 0$.
\item $t-s+L-1 \geq t+n$. It follows that $t-s \geq t+n-L+1 \geq t-L+2$ and $y_i - \sum_{j=0}^{L-1}u_{i-j} = 0$ for all $i \in [t+1, t+n]_{\Z}$ and inequality \eqref{eqn:ru-2-exp-2} converts to $x_t \leq \Vupper + \phi$. We only need to show
\begin{equation}
\Vupper + \min \{s, \hat{s}-1\}V \leq \Vupper + \phi. \label{eq:inter-validlity-2up-4}
\end{equation}
If $s \leq t+L-T-1$ or $s \in [t+L-T,L-1-s]_{\Z}$, we have $\phi = sV$ and therefore \eqref{eq:inter-validlity-2up-4} holds; otherwise, $s \geq L-s$, i.e., $s \geq L/2$, we have $\phi = (L-1-s)V$ and $\max \{t-L+2, t-m+1\} \leq t-s \leq t-L/2$, i.e., $\min \{m-1, L-2\} \geq L/2$. On the other side, by condition (i) in Proposition \ref{prop:ru-2-exp}, we have $t-s+L-1 \leq t+L/2-1 \leq t+\min \{m-1, L-2\}-1 \leq t+n-1$, which contradicts to the condition $t-s+L-1 \geq t+n$. It indicates that $s \geq L-s$ is not possible to happen and \eqref{eq:inter-validlity-2up-4} holds.
\end{enumerate}
\item $y_{t-m} = y_t = 1$. If there is a shut-down and thereafter a start-up between $t-m+1$ and $t-1$, the discussion is same as the case discussed above since $x_{t-m} \geq \Clower y_{t-m}$; otherwise, we consider the case in which the {machine} keeps online throughout $t-m$ to $t$. It follows that $\phi = 0$. We continue to discuss the following two possible cases in terms of the first shut-down time 
{(denoted as $t+\hat{s}$, $\hat{s} \geq 1$)}
after time $t$.
\begin{enumerate}[(1)]
\item $t+\hat{s} \geq t+n+1$. In this case, we have $x_t - x_{t-m} \leq \min \{mV, \Vupper + (\hat{s}-1)V -\Clower\} \leq mV = \mbox{{the} RHS of}$ \eqref{eqn:ru-2-exp-2}.
\item $t+\hat{s} \in [t+1, t+n]_{\Z}$. In this case, we have $x_t - x_{t-m} \leq \min \{mV, \Vupper + (\hat{s}-1)V -\Clower\} \leq \Vupper + (\hat{s}-1)V -\Clower \leq \mbox{{the} RHS of}$ \eqref{eqn:ru-2-exp-2}.
\end{enumerate}
\end{enumerate}

\textbf{(Facet-defining)} Here we only provide the facet-defining proof for inequality \eqref{eqn:ru-2-exp-2}, as inequality \eqref{eqn:ru-2-exp-1} can be proved to be facet-defining similarly. We generate $3T-2$ linearly independent points (i.e., $(\tilde{x}^{\tilde{r}}, \tilde{y}^{\tilde{r}}, \tilde{u}^{\tilde{r}})_{\tilde{r}=2}^{T}$, $(\hat{x}^{\hat{r}}, \hat{y}^{\hat{r}}, \hat{u}^{\hat{r}})_{\hat{r}=1, \hat{r} \neq t-m}^{T}$, and $(\bar{x}^{\bar{r}}, \bar{y}^{\bar{r}}, \bar{u}^{\bar{r}})_{\bar{r}=1}^{T}$) in conv($P$) that satisfy \eqref{eqn:ru-2-exp-2} at equality.

\begin{enumerate}[1)]
\item For each $\tilde{r} \in [2, T]_{\Z}$ (totally $T-1$ points), we create $(\tilde{x}^{\tilde{r}}, \tilde{y}^{\tilde{r}}, \tilde{u}^{\tilde{r}}) \in$ conv($P$) such that
$\tilde{y}_s^{\tilde{r}} = 1$ for $s \in [\tilde{r}, \min \{\tilde{r}+L-1, T\}]_{\Z}$ and $\tilde{y}_s^{\tilde{r}} = 0$ otherwise and $\tilde{u}_s^{\tilde{r}} = 1$ for $s = \tilde{r}$ and $\tilde{u}_s^{\tilde{r}} = 0$ otherwise.
For the value of $\tilde{x}^{\tilde{r}}$, \sred{we have the following three cases:}
	\begin{enumerate}[(1)]
	\item if $t \in [\tilde{r}, \min \{\tilde{r}+L-1, T\}]_{\Z}$ and $\tilde{r}+L-1 \leq T-1$, then we let $\tilde{x}_s^{\tilde{r}} = \Vupper + \min \{s-\tilde{r}, L-1-s+\tilde{r}\}V$ for $s \in  [\tilde{r}, \min \{\tilde{r}+L-1, T\}]_{\Z}$ and $\tilde{x}_s^{\tilde{r}} = 0$ otherwise;
	\item if $t \in [\tilde{r}, \min \{\tilde{r}+L-1, T\}]_{\Z}$ and $\tilde{r}+L-1 = T$, then we let $\tilde{x}_s^{\tilde{r}} = \Vupper + (\min \{s, t\}-\tilde{r})V$ for $s \in  [\tilde{r}, \min \{\tilde{r}+L-1, T\}]_{\Z}$ and $\tilde{x}_s^{\tilde{r}} = 0$ otherwise;
	\item otherwise, we let $\tilde{x}_s^{\tilde{r}} = \Clower$ for $s \in  [\tilde{r}, \min \{\tilde{r}+L-1, T\}]_{\Z}$ and $\tilde{x}_s^{\tilde{r}} = 0$ otherwise.
	\end{enumerate}
\item For each $\hat{r} \in [1,t-1]_{\Z} \setminus \{t-m\}$ (totally $t-2$ points), 
we create $(\hat{x}^{\hat{r}}, \hat{y}^{\hat{r}}, \hat{u}^{\hat{r}}) \in$ conv($P$) such that
\sred{(i)} $\hat{y}_s^{\hat{r}} = 1$ for $s \in [1,\hat{r}]_{\Z}$ and $\hat{y}_s^{\hat{r}} = 0$ otherwise, \sred{(ii)} $\hat{u}_s^{\hat{r}} = 0$ for all $s \in [2,T]_{\Z}$, and \sred{(iii)} $\hat{x}_s^{\hat{r}} = \Clower+\epsilon$ for $s \in [1, \hat{r}]_{\Z} \setminus \{t-m\}$, $\hat{x}_s^{\hat{r}} = \Clower$ for $s \in [1, \hat{r}]_{\Z} \cap \{t-m\}$, and $\hat{x}_s^{\hat{r}} = 0$ otherwise.
\item For $\hat{r} = t$ (totally one point), 
we create $(\hat{x}^{\hat{r}}, \hat{y}^{\hat{r}}, \hat{u}^{\hat{r}}) \in$ conv($P$) such that
\sred{(i)} $\hat{y}_s^{\hat{r}}= 1$ for $s \in [1, t+m]_{\Z}$ and $\hat{y}_s^{\hat{r}} = 0$ otherwise, \sred{(ii)} $\hat{u}_s^{\hat{r}} = 0$ for all $s \in [2,T]_{\Z}$, and \sred{(iii)}
$\hat{x}_s^{\hat{r}} = \Clower+ [s-t+m]^+ V + \epsilon$ for $s \in [1, t]_{\Z}$, $\hat{x}_s^{\hat{r}} = \Clower+ (t+m-s) V + \epsilon$ for $s \in [t+1, t+m]_{\Z}$, and $\hat{x}_s^{\hat{r}} = 0$ otherwise.
\item For each $\hat{r} \in [t+1,t+m]_{\Z}$ (totally $m$ points), 
we create $(\hat{x}^{\hat{r}}, \hat{y}^{\hat{r}}, \hat{u}^{\hat{r}}) \in$ conv($P$) such that
$\hat{y}_s^{\hat{r}} = 1$ for $s \in [\hat{r}, \min \{\hat{r}+L-1, T\}]_{\Z}$ and $\hat{y}_s^{\hat{r}} = 0$ otherwise, $\hat{u}_s^{\hat{r}} = 1$ for $s = \hat{r}$ and $\hat{u}_s^{\hat{r}} = 0$ otherwise, and $\hat{x}_s^{\hat{r}} = \Clower+\epsilon$ for $s \in [\hat{r}, \min \{\hat{r}+L-1, T\}]_{\Z}$ and $\hat{x}_s^{\hat{r}} = 0$ otherwise.
\item For each $\hat{r} \in [t+m+1,T]_{\Z}$ (totally $T-t-m$ points), 
we create $(\hat{x}^{\hat{r}}, \hat{y}^{\hat{r}}, \hat{u}^{\hat{r}}) \in$ conv($P$) such that
\sred{(i)} $\hat{y}_s^{\hat{r}} = 1$ for $s \in [1,\hat{r}]_{\Z}$ and $\hat{y}_s^{\hat{r}} = 0$ otherwise, \sred{(ii)} $\hat{u}_s^{\hat{r}} = 0$ for all $s \in [2,T]_{\Z}$, 
and \sred{(iii)} $\hat{x}_s^{\hat{r}} = \Clower+ [s-t+m]^+ V + \epsilon$ for $s \in [1, t]_{\Z}$, $\hat{x}_s^{\hat{r}} = \Clower+ (\max\{t+m,s\}-s) V + \epsilon$ for $s \in [t+1, \hat{r}]_{\Z}$, and $\hat{x}_s^{\hat{r}} = 0$ otherwise.
\item For each $\bar{r} \in [1,t-1]_{\Z}$ (totally $t-1$ points), 
we create $(\bar{x}^{\bar{r}}, \bar{y}^{\bar{r}}, \bar{u}^{\bar{r}}) \in$ conv($P$) such that
$\bar{y}_s^{\bar{r}} = 1$ for $s \in [1,\bar{r}]_{\Z}$ and $\bar{y}_s^{\bar{r}} = 0$ otherwise, $\bar{u}_s^{\bar{r}} = 0$ for all $s \in [2,T]_{\Z}$, and $\bar{x}_s^{\bar{r}} = \Clower$ for $s \in [1,\bar{r}]_{\Z}$ and $\bar{x}_s^{\bar{r}} = 0$ otherwise.
\item For $\bar{r} = t$ (totally one point), 
we create $(\bar{x}^{\bar{r}}, \bar{y}^{\bar{r}}, \bar{u}^{\bar{r}}) \in$ conv($P$) such that
\sred{(i)} $\bar{y}_s^{\bar{r}}= 1$ for $s \in [1, t+m]_{\Z}$ and $\bar{y}_s^{\bar{r}} = 0$ otherwise, \sred{(ii)} $\bar{u}_s^{\bar{r}} = 0$ for all $s \in [2,T]_{\Z}$,
and \sred{(iii)} $\bar{x}_s^{\bar{r}} = \Clower+ [s-t+m]^+ V$ for $s \in [1, t]_{\Z}$, $\bar{x}_s^{\bar{r}} = \Clower+ (t+m-s) V$ for $s \in [t+1, t+m]_{\Z}$, and $\bar{x}_s^{\bar{r}} = 0$ otherwise.
\item For each $\bar{r} \in [t+1, t+n-1]_{\Z}$ (totally $n-1$ points), 
we create $(\bar{x}^{\bar{r}}, \bar{y}^{\bar{r}}, \bar{u}^{\bar{r}}) \in$ conv($P$) such that
\sred{(i)} $\bar{y}_s^{\bar{r}} = 1$ for $s \in [t-L+1, \bar{r}]_{\Z}$ and $\bar{y}_s^{\bar{r}} = 0$ otherwise, 
\sred{(ii)} $\bar{u}_s^{\bar{r}} = 1$ for $s=t-L+1$ and $\bar{u}_s^{\bar{r}} = 0$ otherwise,
and \sred{(iii)} $\bar{x}_s^{\bar{r}} = \Vupper+\min\{s-t+L-1, \bar{r}-t\}V$ for $s \in [t-L+1, t-1]_{\Z}$,
$\bar{x}_s^{\bar{r}} = \Vupper+(\bar{r}-s)V$ for $s \in [t, \bar{r}]_{\Z}$, and $\bar{x}_s^{\bar{r}} = 0$ otherwise.
\item For $\bar{r} = t+n$ (totally one point), 
we create $(\bar{x}^{\bar{r}}, \bar{y}^{\bar{r}}, \bar{u}^{\bar{r}}) \in$ conv($P$) such that
$\bar{y}_s^{\bar{r}} = 1$ for $s \in [t-p-L+1, \min \{t+p+L-1, T\}]_{\Z}$ and $\bar{y}_s^{\bar{r}} = 0$ otherwise and 
$\bar{u}_s^{\bar{r}} = 1$ for $s = t-p-L+1$ and $\bar{u}_s^{\bar{r}} = 0$ otherwise, where we define $t-p = \max \{a \in S\}$, which is greater than $\geq t-m+L$.
For the value of $\bar{x}^{\bar{r}}$, \sred{we have the following two cases:}
	\begin{enumerate}[(1)]
	\item if $t+p+L-1 \leq T-1$, then we let $\bar{x}_{\sred{s}}^{\bar{r}} = \Vupper + (\min \{s,2t-s\} - t+p+L-1)V$ for $s \in [t-p-L+1, \min \{t+p+L-1, T\}]_{\Z}$ and $\bar{x}_s^{\bar{r}} = 0$ otherwise;
	\item otherwise, we let $\bar{x}_{\sred{s}}^{\bar{r}} = \Vupper + (\min \{s,t\} - t+p+L-1)V$ for $s \in [t-p-L+1, \min \{t+p+L-1, T\}]_{\Z}$ and $\bar{x}_s^{\bar{r}} = 0$ otherwise.
	\end{enumerate}
\item For each $\bar{r} \in [t+n+1, t+m]_{\Z}$ (totally $m-n$ points), 
we create $(\bar{x}^{\bar{r}}, \bar{y}^{\bar{r}}, \bar{u}^{\bar{r}}) \in$ conv($P$) such that
$\bar{y}_s^{\bar{r}} = 1$ for $s \in [t+n-L+1, \bar{r}]_{\Z}$ and $\bar{y}_s^{\bar{r}} = 0$ otherwise and
$\bar{u}_s^{\bar{r}} = 1$ for $s = t+n-L+1$ and $\bar{u}_s^{\bar{r}} = 0$ otherwise.
For the value of $\bar{x}^{\bar{r}}$, \sred{we have the following two cases:}
	\begin{enumerate}[(1)]
	\item if $t \leq t+n-L$, then we let $\bar{x}_{\sred{s}}^{\bar{r}} = \Clower$ for $s \in [t+n-L+1, \bar{r}]_{\Z}$ and $\bar{x}_{\sred{s}}^{\bar{r}} = 0$ otherwise;
	\item otherwise, we let $\bar{x}_s^{\bar{r}} = \Vupper + \min \{s-t-n+L-1, n-s\}V$ for $s \in [t+n-L+1, t]_{\Z}$, $\bar{x}_s^{\bar{r}} = \Vupper + \min \{n+L-1, n, \bar{r}-s\}V$ for $s \in [t+1, \bar{r}]_{\Z}$, and $\bar{x}_s^{\bar{r}} = 0$ otherwise.
	\end{enumerate}
\item For each $\bar{r} \in [t+m+1,T]_{\Z}$ (totally $T-t-m$ points), 
we create $(\bar{x}^{\bar{r}}, \bar{y}^{\bar{r}}, \bar{u}^{\bar{r}}) \in$ conv($P$) such that
\sred{(i)} $\bar{y}_s^{\bar{r}} = 1$ for $s \in [1,\bar{r}]_{\Z}$ and $\bar{y}_s^{\bar{r}} = 0$ otherwise, \sred{(ii)} $\bar{u}_s^{\bar{r}} = 0$ for all $s \in [2,T]_{\Z}$, and \sred{(iii)}
$\bar{x}_s^{\bar{r}} = \Clower+ [s-t+m]^+ V$ for $s \in [1, t]_{\Z}$, $\bar{x}_s^{\bar{r}} = \Clower+ (\max\{t+m,s\}-s) V$ for $s \in [t+1, \bar{r}]_{\Z}$, and $\bar{x}_s^{\bar{r}} = 0$ otherwise.
\end{enumerate}

In summary, we create $3T-2$ points $(\bar{x}^{\bar{r}}, \bar{y}^{\bar{r}}, \bar{u}^{\bar{r}})_{\bar{r}=1}^{T}$, $(\hat{x}^{\hat{r}}, \hat{y}^{\hat{r}}, \hat{u}^{\hat{r}})_{\hat{r}=1, \hat{r} \neq t-m}^{T}$, and $(\tilde{x}^{\tilde{r}}, \tilde{y}^{\tilde{r}}, \tilde{u}^{\tilde{r}})_{\tilde{r}=2}^{T}$. 
It is easy to see that they are valid and satisfy \eqref{eqn:ru-2-exp-2} at equality. Meanwhile, they are clearly linearly independent since $(\bar{x}, \bar{y}, \bar{u})$ and $(\hat{x}, \hat{y}, \hat{u})$ can construct a lower-triangular matrix in terms of the values of $x$ and $y$ and $(\tilde{x}, \tilde{y}, \tilde{u})$ is further linearly independent with them since it constructs an upper-triangular matrix in terms of the value of $u$.
\end{proof}

\subsection{Proof for Proposition \ref{prop:rd-2-exp}} \label{apx:subsec:rd-2-exp}
\begin{proof}
\textbf{(Validity)} It is clear that inequality \eqref{eqn:rd-2-exp} is valid when $y_t = y_{t+m} = 0$ and $y_t = 0, \ y_{t+m} = 1$ due to constraints  \eqref{eqn:p-minup} and \eqref{eqn:p-lower-bound}. We continue to discuss the remaining two cases in terms of the values of $y_{t}$ and $y_{t+m}$.
\begin{enumerate}[1)]
\item $y_{t} = 1, \ y_{t+m} = 0$. There is at least a shut-down between $t$ and $t+m$ and without loss of generality we consider there is only one shut-down between them and let it be $t+\hat{s}$ ($\hat{s} \in [1, m]_{\Z}$). Meanwhile, we let the last start-up time before $t$ be $t-s$ ($s \geq 0$) and continue to discuss the following three possible cases in terms of the value of $t-s+L-1$ ($t-s+L-1 \leq t + \hat{s} - 1$).
\begin{enumerate}[(1)]
\item $t-s+L-1 \leq t$, i.e., $s \geq L-1$. It follows that $\phi = 0$. We continue to discuss the following three possible cases in terms of the value of $t+\hat{s}$.
\begin{enumerate}[(a)]
\item $t+\hat{s} \in [t+1, \tilde{t}-1]_{\Z}$. In this case, we have $x_t \leq \Vupper + \min \{s, \hat{s}-1\}V \leq \Vupper + (\hat{s}-1)V = \mbox{{the} RHS of}$ \eqref{eqn:rd-2-exp}.
\item $t+\hat{s} \in [\tilde{t}, q]_{\Z}$. In this case, we have $x_t \leq \Vupper + \min \{s, \hat{s}-1\}V \leq \Vupper + (\tilde{t}-1-t)V + (t+\hat{s}-t)V \leq \Vupper + (\tilde{t}-1-t)V + V \sum_{i \in (S \setminus \{t+m\}) \cap [\tilde{t}, t+\hat{s}-1]_{\Z}} (d_i - i) = \mbox{{the} RHS of}$ \eqref{eqn:rd-2-exp}.
\item $t+\hat{s} \geq q+1$. In this case, we have $x_t \leq \Vupper + \min \{s, \hat{s}-1\}V \leq \Vupper + (m-1)V \leq \Clower + mV = \Vupper + (\tilde{t}-1-t)V + (t+m-\tilde{t})V + (\Clower + V- \Vupper) = \mbox{{the} RHS of}$ \eqref{eqn:rd-2-exp}.
\end{enumerate}
\item $t-s+L-1\in [t+1, \tilde{t}-1]_{\Z}$. It follows that $y_i - \sum_{j=0}^{L-1} u_{i-j} = 0$ for all $i \in [t, t-s+L-1]_{\Z}$. We continue to discuss the following two possible cases in terms of the value of $t+\hat{s}$.
\begin{enumerate}[(a)]
\item $t+\hat{s} \in [\tilde{t}, q]_{\Z}$. In this case, we have $x_t \leq \Vupper + \min \{s, \hat{s}-1\}V$. From inequality \eqref{eqn:rd-2-exp}, we have $x_t \leq \Vupper + [(\tilde{t}-1)-(t-s+L-1)]V + \tilde{f} V+ \phi$, where $\tilde{f} = \sum_{i \in (S \setminus \{t+m\}) \cap [\tilde{t}, t+\hat{s}-1]_{\Z}} (d_i - i) \geq t+\hat{s}-\tilde{t}$. We only need to show 
\begin{equation}
\Vupper + \min \{s, \hat{s}-1\}V \leq \Vupper + [(\tilde{t}-1)-(t-s+L-1)]V + \tilde{f} V+ \phi. \label{eq:inter-validlity-2down-1}
\end{equation}
If $s \leq t+L-T-1$ or $s \in [t+L-T,L-1-s]_{\Z}$, we have $\phi = sV$ and therefore \eqref{eq:inter-validlity-2down-1} holds since $\tilde{f} \geq t+\hat{s}-\tilde{t} \geq 0$ and $(\tilde{t}-1)-(t-s+L-1) \geq 0$; otherwise, $s \geq L-s$, i.e., $s \geq L/2$, we have $\phi = (L-1-s)V$ and the RHS of \eqref{eq:inter-validlity-2down-1} converts to $\Vupper + (\tilde{t}-1-t)V + \tilde{f} V \geq \Vupper + (\tilde{t}-1-t)V + (t+\hat{s}-\tilde{t}) V =  \Vupper + (\hat{s}-1)V$, which indicates that \eqref{eq:inter-validlity-2down-1} holds.
\item $t+\hat{s} \geq q+1$. In this case, we have $x_t \leq \Vupper + \min \{s, \hat{s}-1\}V$. From inequality \eqref{eqn:rd-2-exp}, we have $x_t \leq \Vupper + [(\tilde{t}-1)-(t-s+L-1)]V + (t+m-\tilde{t}) V+ (\Clower + V-\Vupper) + \phi$. If $\phi = sV$, we have $\Vupper + \min \{s, \hat{s}-1\}V \leq \Vupper + \phi$, indicating \eqref{eqn:rd-2-exp} is valid; otherwise $\phi = (L-1-s)V$, we have $x_t \leq \Clower + mV$ from inequality \eqref{eqn:rd-2-exp}, which indicates that \eqref{eqn:rd-2-exp} is valid since $\Vupper + \min \{s, \hat{s}-1\}V \leq \Vupper + sV \leq \Vupper + (m-1)V \leq \Clower + mV$.
\end{enumerate}
\item $t-s+L-1\in [\tilde{t}, t+\hat{s}-1]_{\Z}$. Since $[\tilde{t}, t+\hat{s}-1]_{\Z} \neq \emptyset$ in this case, we have $\tilde{t} = \hat{t}$, otherwise $\tilde{t} \geq t+m \geq t+\hat{s}$. From inequality \eqref{eqn:rd-2-exp}, we have $x_t \leq \Vupper + \tilde{f} V+ \phi + \psi$, where $\tilde{f} = \sum_{i \in (S \setminus \{t+m\}) \cap [t-s+L, t+\hat{s}-1]_{\Z}} (d_i - i) \geq 0$ and $\psi = (\Clower + V - \Vupper) (y_{q} - \sum_{j=0}^{ \min \{L-1, q-2\} } u_{q-j}) \geq 0$. 
Now, we show $\phi = sV$ and therefore $\Vupper + \min \{s, \hat{s}-1\}V \leq \Vupper + sV = \Vupper + \phi \leq \Vupper + \tilde{f} V+ \phi + \psi$, which indicates that \eqref{eqn:rd-2-exp} is valid. 
By contradiction, if $\phi \neq sV$, i.e., $s \geq L-s$ and $\phi = (L-1-s)V$, then we have $s \geq L/2$ and $s \leq \min \{t-2, L-2\}$ since $t-s \geq 2$ and $t-s+L-1 \geq t+1$. It follows that $\min \{t-2, L-2\} \geq L/2$ and $\hat{t} = t+\min \{t-2, L-2\}$ by the definition of $\hat{t}$. It follows that $t-s+L-1 \leq t+L/2-1 \leq t+\min \{t-2, L-2\}-1 = \hat{t}-1 = \tilde{t}-1$, which contradicts to this case that $t-s+L-1 \geq \tilde{t}$.
\end{enumerate}
\item $y_{t} = y_{t+m} = 1$. If there is a shut-down and thereafter a start-up between $t+1$ and $t+m-1$, the discussion is same as the case discussed above since $x_{t+m} \geq \Clower y_{t+m}$; otherwise, we consider the case in which the {machine} keeps online throughout $t$ to $t+m$. We continue to discuss the following three possible cases in terms of the value of $t-s+L-1$, where $t-s$ is {defined as} the last start-up time  before time $t$.
\begin{enumerate}[(1)]
\item $t-s+L-1 \leq t$, i.e., $s \geq L-1$. From inequality \eqref{eqn:rd-2-exp}, we have $x_t - x_{t+m} \leq mV$, which is valid due to ramp-down constraints \eqref{eqn:p-ramp-down}.
\item $t-s+L-1\in [t+1, \tilde{t}-1]_{\Z}$. In this case, we have $x_t - x_{t+m} \leq \min \{mV, \Vupper + sV -\Clower \}$. From inequality \eqref{eqn:rd-2-exp}, we have $x_t - x_{t+m} \leq \Vupper - \Clower + [(\tilde{t}-1)-(t-s+L-1)]V + (t+m-\tilde{t}) V+ (\Clower + V-\Vupper) + \phi$. 
If $\phi = sV$, we have $x_t - x_{t+m} \leq \Vupper + sV - \Clower = \Vupper + \phi - \Clower$, indicating \eqref{eqn:rd-2-exp} is valid; otherwise $\phi = (L-1-s)V$, we have $x_t-x_{t+m} \leq mV$ from inequality \eqref{eqn:rd-2-exp}, which indicates that \eqref{eqn:rd-2-exp} is valid.
\item $t-s+L-1 \geq \tilde{t}$. We further discuss the following two possible cases.
\begin{enumerate}[(a)]
\item If $\hat{t} \leq t+m$, then we have $\tilde{t} = \hat{t}$. From inequality \eqref{eqn:rd-2-exp}, we have $x_t - x_{t+m} \leq \Vupper -\Clower + \tilde{f} V+ \phi + \psi$, where $\tilde{f} = \sum_{i \in (S \setminus \{t+m\}) \cap [t-s+L, t+m]_{\Z}} (d_i - i) \geq 0$ and $\psi = (\Clower + V - \Vupper) (y_{q} - \sum_{j=0}^{ \min \{L-1, q-2\} } u_{q-j}) \geq 0$. 
Now, we show $\phi = sV$ and therefore $x_t - x_{t+m} \leq \Vupper + sV - \Clower = \Vupper + \phi - \Clower \leq \Vupper - \Clower + \tilde{f} V+ \phi + \psi$, which indicates that \eqref{eqn:rd-2-exp} is valid. 
By contradiction, if $\phi \neq sV$, i.e., $s \geq L-s$ and $\phi = (L-1-s)V$, then we have $s \geq L/2$ and $s \leq \min \{t-2, L-2\}$ since $t-s \geq 2$ and $t-s+L-1 \geq t+1$. It follows that $\min \{t-2, L-2\} \geq L/2$ and $\hat{t} = t+\min \{t-2, L-2\}$ by the definition of $\hat{t}$. It follows that $t-s+L-1 \leq t+L/2-1 \leq t+\min \{t-2, L-2\}-1 = \hat{t}-1 = \tilde{t}-1$, which contradicts to this case that $t-s+L-1 \geq \tilde{t}$.
\item If $\hat{t} \geq t+m+1$, then we have $\tilde{t} = t+m$ and $t-s+L-1 \geq \tilde{t} = t+m$, i.e., $m \leq L-1-s$. It follows that $y_i - \sum_{j=0}^{L-1}u_{i-j} = 0$ for all $i \in [t+1, t+m]$. From inequality \eqref{eqn:rd-2-exp}, we have $x_t - x_{t+m} \leq \Vupper - \Clower + \phi$, which is valid no matter $\phi = sV$ or $(L-1-s)V$ since $x_t - x_{t+m} \leq \Vupper + sV -\Clower$ and $x_t - x_{t+m} \leq mV \leq (L-1-s)V$.
\end{enumerate}
\end{enumerate}
\end{enumerate}

\textbf{(Facet-defining)} The facet-defining proof is similar with that in Appendix \ref{apx:subsec:ru-2-exp} for Proposition \ref{prop:ru-2-exp} and thus is omitted here.
\end{proof}

\subsection{Proof for Proposition \ref{prop:3degree-1-multi-period}} \label{apx:subsec:3degree-1-multi-period}
\begin{proof}
\textbf{(Validity)} We discuss the following two \fblue{possible} cases in terms of the value of $y_{t}$:
\begin{enumerate}[1)]
\item If $y_{t} = 0$, then $u_{t-s} = 0$ for all $s \in [0, L-1]_{\Z}$ due to constraints \eqref{eqn:p-minup} and $y_{t+1} = u_{t+1}$ due to constraints \eqref{eqn:p-minup} and \eqref{eqn:p-udef}. It follows that inequality \eqref{eqn:3degree-1-multi-period} converts to $x_{t-2} - x_{t-1} \leq \Vupper y_{t-2} - (\Vupper - V) y_{t-1}$, which can be easily verified to be valid through considering all the three possible cases, i.e., (1) $y_{t-2} = y_{t-1} = 1$, (2) $y_{t-2} = 1$ and $y_{t-1} = 0$, and (3) $y_{t-2} = y_{t-1} = 0$.
\item If $y_{t} = 1$, then $\sum_{s=0}^{L-1} u_{t-s} \leq 1$ due to constraints \eqref{eqn:p-minup}. We further discuss the following four possible cases.
\begin{enumerate}[(1)]
\item If $u_{t-s} = 0$ for all $s \in [0, L-1]_{\Z}$, then $y_{t-1} = 1$ due to constraints \eqref{eqn:p-udef} and $L \geq 2$. It follows that inequality \eqref{eqn:3degree-1-multi-period} converts to $x_{t-2} - x_{t-1} + x_{t} \leq \Vupper y_{t-2} - (\Vupper - V) + \Cupper + (\Clower + V - \Vupper) (y_{t+1} - 1)$, which can be easily verified to be valid through considering all the four possible cases, i.e., (1) $y_{t-2} = y_{t+1} = 1$, (2) $y_{t-2} = 1$ and $y_{t+1} = 0$, (3) $y_{t-2} = 0$ and $y_{t+1} = 1$, and (4) $y_{t-2} = y_{t+1} = 0$.
\item If $u_{t} = 1$, then $u_{t-s} = 0$ for all $s \in [1, L-1]_{\Z}$. Meanwhile, we have $y_{t+1} = 1$ and $u_{t+1} = 0$ due to $L \geq 2$ and $ y_{t-1} = 0$ due to constraints \eqref{eqn:p-ramp-down}. It follows that inequality \eqref{eqn:3degree-1-multi-period} converts to $x_{t-2} + x_{t} \leq \Vupper y_{t-2} + \Vupper$, which is valid since $x_{t-2} \leq \Vupper y_{t-2}$ and $x_{t} \leq \Vupper$ due to constraints \eqref{eqn:p-ramp-up} and \eqref{eqn:p-ramp-down}.
\item If $u_{t-1} = 1$, then $u_{t-s} = 0$ for all $s \in [2, L-1]_{\Z}$ and $u_{t} = 0$. Meanwhile, we have $y_{t-2} = 0$ due to \eqref{eqn:p-ramp-down} and $y_{t} = 1$ and $u_{t+1} = 0$ due to $L \geq 2$. It follows that inequality \eqref{eqn:3degree-1-multi-period} converts to $ x_{t} - x_{t-1} \leq V + (\Clower + V - \Vupper) (y_{t+1} - 1)$, which can be easily verified to be valid for either $y_{t+1} = 1$ or $y_{t+1} = 0$.
\item If $u_{t-s-2} = 1$ for some $s \in [0, L-3]_{\Z}$ when $L \geq 3$, then $y_{t-2} = y_{t-1} = y_{t} = 1$ due to minimum-up time constraints \eqref{eqn:p-minup}. It follows that inequality \eqref{eqn:3degree-1-multi-period} converts to $x_{t-2} - x_{t-1} + x_{t} \leq \Vupper + sV + V + (\Clower + V - \Vupper) (y_{t+1} - 1)$, which can be easily verified to be valid either $y_{t+1} = 1$ or $y_{t+1} = 0$ since $x_{t-2} \leq \Vupper + sV$ and $ x_{t} - x_{t-1} \leq V + (\Clower + V - \Vupper) (y_{t+1} - 1)$.
\end{enumerate}
\end{enumerate}

\textbf{(Facet-defining)} We only provide the facet-defining proof for the case when $L=3$ since the case when $L=2$ can be proved similarly and thus is omitted here.
We generate generate $3T-2$ linearly independent points in conv($P$) that satisfy \eqref{eqn:3degree-1-multi-period} at equality in the following groups.
\begin{enumerate}[1)]
\item For each $r \in [1,t-3]_{\Z}$ (totally $t-3$ points), we create $(\acute{x}^r, \acute{y}^r, \acute{u}^r) \in$ conv($P$) such that
\begin{equation*}
\acute{x}_s^r = \left\{\begin{array}{l}
  \Clower, \ s \in [1,r]_{\Z} \\
  0, \ s \in [r+1,T]_{\Z}
\end{array} \right., \
\acute{y}_s^r = \left\{\begin{array}{l}
  1, \ s \in [1,r]_{\Z} \\
  0, \ s \in [r+1,T]_{\Z}
\end{array} \right., \ \mbox{and} \
\begin{array}{l}
\acute{u}_s^r = 0, \\
\forall s
\end{array}.
\end{equation*}

\item For $r = t-1$ (totally one point), we create $(\acute{x}^r, \acute{y}^r, \acute{u}^r) \in$ conv($P$) such that
\begin{equation*}
\acute{x}_s^r = \left\{\begin{array}{l}
  \Clower + V, \ s \in [1,r-1]_{\Z} \\
  \Clower, \ s = r \\
  0, \ s \in [r+1,T]_{\Z}
\end{array} \right., \
\acute{y}_s^r = \left\{\begin{array}{l}
  1, \ s \in [1,r]_{\Z} \\
  0, \ s \in [r+1,T]_{\Z}
\end{array} \right., \ \mbox{and} \
\begin{array}{l}
\acute{u}_s^r = 0, \\
\forall s
\end{array}.
\end{equation*}

\item For each $r \in [1,t-1]_{\Z}$ (totally $t-1$ points), we create $(\bar{x}^r, \bar{y}^r, \bar{u}^r) \in$ conv($P$) such that $\bar{y}_s^r = 1$ for each $s \in [1,r]_{\Z}$ and $\bar{y}_s^r = 0$ otherwise. Thus $\bar{u}_s^r = 0$ for each $s \in [2,T]_{\Z}$ {due to constraints \eqref{eqn:p-minup} - \eqref{eqn:p-udef}}. For the value of $\bar{x}^r$: (1) for each $r \in [1,t-3]_{\Z}$, we let $\bar{x}_s^r = \Clower + \epsilon$ for each $s \in [1,r]_{\Z}$; (2) for $r= t-2$, we let $\bar{x}_s^r = \Vupper$ for each $s \in [1,r]_{\Z}$; (3) for $r= t-1$, we let $\bar{x}_s^r = \Clower + V + \epsilon$ for each $s \in [1,r-1]_{\Z}$ and $\bar{x}_s^r = \Clower + \epsilon$ for $s = r$.

\item For $r = t$ (totally one point), we create $(\bar{x}^r, \bar{y}^r, \bar{u}^r) \in$ conv($P$) such that $\bar{y}_s^r = 1$ for each $s \in [1,T]_{\Z}$ and thus $\bar{u}_s^r = 0$ for each $s \in [2,T]_{\Z}$ {due to constraints \eqref{eqn:p-minup} - \eqref{eqn:p-udef}}. For the value of $\bar{x}^r$, we let $\bar{x}_s^r = \Cupper - V$ for $s = t-1$ and $\bar{x}_s^r = \Cupper$ otherwise.

\item For each $r \in [t+1,T]_{\Z}$ (totally $T-t$ points), we create $(\bar{x}^r, \bar{y}^r, \bar{u}^r) \in$ conv($P$) such that
\begin{equation*}
\bar{x}_s^r = \left\{\begin{array}{l}
  \Vupper, \ s = t-2 \\
  \Clower+V, \ s = t \\
  \Clower, \ s \in [r,T]_{\Z} \cup \{t-1\} \\
  0, \ \mbox{o.w.}
\end{array} \right., \
\bar{y}_s^r = \left\{\begin{array}{l}
  1, \ s \in [t-2,r]_{\Z} \\
  0, \ \mbox{o.w.}
\end{array} \right., \ \mbox{and} \
\bar{u}_s^r = \left\{\begin{array}{l}
  1, \ s = t-2 \\
  0, \ \mbox{o.w.}
\end{array} \right..
\end{equation*}

\item For each $r \in [2,t]_{\Z}$ (totally $t-1$ points), we create $(\hat{x}^r, \hat{y}^r, \hat{u}^r) \in$ conv($P$) such that $\hat{y}_s^r = 1$ for each $s \in [r,r+L-1]_{\Z}$ (i.e., $s \in [r,r+2]_{\Z}$) and $\hat{y}_s^r = 0$ otherwise. Thus $\hat{u}_s^r = 1$ for $s = r$ {due to constraints \eqref{eqn:p-minup} - \eqref{eqn:p-udef}}. For the value of $\hat{x}^r$: 
(1) for each $r \in [2,t-3]_{\Z} \cup \{t-1\}$, we let $\hat{x}_s^r = \Clower $ for each $s \in [r,r+2]_{\Z} \setminus \{t-2\}$ and $\hat{x}_s^r = \Clower+V $ for each $s \in [r,r+2]_{\Z} \cap \{t-2\}$;
(2) for $r= t-2$, we let $\hat{x}_s^r = \Vupper$ for each $s \in \{t-2, t\}$ and $\hat{x}_s^r = \Clower$ for each $s = t-1$; 
(3) for $r= t$, we let $\hat{x}_s^r = \Vupper$ for each $s \in [r,r+L-1]_{\Z}$.

\item For each $r \in [t+1,T]_{\Z}$ (totally $T-t$ points), we create $(\hat{x}^r, \hat{y}^r, \hat{u}^r) \in$ conv($P$) such that
\begin{equation*}
\hat{x}_s^r = \left\{\begin{array}{l}
  \Clower, \ s \in [r,T]_{\Z} \\
  0, \ \mbox{o.w.}
\end{array} \right., \
\hat{y}_s^r = \left\{\begin{array}{l}
  1, \ s \in [r,T]_{\Z} \\
  0, \ \mbox{o.w.}
\end{array} \right., \ \mbox{and} \
\hat{u}_s^r = \left\{\begin{array}{l}
  1, \ s = r \\
  0, \ \mbox{o.w.}
\end{array} \right..
\end{equation*}

\item For each $r \in [t+1,T]_{\Z}$ (totally $T-t$ points), we create $(\grave{x}^r, \grave{y}^r, \grave{u}^r) \in$ conv($P$) such that
\begin{equation*}
\grave{x}_s^r = \left\{\begin{array}{l}
  \Clower + \epsilon, \ s \in [r,T]_{\Z} \\
  0, \ \mbox{o.w.}
\end{array} \right., \
\grave{y}_s^r = \left\{\begin{array}{l}
  1, \ s \in [r,T]_{\Z} \\
  0, \ \mbox{o.w.}
\end{array} \right., \ \mbox{and} \
\grave{u}_s^r = \left\{\begin{array}{l}
  1, \ s = r \\
  0, \ \mbox{o.w.}
\end{array} \right..
\end{equation*}

\item We create $(\dot{x}, \dot{y}, \dot{u}) \in$ conv($P$) such that $\dot{y}_s = 1$ for each $s \in \{t-1, t, t+1\}$ and $\dot{y}_s = 0$ otherwise. Thus we have $\dot{u}_s = 1$ for $s = t-1$. Meanwhile, we let $\dot{x}_{t-2}=\dot{x}_{t}=\Clower+\epsilon$ and $\dot{x}_{t-1}=\Clower+V+\epsilon$.
\end{enumerate}

Finally, it is clear that $(\bar{x}^r, \bar{y}^r, \bar{u}^r)_{r=1}^{T}$ and $(\hat{x}^r, \hat{y}^r, \hat{u}^r)_{r=2}^{T}$ are linearly independent because they can construct a lower-diagonal matrix. In addition, $(\acute{x}^r, \acute{y}^r, \acute{u}^r)_{r=1, r \neq t-2}^{t-1}$, $(\grave{x}^r, \grave{y}^r, \grave{u}^r)_{r=t+1}^{T}$, and $(\dot{x}, \dot{y}, \dot{u})$ are also linearly independent with them after Gaussian eliminations between $(\acute{x}^r, \acute{y}^r, \acute{u}^r)_{r=1, r \neq t-2}^{t-1}$, $(\dot{x}, \dot{y}, \dot{u})$, and $(\bar{x}^r, \bar{y}^r, \bar{u}^r)_{r=1}^{t-1}$, and between $(\hat{x}^r, \hat{y}^r, \hat{u}^r)_{r=t+1}^{T}$ and $(\grave{x}^r, \grave{y}^r, \grave{u}^r)_{r=t+1}^{T}$.
\end{proof}

\subsection{Proof for Proposition \ref{prop:ru-3-exp}} \label{apx:subsec:ru-3-exp}
\begin{proof}
\textbf{(Validity)} Here we only provide the validity proof for the case in which $L \geq 3$, as the case in which $L=2$ can be proved similarly. We discuss the following two possible cases in terms of the start-up before $t$. Meanwhile, we let $f = \sum_{i \in S} (i - d_i)$.
\begin{enumerate}[1)]
\item There is no start-up before $t$. If the {machine} is offline throughout the first time period to $t$, clearly \eqref{eqn:ru-3-exp} is valid; otherwise, we consider the {machine} is online since the first time period and shuts down at $t+\hat{s}$.
\begin{enumerate}[(1)]
\item $t+\hat{s} \leq t-2$. It follows that $y_{t-2} = y_{t-1} = y_t = 0$ and clearly \eqref{eqn:ru-3-exp} is valid due to constraints \eqref{eqn:p-minup}.
\item $t+\hat{s} = t-1$. Inequality \eqref{eqn:ru-3-exp} converts to $x_{t-2} \leq \Vupper + \psi$, which is valid due to constraints \eqref{eqn:p-ramp-down} and $\psi = V \sum_{i \in S \cap [t-m+1,t-2]_{\Z}} (i - d_i) + (\Cupper - \Vupper - {[m+L-3]^+} V) \geq 0$.
\item $t+\hat{s} = t$. Inequality \eqref{eqn:ru-3-exp} converts to $x_{t-2} - x_{t-1} \leq V + \psi$, which is valid due to constraints \eqref{eqn:p-ramp-down} and $\psi = fV + (\Cupper - \Vupper - {[m+L-3]^+} V) \geq 0$.
\item $t+\hat{s} \geq t+1$. Inequality \eqref{eqn:ru-3-exp} converts to $x_{t-2} - x_{t-1} + x_t \leq V + \Cupper$, which is valid due to constraints \eqref{eqn:p-upper-bound} and \eqref{eqn:p-ramp-down}.
\end{enumerate}
\item There is at least one start-up before $t$. Without loss of generality, we consider there is only one start-up before $t$ and let its time be $t-s$ ($s \geq 0$). Meanwhile, we let the first shut-down time after this start-up be $t+\hat{s}$. We continue to discuss the following four possible cases in terms of the value of $t-s+L-1$.
\begin{enumerate}[(1)]
\item $t-s+L-1 \leq t-m-1$. In this case, we can follow the same discussions in case 1) to show that \eqref{eqn:ru-3-exp} is valid and thus omit them here.
\item $t-s+L-1 \in [t-m, t-1]_{\Z}$. We continue to discuss the following four possible cases in terms of the value of $t+\hat{s}$.
\begin{enumerate}[(a)]
\item $t+\hat{s} \leq t-2$. It follows that $y_{t-2} = y_{t-1} = y_t = 0$ and clearly \eqref{eqn:ru-3-exp} is valid due to constraints \eqref{eqn:p-minup}.
\item $t+\hat{s} = t-1$. Inequality \eqref{eqn:ru-3-exp} converts to $x_{t-2} \leq \Vupper + \psi$, which is valid due to constraints \eqref{eqn:p-ramp-down} and $\psi = V \sum_{i \in S \setminus [t-s, t-s+L+1]_{\Z}} (i - d_i) \geq 0$.
\item $t+\hat{s} = t$. Inequality \eqref{eqn:ru-3-exp} converts to $x_{t-2} - x_{t-1} \leq V + \psi$, which is valid due to constraints \eqref{eqn:p-ramp-down} and $\psi = V \sum_{i \in S \setminus [t-s, t-s+L+1]_{\Z}} (i - d_i) \geq 0$.
\item $t+\hat{s} \geq t+1$. Inequality \eqref{eqn:ru-3-exp} converts to $x_{t-2} - x_{t-1} + x_t \leq \Vupper + V + \tilde{f}V + ({[m+L-3]^+}-f)V$, where $\tilde{f} = \sum_{i\in S \setminus [t-m+1,t-s+L-1]_{\Z}}(i-d_i)$. If $t-s+L-1 = t-m$, i.e., $s = m+L-1$, then $f=\tilde{f}$ and \eqref{eqn:ru-3-exp} further converts to $x_{t-2} - x_{t-1} + x_t \leq \Vupper + V + {[m+L-3]^+}V$, which is valid since $x_{t-2} \leq \Vupper + {[s-2]^+} V \leq \Vupper + {[m+L-3]^+}V$ and $x_t - x_{t-1} \leq V$. Otherwise, we consider $t-s+L-1 \geq t-m+1$, i.e., $s \leq m+L-2$, and let $t-q = \max \{a \in S\}$ and $t-p = \max \{a \in S, a \leq t-s+L-1\}$. If $t-q$ does not exist, then $f=\tilde{f}=0$ and we also have 
\eqref{eqn:ru-3-exp} is valid. Otherwise, we consider $S \neq \emptyset$ and therefore $t-q$ exists and $f=(t-q)-(t-m) = m-q$.
We continue to discuss the following three possible cases.
\begin{itemize}
\item If $t-q \leq t-s+L-1$, i.e., $s \leq q+L-1$, then we have $\tilde{f} = 0$. \eqref{eqn:ru-3-exp} further converts to $x_{t-2} - x_{t-1} + x_t \leq \Vupper + V + {[q+L-3]^+}V$, which is valid since $x_{t-2} \leq \Vupper + {[s-2]^+}V \leq \Vupper + {[q+L-3]^+}V$ and $x_t - x_{t-1} \leq V$.
\item If $t-q \geq t-s+L$ and $t-p$ does not exist, then we have $\tilde{f} = f = m-q$. \eqref{eqn:ru-3-exp} further converts to $x_{t-2} - x_{t-1} + x_t \leq \Vupper + V + {[m+L-3]^+}V$, which is valid since $x_{t-2} \leq \Vupper + {[s-2]^+}V \leq \Vupper + {[m+L-3]^+}V$ and $x_t - x_{t-1} \leq V$.
\item If $t-q \geq t-s+L$ and $t-p$ exists, then we have $\tilde{f} = p-q$ and meanwhile $s \leq p+L-1$ since $t-p \leq t-s+L-1$. \eqref{eqn:ru-3-exp} further converts to $x_{t-2} - x_{t-1} + x_t \leq \Vupper + V + {[p+L-3]^+} V$, which is valid since $x_{t-2} \leq \Vupper + {[s-2]^+}V \leq \Vupper + {[p+L-3]^+}V$ and $x_t - x_{t-1} \leq V$.
\end{itemize}
\end{enumerate}
\item $t-s+L-1 = t$. It follows that $y_i - \sum_{j=0}^{L-1}u_{i-j}=0$ for all $i \in [t-s,t]_{\Z}$. Inequality \eqref{eqn:ru-3-exp} converts to $x_{t-2} - x_{t-1} + x_t \leq V+\Vupper+ {[L-3]^+}V$, which is valid since $x_{t-2} \leq \Vupper + {[L-3]^+}V$ and $x_t - x_{t-1} \leq V$.
\item $t-s+L-1 \geq t+1$, i.e., $s \leq L-2$. It is clearly that \eqref{eqn:ru-3-exp} is valid when $s \in [0,2]_{\Z}$. Now we consider $s \geq 3$, i.e., $s \in [3, L-2]_{\Z}$. Inequality \eqref{eqn:ru-3-exp} converts to $x_{t-2} - x_{t-1} + x_t \leq V+\Vupper+{[L-3]^+}V$, which is valid since $x_{t-2} \leq \Vupper + {[L-3]^+}V$ and $x_t - x_{t-1} \leq V$.
\end{enumerate}
\end{enumerate}

\textbf{(Facet-defining)}
Here we only provide the facet-defining proof for the case in which $L \geq 3$ and  $n_{|S|} \leq t-2$, as the case in which $L=2$ or $n_{|S|} = t-1$ can be proved similarly. We generate $3T-2$ linearly independent points (i.e., $(\tilde{x}^{\tilde{r}}, \tilde{y}^{\tilde{r}}, \tilde{u}^{\tilde{r}})_{\tilde{r}=2}^{T}$, $(\hat{x}^{\hat{r}}, \hat{y}^{\hat{r}}, \hat{u}^{\hat{r}})_{\hat{r}=1, \hat{r} \neq t-2}^{T}$, and $(\bar{x}^{\bar{r}}, \bar{y}^{\bar{r}}, \bar{u}^{\bar{r}})_{\bar{r}=1}^{T}$) in conv($P$) that satisfy \eqref{eqn:ru-3-exp} at equality. As the condition in Proposition \ref{prop:ru-3-exp} described, we have $t=T$ in the following proof.

\begin{enumerate}[1)]
\item For each $\tilde{r} \in [2, t-L-2]_{\Z}$ (totally $[t-L-3]^+$ points), we create $(\tilde{x}^{\tilde{r}}, \tilde{y}^{\tilde{r}}, \tilde{u}^{\tilde{r}}) \in$ conv($P$) such that
$\tilde{y}_s^{\tilde{r}} = 1$ for $s \in [\tilde{r}, \min \{\tilde{r}+L-1, T\}]_{\Z}$ and $\tilde{y}_s^{\tilde{r}} = 0$ otherwise, $\tilde{u}_s^{\tilde{r}} = 1$ for $s = \tilde{r}$ and $\tilde{u}_s^{\tilde{r}} = 0$ otherwise, and $\tilde{x}_s^{\tilde{r}} = \Clower$ for $s \in [\tilde{r}, \min \{\tilde{r}+L-1, T\}]_{\Z}$ and $\tilde{x}_s^{\tilde{r}} = 0$ otherwise.
\item For $\tilde{r}=t-L-1$ (totally one point), 
we create $(\tilde{x}^{\tilde{r}}, \tilde{y}^{\tilde{r}}, \tilde{u}^{\tilde{r}}) \in$ conv($P$) such that
$\tilde{y}_s^{\tilde{r}} = 1$ for $s \in [\tilde{r}, \min \{\tilde{r}+L-1, T\}]_{\Z}$ and $\tilde{y}_s^{\tilde{r}} = 0$ otherwise, $\tilde{u}_s^{\tilde{r}} = 1$ for $s = \tilde{r}$ and $\tilde{u}_s^{\tilde{r}} = 0$ otherwise, and $\tilde{x}_s^{\tilde{r}} = \Vupper$ for $s \in [\tilde{r},\tilde{r}+L-1]_{\Z}$ and $\tilde{x}_s^{\tilde{r}} = 0$ otherwise.
\item For $\tilde{r}=t-L$ (totally one point), 
we create $(\tilde{x}^{\tilde{r}}, \tilde{y}^{\tilde{r}}, \tilde{u}^{\tilde{r}}) \in$ conv($P$) such that
\sred{(i)} $\tilde{y}_s^{\tilde{r}} = 1$ for $s \in [\tilde{r}, \min \{\tilde{r}+L-1, T\}]_{\Z}$ and $\tilde{y}_s^{\tilde{r}} = 0$ otherwise, \sred{(ii)} $\tilde{u}_s^{\tilde{r}} = 1$ for $s = \tilde{r}$ and $\tilde{u}_s^{\tilde{r}} = 0$ otherwise, and \sred{(iii)} $\tilde{x}_s^{\tilde{r}} = \Clower$ for $s \in [\tilde{r},\tilde{r}+L-1]_{\Z} \setminus \{t-2\}$, $\tilde{x}_s^{\tilde{r}} = \Clower+V$ for $s = t-2$, and $\tilde{x}_s^{\tilde{r}} = 0$ otherwise.
\item For each $\tilde{r} \in [t-L+1,t-2]_{\Z}$ (totally $L-3$ points), we create $(\tilde{x}^{\tilde{r}}, \tilde{y}^{\tilde{r}}, \tilde{u}^{\tilde{r}}) \in$ conv($P$) such that
$\tilde{y}_s^{\tilde{r}} = 1$ for $s \in [\tilde{r}, \min \{\tilde{r}+L-1, T\}]_{\Z}$ and $\tilde{y}_s^{\tilde{r}} = 0$ otherwise, $\tilde{u}_s^{\tilde{r}} = 1$ for $s = \tilde{r}$ and $\tilde{u}_s^{\tilde{r}} = 0$ otherwise, and $\tilde{x}_s^{\tilde{r}} = \Vupper+(s-\tilde{r})V$ for $s \in [\tilde{r},t]_{\Z}$ for $s \in [\tilde{r},t]_{\Z}$ and $\hat{x}_s^{\hat{r}} = 0$ otherwise. Note here that we have $\tilde{r}+L-1 \in [t, \min \{t+L-3,T\}]_{\Z}=\{t\}$.
\item For $\tilde{r}=t-1$ (totally one point), 
we create $(\tilde{x}^{\tilde{r}}, \tilde{y}^{\tilde{r}}, \tilde{u}^{\tilde{r}}) \in$ conv($P$) such that
\sred{(i)} $\tilde{y}_s^{\tilde{r}} = 1$ for $s\in \{t-1,t\}$ and $\tilde{y}_s^{\tilde{r}} = 0$ otherwise, \sred{(ii)} $\tilde{u}_s^{\tilde{r}} = 1$ for $s = \tilde{r}$ and $\tilde{u}_s^{\tilde{r}} = 0$ otherwise,
and \sred{(iii)} $\tilde{x}_s^{\tilde{r}} = \Clower$ for $s =t-1$, $\tilde{x}_s^{\tilde{r}} = \Clower+V$ for $s = t$, and $\tilde{x}_s^{\tilde{r}} = 0$ otherwise.
\item For $\tilde{r}=t$ (totally one point), 
we create $(\tilde{x}^{\tilde{r}}, \tilde{y}^{\tilde{r}}, \tilde{u}^{\tilde{r}}) \in$ conv($P$) such that
 $\tilde{y}_s^{\tilde{r}} = 1$ for $s = t$ and $\tilde{y}_s^{\tilde{r}} = 0$ otherwise, $\tilde{u}_s^{\tilde{r}} = 1$ for $s = \tilde{r}$ and $\tilde{u}_s^{\tilde{r}} = 0$ otherwise,
and $\tilde{x}_s^{\tilde{r}} = \Vupper$ for $s = t$ and $\tilde{x}_s^{\tilde{r}} = 0$ otherwise.
\item For each $\hat{r} \in [1,t-m-1]_{\Z}$ (totally $[t-m-1]^+$ points), 
we create $(\hat{x}^{\hat{r}}, \hat{y}^{\hat{r}}, \hat{u}^{\hat{r}}) \in$ conv($P$) such that
$\hat{y}_s^{\hat{r}} = 1$ for $s \in [1,\hat{r}]_{\Z}$ and $\hat{y}_s^{\hat{r}} = 0$ otherwise, $\hat{u}_s^{\hat{r}} = 0$ for all $s \in [2,T]_{\Z}$, and $\hat{x}_s^{\hat{r}} = \Clower+\epsilon$ for $s \in [1,\hat{r}]_{\Z}$ and $\hat{x}_s^{\hat{r}} = 0$ otherwise.
\item For $\hat{r} = t-m$ (totally one point), 
we create $(\hat{x}^{\hat{r}}, \hat{y}^{\hat{r}}, \hat{u}^{\hat{r}}) \in$ conv($P$) such that
$\hat{y}_s^{\hat{r}}= 1$ for all $s \in [1, T]_{\Z}$, $\hat{u}_s^{\hat{r}} = 0$ for all $s \in [2,T]_{\Z}$, and $\hat{x}_s^{\hat{r}} = \Cupper-\epsilon$ for $s \in [1,T]_{\Z} \setminus \{t-1\}$ and $\hat{x}_s^{\hat{r}} = \Cupper-V-\epsilon$ otherwise.
\item For each $\hat{r} \in [t-m+1, t-3]_{\Z}$ (totally $[m-3]^+$ points), 
we create $(\hat{x}^{\hat{r}}, \hat{y}^{\hat{r}}, \hat{u}^{\hat{r}}) \in$ conv($P$) such that
$\hat{y}_s^{\hat{r}} = 1$ for $s \in [\hat{r}-L+1,\hat{r}]_{\Z}$ and $\hat{y}_s^{\hat{r}} = 0$ otherwise,
$\hat{u}_s^{\hat{r}} = 1$ for $s = \hat{r}-L+1$ and $\hat{u}_s^{\hat{r}} = 0$ otherwise, and 
$\hat{x}_s^{\hat{r}} = \Clower+\epsilon$ for $s \in [\hat{r}-L+1,\hat{r}]_{\Z}$ and $\hat{x}_s^{\hat{r}} = 0$ otherwise.
\item For $\hat{r} = t-1$ (totally one point), 
we create $(\hat{x}^{\hat{r}}, \hat{y}^{\hat{r}}, \hat{u}^{\hat{r}}) \in$ conv($P$) such that
\sred{(i)} $\hat{y}_s^{\hat{r}} = 1$ for $s \in [\hat{r}-L+1,\hat{r}]_{\Z}$ and $\hat{y}_s^{\hat{r}} = 0$ otherwise,
\sred{(ii)} $\hat{u}_s^{\hat{r}} = 1$ for $s = \hat{r}-L+1$ and $\hat{u}_s^{\hat{r}} = 0$ otherwise, 
and \sred{(iii)} $\hat{x}_s^{\hat{r}} = \Clower+\epsilon$ for $s \in [\hat{r},\hat{r}+L-1]_{\Z} \setminus \{t-2\}$, $\hat{x}_s^{\hat{r}} = \Clower+V+\epsilon$ for $s = t-2$, and $\hat{x}_s^{\hat{r}} = 0$ otherwise.
\item For $\hat{r} = t$ (totally one point), 
we create $(\hat{x}^{\hat{r}}, \hat{y}^{\hat{r}}, \hat{u}^{\hat{r}}) \in$ conv($P$) such that
\sred{(i)} $\hat{y}_s^{\hat{r}} = 1$ for $s \in \{t-1,t\}$ and $\hat{y}_s^{\hat{r}} = 0$ otherwise,
\sred{(ii)} $\hat{u}_s^{\hat{r}} = 1$ for $s = t-1$ and $\hat{u}_s^{\hat{r}} = 0$ otherwise,
and \sred{(iii)} $\hat{x}_s^{\hat{r}} = \Clower+\epsilon$ for $s=t-1$, $\hat{x}_s^{\hat{r}} = \Clower+V+\epsilon$ for $s = t$, and $\hat{x}_s^{\hat{r}} = 0$ otherwise.
\item For each $\bar{r} \in [1,t-m-1]_{\Z}$ (totally $[t-m-1]^+$ points), 
we create $(\bar{x}^{\bar{r}}, \bar{y}^{\bar{r}}, \bar{u}^{\bar{r}}) \in$ conv($P$) such that
$\bar{y}_s^{\bar{r}} = 1$ for $s \in [1,\bar{r}]_{\Z}$ and $\bar{y}_s^{\bar{r}} = 0$ otherwise, $\bar{u}_s^{\bar{r}} = 0$ for all $s \in [2,T]_{\Z}$, and $\bar{x}_s^{\bar{r}} = \Clower$ for $s \in [1,\bar{r}]_{\Z}$ and $\bar{x}_s^{\bar{r}} = 0$ otherwise.
\item For $\bar{r} = t-m$ (totally one point), 
we create $(\bar{x}^{\bar{r}}, \bar{y}^{\bar{r}}, \bar{u}^{\bar{r}}) \in$ conv($P$) such that
$\bar{y}_s^{\bar{r}}= 1$ for all $s \in [1, T]_{\Z}$, $\bar{u}_s^{\bar{r}} = 0$ for all $s \in [2,T]_{\Z}$, and $\bar{x}_s^{\bar{r}} = \Cupper$ for $s \in [1,T]_{\Z} \setminus \{t-1\}$ and $\bar{x}_s^{\bar{r}} = \Cupper-V$ otherwise. \\
Finally, we create the remaining $T-t+m = m$ points $(\bar{x}^{\bar{r}}, \bar{y}^{\bar{r}}, \bar{u}^{\bar{r}})$ ($\bar{r} \in [t-m+1, T]_{\Z}$). Without loss of generality, we let $S = \{n_1, \cdots, n_p, \cdots, n_q, \cdots, n_{|S|}\} \subseteq [t-m+1,t-1]_{\Z}$ \sred{and} $S' = S \cup \{t-m\}$. 
For notation convenience, we define $n_0 = t-m$ and $n_{|S|+1} = t$.
\item For each $n_p \in S'$, $p \in [0,|S|]_{\Z}$, we create $n_{p+1}-n_p - 1$ points $(\bar{x}^{\bar{r}}, \bar{y}^{\bar{r}}, \bar{u}^{\bar{r}})  \in$ conv($P$) with $\bar{r} \in [n_p+1, n_{p+1}-1]_{\Z}$ (totally there are $\sum_{p=0}^{|S|} n_{p+1}-n_p - 1 = m-|S|-1$ points) such that  
$\bar{y}_s^{\bar{r}} = 1$ for $s \in [n_p-L+1, \bar{r}]_{\Z}$ and $\bar{y}_s^{\bar{r}} = 0$ otherwise, $\bar{u}_s^{\bar{r}} = 1$ for $s = n_p-L+1$ and $\bar{u}_s^{\bar{r}} = 0$ otherwise.
For the value of $\bar{x}^{\bar{r}}$, \sred{we have the following three cases:}
	\begin{enumerate}[(1)]
	\item if $\bar{r} \leq t-3$, then we let $\bar{x}_s^{\bar{r}} = \Clower$ for $s \in [n_p-L+1, \bar{r}]_{\Z}$ and $\bar{x}_s^{\bar{r}} = 0$ otherwise;
	\item if $\bar{r} = t-2$, then we let $\bar{x}_s^{\bar{r}} = \Vupper$ for $s \in [n_p-L+1, \bar{r}]_{\Z}$ and $\bar{x}_s^{\bar{r}} = 0$ otherwise;
	\item if $\bar{r} = t-1$, then we let  $\bar{x}_s^{\bar{r}} = \Clower$ for $s \in [n_p-L+1, \bar{r}]_{\Z} \setminus \{t-1\}$, $\bar{x}_s^{\bar{r}} = \Clower+V$ for $s = t-1$, and $\bar{x}_s^{\bar{r}} = 0$ otherwise.	
	\end{enumerate}
\item For each $n_p \in S \cup \{t\}$, $p \in [1, |S|+1]_{\Z}$, we create one point $(\bar{x}^{\bar{r}}, \bar{y}^{\bar{r}}, \bar{u}^{\bar{r}}) \in$ conv($P$) \sred{with} $\bar{r} = n_p$ (totally there are $|S|+1$ points) such that
$\bar{y}_s^{\bar{r}} = 1$ for $s \in [n_{p-1}-L+1, T]_{\Z}$ and $\bar{y}_s^{\bar{r}} = 0$,
$\bar{u}_s^{\bar{r}} = 1$ for $s = n_{p-1}-L+1$ and $\bar{u}_s^{\bar{r}} = 0$ otherwise, and 
$\bar{x}_s^{\bar{r}} = \Vupper + (s-n_{p-1}+L-1)V$ for $s \in [n_{p-1}-L+1, t]_{\Z}$ and $\bar{x}_s^{\bar{r}} = 0$ otherwise.
\end{enumerate}

In summary, we create $3T-2$ points $(\bar{x}^{\bar{r}}, \bar{y}^{\bar{r}}, \bar{u}^{\bar{r}})_{\bar{r}=1}^{T}$, $(\hat{x}^{\hat{r}}, \hat{y}^{\hat{r}}, \hat{u}^{\hat{r}})_{\hat{r}=1, \hat{r} \neq t-2}^{T}$, and $(\tilde{x}^{\tilde{r}}, \tilde{y}^{\tilde{r}}, \tilde{u}^{\tilde{r}})_{\tilde{r}=2}^{T}$.
It is easy to see that they are valid and satisfy \eqref{eqn:ru-3-exp} at equality.
They are clearly linearly independent since $(\bar{x}, \bar{y}, \bar{u})$ and $(\hat{x}, \hat{y}, \hat{u})$ can construct a lower-triangular matrix in terms of the values of $x$ and $y$ and $(\tilde{x}, \tilde{y}, \tilde{u})$ is further linearly independent with them since it constructs an upper-triangular matrix in terms of the value of $u$.
\end{proof}

\subsection{Proof for Proposition \ref{prop:rd-3-exp}} \label{apx:subsec:rd-3-exp}
\begin{proof}
\textbf{(Validity)} It is easy to show that \eqref{eqn:rd-3-exp} is valid when there exists $i \in [t-2, t]_{\Z}$ such that $y_i = 0$. Now we consider the case in which $y_{t-2} = y_{t-1} = y_t = 1$ and let the start-up time before $t-2$ be $t-s$ ($s\geq 2$) and the shut-down time after $t$ be $t+\hat{s}$ ($\hat{s} \geq 1$). We discuss the following three possible cases in terms of the value of $t-s+L-1$.
\begin{enumerate}[1)]
\item $t-s+L-1 \leq t$. We discuss the following three possible cases in terms of the value of $t+\hat{s}$.
\begin{enumerate}[(1)]
\item $t+\hat{s} \in [t+1, \hat{t}-1]_{\Z}$. Inequality \eqref{eqn:rd-3-exp} converts to $x_{t-2} - x_{t-1} + x_t \leq V + \Vupper + (\hat{t}-1)V$, which is valid since $x_{t-2} - x_{t-1} \leq V$ and $x_t \leq \Vupper + (\hat{t}-1)V$.
\item $t+\hat{s} \in [\hat{t}, t+m+1]_{\Z}$. Inequality \eqref{eqn:rd-3-exp} converts to $x_{t-2} - x_{t-1} + x_t \leq V + \Vupper + (\hat{t}-1)V + \tilde{f}V$, which is valid since $x_{t-2} - x_{t-1} \leq V$ and $x_t \leq \Vupper + (\hat{s}-1)V = \Vupper + (\hat{t}-1)V + (\hat{s}-\hat{t})V \leq \Vupper + \sum_{i \in S_0}V + V \sum_{(S\cup\{\hat{t}\}) \cap [\hat{t}+1,t+\hat{s}-1]_{\Z}} = \Vupper + (\hat{t}-1)V + \tilde{f}V$.
\item $t+\hat{s} \geq t+m+2$. Inequality \eqref{eqn:rd-3-exp} converts to $x_{t-2} - x_{t-1} + x_t \leq V + \Cupper$, which is valid.
\end{enumerate}
\item $t-s+L-1 \geq t+1$, i.e., $s \in [2, L-2]_{\Z}$. It follows that $\phi = (s-2)V$ and inequality \eqref{eqn:rd-3-exp} converts to $x_{t-2} - x_{t-1} + x_t \leq \Vupper + (s-2)V + V + \psi$, which is valid since $x_{t-2} \leq \Vupper + (s-2)V$, $x_t - x_{t-1} \leq V$, and $\psi = V \sum_{i \in S_0} (y_i - \sum_{m=0}^{\min \{L-1,i-2\}} u_{i-m}) + V \sum_{i \in S \cup \{\hat{t}\}} (d_i - i) (y_i - \sum_{m=0}^{L-1} u_{i-m}) + (\Cupper - \Vupper - m V) (y_{t+m+1} - \sum_{j=0}^{L-1} u_{t+m+1-j}) \geq 0$.
\end{enumerate}

\textbf{(Facet-defining)} The facet-defining proof is similar with that in Appendix \ref{apx:subsec:ru-3-exp} for Proposition \ref{prop:ru-3-exp} and thus is omitted here.
\end{proof}

\end{appendices}

\end{document}